\newcommand{\nat}{\mathbb{N}}
\newcommand{\zed}{\mathbb{Z}}
\newcommand{\C}{\mathbb{C}}
\newcommand{\fil}{\mathcal{F}}
\newcommand{\Hom}{\mathrm{Hom}}
\newcommand{\im}{\mathrm{Im}}
\newcommand{\Sym}{\mathrm{Sym}}
\newcommand{\qb}[2]{\genfrac{[}{]}{0pt}{}{#1}{#2}}
\newcommand{\ve}{\varepsilon}
\newcommand{\id}{\mathrm{id}}
\newcommand{\hmf}{\mathrm{hmf}}
\newcommand{\HMF}{\mathrm{HMF}}
\newcommand{\ch}{\mathsf{Ch}^{\mathsf{b}}}
\newcommand{\hch}{\mathsf{hCh}^{\mathsf{b}}}
\newcommand{\tc}{\mathrm{tc}}
\newcommand{\Mod}{\mathrm{Mod}^{\mathsf{gr}}}
\theoremstyle{plain}
\newtheorem{theorem}{Theorem}[section]
\newtheorem{lemma}[theorem]{Lemma}
\newtheorem{proposition}[theorem]{Proposition}
\newtheorem{corollary}[theorem]{Corollary}
\theoremstyle{definition}
\newtheorem{definition}[theorem]{Definition}
\newtheorem{acknowledgments}{Acknowledgments\ignorespaces}
\theoremstyle{remark}
\newtheorem{remark}[theorem]{Remark}
\numberwithin{equation}{section}
\begin{document}

\title{Equivariant colored $\mathfrak{sl}(N)$-homology for links}

\author{Hao Wu}

\address{Department of Mathematics, The George Washington University, Monroe Hall, Room 240, 2115 G Street, NW, Washington DC 20052}

\email{haowu@gwu.edu}

\subjclass[2000]{Primary 57M25}

\keywords{quantum link invariant, Khovanov-Rozansky homology, matrix factorization, symmetric polynomial} 

\begin{abstract}
In this sequel to \cite{Wu-color}, we construct an equivariant colored $\mathfrak{sl}(N)$-homology for links, which generalizes both the colored $\mathfrak{sl}(N)$-homology in \cite{Wu-color} and the equivariant $\mathfrak{sl}(N)$-homology in \cite{Krasner}. The construction is a straightforward generalization of \cite{Wu-color}. The proof of invariance is based on a simple observation which allows us to translate the proof in \cite{Wu-color} into the new setting.

As an application, we prove that deformations over $\C$ of the colored $\mathfrak{sl}(N)$-homology are link invariants. We also construct a spectral sequence connecting the colored $\mathfrak{sl}(N)$-homology to its deformations over $\C$, which generalizes the spectral sequence given in \cite{Gornik,Lee2}.
\end{abstract}

\maketitle

\section{Introduction}\label{sec-intro}

\subsection{Background} The $\mathfrak{sl}(N)$-Khovanov-Rozansky homology \cite{KR1} is a $\zed^{\oplus2}$-graded homological invariant for links. It categorifies the (uncolored) single variable $\mathfrak{sl}(N)$-HOMFLY-PT polynomial and generalizes the Khovanov homology \cite{K1}. Its construction in \cite{KR1} is based on graded matrix factorizations associated to special MOY graphs with potentials induced by $X^{N+1}$. One can perturb this construction by considering matrix factorizations with potentials induced by 
\begin{equation}\label{def-f}
f(X)=X^{N+1} + \sum_{k=1}^N (-1)^{k}\frac{N+1}{N+1-k}B_{k} X^{N+1-k}. 
\end{equation}
This idea has been explored by Lee \cite{Lee2}, Gornik \cite{Gornik}, Khovanov \cite{K2}, Mackaay, Vaz \cite{MackaayVaz}, the author \cite{Wu7} and, more recently, by Krasner \cite{Krasner}. This perturbed construction gives homological invariants for links. Their applications can be found in for example \cite{Lobb,Pl4,Ras1,Shumakovitch,Wu7}.

Recently, the author \cite{Wu-color} generalized the $\mathfrak{sl}(N)$-Khovanov-Rozansky homology to an $\mathfrak{sl}(N)$-homology for links colored by positive integers (or, equivalently, wedge products of the defining representation of $\mathfrak{sl}(N;\C)$.) The construction is based on matrix factorizations associated to general MOY graphs with potentials induced by $X^{N+1}$. 

\subsection{Main results} In the present paper, we consider the perturbed construction based on matrix factorizations with potentials induced by $f(X)$ in the colored situation. We take the view that $X$ is a homogeneous indeterminate of degree $2$, and $B_k$ is a homogeneous indeterminate of degree $2k$ for each $k=1,\dots,N$. We set $R_B=\C[B_1,\dots,B_N]$. (Unless otherwise specified, $N$ is a fixed integer greater than or equal to $2$ in the rest of this paper.)

For a link diagram $D$ colored by positive integers with a marking, this perturbed construction gives a bounded chain complex $(C_f(D), d)$ over the homotopy category $\hmf_{R_B,0}$ of graded matrix factorizations over $R_B$ with potential $0$. $C_f(D)$ is $\zed_2\oplus\zed\oplus\zed$-graded, where the $\zed_2$-grading is the $\zed_2$-grading of the underlying matrix factorizations, the first $\zed$-grading is the total polynomial grading\footnote{The total polynomial grading is the sum of the grading of $R_B$ and the grading from the alphabets marking the link diagram $D$.} of the underlying matrix factorizations, and the second $\zed$-grading is the homological grading. Moreover, the gradings of the alphabets marking $D$ induce a quantum filtration on $C_f(D)$.

Since $\hmf_{R_B,0}$ is not an abelian category, we can not directly define the homology of $C_f(D)$. As in \cite{KR1}, we note that objects of $\hmf_{R_B,0}$ are chain complexes and denote by $d_{mf}$ the differential map of the underlying matrix factorization of $C_f(D)$. Then $(H(C_f(D),d_{mf}), d_\ast)$ is a chain complex of $R_B$-modules, where $d_\ast$ is induced by the differential map $d$ of $C_f(D)$. Define
\[
H_f(D)= H(H(C_f(D),d_{mf}), d_\ast). 
\]
We call $H_f(D)$ the equivariant colored $\mathfrak{sl}(N)$-homology of $D$. It inherits the $\zed_2\oplus\zed\oplus\zed$-grading and the quantum filtration of $C_f(D)$ and, as we will later explain, is a finitely generated $R_B$-module.

The following theorem establishes the invariance of $H_f(D)$.

\begin{theorem}\label{thm-inv-main}
Let $D$ be a link diagram whose components are colored by positive integers, and $C_f(D)$ the chain complex associated to $D$.  Then the homotopy type of $C_f(D)$, with its $\zed_2\oplus\zed\oplus\zed$-grading, is independent of the choice of marking and is invariant under Reidemeister moves. 
If every component of $D$ is colored by $1$, then $C_f(D)$ is isomorphic to the chain complex defined by Krasner in \cite{Krasner}.

Consequently, the finitely generated $R_B$-module $H_f(D)$, with its $\zed_2\oplus\zed\oplus\zed$-grading, is an invariant for links colored by positive integers.
\end{theorem}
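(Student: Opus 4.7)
The plan is to mirror the invariance proof of \cite{Wu-color} in the equivariant setting over $R_B$, establishing in turn that $C_f(D)$ is independent of the marking, invariant under the Reidemeister moves, and isomorphic to Krasner's complex when every color equals $1$. The invariance of $H_f(D)$ as a finitely generated $R_B$-module will then follow automatically from the homotopy invariance of $C_f(D)$, since $H_f$ is built from $C_f$ by the two-step functor $H(H(-,d_{\mf}),d_\ast)$.

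The key observation I would hope to exploit is that every local decomposition and homotopy equivalence built in \cite{Wu-color} rests on Koszul-type manipulations involving the partial derivatives of $X^{N+1}$ together with identities among symmetric polynomials in the alphabets marking $D$. Writing $f(X)=X^{N+1}+g(X)$ with $g(X)\in R_B[X]$ of $X$-degree at most $N$, the potential $\sum_i f(X_i)-\sum_j f(Y_j)$ attached to a MOY graph lifts the original $\sum_i X_i^{N+1}-\sum_j Y_j^{N+1}$ by an $R_B$-linear perturbation, and $f'(X)$ is still monic of degree $N$. Consequently the Koszul resolutions and contractions of \cite{Wu-color} should admit direct $R_B$-linear analogues: one reuses the same factorization data with each potential term replaced by its $f$-analogue, and the same explicit morphisms should still satisfy the relations needed to split MOY-graph complexes. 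With this translation principle in hand, marking invariance reduces to the usual Koszul contraction of a superfluous marked point (which uses only the monicity of $f'$), and each Reidemeister move reduces to checking that the explicit chain maps from \cite{Wu-color} still compose, up to $R_B$-linear homotopy, in the required ways and that the $\zed_2\oplus\zed\oplus\zed$-grading and the quantum filtration are preserved. The identification with Krasner's construction when all colors equal $1$ is then immediate: in that case the MOY graphs appearing in $C_f(D)$ reduce to arcs and crossings, and the matrix factorizations specialize to those defining Krasner's chain complex.

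The main obstacle I anticipate is bookkeeping rather than conceptual: one must verify that each explicit morphism and homotopy from \cite{Wu-color} is genuinely $R_B$-linear after the perturbation, that the total polynomial grading accounts correctly for the degrees of the $B_k$, and that the filtration induced by those degrees is preserved at every step. A secondary point of care is that some composition identities in \cite{Wu-color} may make implicit use of the explicit form of $X^{N+1}$ beyond the degree and leading coefficient of its derivative; these places will need to be revisited to confirm that the $f$-versions close up with only $R_B$-linear corrections, so that the resulting equalities still descend to homotopies in $\hmf_{R_B,0}$.
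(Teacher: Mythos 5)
Your overall strategy --- translate the invariance proof of \cite{Wu-color} to the equivariant setting --- is the right one, and your reduction of the color-$1$ case to Krasner's complex matches the paper. But there is a genuine gap at exactly the point you flag as a ``secondary point of care'': you propose to re-verify, by hand, that every explicit morphism and homotopy of \cite{Wu-color} still closes up after replacing $X^{N+1}$ by $f(X)$, and you have no mechanism for the places where the original computations use the explicit form of the left columns of the Koszul factorizations. The paper's proof supplies precisely that missing mechanism: the projection $\pi_0(B_i)=0$ induces a functor $\varpi_0:\hmf_{\tilde{R},w}\to\hmf_{R,\pi_0(w)}$ with $\varpi_0(C_f(\Gamma))=C(\Gamma)$, and the key Lemma \ref{reduce-base-homotopic-equivalence} shows that a grading-preserving morphism over $\tilde{R}=R\otimes_\C R_B$ (with grading bounded below) is a homotopy equivalence \emph{if and only if} its reduction modulo $(B_1,\dots,B_N)$ is one --- both conditions being detected in the common quotient by the maximal homogeneous ideal, via \cite[Proposition 8]{KR1}. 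With this, one never re-verifies the hard composition identities: one checks that each equivariant morphism is homogeneous of the right degree and that $\varpi_0$ sends it to the corresponding morphism of \cite{Wu-color} (Proposition \ref{basic-changes-varpi-0}), and then \emph{lifts} every homotopy equivalence from the non-equivariant setting. The paper is explicit that the direct route you propose fails to be practical in at least one place: the Second Composition Formula $\epsilon\circ\eta\approx\id$ (Proposition \ref{saddle+compose+annihilation}), whose proof in \cite{Wu-color} uses the left column of the Koszul factorization and whose direct generalization ``would be very complex and require many not-so-easy modifications''; it is instead deduced from Lemma \ref{reduce-base-homotopic-equivalence} together with a $\Hom_\hmf$ computation.

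Two smaller corrections. First, marking independence does not rest on monicity of $f'$ alone; it uses the new contraction lemma for nice pairs (Lemma \ref{nice-b-contract}), whose hypotheses concern regularity of the right-column entries modulo the boundary variables --- these entries are unchanged from \cite{Wu-color}, which is why the argument transfers. Second, the Reidemeister invariance is not proved move-by-move from the chain maps of \cite{Wu-color}; it is an induction on colors whose base case is Krasner's invariance theorem \cite[Theorem 14]{Krasner} (supplemented by a II$_b$ argument), with the inductive step carried by the fork-sliding theorem --- itself proved by the lifting mechanism above together with Gaussian elimination. Your plan should be reorganized around the functor $\varpi_0$ and the lifting lemma rather than around term-by-term re-verification.
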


For $b_1,\dots, b_N\in \C$, let $\pi:R_B\rightarrow \C$ be the $\C$-algebra homomorphism given by $\pi(B_i)=b_i$. Then $\pi$ induces a functor $\varpi:\hmf_{R_B,0} \rightarrow \hmf_{\C,0}$. Of course, $\varpi$ does not preserve total polynomial grading. But, for a link diagram $D$ with a marking, the gradings of the alphabets marking $D$ also induce a quantum filtration on $C_{f,\pi}(D):=\varpi(C_f(D))$. And $\varpi$ preserves the quantum filtration structure. We define the homology $H_{f,\pi}(D)$ of $C_{f,\pi}(D)$ in a procedure similar to that of $H_f(D)$. We call $H_{f,\pi}$ a deformation of the colored $\mathfrak{sl}(N)$-homology over $\C$. As an application of Theorem \ref{thm-inv-main}, we prove the following theorem, which generalizes \cite[Theorems 1.1 and 1.2]{Wu7}.

\begin{theorem}\label{thm-inv-deformation}
Let $D$ be a link diagram whose components are colored by positive integers. Then the homotopy type of $C_{f,\pi}(D)$, with its $\zed_2$-grading, homological grading and quantum filtration, is independent of the choice of marking and is invariant under Reidemeister moves. If every component of $D$ is colored by $1$, then $C_{f,\pi}(D)$ is the chain complex defined in \cite{Wu7}.

Consequently, the $\C$-space $H_{f,\pi}(D)$, with its $\zed_2$-grading, homological grading and quantum filtration, is invariant under all Reidemeister moves.

Define the total color $\tc(D)$ of $D$ to be the sum of the colors of the components of $D$. Then the subspace of $H_{f,\pi}(D)$ of elements of $\zed_2$-degree $\tc(D)+1$ vanishes.

Moreover, the quantum filtration of $C_{f,\pi}(D)$ induces a spectral sequence converging to $H_{f,\pi}(D)$ with $E_1$-term isomorphic to the colored $\mathfrak{sl}(N)$-homology $H(D)$ defined in \cite{Wu-color}.
\end{theorem}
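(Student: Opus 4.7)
The plan is to bootstrap every assertion from Theorem~\ref{thm-inv-main} by pushing the homotopy equivalences of $C_f(D)$ through the specialization functor $\varpi:\hmf_{R_B,0}\to\hmf_{\C,0}$. Applied termwise, $\varpi$ induces a functor $\hch(\hmf_{R_B,0})\to\hch(\hmf_{\C,0})$, hence carries the homotopy equivalences supplied by Theorem~\ref{thm-inv-main} (marking changes and Reidemeister moves) to homotopy equivalences of $C_{f,\pi}(D)$. Termwise, $\varpi$ preserves the $\zed_2$-grading and the homological grading; it destroys the total polynomial grading because the $B_k$ become numerical constants, but the quantum filtration is built only from the alphabets marking $D$, which $\varpi$ leaves untouched, so the filtered structure survives. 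This yields the marking-independence and Reidemeister invariance of $C_{f,\pi}(D)$, and the invariance of $H_{f,\pi}(D)$ follows by applying the two-stage homology construction $H(H(-,d_{mf}),d_\ast)$ to the descended equivalences.

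The identification with the chain complex of \cite{Wu7} when every component of $D$ is colored by $1$ is then immediate: Theorem~\ref{thm-inv-main} identifies $C_f(D)$ with Krasner's complex, and applying $\varpi$ gives the complex of \cite{Wu7} by direct comparison of the defining matrix factorizations. For the vanishing in $\zed_2$-degree $\tc(D)+1$, I would argue at the chain level: the $\zed_2$-grading of the matrix factorization assigned to a resolved colored MOY graph is controlled by the parities of the colors at its boundary endpoints, and a local parity count at each crossing shows that every homogeneous generator of $C_{f,\pi}(D)$ sits in $\zed_2$-degree congruent to $\tc(D)\pmod 2$; the $\tc(D)+1$ subspace is therefore already empty before passing to homology.

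Finally, for the spectral sequence, the polynomial $f(X)=X^{N+1}+\sum_{k=1}^{N}(-1)^{k}\frac{N+1}{N+1-k}b_{k}X^{N+1-k}$ has leading term $X^{N+1}$ in the quantum filtration, since once the $b_k$'s are specialized to complex numbers the remaining summands all lie in strictly lower filtration degree. Consequently the associated graded of $C_{f,\pi}(D)$ with respect to the quantum filtration is canonically isomorphic to the colored $\mathfrak{sl}(N)$-complex $C(D)$ of \cite{Wu-color}. The standard construction for filtered chain complexes then produces a spectral sequence with $E_1$-term $H(D)$ converging to $H_{f,\pi}(D)$; convergence holds because $H(D)$ is finite-dimensional in each quantum degree by \cite{Wu-color}, so the induced filtration is bounded in each bi-degree.

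The main obstacle, and the step I expect to require the most care, is upgrading the homotopy equivalences produced in the proof of Theorem~\ref{thm-inv-main} to \emph{filtered} homotopy equivalences, so that the spectral sequence is itself a link invariant from $E_1$ onward rather than merely an abutment to an invariant. This is the colored analogue of the filtered-invariance arguments in \cite{Lee2,Gornik,Wu7}; concretely it means tracing through each MOY-type local move in \cite{Wu-color} and verifying that the explicit chain homotopies respect the filtration. Once this filtered refinement is in hand, standard spectral-sequence machinery delivers the final statement.
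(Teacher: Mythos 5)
Your reduction of the invariance statements to Theorem \ref{thm-inv-main} via the specialization functor $\varpi$ is exactly the paper's route (Theorem \ref{invariance-reidemeister-deformation}); incidentally, the ``main obstacle'' you flag at the end is a non-issue, since the equivalences supplied by Theorem \ref{thm-inv-main} preserve the total polynomial grading and $\deg_Q \leq \deg_T$, so they are automatically filtered of degree $\leq 0$. The genuine gap is in your treatment of the $\zed_2$-purity and, downstream of it, of the $E_1$-term. The claim that every homogeneous generator of $C_{f,\pi}(D)$ sits in $\zed_2$-degree congruent to $\tc(D)$ is false: the matrix factorization attached to each resolution is a tensor product of Koszul factorizations whose underlying module $\bigoplus_{\ve} R\cdot 1_\ve$ has generators in both $\zed_2$-degrees as soon as any vertex has positive width. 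The purity of $H_{f,\pi}(D)$ is a statement about homology, not about the chain groups, and it requires real input from the undeformed theory. The paper proves it in Proposition \ref{MOY-H-H-f-pi-ismorphism} by Gornik's inductive argument: decomposing the differential as $d_\ve=\sum_l d_\ve^{(l)}$ with $d_\ve^{(0)}$ the top quantum-homogeneous part (which is the differential of $C(\Gamma)$), one uses the vanishing $H^{\mathrm{cr}(\Gamma)-1,i}(\Gamma)=0$ from \cite{Wu-color} to construct, degree by degree, a primitive for any cycle of $\zed_2$-degree $\mathrm{cr}(\Gamma)-1$.

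The same proposition is what identifies the $E_1$-term, and your argument elides it. Knowing that the associated graded of $C_{f,\pi}(\Gamma)$ is $C(\Gamma)$ at the level of matrix factorizations (the paper's Corollary \ref{relate-C-C-f-pi}) is not enough: the spectral sequence of the theorem lives on the filtered complex $(H(C_{f,\pi}(D),d_{mf}),d_\ast)$, so one must show that the associated graded of $H(C_{f,\pi}(\Gamma),d_{mf})$ is $H(C(\Gamma),d_{mf})$, naturally in $\Gamma$. Homology does not commute with associated graded in general; here the isomorphism $\fil^k H_{f,\pi}^{\mathrm{cr}(\Gamma)}(\Gamma)/\fil^{k-1}H_{f,\pi}^{\mathrm{cr}(\Gamma)}(\Gamma)\cong H^{\mathrm{cr}(\Gamma),k}(\Gamma)$ and its naturality are again forced by $H^{\mathrm{cr}(\Gamma)-1}(\Gamma)=0$, via the same inductive construction. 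Without this step your spectral sequence has the right abutment but an unidentified $E_1$-page. In short, the invariance clauses are handled correctly, but both the purity and the $E_1$-identification need the Gornik-type analysis rather than the chain-level shortcuts you propose.
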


\subsection{Structure of the proof} Our proof of Theorem \ref{thm-inv-main} is based on a simple algebraic observation which allows us to translate the proof of invariance of the colored $\mathfrak{sl}(N)$-homology in \cite{Wu-color} into a proof of invariance of the equivariant colored $\mathfrak{sl}(N)$-homology. We assume the reader is somewhat familiar with \cite{Wu-color}.

In Section \ref{sec-algebra}, we review algebraic results necessary for the proof. In particular, we define in Subsection \ref{subsec-varpi-0} a functor $\varpi_0$ which serves as a translator between the colored $\mathfrak{sl}(N)$-homology in \cite{Wu-color} and the equivariant colored $\mathfrak{sl}(N)$-homology in the present paper. This functor is essentially an explicit generalization of the main technique used by Krasner in \cite{Krasner} to prove the invariance of the uncolored equivariant $\mathfrak{sl}(N)$-homology. 

In Section \ref{sec-mf-MOY}, we define the matrix factorization $C_f(\Gamma)$ associated to a MOY graph $\Gamma$. This definition is a straightforward generalization of the corresponding definition in \cite{Wu-color}. In the remainder of Section \ref{sec-mf-MOY} and Sections \ref{sec-some-morph}-\ref{sec-decomps}, we prove basic properties of $C_f(\Gamma)$ needed for the construction of the equivariant colored $\mathfrak{sl}(N)$-homology. These properties are established, for the most part, by using the functor $\varpi_0$ to translate the corresponding properties in \cite{Wu-color}.

In Sections \ref{sec-chain-complex-def}-\ref{sec-inv-reidemeister}, we construct the equivariant colored $\mathfrak{sl}(N)$-homology and prove its invariance under Reidemeister moves. This is again done by translating the corresponding work in \cite{Wu-color} using the functor $\varpi_0$.

Finally, we prove Theorem \ref{thm-inv-deformation} in Section \ref{sec-deform-over-C}. The invariance part of Theorem \ref{thm-inv-deformation} follows readily from Theorem \ref{thm-inv-main}. The purity of the $\zed_2$-grading and the spectral sequence are established using the methods developed in \cite{Gornik, Wu7}.

\subsection{Some remarks} To completely understand the present paper, the reader needs to understand the techniques developed in \cite{Wu-color}. On the other hand, the present paper is, in some sense, a ``quick" guide to \cite{Wu-color} and provides easier access to the ideas in \cite{Wu-color} with fewer technical details.

Three versions of the colored $\mathfrak{sl}(N)$-homology appear in this paper. The main body of the paper is about the equivariant colored $\mathfrak{sl}(N)$-homology $H_f$ defined using the chain complex $C_f$. In the construction and proof of invariance of $H_f$, we frequently compare it to the colored $\mathfrak{sl}(N)$-homology $H$ defined using the chain complex $C$. Finally, toward the end, we study the deformation $H_{f,\pi}$ of the colored $\mathfrak{sl}(N)$-homology, which is defined using the chain complex $C_{f,\pi}$. The author hopes the reader will not be confused by these notations.

The applications in \cite{Lobb,Pl4,Ras1,Shumakovitch,Wu7} are mostly based on generic deformations of the $\mathfrak{sl}(N)$-Khovanov-Rozansky homology. Here, ``generic" means that 
\[
\frac{d}{dX} (\pi(f(X)))= (N+1)(X^N+ \sum_{k=1}^{N} (-1)^{k} b_{k}X^{N-k})
\] 
has $N$ distinct roots over $\C$. This is because Lee \cite{Lee2} and Gornik \cite{Gornik} constructed explicit bases for generic deformations of the $\mathfrak{sl}(N)$-Khovanov-Rozansky homology. The author is working to generalize their construction to generic deformations of the colored $\mathfrak{sl}(N)$-homology.

\begin{acknowledgments}
I would like to thank Mikhail Khovanov and Daniel Krasner for helpful discussions.
\end{acknowledgments}

\section{A Little Algebra}\label{sec-algebra}

We refer the reader to  \cite[Sections 2-4]{Wu-color} and \cite[Subsection 2.1-2.2]{Wu7} for the definitions and properties of the algebraic structures used in the construction. Here, we just add a few things that are not explicitly given in those papers.

\subsection{A new contraction lemma for Koszul matrix factorizations} Several versions of the contraction lemma for Koszul matrix factorizations are given in \cite[Propositions 2.19, 2.20, 2.22]{Wu-color}. Here, we give a new version of this lemma that replaces all the versions in \cite{Wu-color}.

\begin{definition}\label{def-nice-pair}
Let $\hat{R}=\C[X_1,\dots,X_n]$, where $X_1,\dots,X_n$ are homogeneous indeterminates with positive integer degrees. Assume that $R$ is a commutative graded unital $\hat{R}$-algebra\footnote{A ``commutative graded unital $\hat{R}$-algebra" is a graded commutative ring with $1$ equipped with a grading preserving injective ring homomorphism $\jmath: \hat{R} \rightarrow R$ such that $\jmath(1)=1$. We do not distinguish between $\hat{R}$ and its image in $R$ under $\jmath$.}. For a sequence $\{b_1,\dots,b_l\}$ of homogeneous elements of $R$, we say that $(R,\{b_1,\dots,b_l\})$ is a nice pair over $\hat{R}$ if
\begin{itemize}
	\item $R$ is a free $\hat{R}$-module and the grading on $R$ is bounded below,
	\item $R/(b_1,\dots,b_l)$ is also a commutative graded unital $\hat{R}$-algebra and a free $\hat{R}$-module,
	\item $\{b_1,\dots,b_l\}$ is regular sequence in $R/(X_1,\dots,X_n)$ (see \cite[Definition 2.17]{Wu-color}),
\end{itemize}
where $(b_1,\dots,b_l)$ and $(X_1,\dots,X_n)$ are the ideals of $R$ generated by $b_1,\dots,b_l$ and $X_1,\dots,X_n$.

Here, we allow $n=0$. That is, $\hat{R}=\C$, $R$ is a graded commutative unital $\C$-algebra. In this case, $(R,\{b_1,\dots,b_l\})$ is a nice pair over $\C$ if $\{b_1,\dots,b_l\}$ is regular sequence in $R$.
\end{definition}

Before stating the lemma, let us recall the definition of homotopy equivalences of matrix factorizations.

\begin{definition}\label{def-homotopy-equivalence}
A morphism of matrix factorizations $\varphi:M\rightarrow M'$ is called a homotopy equivalence of matrix factorizations if there exists a morphism of matrix factorizations $\overline{\varphi}:M'\rightarrow M$ such that such that $\overline{\varphi} \circ \varphi \simeq \id_M$ and $\varphi \circ \overline{\varphi} \simeq \id_{M'}$. 

$M$ and $M'$ are called homotopic if there is a homotopy equivalence from $M$ to $M'$.
\end{definition}

\begin{lemma}\label{nice-b-contract}
Let $\hat{R}=\C[X_1,\dots,X_n]$, where $X_1,\dots,X_n$ are homogeneous indeterminates with positive integer degrees, and $R$ a graded commutative unital $\hat{R}$-algebra. Assume that $a_1,\dots,a_k,b_1,\dots,b_k$ are homogeneous elements of $R$ satisfying: 
\begin{itemize}
	\item $\deg a_j +\deg b_j = 2N+2$ for all $j$;
	\item $\sum_{j=1}^k a_jb_j =w \in \hat{R}$;
	\item For some $1\leq l \leq k$, $(R,\{b_1,\dots,b_l\})$ is a nice pair over $\hat{R}$.
\end{itemize}
Let $R'=R/(b_1,\dots,b_l)$ and $P:R\rightarrow R'$ be the standard quotient map. Then the Koszul matrix factorizations 
\[
M=\left(%
\begin{array}{cc}
  a_1 & b_1 \\
  a_2 & b_2 \\
  \dots & \dots \\
  a_k & b_k
\end{array}%
\right)_R
\hspace{.5cm}
\text{and}
\hspace{.5cm}
M'=\left(%
\begin{array}{cc}
  P(a_{l+1}) & P(b_{l+1}) \\
  P(a_{l+2}) & P(b_{l+2}) \\
  \dots & \dots \\
  P(a_k) & P(b_k)
\end{array}%
\right)_{R'}
\]
are homotopic as graded matrix factorizations over $\hat{R}$.
\end{lemma}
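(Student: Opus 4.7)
The plan is to proceed in two stages. In Stage~1 I strengthen the nice-pair hypothesis to conclude that $\{b_1,\ldots,b_l\}$ is a regular sequence in $R$ itself (not merely in $R/(X_1,\ldots,X_n)$) and that every intermediate quotient $R_i:=R/(b_1,\ldots,b_i)$ is a free, bounded-below, graded $\hat{R}$-module. In Stage~2 I induct on $l$ to reduce the simultaneous contraction of $l$ rows to the single-row case handled by one of the existing contraction lemmas in \cite{Wu-color}.

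For Stage~1, I would induct on $i$ starting from $R_0=R$. Assuming $R_{i-1}$ is free, apply $-\otimes_{\hat{R}}\C$ to the classical two-term Koszul complex $(R_{i-1}\xrightarrow{b_i}R_{i-1})$ and invoke universal coefficients. Freeness of $R_{i-1}$ makes this a complex of free $\hat{R}$-modules; the hypothesis that $\bar{b}_i$ is regular on $R_{i-1}/(X_1,\ldots,X_n)$ kills $H_1$ of the base-changed complex; universal coefficients then splits this vanishing into $\mathrm{Tor}_1^{\hat{R}}(R_i,\C)=0$ and $\ker(b_i\cdot)\otimes_{\hat{R}}\C=0$, from which graded Nakayama (valid since every module in sight is bounded below and $\hat{R}$ is positively graded) forces $\ker(b_i\cdot)=0$ in $R_{i-1}$. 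Lifting a graded $\C$-basis of $R_i/(X)R_i$ back to $R_i$ then produces a surjection $\hat{R}^{\oplus B}\twoheadrightarrow R_i$ whose kernel, also bounded below, is killed by $-\otimes_{\hat{R}}\C$ via the vanishing $\mathrm{Tor}_1^{\hat{R}}(R_i,\C)=0$, hence is zero by Nakayama, so $R_i$ is free.

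For Stage~2, the base case $l=1$ follows from Stage~1 (which gives $b_1$ regular in $R$) together with any of \cite[Propositions 2.19, 2.20, 2.22]{Wu-color}, whose constructions are $\hat{R}$-linear and grading-preserving by inspection. For the inductive step, apply the base case to contract $(a_1,b_1)$, replacing $M$ by the Koszul matrix factorization over $R_1$ with rows $(P_1(a_j),P_1(b_j))$ for $j\ge 2$; Stage~1 certifies that $(R_1,\{P_1(b_2),\ldots,P_1(b_l)\})$ is again a nice pair over $\hat{R}$ (the three conditions transfer: $R_1$ and $R_1/(P_1(b_2),\ldots,P_1(b_l))=R_l$ are free by Stage~1 and by hypothesis, and the truncated sequence is regular in $R_1/(X)=R/(b_1,X)$ by definition of regular sequence), so the inductive hypothesis finishes the argument.

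I expect Stage~1 to be the main obstacle: regularity in $R/(X_1,\ldots,X_n)$ does not automatically lift to regularity in $R$ without input from the freeness conditions, and one must carefully orchestrate the Tor, Koszul, and Nakayama ingredients. Once Stage~1 is in place, the inductive reduction in Stage~2 is essentially bookkeeping, and the grading and $\hat{R}$-linearity of the final equivalence are automatic from the naturality of the maps produced by the classical contraction lemmas, together with the observation that $\hat{R}\hookrightarrow R_i$ at every stage (since the nice-pair definition forces $\hat{R}\cap(b_1,\ldots,b_l)=0$).
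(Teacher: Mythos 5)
Your proposal is correct, but it follows a genuinely different route from the paper. The paper never lifts regularity from $R/(X_1,\dots,X_n)$ to $R$: it writes down the explicit $\hat R$-linear surjection $G:M\to M'$ (projection onto the $\ve_1=0$ summands followed by $P$), reduces everything modulo $\hat{\mathfrak I}=(X_1,\dots,X_n)$, and constructs the contracting homotopy only on $\ker g\subset M/\hat{\mathfrak I}M$, where the division map $\varphi(\pi(b_1)s)=s$ exists precisely because of the hypothesis that $\{b_1,\dots,b_l\}$ is regular in $R/(X_1,\dots,X_n)$; it then invokes \cite[Proposition 8]{KR1} (a grading-preserving morphism of bounded-below graded matrix factorizations over $\hat R$ is a homotopy equivalence iff it is a quasi-isomorphism after reducing modulo the maximal homogeneous ideal) to upgrade $g$ to a homotopy equivalence $G$, and handles general $l$ by composing $G_{l-1}\circ\cdots\circ G_0$. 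Your Stage~1 — the Koszul/Tor/graded-Nakayama argument showing that each $b_i$ is a non-zero-divisor on $R_{i-1}$ and each $R_i$ is graded-free — is correct (boundedness below of all submodules in sight is what makes graded Nakayama applicable without finite generation) and is a strictly stronger intermediate result; it even shows that the hypothesis that $R/(b_1,\dots,b_l)$ be $\hat R$-free follows from the remaining hypotheses. Once Stage~1 is in place, the reduction to the older contraction lemmas of \cite{Wu-color} is legitimate, with the one caveat that your argument leans on those lemmas delivering an $\hat R$-linear, grading-preserving homotopy inverse under the hypothesis that $b_1$ is regular in $R$ — a point you assert by inspection but which the paper avoids entirely by re-proving the base case via the mod-$(X)$ reduction. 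In short: the paper's route buys economy, since \cite[Proposition 8]{KR1} manufactures the homotopy inverse over $\hat R$ automatically and no commutative algebra is needed; your route buys the reusable structural fact that $\{b_1,\dots,b_l\}$ is regular in $R$ itself, after which the $l>1$ induction is pure bookkeeping.
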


\begin{proof} 
We prove the lemma by modifying the proof of \cite[Proposition 2.2]{Wu-color}. First, we prove the case when $l=1$. In this case $R'=R/(b_1)$ and
\[
M'=\left(%
\begin{array}{cc}
  P(a_{2}) & P(b_{2}) \\
  \dots & \dots \\
  P(a_k) & P(b_k)
\end{array}%
\right)_{R'}.
\]
We invoke the ``$1_\ve$" notation
introduced in \cite[Definition 2.4]{Wu-color} and define an $R$-module homomorphism $G:M\rightarrow M'$ by
\[
G(r1_\ve) = \left\{%
\begin{array}{ll}
    P(r) 1_{(\ve_{2},\dots,\ve_k)} & \text{if } \ve_1=0, \\
    0 & \text{if } \ve_1 = 1, 
\end{array}%
\right.
\]
for $r\in R$ and $\ve=(\ve_1,\dots,\ve_k)\in I^k$. It is easy to check that $G$ is surjective, commutes with the differential maps and preserves both the $\zed_2$ and the total polynomial gradings. So, in particular, $G$ is a morphism of matrix factorizations over $\hat{R}$.

Let $S=R/(X_1,\dots,X_n)$ and $S'=R/(X_1,\dots,X_n,b_1)$, both of which are commutative graded unital $\C$-algebras. Then $P$ induces a projection $p:S\rightarrow S'$. Denote by $\pi:R\rightarrow S$ the standard projection and by $\hat{\mathfrak{I}}$ the ideal of $\hat{R}$ generated by $X_1,\dots,X_n$. Then $w\in \hat{\mathfrak{I}}$, and $M/\hat{\mathfrak{I}}M$ and $M'/\hat{\mathfrak{I}}M'$ are graded matrix factorizations over $\C$ of potential $0$, that is, chain complexes. $M/\hat{\mathfrak{I}}M$ and $M'/\hat{\mathfrak{I}}M'$ are given by 
\[
M/\hat{\mathfrak{I}}M=\left(%
\begin{array}{cc}
  \pi(a_1) & \pi(b_1) \\
  \pi(a_2) & \pi(b_2) \\
  \dots & \dots \\
  \pi(a_k) & \pi(b_k)
\end{array}%
\right)_S
~
\text{and}
~
M'/\hat{\mathfrak{I}}M'=\left(%
\begin{array}{cc}
  (p\circ\pi)(a_{2}) & (p\circ\pi)(b_{2}) \\
  \dots & \dots \\
  (p\circ\pi)(a_k) & (p\circ\pi)(b_k)
\end{array}%
\right)_{S'}.
\] 

$G$ induces a surjective $S$-module homomorphism $g: M/\hat{\mathfrak{I}}M \rightarrow M'/\hat{\mathfrak{I}}M'$ given by 
\[
g(s1_\ve) = \left\{%
\begin{array}{ll}
    p(s) 1_{(\ve_{2},\dots,\ve_k)} & \text{if } \ve_1=0, \\
    0 & \text{if } \ve_1 = 1, 
\end{array}%
\right.
\]
It is easy to see that $g$ is a chain map that preserves both gradings. The kernel of $g$ is the subcomplex
\[
\ker g = \bigoplus_{\ve_{2},\dots,\ve_k \in I}(S \cdot 1_{(1,\ve_{2},\dots,\ve_k)} \oplus \pi(b_1)S \cdot 1_{(0,\ve_{2},\dots,\ve_k)}).
\] 

Recall that, by the definition of nice pairs, $\pi(b_1)$ is not a zero-divisor in $S$. So the division map $\varphi:\pi(b_1)S\rightarrow S$ given by $\varphi(\pi(b_1)s)=s$ is a well defined $S$-module homomorphism. Define an $S$-module homomorphism $h:\ker g \rightarrow \ker g$ by 
\begin{eqnarray*}
h(1_{(1,\ve_{2},\dots,\ve_k)}) & = & 0, \\
h(\pi(b_1) 1_{(0,\ve_{2},\dots,\ve_k)}) & = & 1_{(1,\ve_{2},\dots,\ve_k)}.
\end{eqnarray*}
Then 
\[
d|_{\ker g} \circ h+h \circ d|_{\ker g}=\id_{\ker g},
\] 
where $d$ is the differential map of $M/\hat{\mathfrak{I}}M$. In particular, this means that $H(\ker g)=0$. Then, using the long exact sequence induced by
\[
0\rightarrow \ker g \rightarrow M/\hat{\mathfrak{I}}M \xrightarrow{g} M'/\hat{\mathfrak{I}}M' \rightarrow 0,
\]
it is easy to see that $g$ is a quasi-isomorphism. Thus, by \cite[Proposition 8]{KR1} (see \cite[Corollary 3.8]{Wu-color}), $G$ is a homotopy equivalence of matrix factorizations over $\hat{R}$.

Now we prove the lemma for general $1 \leq l \leq k$. For $1\leq i \leq l$, let $R_i = R/(b_1,\dots,b_i)$ and $P_i:R \rightarrow R_i$ the standard projection. Then $R_l=R'$ and $P_l=P$.

Consider the Koszul matrix factorizations $M_0=M$ and $M_i$ over $R_i$ defined by
\[
M_i = \left(%
\begin{array}{cc}
  P_i(a_{i+1}) & P_i(b_{i+1}) \\
  P_i(a_{i+2}) & P_i(b_{i+2}) \\
  \dots & \dots \\
  P_i(a_k) & P_i(b_k)
\end{array}%
\right)_{R_i} \text{ for } 1\leq i \leq l.
\]
Then $M'=M_l$. Denote by $\hat{\mathfrak{I}}$ again the ideal of $\hat{R}$ generated by $X_1,\dots,X_n$. By the proof of the case $l=1$, one can see that, for $i=0,1,\dots,l-1$, there is an $R$-module homomorphism $G_i: M_i \rightarrow M_{i+1}$ that commutes with the differential maps, preserves both gradings and induces a quasi-isomorphism $g_i: M_i/\hat{\mathfrak{I}} M_i \rightarrow M_{i+1}/\hat{\mathfrak{I}} M_{i+1}$ of chain complexes. Define $G:M \rightarrow M'$ by $G=G_{l-1}\circ \cdots \circ G_1 \circ G_0$. Then $G$ is a morphism of matrix factorizations over $\hat{R}$ preserving both gradings and induces a quasi-isomorphism $g=g_{l-1}\circ \cdots \circ g_1 \circ g_0: M/\hat{\mathfrak{I}} M \rightarrow M'/\hat{\mathfrak{I}} M'$ of chain complexes. Thus, by \cite[Proposition 8]{KR1} (see \cite[Corollary 3.8]{Wu-color}), $G$ is a homotopy equivalence of matrix factorizations over $\hat{R}$.
\end{proof}

Applying the same arguments, we get the following lemma.

\begin{lemma}\label{nice-a-contract}
Let $\hat{R}=\C[X_1,\dots,X_n]$, where $X_1,\dots,X_n$ are homogeneous indeterminates with positive integer degrees, and $R$ a graded commutative unital $\hat{R}$-algebra. Assume that $a_1,\dots,a_k,b_1,\dots,b_k$ are homogeneous elements of $R$ satisfying: 
\begin{itemize}
	\item $\deg a_j +\deg b_j = 2N+2$ for all $j$;
	\item $\sum_{j=1}^k a_jb_j =w \in \hat{R}$;
	\item For some $1\leq l \leq k$, $(R,\{a_1,\dots,a_l\})$ is a nice pair over $\hat{R}$.
\end{itemize}
Let $R'=R/(a_1,\dots,a_l)$ and $P:R\rightarrow R'$ be the standard quotient map. Then the Koszul matrix factorizations 
\[
\left(%
\begin{array}{cc}
  a_1 & b_1 \\
  a_2 & b_2 \\
  \dots & \dots \\
  a_k & b_k
\end{array}%
\right)_R
\simeq \left(%
\begin{array}{cc}
  P(a_{l+1}) & P(b_{l+1}) \\
  P(a_{l+2}) & P(b_{l+2}) \\
  \dots & \dots \\
  P(a_k) & P(b_k)
\end{array}%
\right)_{R'} \{q^{l(N+1)-\sum_{i=1}^l\deg a_i}\}\left\langle l\right\rangle
\]
as graded matrix factorizations over $\hat{R}$.
\end{lemma}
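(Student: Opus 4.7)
The plan is to reduce Lemma \ref{nice-a-contract} to Lemma \ref{nice-b-contract} via the standard row-swap identity for Koszul matrix factorizations. Recall that for a single homogeneous row $(a,b)$ with $\deg a + \deg b = 2N+2$, interchanging the two entries yields an isomorphism
\[
\begin{pmatrix} a & b \end{pmatrix}_R \cong \begin{pmatrix} b & a \end{pmatrix}_R \{q^{N+1-\deg a}\}\langle 1\rangle
\]
of graded matrix factorizations over $\hat{R}$. The $\langle 1\rangle$ records that the source and target of the differential have been interchanged, while the polynomial shift compensates for the asymmetry $\deg a \neq \deg b$. This is essentially a direct computation on the exterior algebra $R[\theta]$ equipped with differential $a\theta + b\partial_\theta$ versus $b\theta + a\partial_\theta$.

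Applying this identity independently to each of the first $l$ rows of $M$ (which is legitimate because the Koszul factorization is the tensor product of its one-row pieces, so a shift on one tensor factor propagates through), we obtain a homotopy equivalence
\[
\left(
\begin{array}{cc}
  a_1 & b_1 \\
  \dots & \dots \\
  a_k & b_k
\end{array}
\right)_R
\cong
\widetilde{M} \, \{q^{l(N+1) - \sum_{i=1}^l \deg a_i}\}\langle l\rangle,
\]
where $\widetilde{M}$ is the Koszul matrix factorization whose first $l$ rows read $(b_i,a_i)$ and whose remaining rows are $(a_{l+1},b_{l+1}),\dots,(a_k,b_k)$. The potential $\sum_j a_j b_j = \sum_j b_j a_j = w$ is preserved, as are the degree conditions.

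Now the second column of the first $l$ rows of $\widetilde{M}$ is precisely $a_1,\dots,a_l$, for which $(R,\{a_1,\dots,a_l\})$ is a nice pair over $\hat{R}$ by hypothesis. Lemma \ref{nice-b-contract} therefore applies to $\widetilde{M}$, contracting its first $l$ rows and replacing the base ring by $R' = R/(a_1,\dots,a_l)$. Combining with the shift produced by the swap gives exactly the statement of Lemma \ref{nice-a-contract}.

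The main point requiring care is the row-swap identity with its precise grading shift; once this is in hand the argument is purely bookkeeping, with the $l$ individual shifts $\{q^{N+1-\deg a_i}\}\langle 1\rangle$ accumulating additively. One could alternatively mimic the proof of Lemma \ref{nice-b-contract} directly, defining the projection $G$ so that it keeps the $\ve_1 = 1$ summands (rather than $\ve_1 = 0$), using that $\pi(a_1)$ is now the non-zero-divisor; this reproduces the same shift from the first-hand side but seems no cleaner than the swap-and-apply approach above.
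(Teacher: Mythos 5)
Your argument is correct, and it reaches the conclusion by a route that differs from the paper's. The paper disposes of this lemma with the single remark ``Applying the same arguments, we get the following lemma,'' i.e.\ it intends the reader to rerun the proof of Lemma \ref{nice-b-contract} with the roles of the two columns exchanged (define $G$ to keep the $\ve_1=1$ summands, use that $\pi(a_1)$ is the non-zero-divisor, and track the resulting degree shift directly) --- exactly the alternative you sketch in your last paragraph. Your primary route instead treats Lemma \ref{nice-a-contract} as a formal corollary of Lemma \ref{nice-b-contract}: the one-row swap $(a,b)_R\cong(b,a)_R\{q^{N+1-\deg a}\}\langle 1\rangle$ is just the relabeling of $M^0$ and $M^1$ together with the quantum shift forced by the convention $(a,b)_R=R\to R\{q^{N+1-\deg a}\}\to R$, it propagates through the tensor product defining the Koszul factorization, and the accumulated shift $\{q^{l(N+1)-\sum_{i=1}^l\deg a_i}\}\langle l\rangle$ matches the statement. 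What your approach buys is that the homotopy-equivalence argument (regularity, the contracting homotopy on $\ker g$, the quasi-isomorphism criterion) is never repeated; what the paper's approach buys is that one never has to pin down the swap isomorphism and its grading shift. Both are sound, and your bookkeeping of the shift is right.
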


\subsection{Graded matrix factorizations over $R_B \otimes \C[X_1,\dots,X_m]$}\label{subsec-varpi-0} Recall that $R_B=\C[B_1,\dots,B_N]$, where $B_i$ is a homogeneous indeterminate of degree $2i$. Let $X_1,\dots, X_m$ be homogeneous indeterminates of positive integer degrees. Set $R=\C[X_1,\dots,X_m]$ and $\tilde{R} = R \otimes_\C R_B = \C[X_1,\dots,X_m,B_1,\dots,B_N]$. Denote by $\mathfrak{I}_B$ the homogeneous ideal of $\tilde{R}$ generated by $B_1,\dots,B_N$ and by $\pi_0: \tilde{R} \rightarrow \tilde{R}/\mathfrak{I}_B = R$ the standard projection map. Let $w$ be a homogeneous element of $\tilde{R}$ of degree $2N+2$. If $\tilde{M}$ is a graded matrix factorization over $\tilde{R}$ with potential $w$, then $\tilde{M}/\mathfrak{I}_B \tilde{M}$ is a graded matrix factorization over $R$ with potential $\pi_0(w)$. 

We call the grading of $\tilde{R}$ and the grading of any graded $\tilde{R}$-module the total polynomial grading. Also, recall that every matrix factorization comes with a $\zed_2$-grading.

\begin{lemma}\label{reduce-base-homotopic-equivalence}
Let $\tilde{M}$, $\tilde{M}'$ be graded matrix factorizations over $\tilde{R}$ with potential $w$, and $\tilde{\psi}:\tilde{M}\rightarrow\tilde{M}'$ a morphism of matrix factorizations over $\tilde{R}$ preserving both gradings. Assume that the total polynomial gradings of $\tilde{M}$ and $\tilde{M}'$ are bounded below. Then $\tilde{\psi}$ is a homotopy equivalence of matrix factorizations if and only if the induced morphism $\psi:\tilde{M}/\mathfrak{I}_B \tilde{M}\rightarrow\tilde{M}/\mathfrak{I}_B \tilde{M}$ is a homotopy equivalence of matrix factorizations.
\end{lemma}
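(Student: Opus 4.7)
The plan is to handle the two implications separately. The forward direction is immediate: reducing a homotopy equivalence datum $(\tilde\phi, \tilde h_1, \tilde h_2)$ for $\tilde\psi$ modulo $\mathfrak{I}_B$ yields the analogous datum over $R$ witnessing that $\psi$ is a homotopy equivalence.

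For the backward direction, I would pass to the mapping cone. Let $\tilde N := \mathrm{Cone}(\tilde\psi)$, a graded matrix factorization over $\tilde R$ with potential $w$; then $\tilde N/\mathfrak{I}_B\tilde N = \mathrm{Cone}(\psi) =: N$. A morphism of matrix factorizations with a common potential is a homotopy equivalence if and only if its cone is contractible, so the task reduces to lifting a null-homotopy $h \in \mathrm{End}^{1}_R(N)$ of $\id_N$---necessarily of polynomial degree $-(N+1)$---to a null-homotopy $\tilde h$ of $\id_{\tilde N}$ over $\tilde R$.

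The approach is successive approximation along the $\mathfrak{I}_B$-adic filtration. Pick any $\tilde R$-linear lift $\tilde h_0 \in \mathrm{End}^{1}_{\tilde R}(\tilde N)$ of $h$ (possible by freeness of $\tilde N$ over $\tilde R$). The error $\epsilon_1 := \id_{\tilde N} - d\tilde h_0 - \tilde h_0 d$ lies in $\mathfrak{I}_B \cdot \mathrm{End}^{0}_{\tilde R}(\tilde N)$ and is closed in the endomorphism differential (using $d^2 = w\cdot\id$). Its image in $(\mathfrak{I}_B/\mathfrak{I}_B^2) \otimes_\C \mathrm{End}_R(N)$ is a cycle in an acyclic complex, since the endomorphism complex $\mathrm{End}_R(N)$ is contractible---the null-homotopy $h$ of $\id_N$ induces one of $\id_{\mathrm{End}_R(N)}$ via $\phi \mapsto h\phi$---hence a boundary. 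Lift a primitive to $\tilde h_1 \in \mathfrak{I}_B \cdot \mathrm{End}^{1}_{\tilde R}(\tilde N)$ and iterate, producing $\tilde h_k \in \mathfrak{I}_B^k \cdot \mathrm{End}^{1}_{\tilde R}(\tilde N)$ with each successive error descending a filtration layer.

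The hard part, and the reason the bounded-below hypothesis enters, is to ensure the formal sum $\tilde h := \sum_{k\geq 0}\tilde h_k$ defines a genuine $\tilde R$-linear endomorphism rather than just an element of an $\mathfrak{I}_B$-adic completion. In a homogeneous basis $\{e_i\}$ of $\tilde N$ with degrees $d_i \geq d_0$, the $(i,j)$-entry of $\tilde h_k$ has polynomial degree $d_j - d_i - (N+1)$ and must lie in $\mathfrak{I}_B^k$, forcing $2k \leq d_j - d_i - (N+1)$. For each fixed pair $(i,j)$ this bounds $k$, so every matrix entry of $\tilde h$ is a finite sum, $\tilde h \in \mathrm{End}^{1}_{\tilde R}(\tilde N)$ is well-defined, and $d\tilde h + \tilde h d = \id_{\tilde N}$ by construction. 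This contracts $\tilde N$, yielding the backward implication.
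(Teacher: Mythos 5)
Your argument is correct, but it takes a genuinely different route from the paper's. The paper disposes of both directions at once by invoking the Khovanov--Rozansky criterion (\cite[Proposition 8]{KR1}, restated as \cite[Corollary 3.8]{Wu-color}): a grading-preserving morphism of bounded-below graded matrix factorizations is a homotopy equivalence iff it becomes a quasi-isomorphism after reducing modulo the \emph{maximal} homogeneous ideal; since $\tilde{M}/\tilde{\mathfrak{I}}\tilde{M}=M/\mathfrak{I}M$ and $\tilde{\psi}$, $\psi$ induce the same chain map there, the two conditions coincide. Your proof is instead self-contained: you pass to $\mathrm{Cone}(\tilde{\psi})$ and lift a contracting homotopy across the $\mathfrak{I}_B$-adic filtration by successive approximation, with the bounded-below hypothesis guaranteeing entry-wise termination of the formal sum. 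This buys independence from the KR criterion (and in effect reproves a special case of it), at the cost of the obstruction-theoretic bookkeeping; the paper's route is a two-line reduction to a black box. Two small points to tighten in your write-up: (i) for $\tilde{N}$ of infinite rank you should filter $\mathrm{End}_{\tilde{R}}(\tilde{N})$ by the condition that all matrix entries lie in $\mathfrak{I}_B^k$ (rather than by the submodule $\mathfrak{I}_B^k\cdot\mathrm{End}$, which may be smaller), so that the identification of the associated graded with $(\mathfrak{I}_B^k/\mathfrak{I}_B^{k+1})\otimes_\C\mathrm{End}_R(N)$ and your degree count go through uniformly; (ii) it is worth recording the sign check that $\phi\mapsto h\phi$ contracts the endomorphism complex, i.e.\ $D(h\phi)+hD(\phi)=(dh+hd)\phi=\phi$, which is what makes each graded layer acyclic.
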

\begin{proof}
Write $M=\tilde{M}/\mathfrak{I}_B \tilde{M}$ and $M'=\tilde{M}'/\mathfrak{I}_B \tilde{M}'$. Let $\tilde{\mathfrak{I}}$ be the maximal homogeneous ideal of $\tilde{R}$ generated by $X_1,\dots,X_m,B_1,\dots,B_N$, and $\mathfrak{I}$ the maximal homogeneous ideal of $R$ generated by $X_1,\dots,X_m$. 

By \cite[Proposition 8]{KR1} (see also \cite[Corollary 3.8]{Wu-color}), $\tilde{\psi}$ is a homotopy equivalence of matrix factorizations if and only if it induces a quasi-isomorphism $\tilde{M}/ \tilde{\mathfrak{I}} \tilde{M} \rightarrow \tilde{M}'/ \tilde{\mathfrak{I}} \tilde{M}'$, and $\psi$ is a homotopy equivalence of matrix factorizations if and only if it induces a quasi-isomorphism $M/ \mathfrak{I}M \rightarrow M'/ \mathfrak{I} M'$. It is clear that $\tilde{M}/ \tilde{\mathfrak{I}} \tilde{M} = M/ \mathfrak{I}M$, $\tilde{M}'/ \tilde{\mathfrak{I}} \tilde{M}' = M'/ \mathfrak{I}M'$ and $\tilde{\psi}$, $\psi$ induce the same chain map. And the lemma follows.
\end{proof}

\begin{lemma}\label{homotopy-finite-reduce-base}
Let $\tilde{M}$ be a graded matrix factorizations over $\tilde{R}$ with potential $w$. Assume that the total polynomial grading of $\tilde{M}$ is bounded below. then we have:
\begin{enumerate}[(a)]
	\item $\tilde{M}$ is homotopically finite over $\tilde{R}$ if and only if $M=\tilde{M}/\mathfrak{I}_B \tilde{M}$ is homotopically finite over $R$.
	\item $\tilde{M}$ is null-homotopic over $\tilde{R}$ if and only if $M=\tilde{M}/\mathfrak{I}_B \tilde{M}$ is null-homotopic over $R$.
\end{enumerate}

\end{lemma}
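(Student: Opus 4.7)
The plan is to exploit the observation already used in the proof of Lemma \ref{reduce-base-homotopic-equivalence} that, denoting by $\tilde{\mathfrak{I}}$ the maximal homogeneous ideal of $\tilde{R}$ generated by $X_1,\dots,X_m,B_1,\dots,B_N$ and by $\mathfrak{I}$ the maximal homogeneous ideal of $R$ generated by $X_1,\dots,X_m$, there is an equality
\[
\tilde{M}/\tilde{\mathfrak{I}}\tilde{M} \;=\; M/\mathfrak{I}M
\]
of $\zed_2$-graded chain complexes over $\C$. Both parts will be reduced to a statement about this common $\C$-complex.

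Part (b) I would derive directly from Lemma \ref{reduce-base-homotopic-equivalence}. The matrix factorization $\tilde{M}$ is null-homotopic precisely when it is homotopy equivalent (over $\tilde{R}$) to the zero matrix factorization, that is, when the zero morphism $\tilde{M}\to 0$ is a homotopy equivalence. This zero morphism automatically preserves both the $\zed_2$-grading and the total polynomial grading, and $0$ trivially has bounded-below total polynomial grading, so Lemma \ref{reduce-base-homotopic-equivalence} applies and translates the condition into the analogous one for $M \to 0$ over $R$.

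For part (a), I would invoke the standard characterization, derivable from \cite[Proposition 8]{KR1} (equivalently \cite[Corollary 3.8]{Wu-color}) together with graded Nakayama, for bounded-below graded matrix factorizations over a graded polynomial $\C$-algebra with positive-degree indeterminates: such a matrix factorization is homotopically finite if and only if the reduction modulo the maximal homogeneous ideal has finite-dimensional total cohomology over $\C$. The forward implication is immediate, since reduction sends finite rank to finite rank. For the reverse implication, one takes a minimal $\C$-model of the finite-dimensional cohomology, lifts it to a finite-rank matrix factorization over the ambient polynomial ring, and promotes the resulting chain map to a homotopy equivalence using Proposition 8 of \cite{KR1}. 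Applying this characterization to $\tilde{M}$ over $\tilde{R}$ and to $M$ over $R$, both conditions reduce to finite-dimensionality of the cohomology of the same $\C$-complex $\tilde{M}/\tilde{\mathfrak{I}}\tilde{M}=M/\mathfrak{I}M$, and (a) follows.

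The main obstacle is the reverse implication in part (a): producing a finite-rank model over $\tilde{R}$ given only one over $R$. The cleanest route is through the minimal-$\C$-model characterization sketched above, which avoids directly lifting the differential of the $R$-model along $\tilde{R}\to R$ -- such a direct lift would square to $\pi_0(w)$ rather than $w$, and correcting this would require a nontrivial obstruction-theoretic deformation. The bounded-below hypothesis on the total polynomial grading is precisely what makes the Nakayama lifting argument converge and makes the Proposition 8 criterion applicable.
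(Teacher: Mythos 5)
Your proposal is correct and follows essentially the same route as the paper: both reduce the statement to the observation that $\tilde{M}/\tilde{\mathfrak{I}}\tilde{M} \cong M/\mathfrak{I}M$ and then apply the Khovanov--Rozansky criterion that a bounded-below graded matrix factorization is homotopically finite (resp.\ null-homotopic) if and only if its reduction modulo the maximal homogeneous ideal has finite-dimensional (resp.\ vanishing) homology. The paper simply cites this criterion as \cite[Proposition 7]{KR1} (\cite[Corollary 3.9]{Wu-color}) for both parts at once, whereas you re-sketch its proof for (a) and route (b) through Lemma \ref{reduce-base-homotopic-equivalence}; these are cosmetic rather than substantive differences.
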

\begin{proof}
Recall that $\tilde{M}$ is homotopically finite over $\tilde{R}$ if and only if it is homotopic over $\tilde{R}$ to a finitely generated matrix factorization over $\tilde{R}$. Then, by \cite[Proposition 7]{KR1} (see also \cite[Corollary 3.9]{Wu-color}), $\tilde{M}$ is homotopically finite (resp. null-homotopic) over $\tilde{R}$ if and only if the homology of $\tilde{M}/ \tilde{\mathfrak{I}} \tilde{M}$ is finite dimensional over $\C$ (resp. $0$), and $M$ is homotopically finite (resp. null-homotopic) over $R$ if and only if the homology of $M/\mathfrak{I}M$ is finite dimensional over $\C$ (resp. $0$). It is clear that $\tilde{M}/ \tilde{\mathfrak{I}} \tilde{M} \cong M/\mathfrak{I}M$. And lemma follows.
\end{proof}

\begin{corollary}\label{reduce-base-functor}
$\pi_0$ induces a functor $\varpi_0: \hmf_{\tilde{R},w} \rightarrow \hmf_{R,\pi_0(w)}$, which maps each object $\tilde{M}$ of $\hmf_{\tilde{R},w}$ to the object $\varpi_0(\tilde{M})=\tilde{M}/\mathfrak{I}_B \tilde{M}$ of $\hmf_{R,\pi_0(w)}$. 
\end{corollary}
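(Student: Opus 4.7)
The plan is to verify the functor axioms directly, leveraging Lemma \ref{homotopy-finite-reduce-base} to ensure the image lands in $\hmf_{R,\pi_0(w)}$ and a simple reduction-mod-$\mathfrak{I}_B$ argument to handle morphisms and homotopies. I would work under the standing assumption that objects of $\hmf_{\tilde{R},w}$ have total polynomial grading bounded below, which is what activates both preceding lemmas; this is the natural convention in this paper since the matrix factorizations arising from MOY graphs all satisfy it.

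First I would address the object assignment. Given a graded matrix factorization $\tilde{M}$ over $\tilde{R}$ with potential $w$, since $\mathfrak{I}_B$ is a homogeneous ideal and $\pi_0$ preserves gradings, $\tilde{M}/\mathfrak{I}_B\tilde{M}$ inherits a $\zed_2$-grading and a total polynomial grading, and its induced differential squares to $\pi_0(w)\cdot\id$. Thus $\tilde{M}/\mathfrak{I}_B\tilde{M}$ is a graded matrix factorization over $R$ with potential $\pi_0(w)$. To see that it belongs to $\hmf_{R,\pi_0(w)}$, I invoke Lemma \ref{homotopy-finite-reduce-base}(a), which guarantees that homotopical finiteness over $\tilde{R}$ descends to homotopical finiteness over $R$.

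Next, for a morphism $\tilde{\psi}:\tilde{M}\rightarrow\tilde{M}'$ of matrix factorizations over $\tilde{R}$, the $\tilde{R}$-linearity of $\tilde{\psi}$ forces $\tilde{\psi}(\mathfrak{I}_B\tilde{M})\subseteq\mathfrak{I}_B\tilde{M}'$, so $\tilde{\psi}$ descends to an $R$-linear morphism $\psi:\tilde{M}/\mathfrak{I}_B\tilde{M}\rightarrow\tilde{M}'/\mathfrak{I}_B\tilde{M}'$ of matrix factorizations preserving both gradings. If $\tilde{\psi}_1,\tilde{\psi}_2$ are homotopic via $\tilde{h}$ satisfying $\tilde{\psi}_1-\tilde{\psi}_2=d_{\tilde{M}'}\tilde{h}+\tilde{h}d_{\tilde{M}}$, reducing this identity modulo $\mathfrak{I}_B$ yields a homotopy between the induced morphisms $\psi_1$ and $\psi_2$. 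Hence the assignment is well-defined on the homotopy category. Preservation of composition and of identities is immediate, because the entire construction is obtained by quotienting by a fixed ideal.

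I do not expect a genuine obstacle here; everything reduces to routine verifications once one has Lemma \ref{homotopy-finite-reduce-base}(a), which is the real content. The mild point requiring care is simply the bounded-below hypothesis that both Lemmas \ref{reduce-base-homotopic-equivalence} and \ref{homotopy-finite-reduce-base} require. Although not strictly needed for this corollary, Lemma \ref{reduce-base-homotopic-equivalence} is the natural companion statement: it ensures that $\varpi_0$ \emph{reflects} homotopy equivalences, which is the property that will be used throughout the paper to translate the invariance arguments of \cite{Wu-color} into the equivariant setting.
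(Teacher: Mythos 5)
Your proposal is correct and follows essentially the same route as the paper: the object assignment is justified by Lemma \ref{homotopy-finite-reduce-base}(a) together with the fact that $\tilde{M}/\mathfrak{I}_B\tilde{M}$ inherits the bounded-below total polynomial grading, and the action on morphisms and homotopies is the routine reduction modulo $\mathfrak{I}_B$. The paper's proof is just a one-line version of exactly this argument.
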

\begin{proof}
This follows easily from Lemma \ref{homotopy-finite-reduce-base}, and the fact that $\tilde{M}/\mathfrak{I}_B \tilde{M}$ inherits the total polynomial grading of $\tilde{M}$, which is bounded below. (See \cite[Definition 2.30]{Wu-color}.) 
\end{proof}

\subsection{Schur polynomials of the difference of two alphabets} Now we review the notion of the Schur polynomials associated to the difference of two alphabets. For more details, see for example \cite{Lascoux-notes}.

Recall that, for an alphabet $\mathbb{X}$, the complete symmetric polynomials $\{h_k(\mathbb{X})\}$ are the unique symmetric polynomials in $\mathbb{X}$ such that
\[
 \sum_{k=0}^{\infty} h_k(\mathbb{X}) t^k = H_{\mathbb{X}}(t) := \prod_{x\in \mathbb{X}} (1-xt)^{-1}.
\]
For two alphabets $\mathbb{X}$ and $\mathbb{Y}$, not necessarily disjoint, define the complete symmetric polynomials $\{h_k(\mathbb{X}-\mathbb{Y})\}$ to be the unique elements of $\Sym(\mathbb{X}|\mathbb{Y})$ such that
\begin{equation}\label{def-complete-poly-diff-0}
 \sum_{k=0}^{\infty} h_k(\mathbb{X}-\mathbb{Y}) t^k = \frac{H_{\mathbb{X}}(t)}{H_{\mathbb{Y}}(t)} = (\prod_{x\in \mathbb{X}} (1-xt)^{-1}) \cdot (\prod_{y\in \mathbb{Y}} (1-yt)).
\end{equation}
If $\mathbb{X}=\emptyset$, then $h_k(\emptyset-\mathbb{Y})= h_k(-\mathbb{Y})=(-1)^kY_k$ as in \cite[Subsection 4.2]{Wu-color}, where $Y_k$ is the $k$-th elementary symmetric polynomial in $\mathbb{Y}$. In general,
\begin{equation}\label{def-complete-poly-diff}
h_k(\mathbb{X}-\mathbb{Y}) = \sum_{j=0}^{k} h_{k-j}(\mathbb{X})h_j(-\mathbb{Y}) = \sum_{j=0}^{k} (-1)^j Y_j h_{k-j}(\mathbb{X}).
\end{equation}
More generally, for a partition $\lambda=(\lambda_1\geq\dots\geq\lambda_m)$, we have the Schur polynomial $S_{\lambda}(\mathbb{X}-\mathbb{Y})$ given by
\begin{eqnarray*}
&& S_\lambda(\mathbb{X}-\mathbb{Y}) \\
& = & \det (h_{\lambda_i -i +j}(\mathbb{X}-\mathbb{Y})) \\
& = & \left|%
\begin{array}{llll}
h_{\lambda_1}(\mathbb{X}-\mathbb{Y}) & h_{\lambda_1+1}(\mathbb{X}-\mathbb{Y}) & \dots & h_{\lambda_1+m-1}(\mathbb{X}-\mathbb{Y}) \\
h_{\lambda_2-1}(\mathbb{X}-\mathbb{Y}) & h_{\lambda_2}(\mathbb{X}-\mathbb{Y}) & \dots & h_{\lambda_2+m-2}(\mathbb{X}-\mathbb{Y}) \\
\dots & \dots & \dots & \dots \\
h_{\lambda_m-m+1}(\mathbb{X}-\mathbb{Y}) & h_{\lambda_m-m+2}(\mathbb{X}-\mathbb{Y}) & \dots & h_{\lambda_m}(\mathbb{X}-\mathbb{Y}) 
\end{array}%
\right|.
\end{eqnarray*}

\begin{lemma}\label{schur-poly-diff-cancel}
Let $\mathbb{X}$, $\mathbb{Y}$ and $\mathbb{W}$ be disjoint alphabets. Then, for any partition $\lambda=(\lambda_1\geq\dots\geq\lambda_m)$,
\[
S_\lambda(\mathbb{X}-\mathbb{Y}) = S_\lambda((\mathbb{X}\cup\mathbb{W})-(\mathbb{Y}\cup\mathbb{W})).
\]
\end{lemma}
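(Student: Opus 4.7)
The plan is to reduce everything to a generating function identity. By the determinantal (Jacobi-Trudi) definition given just above the lemma,
\[
S_\lambda(\mathbb{X}-\mathbb{Y}) = \det\bigl(h_{\lambda_i - i + j}(\mathbb{X}-\mathbb{Y})\bigr),
\]
so it suffices to prove the lemma in the special case $\lambda = (k)$, i.e. to show that
\[
h_k(\mathbb{X}-\mathbb{Y}) = h_k\bigl((\mathbb{X}\cup\mathbb{W})-(\mathbb{Y}\cup\mathbb{W})\bigr)
\]
for every $k \geq 0$; the general case then follows entry-by-entry from the determinant.

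For the $h_k$ case, I would use the defining generating function (\ref{def-complete-poly-diff-0}). Since $\mathbb{X}$ and $\mathbb{W}$ are disjoint, the product defining $H_{\mathbb{X}\cup\mathbb{W}}(t)$ factors as
\[
H_{\mathbb{X}\cup\mathbb{W}}(t) = \prod_{x\in\mathbb{X}\cup\mathbb{W}}(1-xt)^{-1} = H_{\mathbb{X}}(t)\,H_{\mathbb{W}}(t),
\]
and similarly $H_{\mathbb{Y}\cup\mathbb{W}}(t) = H_{\mathbb{Y}}(t)\,H_{\mathbb{W}}(t)$. Therefore
\[
\sum_{k=0}^\infty h_k\bigl((\mathbb{X}\cup\mathbb{W})-(\mathbb{Y}\cup\mathbb{W})\bigr)\,t^k = \frac{H_{\mathbb{X}}(t)\,H_{\mathbb{W}}(t)}{H_{\mathbb{Y}}(t)\,H_{\mathbb{W}}(t)} = \frac{H_{\mathbb{X}}(t)}{H_{\mathbb{Y}}(t)} = \sum_{k=0}^\infty h_k(\mathbb{X}-\mathbb{Y})\,t^k.
\]
Comparing coefficients of $t^k$ gives the equality of the complete symmetric polynomials.

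Plugging this back into the Jacobi-Trudi determinant yields the equality $S_\lambda(\mathbb{X}-\mathbb{Y}) = S_\lambda((\mathbb{X}\cup\mathbb{W})-(\mathbb{Y}\cup\mathbb{W}))$. There is no real obstacle here; the only subtlety is to make sure that the manipulation of $H_{\mathbb{W}}(t)$ as an invertible element takes place in the right ring. Working in the power series ring $\Sym(\mathbb{X}|\mathbb{Y}|\mathbb{W})[[t]]$, the element $H_{\mathbb{W}}(t)$ has constant term $1$ and is therefore a unit, which legitimizes the cancellation. Disjointness of the three alphabets is only used to guarantee that the product formula for $H_{\mathbb{X}\cup\mathbb{W}}(t)$ holds with each factor $(1-xt)^{-1}$ appearing with its natural multiplicity, so that the cancellation with $H_{\mathbb{W}}(t)$ in the denominator is exact.
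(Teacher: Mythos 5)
Your proof is correct and follows exactly the paper's route: reduce to the single-part case via the Jacobi--Trudi determinant, then verify $h_k(\mathbb{X}-\mathbb{Y}) = h_k((\mathbb{X}\cup\mathbb{W})-(\mathbb{Y}\cup\mathbb{W}))$ by cancelling $H_{\mathbb{W}}(t)$ in the generating function \eqref{def-complete-poly-diff-0}. You simply spell out the cancellation that the paper leaves as "follows easily from the definition."
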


\begin{proof}
If $\lambda=(\lambda_1)$, that is, $\lambda$ has only one part, then $S_\lambda(\mathbb{X}-\mathbb{Y})=h_{\lambda_1}(\mathbb{X}-\mathbb{Y})$ and $S_\lambda((\mathbb{X}\cup\mathbb{W})-(\mathbb{Y}\cup\mathbb{W}))=h_{\lambda_1}((\mathbb{X}\cup\mathbb{W})-(\mathbb{Y}\cup\mathbb{W}))$. In this case, the lemma follows easily from the definition of $h_{k}(\mathbb{X}-\mathbb{Y})$ by \eqref{def-complete-poly-diff-0}. Then, for general partitions, the lemma follows from the above definition of $S_\lambda(\mathbb{X}-\mathbb{Y})$.
\end{proof}

Recall that the power sum symmetric polynomial $p_k(\mathbb{X})$ is defined by
\[
p_k(\mathbb{X})=
\begin{cases}    
\sum_{x \in \mathbb{X}} x^k & \text{if } k\geq0, \\
0 & \text{if } k<0. 
\end{cases}
\]

The next lemma follows easily from \cite[Lemma 4.1]{Wu-color}.

\begin{lemma}\label{power-derive-complete}
Let $B_1,\dots,B_N$ be as above. Define 
\[
f(\mathbb{X})= p_{N+1}(\mathbb{X}) + \sum_{k=1}^N (-1)^{N+1-k}\frac{N+1}{k}B_{N+1-k} p_{k}(\mathbb{X}).
\]
Denote by $X_j$ the $j$-th elementary symmetric polynomial in $\mathbb{X}$. Then
\[
\frac{\partial f(\mathbb{X})} {\partial X_j} = (-1)^{j+1} (N+1) (h_{N+1-j}(\mathbb{X}) + \sum_{k=1}^N (-1)^{N+1-k}B_{N+1-k} h_{k-j}(\mathbb{X})).
\]
If we identify $B_k$ with the $k$-th elementary symmetric polynomial of an alphabet $\mathbb{B}$ of $N$ indeterminates disjoint from $\mathbb{X}$, then 
\[
\frac{\partial f(\mathbb{X})} {\partial X_j} = (-1)^{j+1} (N+1) h_{N+1-j}(\mathbb{X}-\mathbb{B}).
\]
\end{lemma}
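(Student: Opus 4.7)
The plan is to reduce everything to the formula
\[
\frac{\partial p_m(\mathbb{X})}{\partial X_j} \;=\; (-1)^{j+1}\,m\,h_{m-j}(\mathbb{X}),
\]
which is what \cite[Lemma 4.1]{Wu-color} should supply (and can be verified on low orders using the Newton--Girard identity
$p_m = \sum_{k=1}^{m-1} (-1)^{k-1} X_k p_{m-k} + (-1)^{m-1} m X_m$, viewing $X_1,X_2,\dots$ as algebraically independent coordinates and using induction on $m$; here we adopt the convention $h_r=0$ for $r<0$). Once this formula is in hand, everything else is mechanical.

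First, I apply the formula term-by-term to the definition
\[
f(\mathbb{X}) \;=\; p_{N+1}(\mathbb{X}) \;+\; \sum_{k=1}^N (-1)^{N+1-k}\,\frac{N+1}{k}\,B_{N+1-k}\,p_{k}(\mathbb{X}).
\]
The key arithmetic point is that the coefficient $\frac{N+1}{k}$ in the $k$-th term is exactly compensated by the factor $k$ coming out of $\partial p_k/\partial X_j$, so both the leading term and every summand contribute a common factor of $(N+1)$. After factoring out $(-1)^{j+1}(N+1)$, one reads off
\[
\frac{\partial f}{\partial X_j} \;=\; (-1)^{j+1}(N+1)\Bigl(h_{N+1-j}(\mathbb{X}) + \sum_{k=1}^N (-1)^{N+1-k}\,B_{N+1-k}\,h_{k-j}(\mathbb{X})\Bigr),
\]
which is the first assertion.

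For the second assertion, I reindex via $i = N+1-k$ (so $k$ running from $1$ to $N$ corresponds to $i$ running from $N$ down to $1$), and adjoin the $i=0$ term as $h_{N+1-j}(\mathbb{X}) = B_0 \, h_{N+1-j}(\mathbb{X})$ with $B_0 := 1$. The bracketed expression becomes
\[
\sum_{i=0}^{N} (-1)^{i}\,B_{i}\,h_{N+1-j-i}(\mathbb{X}).
\]
Identifying $B_i$ with the $i$-th elementary symmetric polynomial of $\mathbb{B}$, and noting that $B_i=0$ for $i>N$ while $h_r(\mathbb{X})=0$ for $r<0$ so the upper limit can be taken as $N+1-j$ without change, this sum is exactly the expansion of $h_{N+1-j}(\mathbb{X}-\mathbb{B})$ given in \eqref{def-complete-poly-diff}.

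There is no real obstacle here; the only thing to be careful about is the bookkeeping of signs and the reindexing, in particular verifying that the ranges of summation match after using the vanishing conventions for $B_i$ with $i>N$ and $h_r$ with $r<0$. With those conventions the two sums coincide term by term, completing the proof.
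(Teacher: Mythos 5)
Your proposal is correct and follows essentially the same route as the paper, which simply observes that the lemma "follows easily from \cite[Lemma 4.1]{Wu-color}" (the formula $\partial p_m/\partial X_j = (-1)^{j+1} m\, h_{m-j}(\mathbb{X})$) together with the definition \eqref{def-complete-poly-diff}. Your term-by-term cancellation of the factor $k$ against $\frac{N+1}{k}$ and the reindexing $i=N+1-k$ with the vanishing conventions are exactly the bookkeeping the paper leaves implicit.
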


The following is a slight variant of \cite[Proposition \emph{Gr}5]{Lascoux-notes}. 

\begin{theorem}\cite[Proposition \emph{Gr}5]{Lascoux-notes}\label{part-symm-str-alter}
Let $\mathbb{X}$ and $\mathbb{B}$ be two disjoint alphabets with $m$ and $N$ indeterminates, where $m \leq N$. Then the quotient ring
\[
\hat{R}:=\Sym(\mathbb{X}|\mathbb{B})/(h_{N}(\mathbb{X}-\mathbb{B}), h_{N-1}(\mathbb{X}-\mathbb{B}),\dots, h_{N+1-m}(\mathbb{X}-\mathbb{B}))
\] 
is a graded-free $\Sym(\mathbb{B})$-module. 

Denote by $\Lambda_{m,N-m}$ the set of partitions $\Lambda_{m,N-m}=\{\lambda~|~l(\lambda)\leq m, ~\lambda_1\leq N-m\}$. Then 
\[\{S_\lambda(\mathbb{X})~|~ \lambda \in \Lambda_{m,N-m}\} \text{ and } \{S_\lambda(\mathbb{X}-\mathbb{B})~|~ \lambda \in \Lambda_{m,N-m}\}
\] 
are two homogeneous bases for the $\Sym(\mathbb{B})$-module $\hat{R}$. In particular,
\[
\hat{R} \cong \Sym(\mathbb{B}) \{\qb{N}{m}\}
\]
as $\Sym(\mathbb{B})$-modules.

Moreover, there is a unique $\Sym(\mathbb{B})$-module homomorphism 
\[
\zeta:\hat{R} \rightarrow \Sym(\mathbb{B}),
\] 
called the Sylvester operator, such that, for $\lambda,\mu \in \Lambda_{m,N-m}$,
\[
\zeta(S_\lambda(\mathbb{X}) \cdot S_\mu(\mathbb{X}-\mathbb{B})) = \left\{%
\begin{array}{ll}
    1 & \text{if } \lambda_j + \mu_{m+1-j} =N-m ~\forall j=1,\dots,m, \\
    0 & \text{otherwise.}  \\
\end{array}%
\right.
\]
\end{theorem}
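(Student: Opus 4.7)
The plan is to reduce everything, via a graded Nakayama argument, to the classical Grassmannian case where $\mathbb{B}=0$. Let $\mathfrak{I}_B \subset \Sym(\mathbb{X}|\mathbb{B})$ denote the ideal generated by $B_1,\dots,B_N$. From \eqref{def-complete-poly-diff} we read off $h_k(\mathbb{X}-\mathbb{B}) \equiv h_k(\mathbb{X}) \pmod{\mathfrak{I}_B}$, so $\hat{R}/\mathfrak{I}_B \hat{R}$ is the classical quotient $\Sym(\mathbb{X})/(h_{N-m+1}(\mathbb{X}),\dots,h_N(\mathbb{X}))$, i.e.\ the standard presentation of $H^{\ast}(Gr(m,N);\C)$. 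Schubert calculus tells us this ring is a graded $\C$-vector space with basis $\{S_\lambda(\mathbb{X})\}_{\lambda\in\Lambda_{m,N-m}}$ and graded dimension $\qb{N}{m}$; alternatively this can be proved directly from Jacobi--Trudi and the Giambelli determinant.

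Next I would lift to the relative version. Since $\Sym(\mathbb{X}|\mathbb{B})$ is Cohen--Macaulay of Krull dimension $m+N$ and the augmented $(m+N)$-element sequence $h_N(\mathbb{X}-\mathbb{B}),\dots,h_{N+1-m}(\mathbb{X}-\mathbb{B}),B_1,\dots,B_N$ cuts out the zero-dimensional ring above, a standard dimension count forces the full sequence to be regular, hence so is its $m$-element prefix. Therefore $\hat{R}$ is flat, hence graded-free, over $\Sym(\mathbb{B})$. Graded Nakayama then promotes any homogeneous lift of a basis of $\hat{R}/\mathfrak{I}_B \hat{R}$ to a $\Sym(\mathbb{B})$-basis of $\hat{R}$, yielding the first basis and the graded rank $\qb{N}{m}$. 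Applying Jacobi--Trudi together with $h_k(\mathbb{X}-\mathbb{B})\equiv h_k(\mathbb{X}) \pmod{\mathfrak{I}_B}$ gives $S_\lambda(\mathbb{X}-\mathbb{B}) \equiv S_\lambda(\mathbb{X}) \pmod{\mathfrak{I}_B}$, so the second proposed set also reduces to the Schubert basis modulo $\mathfrak{I}_B$ and is therefore itself a $\Sym(\mathbb{B})$-basis.

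For the Sylvester operator I would define $\zeta:\hat{R}\to\Sym(\mathbb{B})$ to be the $\Sym(\mathbb{B})$-linear map extracting the coefficient of $S_{(N-m)^m}(\mathbb{X})$ in the expansion of an element in the first basis. Uniqueness under the claimed pairing property is automatic, since the pairing with $\mu=\emptyset$ already forces $\zeta(S_\lambda(\mathbb{X})) = \delta_{\lambda,(N-m)^m}$. To verify the pairing, I expand $S_\mu(\mathbb{X}-\mathbb{B}) = \sum_\rho a_{\mu,\rho}(\mathbb{B}) S_\rho(\mathbb{X})$ with $a_{\mu,\rho}\in\Sym(\mathbb{B})$ of degree $2(|\mu|-|\rho|)$ and expand $S_\lambda S_\rho = \sum_\sigma c_{\lambda,\rho}^\sigma(\mathbb{B}) S_\sigma(\mathbb{X})$; then $\zeta(S_\lambda(\mathbb{X}) \cdot S_\mu(\mathbb{X}-\mathbb{B})) = \sum_\rho a_{\mu,\rho}(\mathbb{B}) c_{\lambda,\rho}^{(N-m)^m}(\mathbb{B})$ lies in $\Sym(\mathbb{B})$-degree $2(|\lambda|+|\mu|-m(N-m))$. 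When this degree is negative, the value is automatically $0$; when it is zero, only the $\rho=\mu$ summand survives and reduces to the classical Schubert pairing, which equals $1$ precisely when $\mu=\lambda^\vee$ (the complement of $\lambda$ in the $m\times(N-m)$ box, i.e.\ $\lambda_j+\mu_{m+1-j}=N-m$ for all $j$), and $0$ otherwise.

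The main obstacle is the case $|\lambda|+|\mu|>m(N-m)$, where $\zeta(S_\lambda(\mathbb{X}) \cdot S_\mu(\mathbb{X}-\mathbb{B}))$ a priori lives in positive $\Sym(\mathbb{B})$-degree and I must show it still vanishes. My plan is to exploit the vanishing properties of $S_\mu(\mathbb{X}-\mathbb{B})$ when $\mathbb{X}$ specializes to a sub-alphabet of $\mathbb{B}$, equivalent to the defining relations $h_j(\mathbb{X}-\mathbb{B})=0$ for $N-m+1 \leq j \leq N$, together with a residue-type formula for $\zeta$ in the style of Lascoux. An equivalent inductive approach is to repeatedly apply the defining relations to rewrite high-$\mathbb{X}$-degree monomials as $\Sym(\mathbb{B})$-combinations of strictly lower $\mathbb{X}$-degree Schur polynomials and to reduce every such contribution to the already-verified degree ranges. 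This reduction is the essential content of Lascoux's original Proposition \emph{Gr}5 and is where the bulk of the technical bookkeeping resides; everything else is routine application of graded Nakayama and regular-sequence lifting.
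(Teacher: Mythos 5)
The paper does not actually prove this statement: it is presented as a citation of Lascoux's Proposition \emph{Gr}5, and the ``proof'' consists of the single sentence ``Compare [Proposition \emph{Gr}5] to [Lemma 11.5, Wu-color].'' So you are attempting considerably more than the paper does. The parts of your argument concerning freeness and the two bases are complete and correct: reducing modulo $\mathfrak{I}_B$ gives the standard presentation of $H^*(Gr(m,N);\C)$, the Cohen--Macaulay dimension count shows the defining sequence is regular so that $\hat{R}$ is graded-free over $\Sym(\mathbb{B})$, graded Nakayama lifts the Schubert basis, and $S_\lambda(\mathbb{X}-\mathbb{B})\equiv S_\lambda(\mathbb{X}) \pmod{\mathfrak{I}_B}$ via Jacobi--Trudi handles the second basis. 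The uniqueness of $\zeta$ from the $\mu=\emptyset$ specialization is also right, as is the degree argument disposing of the cases $|\lambda|+|\mu|\leq m(N-m)$.

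The genuine gap is exactly where you place it: the vanishing of $\zeta(S_\lambda(\mathbb{X})\cdot S_\mu(\mathbb{X}-\mathbb{B}))$ when $|\lambda|+|\mu|>m(N-m)$. In that range the value lives in strictly positive $\Sym(\mathbb{B})$-degree, so no degree count can kill it, and this orthogonality is the entire nontrivial content of the theorem --- it is the equivariant Poincar\'e duality between the two Schubert-type bases. You name two plausible strategies (the residue/Lagrange-interpolation formula for $\zeta$ combined with the vanishing of $S_\mu(\mathbb{Y}-\mathbb{B})$ at subsets $\mathbb{Y}\subset\mathbb{B}$, or an inductive rewriting using the relations) but carry out neither, explicitly deferring to Lascoux. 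As a self-contained proof this is therefore incomplete; as a reduction of the theorem to a precisely identified classical fact it is fine, and in that mode it is no weaker than, and substantially more informative than, the paper's own treatment. If you want to close the gap, the residue formula $\zeta(f)=\sum_{\mathbb{Y}\subset\mathbb{B},\,|\mathbb{Y}|=m} f(\mathbb{Y})\big/\prod_{y\in\mathbb{Y},\,b\in\mathbb{B}\setminus\mathbb{Y}}(y-b)$ is the most economical route: one checks it agrees with your coefficient-extraction definition on the basis $\{S_\lambda(\mathbb{X})\}$ and then evaluates the product directly using the interpolation/vanishing properties of $S_\mu(\mathbb{X}-\mathbb{B})$.
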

\begin{proof}
Compare \cite[Proposition \emph{Gr}5]{Lascoux-notes} (see \cite[Theorem 4.3]{Wu-color}) to \cite[Lemma 11.5]{Wu-color}.
\end{proof}

\begin{remark}
Note that $\hat{R}$ is isomorphic to the $GL(N)$-equivariant cohomology ring of the complex $(m,N)$-Grassmannian $G_{m,N}$ and $\Sym(\mathbb{B})$ is isomorphic to the base ring of this equivariant cohomology. Also, the Sylvester operator gives the corresponding Poincar\'e duality. See for example \cite[Lecture 6]{Fulton-notes} for more details.
\end{remark}

\section{Matrix Factorizations Associated to MOY Graphs}\label{sec-mf-MOY}

\subsection{MOY graphs and their markings} First, let us recall the definitions of MOY graphs and markings of MOY graphs in \cite{Wu-color}.
\begin{definition}\label{MOY-graph-def}
An abstract MOY graph is an oriented graph with each edge colored by a non-negative integer such that, for every vertex $v$ with valence at least $2$, the sum of integers coloring the edges entering $v$ is equal to the sum of integers coloring the edges leaving $v$. We call this common sum the width of $v$.

A vertex of valence $1$ in an abstract MOY graph is called an end point. An abstract MOY graph $\Gamma$ is said to be closed if it has no end points.

An embedded MOY graph, or simply an MOY graph, $\Gamma$ is an embedding of an abstract MOY graph into $\mathbb{R}^2$ such that, through each vertex $v$ of $\Gamma$, there is a straight line $L_v$ so that all the edges entering $v$ enter through one side of $L_v$ and all edges leaving $v$ leave through the other side of $L_v$.
\end{definition}

\begin{remark}
Before moving on, we should emphasize the following two points:
\begin{enumerate}[(i)]
	\item In this paper, an MOY graph means an embedded MOY graph.
	\item Some abstract MOY graphs can not be realized as an (embedded) MOY graph.
\end{enumerate}
\end{remark}

\begin{definition}\label{MOY-marking-def}
A marking of an MOY graph $\Gamma$ consists of the following:
\begin{enumerate}
	\item A finite collection of marked points on $\Gamma$ such that
	\begin{itemize}
	\item every edge of $\Gamma$ has at least one marked point;
	\item all the end points (vertices of valence $1$) are marked;
	\item none of the interior vertices (vertices of valence at least $2$) is marked.
  \end{itemize}
  \item An assignment of pairwise disjoint alphabets to the marked points such that the alphabet associated to a marked point on an edge of color $m$ has $m$ independent indeterminates. (Recall that an alphabet is a finite collection of homogeneous indeterminates of degree $2$.)
\end{enumerate}
\end{definition}

\begin{figure}[ht]

\setlength{\unitlength}{1pt}

\begin{picture}(360,95)(-180,-55)


\put(0,0){\vector(-1,1){15}}

\put(-15,15){\line(-1,1){15}}

\put(-23,25){\tiny{$i_1$}}

\put(-33,32){\small{$\mathbb{X}_1$}}

\put(0,0){\vector(-1,2){7.5}}

\put(-7.5,15){\line(-1,2){7.5}}

\put(-11,25){\tiny{$i_2$}}

\put(-18,32){\small{$\mathbb{X}_2$}}

\put(3,25){$\cdots$}

\put(0,0){\vector(1,1){15}}

\put(15,15){\line(1,1){15}}

\put(31,25){\tiny{$i_k$}}

\put(27,32){\small{$\mathbb{X}_k$}}


\put(4,-2){$v$}

\multiput(-50,0)(5,0){19}{\line(1,0){3}}

\put(-70,0){$L_v$}

\put(45,0){\tiny{$i_1+i_2+\cdots +i_k = j_1+j_2+\cdots +j_l$}}


\put(-30,-30){\vector(1,1){15}}

\put(-15,-15){\line(1,1){15}}

\put(-26,-30){\tiny{$j_1$}}

\put(-33,-40){\small{$\mathbb{Y}_1$}}

\put(-15,-30){\vector(1,2){7.5}}

\put(-7.5,-15){\line(1,2){7.5}}

\put(-13,-30){\tiny{$j_2$}}

\put(-18,-40){\small{$\mathbb{Y}_2$}}

\put(3,-30){$\cdots$}

\put(30,-30){\vector(-1,1){15}}

\put(15,-15){\line(-1,1){15}}

\put(31,-30){\tiny{$j_l$}}

\put(27,-40){\small{$\mathbb{Y}_l$}}

\put(-4,-55){$\Gamma_v$}

\end{picture}

\caption{}\label{general-MOY-vertex}

\end{figure}

\subsection{The matrix factorization associated to an MOY graph} For an MOY graph $\Gamma$ with a marking, cut it open at the marked points. This gives a collection of marked MOY graphs, each of which is a star-shaped neighborhood of a vertex in $G$ and is marked only at the endpoints. We call these the pieces of $\Gamma$. (If an edge of $\Gamma$ has two or more marked points, then some of these pieces may be oriented arcs from one marked point to another. In this case, we consider such an arc as a neighborhood of an additional vertex of valence $2$ in the middle of that arc.)

Let $\Gamma_v$ in Figure \ref{general-MOY-vertex} be a piece of $\Gamma$. Set $m=i_1+i_2+\cdots +i_k = j_1+j_2+\cdots +j_l$ (the width of $v$.) Define $R=\Sym(\mathbb{X}_1|\dots|\mathbb{X}_k|\mathbb{Y}_1|\dots|\mathbb{Y}_l)$. Let $\tilde{R}=R\otimes_{\C} R_B$ with the total grading induced by the gradings of $R$ and $R_B$. Set $\mathbb{X}=\mathbb{X}_1\cup\cdots\cup \mathbb{X}_k$ and $\mathbb{Y}=\mathbb{Y}_1\cup\cdots\cup \mathbb{Y}_l$. Denote by $X_j$ the $j$-th elementary symmetric polynomial in $\mathbb{X}$ and by $Y_j$ the $j$-th elementary symmetric polynomial in $\mathbb{Y}$. Let $U_1, \dots,U_m$ be homogeneous elements of $\tilde{R}$ satisfying
\begin{enumerate} [(i)]
	\item $\deg U_i = 2N+2-2i$,
	\item $\sum_{i=1}^m U_i \cdot (X_i-Y_i) = \sum_{\alpha=1}^k f(\mathbb{X}_\alpha) - \sum_{\beta=1}^l f(\mathbb{Y}_\beta)$,
\end{enumerate}
where $f(\mathbb{X}_\alpha)$ and $f(\mathbb{Y}_\beta)$ are defined as in Lemma \ref{power-derive-complete}.

The matrix factorization associated to the vertex $\Gamma_v$ is defined to be
\[
C_f(\Gamma_v)=\left(%
\begin{array}{cc}
  U_1 & X_1-Y_1 \\
  U_2 & X_2-Y_2 \\
  \dots & \dots \\
  U_m & X_m-Y_m
\end{array}%
\right)_{\tilde{R}}
\{q^{-\sum_{1\leq s<t \leq k} i_si_t}\},
\]
which is a graded matrix factorization over $\tilde{R}$ with potential $\sum_{\alpha=1}^k f(\mathbb{X}_\alpha) - \sum_{\beta=1}^l f(\mathbb{Y}_\beta)$. As in \cite[Subsection 5.2]{Wu-color}, one can easily check that $\{X_1-Y_1,\dots,X_m-Y_m\}$ is a regular sequence in $\tilde{R}$. (See \cite[Definition 2.17]{Wu-color}.) So, by \cite[Lemma 2.18]{Wu-color}, the isomorphism type of $C_f(\Gamma_v)$ does not depend on the choice of $U_1, \dots,U_m$. Also, note that the grading of $R$ induces a filtration on $C_f(\Gamma_v)$, which we call the quantum filtration. (See \cite[Subsection 2.2]{Wu7} for definition.) It is easy to check that the isomorphism induced by changing the choice of $U_1, \dots,U_m$ preserves the quantum filtration too.

From now on, we will only specify our choice for $U_1,\dots,U_m$ when it is used in the computation. Otherwise, we will simply denote them by $\ast$'s. 

\begin{definition}\label{MOY-mf-def}
\[
C_f(\Gamma) := \bigotimes_{\Gamma_v} C_f(\Gamma_v),
\]
where $\Gamma_v$ runs through all pieces of $\Gamma$. Here, the tensor product is done over the common end points. More precisely, for two sub-MOY graphs $\Gamma_1$ and $\Gamma_2$ of $\Gamma$ intersecting only at (some of) their open end points, let $\mathbb{W}_1,\dots,\mathbb{W}_n$ be the alphabets associated to these common end points. Then, in the above tensor product, $C_f(\Gamma_1)\otimes C_f(\Gamma_2)$ is the tensor product 
\[
C_f(\Gamma_1)\otimes_{\Sym(\mathbb{W}_1|\dots|\mathbb{W}_n)\otimes_\C R_b} C_f(\Gamma_2).
\]

$C_f(\Gamma)$ has a $\zed_2$-grading, a total polynomial grading and a quantum filtration.

Assume $\Gamma$ has end points. Let $\mathbb{E}_1,\dots,\mathbb{E}_n$ be the alphabets assigned to all end points of $\Gamma$, among which $\mathbb{E}_1,\dots,\mathbb{E}_k$ are assigned to exits and $\mathbb{E}_{k+1},\dots,\mathbb{E}_n$ are assigned to entrances. Then the potential of $C_f(\Gamma)$ is  
\[
w= \sum_{i=1}^k f(\mathbb{E}_i) - \sum_{j=k+1}^n f(\mathbb{E}_j).
\]
Let $R_\partial=\Sym(\mathbb{E}_1|\cdots|\mathbb{E}_n)$ and $\tilde{R}_\partial=R_\partial \otimes_\C R_B$. Although the alphabets assigned to all marked points on $\Gamma$ are used in its construction, $C_f(\Gamma)$ is viewed as a matrix factorization over $\tilde{R}_\partial$ with potential $w$.

If $\Gamma$ is closed, i.e. has no end points, then $R_\partial=\C$, $\tilde{R}_\partial=R_B$, and $C_f(\Gamma)$ is a matrix factorization over $\tilde{R}_\partial=R_B$ with potential $0$. 

We allow the MOY graph to be empty. In this case, we define 
\[
C_f(\emptyset)=R_B\rightarrow 0 \rightarrow R_B,
\]
where the $\zed_2$-grading $R_B$ is $0$ and the quantum filtration on $R_B$ is given by $\fil^{-1}R_B=0$, $\fil^{0}R_B=R_B$.
\end{definition}

\begin{remark}\label{functor-varpi-0}
Note that the projection $\pi_0: R_B \rightarrow \C$ given by $\pi_0(B_i)=0$ induces a functor $\varpi_0$ such that $\varpi_0(C_f(\Gamma)) = C(\Gamma)$, where $C(\Gamma)$ is the the matrix factorization associated to $\Gamma$ defined in \cite[Definition 5.5]{Wu-color}. This functor allows us to translate the proofs of most of the results about $C(\Gamma)$ in \cite{Wu-color} to $C_f(\Gamma)$. 
\end{remark}

The lemmas in the rest of this subsection correspond to those in \cite[Subsection 5.2]{Wu-color}. Their proofs remain more or less unchanged.

\begin{lemma}\label{marking-independence}
If $\Gamma$ is an MOY graph, then the homotopy type of $C_f(\Gamma)$ does not depend on the choice of the marking.
\end{lemma}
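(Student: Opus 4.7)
The plan is to follow the proof of the corresponding lemma in \cite[Subsection 5.2]{Wu-color}, using the functor $\varpi_0$ from Remark \ref{functor-varpi-0} as a translator when convenient and the new contraction lemma (Lemma \ref{nice-b-contract}) to carry out the local step over $R_B$ rather than over $\C$. First I would reduce to an elementary move. Any two markings of $\Gamma$ admit a common refinement given by the union of their marked points, and one reaches that refinement by inserting marked points one at a time; so it suffices to show that the homotopy type of $C_f(\Gamma)$ is preserved when a single new marked point $p$, carrying a fresh alphabet $\mathbb{W}$ of size $m$ equal to the color of the edge containing $p$, is inserted.

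For this local step, by the tensor-product definition of $C_f(\Gamma)$, only the pieces on the two sides of $p$ are affected. Writing out the factorizations shows that $C_f(\Gamma)$ for the refined marking differs from that of the unrefined marking by an extra Koszul block of $m$ rows $(U_i, X_i - W_i)$, $i=1,\dots,m$, where $\mathbb{X}$ is the alphabet at the marked point immediately on one side of $p$, once $\mathbb{W}$ has been identified with the alphabet on the other side of $p$. I would then apply Lemma \ref{nice-b-contract} to contract these $m$ rows, taking $\hat{R}=\Sym(\mathbb{E}_1|\cdots|\mathbb{E}_n)\otimes_\C R_B$ with $\mathbb{E}_1,\dots,\mathbb{E}_n$ the boundary alphabets of $\Gamma$, and letting $R$ be the ambient polynomial ring of the full Koszul factorization. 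The nice-pair hypotheses are routine: $R$ is a polynomial $\hat{R}$-algebra, hence a free graded $\hat{R}$-module with grading bounded below; its quotient by the linear forms $\{X_i - W_i\}$ is again a polynomial $\hat{R}$-algebra in which $\mathbb{W}$ has been identified with $\mathbb{X}$; and after killing the positive-degree generators of $\hat{R}$, the $\mathbb{W}$-variables appear in no other row, so $\{X_i - W_i\}_{i=1}^m$ remains a regular sequence. The degree condition $\deg U_i+\deg(X_i-W_i)=2N+2$ and the potential condition $\sum U_j b_j \in \hat{R}$ are built into the construction of $C_f$. The lemma then yields a homotopy equivalence of graded matrix factorizations over $\hat{R}$ between the refined and the unrefined $C_f(\Gamma)$.

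The main obstacle is the global/local bookkeeping: one has to make sure that the regularity of $\{X_i - W_i\}$ is preserved after all the other pieces of $\Gamma$ have been tensored in, and that the choice of $U_i$ in the surviving rows does not matter (which is handled exactly as for $C_f(\Gamma_v)$). If one prefers to avoid this verification, the cleaner alternative promoted by the paper is to first build the natural morphism $\tilde\psi$ between the two marked chain complexes and then invoke Lemma \ref{reduce-base-homotopic-equivalence}: since $\varpi_0(C_f(\Gamma))=C(\Gamma)$ by Remark \ref{functor-varpi-0} and $\varpi_0(\tilde\psi)$ coincides with the marking-independence morphism of \cite[Subsection 5.2]{Wu-color}, which is already known to be a homotopy equivalence, the equivariant statement follows without any new regularity analysis. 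This ``translation via $\varpi_0$'' is the strategy I would actually carry out.
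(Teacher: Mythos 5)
Your proposal is correct and its first route is exactly the paper's proof: reduce to inserting a single marked point and contract the extra Koszul rows $(U_i, X_i-W_i)$ using Lemma \ref{nice-b-contract}, the paper's one-line argument being precisely "this follows easily from Lemma \ref{nice-b-contract}." The alternative detour through $\varpi_0$ and Lemma \ref{reduce-base-homotopic-equivalence} is unnecessary here (and would anyway require constructing the same quotient morphism that Lemma \ref{nice-b-contract} already provides), so the direct contraction is the argument to keep.
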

\begin{proof}
We only need to show that adding or removing an extra marked point corresponds to a homotopy of matrix factorizations preserving the $\zed_2$-grading and the total polynomial grading. This follows easily from Lemma \ref{nice-b-contract}.
\end{proof}

\begin{definition}\label{homology-MOY-def}
Let $\Gamma$ be an MOY graph with a marking. 
\begin{enumerate}[(i)]
	\item If $\Gamma$ is closed, i.e. has no open end points, then $C_f(\Gamma)$ is a chain complex. Denote by $H_f(\Gamma)$ the homology of $C_f(\Gamma)$. Note that $H_f(\Gamma)$ inherits both gradings of $C_f(\Gamma)$.
	\item If $\Gamma$ has end points, let $\mathbb{E}_1,\dots,\mathbb{E}_n$ be the alphabets assigned to all end points of $\Gamma$, and $R_\partial=\Sym(\mathbb{E}_1|\cdots|\mathbb{E}_n)$. Denote by $E_{i,j}$ the $j$-th elementary symmetric polynomial in $\mathbb{E}_i$ and by $\mathfrak{I}$ the homogeneous ideal of $\tilde{R}_\partial=R_\partial \otimes_\C R_B$ generated by $\{E_{i,j}\}$. Then $H_f(\Gamma)$ is defined to be the homology of the chain complex $C_f(\Gamma)/\mathfrak{I} C_f(\Gamma)$. Clearly, $H_f(\Gamma)$ inherits both gradings of $C_f(\Gamma)$.
\end{enumerate}
Note that (i) is a special case of (ii).
\end{definition}

\begin{lemma}\label{width-cap}
If $\Gamma$ is an MOY graph with a vertex of width greater than $N$, then $C_f(\Gamma)\simeq 0$.
\end{lemma}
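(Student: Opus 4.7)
The plan is to reduce the lemma to its counterpart in \cite{Wu-color} via the translator functor $\varpi_0$ from Remark \ref{functor-varpi-0}. Recall that the projection $\pi_0:R_B\to\C$ sending each $B_i$ to $0$ induces a functor with $\varpi_0(C_f(\Gamma))=C(\Gamma)$, where $C(\Gamma)$ is the matrix factorization associated to $\Gamma$ in \cite[Definition 5.5]{Wu-color}. The analogous vanishing statement for $C(\Gamma)$, namely that $C(\Gamma)\simeq 0$ whenever $\Gamma$ has a vertex of width greater than $N$, is already established in \cite{Wu-color} (as the width-greater-than-$N$ vanishing lemma in Subsection 5.2 there).

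So the proof will proceed in three short steps. First, I would verify that the total polynomial grading of $C_f(\Gamma)$ is bounded below. This is immediate from Definition \ref{MOY-mf-def}: each local factor $C_f(\Gamma_v)$ is a Koszul factorization over $\tilde{R}$ whose entries are homogeneous elements of a non-negatively graded polynomial ring, tensoring such pieces along common marked points preserves the bounded-below property, and the overall degree shift $\{q^{-\sum_{s<t} i_s i_t}\}$ is a single integer shift. Second, I would invoke the vanishing of $C(\Gamma)$ in \cite{Wu-color} to conclude that $\varpi_0(C_f(\Gamma))=C(\Gamma)$ is null-homotopic over $R_\partial$. Third, I would apply Lemma \ref{homotopy-finite-reduce-base}(b), which, given the bounded-below grading, upgrades this to null-homotopy of $C_f(\Gamma)$ itself over $\tilde{R}_\partial$.

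The main (minor) obstacle is a bookkeeping one: one must make sure the ideal being killed by $\varpi_0$ is genuinely the $B$-ideal $\mathfrak{I}_B$ inside $\tilde{R}_\partial$, so that Lemma \ref{homotopy-finite-reduce-base}(b) applies in the boundary ring and not just in the internal ring of $\Gamma$. This is straightforward because $\tilde{R}_\partial=R_\partial\otimes_\C R_B$ and $\pi_0$ was defined precisely as the quotient by $\mathfrak{I}_B$, so $\varpi_0(C_f(\Gamma))$ is literally $C_f(\Gamma)/\mathfrak{I}_B C_f(\Gamma)$, which by construction equals $C(\Gamma)$. Once this identification is made, the lemma is an entirely formal consequence of the corresponding fact in \cite{Wu-color}, and no new local computation at the offending vertex is required.
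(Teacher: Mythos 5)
Your argument is exactly the paper's: the paper proves this lemma by citing the corresponding vanishing statement \cite[Lemma 5.8]{Wu-color} together with Lemma \ref{homotopy-finite-reduce-base}, which is precisely your combination of $\varpi_0(C_f(\Gamma))=C(\Gamma)$, the vanishing of $C(\Gamma)$, and part (b) of that lemma. Your additional checks (bounded-below grading, identification of the quotient by $\mathfrak{I}_B$ in the boundary ring) are correct and merely make explicit what the paper leaves implicit.
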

\begin{proof}
Follows from \cite[Lemma 5.8]{Wu-color} and Lemma \ref{homotopy-finite-reduce-base}.
\end{proof}

\begin{lemma}\label{MOY-object-of-hmf} 
Let $\Gamma$ be an MOY graph, and $\mathbb{E}_1,\dots,\mathbb{E}_n$ the alphabets assigned to all end points of $\Gamma$, among which $\mathbb{E}_1,\dots,\mathbb{E}_k$ are assigned to exits and $\mathbb{E}_{k+1},\dots,\mathbb{E}_n$ are assigned to entrances. (Here we allow $n=0$, i.e. $\Gamma$ to be closed.) Write $R_\partial=\Sym(\mathbb{E}_1|\cdots|\mathbb{E}_n)$, $\tilde{R}_\partial=R_\partial\otimes_\C R_B$ and $w= \sum_{i=1}^k f(\mathbb{E}_i) - \sum_{j=k+1}^n f(\mathbb{E}_j)$. Then $C_f(\Gamma)$ is an object of $\hmf_{\tilde{R}_\partial,w}$.

Moreover, the projection $\pi_0: \tilde{R}_\partial \rightarrow R_\partial$ given by $\pi_0(B_i)=0$ induces a functor $\varpi_0:\hmf_{\tilde{R}_\partial,w}\rightarrow \hmf_{R_\partial,\pi_0(w)}$ such that $\varpi_0(C_f(\Gamma)) = C(\Gamma)$, where $C(\Gamma)$ is the the matrix factorization associated to $\Gamma$ defined in \cite[Definition 5.5]{Wu-color}. 
\end{lemma}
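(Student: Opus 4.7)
The plan is to reduce everything to the analogous statement for $C(\Gamma)$ in \cite{Wu-color} by passing through the base-change functor $\varpi_0$ constructed in Subsection~\ref{subsec-varpi-0}. First I would observe that the total polynomial grading on each local factor $C_f(\Gamma_v)$ is bounded below: each $C_f(\Gamma_v)$ is a finitely generated graded matrix factorization over $\tilde{R}_v := \Sym(\mathbb{X}_1|\cdots|\mathbb{X}_k|\mathbb{Y}_1|\cdots|\mathbb{Y}_l)\otimes_\C R_B$, all of whose indeterminates have positive degree, so the grading is bounded below. Tensoring finitely many such factors together over the common-endpoint alphabets preserves this property, hence the total polynomial grading of $C_f(\Gamma)$ is bounded below, which lets Lemmas~\ref{reduce-base-homotopic-equivalence} and~\ref{homotopy-finite-reduce-base} apply to $C_f(\Gamma)$.

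Next I would identify $\varpi_0(C_f(\Gamma))$ with $C(\Gamma)$. The projection $\pi_0: \tilde{R}_\partial \to R_\partial$ sending every $B_i$ to $0$ turns the polynomial $f(X)$ in \eqref{def-f} back into $X^{N+1}$, so $\pi_0(w)$ coincides with the potential used in \cite[Definition~5.5]{Wu-color}. For each piece $\Gamma_v$, any choice of homogeneous elements $U_1,\dots,U_m \in \tilde{R}_v$ satisfying $\sum_i U_i(X_i - Y_i) = \sum_\alpha f(\mathbb{X}_\alpha) - \sum_\beta f(\mathbb{Y}_\beta)$ reduces under $\pi_0$ to a family of elements $\pi_0(U_1),\dots,\pi_0(U_m)$ satisfying the analogous identity with $f$ replaced by $X^{N+1}$. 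Since $\{X_1-Y_1,\dots,X_m-Y_m\}$ remains a regular sequence after killing the $B_i$'s, \cite[Lemma~2.18]{Wu-color} guarantees that the isomorphism type of the Koszul matrix factorization is independent of the choice of $U_i$, so
\[
\varpi_0(C_f(\Gamma_v)) = C_f(\Gamma_v)/\mathfrak{I}_B C_f(\Gamma_v) \cong C(\Gamma_v).
\]
Because quotienting by the ideal $\mathfrak{I}_B$ commutes with the tensor products used in Definition~\ref{MOY-mf-def} (they are all taken over rings of the form $\Sym(\mathbb{W}_1|\cdots|\mathbb{W}_n)\otimes_\C R_B$, and $\mathfrak{I}_B$ is generated in the $R_B$-factor), the local isomorphisms assemble into $\varpi_0(C_f(\Gamma))\cong C(\Gamma)$.

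Finally I would verify the homotopy finiteness of $C_f(\Gamma)$ over $\tilde{R}_\partial$. By \cite[Lemma~5.9 or the corresponding statement in Subsection~5.2]{Wu-color}, $C(\Gamma)$ is an object of $\hmf_{R_\partial,\pi_0(w)}$, i.e.\ it is homotopically finite over $R_\partial$. Combining this with the isomorphism $\varpi_0(C_f(\Gamma))\cong C(\Gamma)$ from the previous step and Lemma~\ref{homotopy-finite-reduce-base}(a), we conclude that $C_f(\Gamma)$ is homotopically finite over $\tilde{R}_\partial$. This, together with the bounded-below total polynomial grading, shows that $C_f(\Gamma)\in\hmf_{\tilde{R}_\partial,w}$. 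The existence of the functor $\varpi_0:\hmf_{\tilde{R}_\partial,w}\to\hmf_{R_\partial,\pi_0(w)}$ is then Corollary~\ref{reduce-base-functor}, and the identification $\varpi_0(C_f(\Gamma))=C(\Gamma)$ is the content of the second step.

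The only mildly technical point will be checking that $\mathfrak{I}_B$ behaves well under the iterated tensor product in Definition~\ref{MOY-mf-def}; this is really a compatibility statement rather than a difficulty, since the $B_i$'s commute with every alphabet used in the tensor decomposition.
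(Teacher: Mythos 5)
Your proposal is correct and follows essentially the same route as the paper: the paper's proof simply cites the analogous statement for $C(\Gamma)$ in \cite{Wu-color} (Lemma 5.11 there), Lemma \ref{homotopy-finite-reduce-base}, and Corollary \ref{reduce-base-functor}, which is exactly the skeleton you flesh out. The extra details you supply — boundedness below of the total polynomial grading, the piece-by-piece identification $\varpi_0(C_f(\Gamma_v))\cong C(\Gamma_v)$ via \cite[Lemma 2.18]{Wu-color}, and the compatibility of $\mathfrak{I}_B$ with the tensor product — are all correct and are left implicit in the paper (the identification $\varpi_0(C_f(\Gamma))=C(\Gamma)$ is asserted without proof in Remark \ref{functor-varpi-0}).
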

\begin{proof}
Follows from \cite[Lemma 5.11]{Wu-color}, Lemma \ref{homotopy-finite-reduce-base} and Corollary \ref{reduce-base-functor}.
\end{proof}

\begin{figure}[ht]

\begin{picture}(360,90)(-180,-50)


\put(-130,0){\vector(-3,2){22.5}}

\put(-152.5,15){\line(-3,2){22.5}}

\put(-178,25){\tiny{$i_1$}}

\put(-178,32){\small{$\mathbb{X}_1$}}

\put(-152,15){$\cdots$}

\put(-150,25){\tiny{$i_s$}}

\put(-148,32){\small{$\mathbb{X}_s$}}

\put(-115,25){\tiny{$i_{s+1}$}}

\put(-117,32){\small{$\mathbb{X}_{s+1}$}}

\put(-130,0){\vector(0,1){7.5}}

\put(-130,7.5){\line(0,1){7.5}}

\put(-130,15){\vector(-1,1){7.5}}

\put(-137.5,22.5){\line(-1,1){7.5}}

\put(-130,15){\vector(1,1){7.5}}

\put(-122.5,22.5){\line(1,1){7.5}}

\put(-131,10){\line(1,0){2}}

\put(-129,6){\small{$\mathbb{A}$}}

\put(-119,15){$\cdots$}

\put(-130,0){\vector(3,2){22.5}}

\put(-107.5,15){\line(3,2){22.5}}

\put(-84,25){\tiny{$i_k$}}

\put(-87,32){\small{$\mathbb{X}_k$}}


\put(-123,-2){$v$}


\put(-152.5,-15){\line(3,2){22.5}}

\put(-175,-30){\vector(3,2){22.5}}

\put(-178,-27){\tiny{$j_1$}}

\put(-178,-38){\small{$\mathbb{Y}_1$}}

\put(-152,-20){$\cdots$}

\put(-137.5,-15){\line(1,2){7.5}}

\put(-145,-30){\vector(1,2){7.5}}

\put(-150,-27){\tiny{$j_t$}}

\put(-148,-38){\small{$\mathbb{Y}_t$}}

\put(-122.5,-15){\line(-1,2){7.5}}

\put(-115,-30){\vector(-1,2){7.5}}

\put(-115,-27){\tiny{$j_{t+1}$}}

\put(-117,-38){\small{$\mathbb{Y}_{t+1}$}}

\put(-119,-20){$\cdots$}

\put(-107.5,-15){\line(-3,2){22.5}}

\put(-85,-30){\vector(-3,2){22.5}}

\put(-84,-27){\tiny{$j_l$}}

\put(-87,-38){\small{$\mathbb{Y}_l$}}

\put(-130,-50){$\Gamma_1$}


\put(0,0){\vector(-3,2){22.5}}

\put(-22.5,15){\line(-3,2){22.5}}

\put(-48,25){\tiny{$i_1$}}

\put(-48,32){\small{$\mathbb{X}_1$}}

\put(-22,15){$\cdots$}

\put(0,0){\vector(-1,2){7.5}}

\put(-7.5,15){\line(-1,2){7.5}}

\put(-20,25){\tiny{$i_s$}}

\put(-18,32){\small{$\mathbb{X}_s$}}

\put(0,0){\vector(1,2){7.5}}

\put(7.5,15){\line(1,2){7.5}}

\put(15,25){\tiny{$i_{s+1}$}}

\put(13,32){\small{$\mathbb{X}_{s+1}$}}

\put(11,15){$\cdots$}

\put(0,0){\vector(3,2){22.5}}

\put(22.5,15){\line(3,2){22.5}}

\put(46,25){\tiny{$i_k$}}

\put(43,32){\small{$\mathbb{X}_k$}}


\put(7,-2){$v$}


\put(-22.5,-15){\line(3,2){22.5}}

\put(-45,-30){\vector(3,2){22.5}}

\put(-48,-27){\tiny{$j_1$}}

\put(-48,-38){\small{$\mathbb{Y}_1$}}

\put(-22,-20){$\cdots$}

\put(-7.5,-15){\line(1,2){7.5}}

\put(-15,-30){\vector(1,2){7.5}}

\put(-20,-27){\tiny{$j_t$}}

\put(-18,-38){\small{$\mathbb{Y}_t$}}

\put(7.5,-15){\line(-1,2){7.5}}

\put(15,-30){\vector(-1,2){7.5}}

\put(15,-27){\tiny{$j_{t+1}$}}

\put(13,-38){\small{$\mathbb{Y}_{t+1}$}}

\put(11,-20){$\cdots$}

\put(22.5,-15){\line(-3,2){22.5}}

\put(45,-30){\vector(-3,2){22.5}}

\put(46,-27){\tiny{$j_l$}}

\put(43,-38){\small{$\mathbb{Y}_l$}}

\put(0,-50){$\Gamma$}


\put(130,0){\vector(-3,2){22.5}}

\put(107.5,15){\line(-3,2){22.5}}

\put(82,25){\tiny{$i_1$}}

\put(82,32){\small{$\mathbb{X}_1$}}

\put(108,15){$\cdots$}

\put(130,0){\vector(-1,2){7.5}}

\put(122.5,15){\line(-1,2){7.5}}

\put(110,25){\tiny{$i_s$}}

\put(112,32){\small{$\mathbb{X}_s$}}

\put(130,0){\vector(1,2){7.5}}

\put(137.5,15){\line(1,2){7.5}}

\put(145,25){\tiny{$i_{s+1}$}}

\put(143,32){\small{$\mathbb{X}_{s+1}$}}

\put(141,15){$\cdots$}

\put(130,0){\vector(3,2){22.5}}

\put(152.5,15){\line(3,2){22.5}}

\put(176,25){\tiny{$i_k$}}

\put(173,32){\small{$\mathbb{X}_k$}}


\put(137,-2){$v$}


\put(107.5,-15){\line(3,2){22.5}}

\put(85,-30){\vector(3,2){22.5}}

\put(82,-27){\tiny{$j_1$}}

\put(82,-38){\small{$\mathbb{Y}_1$}}

\put(108,-20){$\cdots$}

\put(115,-30){\vector(1,1){7.5}}

\put(122.5,-22.5){\line(1,1){7.5}}

\put(110,-27){\tiny{$j_t$}}

\put(112,-38){\small{$\mathbb{Y}_t$}}

\put(145,-30){\vector(-1,1){7.5}}

\put(137.5,-22.5){\line(-1,1){7.5}}

\put(145,-27){\tiny{$j_{t+1}$}}

\put(143,-38){\small{$\mathbb{Y}_{t+1}$}}

\put(130,-7.5){\line(0,1){7.5}}

\put(130,-15){\vector(0,1){7.5}}

\put(129,-5){\line(1,0){2}}

\put(132,-12){\small{$\mathbb{B}$}}

\put(141,-20){$\cdots$}

\put(152.5,-15){\line(-3,2){22.5}}

\put(175,-30){\vector(-3,2){22.5}}

\put(176,-27){\tiny{$j_l$}}

\put(173,-38){\small{$\mathbb{Y}_l$}}

\put(130,-50){$\Gamma_2$}
\end{picture}

\caption{}\label{edge-contraction-figure}

\end{figure}

\begin{lemma}\label{edge-contraction}
Let $\Gamma$, $\Gamma_1$ and $\Gamma_2$ be MOY graphs shown in Figure \ref{edge-contraction-figure}. Then $C_f(\Gamma_1) \simeq C_f(\Gamma_2) \simeq C_f(\Gamma)$.
\end{lemma}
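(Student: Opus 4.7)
The plan is to reduce Lemma~\ref{edge-contraction} to the corresponding edge-contraction lemma in \cite{Wu-color} via the translation functor $\varpi_0$ from Corollary~\ref{reduce-base-functor} and Lemma~\ref{MOY-object-of-hmf}. By the symmetry between the $\mathbb{X}$-side and $\mathbb{Y}$-side of a vertex, it suffices to establish $C_f(\Gamma_1) \simeq C_f(\Gamma)$; the argument for $C_f(\Gamma_2) \simeq C_f(\Gamma)$ is identical after reversing the roles of entries and exits at the vertex $v$.

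First I would write down an explicit morphism $\Phi: C_f(\Gamma_1) \to C_f(\Gamma)$ of matrix factorizations over $\tilde{R}_\partial$, using the same formulas that define the homotopy equivalence $C(\Gamma_1) \to C(\Gamma)$ constructed in \cite{Wu-color}. Those formulas are polynomial expressions in the entries of the underlying Koszul factorizations and lift verbatim from $R_\partial$ to $\tilde{R}_\partial = R_\partial \otimes_\mathbb{C} R_B$. The main subtlety is that, to compare the $v$-piece of $\Gamma_1$ (after the substitution $A_j \mapsto a_j$, where $A_j$ and $a_j$ are the $j$-th elementary symmetric polynomials in $\mathbb{A}$ and in $\mathbb{X}_s \cup \mathbb{X}_{s+1}$ respectively) to the $v$-piece of $\Gamma$, one must observe that the substituted entries furnish a legitimate choice of the $U_i$'s in Definition~\ref{MOY-mf-def}. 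This is automatic since the isomorphism type of $C_f(\Gamma_v)$ is independent of the choice of the $U_i$'s. A short bookkeeping check confirms that the grading shifts at the two vertices of $\Gamma_1$ combine to match the single shift at $v$ in $\Gamma$: the term $i_s i_{s+1}$ contributed by the splitting vertex cancels precisely against the change when widths $i_s, i_{s+1}$ are merged into the single width $i_s + i_{s+1}$ at $v$.

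By construction, $\Phi$ preserves both the $\zed_2$-grading and the total polynomial grading, and under $\varpi_0$ it specializes to the homotopy equivalence $C(\Gamma_1) \to C(\Gamma)$ from \cite{Wu-color}. Lemma~\ref{reduce-base-homotopic-equivalence} then promotes $\Phi$ to a homotopy equivalence over $\tilde{R}_\partial$, completing the proof of $C_f(\Gamma_1) \simeq C_f(\Gamma)$.

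I anticipate the hardest part to be checking that the formulas for $\Phi$ actually yield a chain map over the deformed ring $\tilde{R}_\partial$, since the new potential $f(X)$ changes the $U_i$'s appearing in the Koszul entries. An alternative route that avoids this check is to apply Lemma~\ref{nice-b-contract} directly with $b_j = A_j - a_j$ for $j = 1, \ldots, i_s + i_{s+1}$: one verifies that $(\tilde{R}, \{A_j - a_j\})$ is a nice pair over $R_B$ --- the quotient is $R_B$-free via the substitution $A_j \mapsto a_j$, and modulo the remaining marking variables the sequence reduces to the $A_j$'s in $\Sym(\mathbb{A})$, which is manifestly regular --- and the lemma then identifies $C_f(\Gamma_1)$ with the matrix factorization obtained by deleting the $(U_j, A_j - a_j)$ rows and substituting $A_j = a_j$ elsewhere, which is $C_f(\Gamma)$.
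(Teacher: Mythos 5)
Your proposal is correct, and the ``alternative route'' you give at the end is exactly the paper's proof: the paper simply invokes Lemma \ref{nice-b-contract} with the rows whose right entries are $A_j-a_j$, exactly as you describe, deferring the bookkeeping to the proof of the corresponding lemma in \cite{Wu-color}. Your primary route (lift the explicit homotopy equivalence from \cite{Wu-color} and promote it via Lemma \ref{reduce-base-homotopic-equivalence}) is not what the paper does, and the difficulty you flag there is real: the left entries $U_i$ genuinely change when the potential is deformed to $f(X)$, so the undeformed formulas need not commute with the deformed differentials, and one would have to re-derive the map rather than ``lift it verbatim.'' Since you recognize this and supply the contraction-lemma argument that sidesteps it entirely, the proof goes through; the only small imprecision is that the nice pair should be taken over the boundary ring $\tilde{R}_\partial$ rather than over $R_B$, but freeness and regularity hold for the same reasons you give (the quotient eliminates $\mathbb{A}$, and modulo the boundary alphabets and the $B_i$ the sequence becomes $\{A_1,A_2,\dots\}$, which is regular).
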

\begin{proof}
Follows from Lemma \ref{nice-b-contract}. See the proof of \cite[Lemma 5.12]{Wu-color} for full details.
\end{proof}

\begin{figure}[ht]

\setlength{\unitlength}{1pt}

\begin{picture}(360,100)(-180,-50)


\put(-100,25){$\Gamma_1$:}

\put(-60,10){\vector(0,1){10}}

\put(-60,20){\vector(-1,1){20}}

\put(-60,20){\vector(1,1){10}}

\put(-50,30){\vector(-1,1){10}}

\put(-50,30){\vector(1,1){10}}

\put(-75,3){\tiny{$i+j+k$}}

\put(-55,21){\tiny{$j+k$}}

\put(-80,42){\tiny{$i$}}

\put(-60,42){\tiny{$j$}}

\put(-40,42){\tiny{$k$}}


\put(20,25){$\Gamma'_1$:}

\put(60,10){\vector(0,1){10}}

\put(60,20){\vector(1,1){20}}

\put(60,20){\vector(-1,1){10}}

\put(50,30){\vector(1,1){10}}

\put(50,30){\vector(-1,1){10}}

\put(45,3){\tiny{$i+j+k$}}

\put(38,21){\tiny{$i+j$}}

\put(80,42){\tiny{$k$}}

\put(60,42){\tiny{$j$}}

\put(40,42){\tiny{$i$}}


\put(-100,-25){$\Gamma_2$:}

\put(-60,-30){\vector(0,-1){10}}

\put(-80,-10){\vector(1,-1){20}}

\put(-50,-20){\vector(-1,-1){10}}

\put(-60,-10){\vector(1,-1){10}}

\put(-40,-10){\vector(-1,-1){10}}

\put(-75,-47){\tiny{$i+j+k$}}

\put(-55,-29){\tiny{$j+k$}}

\put(-80,-8){\tiny{$i$}}

\put(-60,-8){\tiny{$j$}}

\put(-40,-8){\tiny{$k$}}


\put(20,-25){$\Gamma'_2$:}

\put(60,-30){\vector(0,-1){10}}

\put(80,-10){\vector(-1,-1){20}}

\put(50,-20){\vector(1,-1){10}}

\put(60,-10){\vector(-1,-1){10}}

\put(40,-10){\vector(1,-1){10}}

\put(45,-47){\tiny{$i+j+k$}}

\put(38,-29){\tiny{$i+j$}}

\put(80,-8){\tiny{$k$}}

\put(60,-8){\tiny{$j$}}

\put(40,-8){\tiny{$i$}}

\end{picture}

\caption{}\label{contract-expand-figure}

\end{figure}

\begin{corollary}\label{contract-expand}
Suppose that $\Gamma_1$, $\Gamma'_1$, $\Gamma_2$ and $\Gamma'_2$ are MOY graphs shown in Figure \ref{contract-expand-figure}. Then $C_f(\Gamma_1) \simeq C_f(\Gamma'_1)$ and $C_f(\Gamma_2) \simeq C_f(\Gamma'_2)$.
\end{corollary}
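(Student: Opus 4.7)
The plan is to realize both homotopy equivalences as instances of the edge contraction in Lemma~\ref{edge-contraction}. The graphs $\Gamma_1$ and $\Gamma'_1$ coincide outside a small disk; inside that disk, each has a single internal edge connecting two trivalent vertices (the edge of color $j+k$ in $\Gamma_1$ and the edge of color $i+j$ in $\Gamma'_1$). Contracting the $j+k$ edge in $\Gamma_1$ and contracting the $i+j$ edge in $\Gamma'_1$ both produce the same MOY graph $\Gamma^{\flat}$, namely the one with a single $4$-valent vertex having one entering edge of color $i+j+k$ and three leaving edges of colors $i$, $j$, $k$. Lemma~\ref{edge-contraction} then gives $C_f(\Gamma_1) \simeq C_f(\Gamma^{\flat}) \simeq C_f(\Gamma'_1)$, which is the first claim.

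For the second claim I would run the identical argument with the orientations reversed: $\Gamma_2$ and $\Gamma'_2$ share a common contracted graph $\Gamma^{\flat\flat}$ with a single $4$-valent vertex having one leaving edge of color $i+j+k$ and three entering edges of colors $i$, $j$, $k$. Since Lemma~\ref{edge-contraction} is formulated to handle both an internal edge between two entering edges (the $\mathbb{A}$ case of $\Gamma_1$ in Figure~\ref{edge-contraction-figure}) and one between two leaving edges (the $\mathbb{B}$ case of $\Gamma_2$ in the same figure), no separate argument is required. Both contractions then yield $C_f(\Gamma_2) \simeq C_f(\Gamma^{\flat\flat}) \simeq C_f(\Gamma'_2)$.

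I do not anticipate any real obstacle. The corollary is a pair of applications of Lemma~\ref{edge-contraction} combined with the elementary observation that the two different ways of bundling three edges through an intermediate trivalent vertex yield the same $4$-valent graph after contracting the intermediate edge. All remaining bookkeeping — matching of alphabets at common end points, preservation of the $\zed_2$-grading and the total polynomial grading, and transitivity of the relation $\simeq$ — is immediate from Definition~\ref{MOY-mf-def} and the statement of Lemma~\ref{edge-contraction}.
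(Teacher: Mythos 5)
Your proposal is correct and matches the paper's argument: the paper proves the corollary by observing it is a special case of Lemma \ref{edge-contraction}, which is precisely your route of contracting the intermediate edge in each of $\Gamma_1$, $\Gamma'_1$ (resp. $\Gamma_2$, $\Gamma'_2$) to the common $4$-valent vertex graph and composing the resulting homotopy equivalences. (Only a cosmetic slip: in Figure \ref{edge-contraction-figure} the $\mathbb{A}$-bundle sits on the \emph{leaving} edges and the $\mathbb{B}$-bundle on the \emph{entering} edges, the reverse of your parenthetical labels, but this does not affect the argument.)
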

\begin{proof}
This is a special case of Lemma \ref{edge-contraction}.
\end{proof}

\subsection{Direct sum decomposition (II)} The proof of direct sum decomposition (II) in \cite{Wu-color} applies to $C_f$ without change. 

\begin{figure}[ht]

\setlength{\unitlength}{1pt}

\begin{picture}(360,70)(-180,-10)


\put(-60,0){\vector(0,1){15}}

\qbezier(-60,15)(-70,15)(-70,25)

\put(-70,25){\vector(0,1){10}}

\qbezier(-70,35)(-70,45)(-60,45)

\put(-71,26){\line(1,0){2}}

\qbezier(-60,15)(-50,15)(-50,25)

\put(-50,25){\vector(0,1){10}}

\qbezier(-50,35)(-50,45)(-60,45)

\put(-51,26){\line(1,0){2}}

\put(-60,45){\vector(0,1){15}}

\put(-65,55){\tiny{$n$}}

\put(-58,54){\small{$\mathbb{Y}$}}

\put(-65,0){\tiny{$n$}}

\put(-58,0){\small{$\mathbb{X}$}}

\put(-77,30){\tiny{$m$}}

\put(-77,22){\small{$\mathbb{A}$}}

\put(-47,30){\tiny{$n-m$}}

\put(-48,22){\small{$\mathbb{B}$}}

\put(-63,-10){$\Gamma$}


\put(60,0){\vector(0,1){60}}

\put(62,54){\small{$\mathbb{Y}$}}

\put(55,30){\tiny{$n$}}

\put(62,0){\small{$\mathbb{X}$}}

\put(57,-10){$\Gamma_1$}

\end{picture}

\caption{}\label{decomp-II-figure}

\end{figure}

\begin{theorem}[Direction Sum Decomposition (II)]\label{decomp-II}
Suppose that $\Gamma$ and $\Gamma_1$ are MOY graphs shown in Figure \ref{decomp-II-figure}, where $n\geq m \geq 0$. Then 
\[
C_f(\Gamma) \simeq C_f(\Gamma_1)\{\qb{n}{m}\}.
\]
\end{theorem}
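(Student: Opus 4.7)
The plan is to imitate the proof of \cite[Direct Sum Decomposition (II)]{Wu-color} verbatim in the equivariant setting, and then use Lemma \ref{reduce-base-homotopic-equivalence} to transfer the verification of homotopy equivalence from the non-equivariant category to the equivariant one.

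First, I would write $C_f(\Gamma)$ explicitly as the tensor product of two Koszul matrix factorizations, one for each vertex of width $n$, over the ring $\tilde{R}=\Sym(\mathbb{X}|\mathbb{Y}|\mathbb{A}|\mathbb{B})\otimes_\C R_B$, using the definition in Subsection 3.2. For each partition $\lambda\in\Lambda_{m,n-m}$, I would then construct a morphism
\[
\varphi_\lambda : C_f(\Gamma_1)\{q^{2|\lambda|-m(n-m)}\} \longrightarrow C_f(\Gamma)
\]
given by multiplication by the Schur polynomial $S_\lambda(\mathbb{A})$ (acting on the appropriate tensor factor), and a companion morphism $\bar\varphi_\lambda$ in the opposite direction given by multiplication by $S_{\lambda^c}(\mathbb{A}-\mathbb{B})$, where $\lambda^c$ denotes the complementary partition inside $\Lambda_{m,n-m}$. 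These formulas are the same ones used in \cite{Wu-color}; the only difference is that they now live over $\tilde R$ rather than over $R$. Since multiplication by a Schur polynomial is $\tilde R$-linear and respects the $\zed_2$-grading, the total polynomial grading, and the quantum filtration, the maps $\varphi_\lambda$ and $\bar\varphi_\lambda$ are honest morphisms of graded matrix factorizations over $\tilde R$.

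Assembling these into $\Phi = \bigoplus_\lambda \varphi_\lambda$ and $\bar\Phi=\bigoplus_\lambda \bar\varphi_\lambda$, I would claim that $\Phi$ is the desired homotopy equivalence. By Lemma \ref{reduce-base-homotopic-equivalence}, it suffices to check that $\varpi_0(\Phi)$ is a homotopy equivalence of matrix factorizations over $R$. But the projection $\pi_0$ only kills the $B_i$'s and fixes every Schur polynomial in $\mathbb{A}$, $\mathbb{B}$, so $\varpi_0(\Phi)$ is precisely the morphism producing the non-equivariant Direct Sum Decomposition (II) in \cite{Wu-color}. Since that statement is already established, $\varpi_0(\Phi)$ is a homotopy equivalence, and hence so is $\Phi$.

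The main technical obstacle is that the Sylvester-operator machinery underlying the construction, namely the orthogonality and spanning relations between $\{S_\lambda(\mathbb{A})\}_{\lambda\in\Lambda_{m,n-m}}$ and $\{S_\mu(\mathbb{A}-\mathbb{B})\}_{\mu\in\Lambda_{m,n-m}}$, has to hold over $\Sym(\mathbb{B})$ rather than just over $\C$, so that the composition identities used in \cite{Wu-color} lift to $\tilde R$. This is exactly what Theorem \ref{part-symm-str-alter} supplies, together with Lemmas \ref{nice-b-contract} and \ref{nice-a-contract}, which play the role of the contraction lemmas used in \cite{Wu-color}. The only remaining work is the bookkeeping of the quantum grading shifts $\{q^{2|\lambda|-m(n-m)}\}_{\lambda\in\Lambda_{m,n-m}}$ summing to $\qb{n}{m}$, which is identical to the computation in \cite{Wu-color}.
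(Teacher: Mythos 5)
Your conclusion is right, but you have taken a much longer route than the paper, and one step of it does not close as written. The paper's entire proof is the observation that, after contracting the two trivalent vertices of $\Gamma$ via Lemma \ref{nice-b-contract}, $C_f(\Gamma)$ is literally $C_f(\Gamma_1)\otimes_{\Sym(\mathbb{A}\cup\mathbb{B})}\Sym(\mathbb{A}|\mathbb{B})$, and $\Sym(\mathbb{A}|\mathbb{B})$ is graded-free of graded rank $\qb{n}{m}$ over $\Sym(\mathbb{A}\cup\mathbb{B})$ --- a classical fact about partially symmetric polynomials that is untouched by the deformation. No morphisms, no Sylvester operators, and no reduction by $\varpi_0$ are needed. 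In particular, Theorem \ref{part-symm-str-alter}, which concerns freeness over $\Sym(\mathbb{B})$ for the $N$-letter deformation alphabet modulo the ideal generated by $h_N(\mathbb{X}-\mathbb{B}),\dots,h_{N+1-m}(\mathbb{X}-\mathbb{B})$, is not the freeness statement relevant here.

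Two concrete soft spots in your version. First, ``multiplication by $S_\lambda(\mathbb{A})$'' is not a morphism from $C_f(\Gamma_1)$ to $C_f(\Gamma)$ until you have already identified $C_f(\Gamma_1)$ with a subobject of $C_f(\Gamma)$; the natural way to do that is precisely the base-change identification above, at which point the theorem is already proved and your morphisms are redundant. Second, your appeal to Lemma \ref{reduce-base-homotopic-equivalence} rests on the claim that $\varpi_0(\Phi)$ ``is precisely the morphism producing the non-equivariant Decomposition (II)'' and hence is known to be a homotopy equivalence. But the non-equivariant proof in \cite{Wu-color} is also the base-change observation and does not single out any such morphism; knowing that the two objects are abstractly homotopy equivalent does not tell you that your particular $\varpi_0(\Phi)$ realizes the equivalence. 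To close your argument you would still have to verify $\bar\Phi\circ\Phi\simeq\id$ and $\Phi\circ\bar\Phi\simeq\id$ directly, using the duality between $\{S_\lambda(\mathbb{A})\}_{\lambda\in\Lambda_{m,n-m}}$ and $\{S_{\lambda^c}(-\mathbb{B})\}$ in $\Sym(\mathbb{A}|\mathbb{B})$ over $\Sym(\mathbb{A}\cup\mathbb{B})$ --- this is doable (it is essentially Lemma \ref{phibar-compose-phi}), but it is exactly the heavier machinery the paper reserves for the genuinely harder decompositions (I), (III) and (IV).
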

\begin{proof}
As in the proof of \cite[Theorem 5.14]{Wu-color}, we observe that
\[
C_f(\Gamma) \simeq C_f(\Gamma_1) \otimes_{\Sym(\mathbb{A}\cup\mathbb{B})} \Sym(\mathbb{A}|\mathbb{B}) \cong C_f(\Gamma_1)\{\qb{n}{m}\}.
\]
See \cite[Theorem 5.14]{Wu-color} for more details.
\end{proof}

\subsection{Colored circles}

\begin{figure}[ht]

\setlength{\unitlength}{1pt}

\begin{picture}(360,60)(-180,0)


\qbezier(0,60)(-20,60)(-20,45)

\qbezier(0,60)(20,60)(20,45)

\put(-20,15){\vector(0,1){30}}

\put(20,15){\line(0,1){30}}

\qbezier(0,0)(-20,0)(-20,15)

\qbezier(0,0)(20,0)(20,15)

\put(19,30){\line(1,0){2}}

\put(-15,30){\tiny{$m$}}

\put(25,25){\small{$\mathbb{X}$}}
 
\end{picture}

\caption{}\label{circle-module-figure}

\end{figure}

\begin{proposition}\label{circle-module}
Assume $\bigcirc_m$ is the circle colored by $m~(\leq N)$ in Figure \ref{circle-module-figure}. Let $\mathbb{B}$ be an alphabet of $N$ indeterminates. Identify $R_B$ and $\Sym(\mathbb{B})$ by identifying $B_j$ with the $j$-th elementary symmetric polynomial in $\mathbb{B}$. Denote by $\mathcal{H}$ the ideal of $\Sym(\mathbb{X}|\mathbb{B})$ generated by $\{h_N(\mathbb{X}-\mathbb{B}),h_{N-1}(\mathbb{X}-\mathbb{B}),\dots,h_{N+1-m}(\mathbb{X}-\mathbb{B})\}$. Then, as graded matrix factorizations over $R_B$, 
\begin{eqnarray*}
C_f(\bigcirc_m)
& \simeq & C_f(\emptyset) \otimes_{R_B=\Sym(\mathbb{B})} \Sym(\mathbb{X}|\mathbb{B})/\mathcal{H} ~\{q^{-m(N-m)}\} \left\langle m \right\rangle \\
& \cong & C_f(\emptyset) \{\qb{N}{m}\}\left\langle m \right\rangle,
\end{eqnarray*}
where $\mathbb{X}$ is an alphabet of $m$ indeterminates. 
\end{proposition}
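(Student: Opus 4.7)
To set things up, I will mark $\bigcirc_m$ with the single point shown in Figure \ref{circle-module-figure}, labeled by $\mathbb{X}$, so that the MOY graph consists of one valence-$2$ vertex (whose shift $\{q^{-\sum_{s<t}i_si_t}\}$ is trivial since only one edge leaves) and one arc from $\mathbb{X}$ back to itself. Following Definition \ref{MOY-mf-def}, I will first write the Koszul matrix factorization of that arc using an auxiliary entrance alphabet $\mathbb{Y}$, and then identify the two copies of the boundary alphabet; this amounts to setting $Y_i = X_i$ in the Koszul presentation. Consequently $C_f(\bigcirc_m)$ is the Koszul matrix factorization over $\tilde{R} = \Sym(\mathbb{X})\otimes_\C R_B$ whose $b$-entries are all $0$ and whose $a$-entries are the specializations $U_j|_{Y=X}$. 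Differentiating $\sum_i U_i(X_i-Y_i) = f(\mathbb{X})-f(\mathbb{Y})$ in $X_j$ and setting $Y=X$ yields $U_j|_{Y=X} = \partial f(\mathbb{X})/\partial X_j$, which equals $(-1)^{j+1}(N+1)h_{N+1-j}(\mathbb{X}-\mathbb{B})$ by Lemma \ref{power-derive-complete} once $B_k$ is identified with the $k$-th elementary symmetric polynomial in $\mathbb{B}$.

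With this identification, the $a$-entries are (up to nonzero scalars) exactly $h_{N+1-j}(\mathbb{X}-\mathbb{B})$ for $j=1,\ldots,m$, and $\tilde R = \Sym(\mathbb{X}|\mathbb{B})$. The next step is to check that $(\Sym(\mathbb{X}|\mathbb{B}),\{h_{N+1-m}(\mathbb{X}-\mathbb{B}),\ldots,h_N(\mathbb{X}-\mathbb{B})\})$ is a nice pair over $R_B \cong \Sym(\mathbb{B})$. The freeness of $\Sym(\mathbb{X}|\mathbb{B})$ and of $\hat R := \Sym(\mathbb{X}|\mathbb{B})/\mathcal{H}$ over $\Sym(\mathbb{B})$ is the first half of Theorem \ref{part-symm-str-alter}. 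For the regularity requirement, after quotienting by $\mathbb{B}=0$ the sequence becomes $\{h_{N+1-i}(\mathbb{X})\}_{i=1}^m$ in $\Sym(\mathbb{X})$, and Theorem \ref{part-symm-str-alter} applied with $\mathbb{B} = \emptyset$ identifies $\Sym(\mathbb{X})/(h_{N+1-m}(\mathbb{X}),\ldots,h_N(\mathbb{X}))$ as a graded free module of Poincar\'e polynomial $\qb{N}{m}$; a dimension count against the Koszul resolution forces this sequence to be regular.

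Now I will apply Lemma \ref{nice-a-contract} with $k=l=m$ to contract all rows; since all $b$-entries are zero the potential $w$ is trivially $0 \in R_B$. The result is the trivial matrix factorization with underlying module $\hat R$ over $R_B$, carrying the shift
\[
\{q^{m(N+1) - \sum_{i=1}^m 2(N+1-i)}\}\langle m\rangle = \{q^{-m(N-m)}\}\langle m\rangle,
\]
which proves $C_f(\bigcirc_m) \simeq C_f(\emptyset) \otimes_{R_B} \hat R\{q^{-m(N-m)}\}\langle m\rangle$ (since $C_f(\emptyset)$ is the unit matrix factorization over $R_B$). The second isomorphism is then immediate from Theorem \ref{part-symm-str-alter}, which presents $\hat R$ as the free $\Sym(\mathbb{B})$-module on $\{S_\lambda(\mathbb{X})\mid\lambda\in\Lambda_{m,N-m}\}$ in degrees $2|\lambda|$; absorbing the factor $\{q^{-m(N-m)}\}$ gives $\hat R\{q^{-m(N-m)}\} \cong \Sym(\mathbb{B})\{\qb{N}{m}\}$ in the balanced form required by the statement. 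The main obstacle is the regularity half of the nice pair condition, which depends crucially on the Grassmannian-cohomology content packaged inside Theorem \ref{part-symm-str-alter}; once it is in hand, Lemma \ref{nice-a-contract} does essentially all the remaining computation.
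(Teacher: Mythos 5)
Your proposal is correct and follows essentially the same route as the paper: identify the entries of the Koszul factorization with $\partial f(\mathbb{X})/\partial X_j = (-1)^{j+1}(N+1)h_{N+1-j}(\mathbb{X}-\mathbb{B})$ via Lemma \ref{power-derive-complete}, verify the nice-pair condition using Theorem \ref{part-symm-str-alter}, and contract with Lemma \ref{nice-a-contract}, with the shift computation matching. The only difference is cosmetic: where you sketch the regularity of $\{h_{N+1-j}(\mathbb{X})\}$ directly by a dimension count, the paper simply cites \cite[Proposition 6.2]{Wu-color}.
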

\begin{proof}
By definition,
\[
C(\bigcirc_m) =
\left(%
\begin{array}{cc}
  U_1 & 0 \\
  \dots & \dots \\
  U_m & 0 
\end{array}%
\right)_{\Sym(\mathbb{X})},
\]
where we choose $U_j=\frac{\partial f(\mathbb{X})}{\partial X_j}$. By Lemma \ref{power-derive-complete}, we know 
\[
U_j=(-1)^{j+1} (N+1) h_{N+1-j}(\mathbb{X}-\mathbb{B}).
\]
By Theorem \ref{part-symm-str-alter} and \cite[Proposition 6.2]{Wu-color}, $(\Sym(\mathbb{X}|\mathbb{B}), \{U_1,\dots,U_m\})$ is a nice pair over $R_B$. So by Lemma \ref{nice-a-contract}, 
\[
C_f(\bigcirc_m) \simeq C_f(\emptyset) \otimes_{R_B=\Sym(\mathbb{B})} \Sym(\mathbb{X}|\mathbb{B})/\mathcal{H} ~\{q^{-m(N-m)}\} \left\langle m \right\rangle.
\]
By Theorem \ref{part-symm-str-alter}, one can easily see that
\[
C_f(\emptyset) \otimes_{R_B=\Sym(\mathbb{B})} \Sym(\mathbb{X}|\mathbb{B})/\mathcal{H} ~\{q^{-m(N-m)}\} \left\langle m \right\rangle \cong C_f(\emptyset) \{\qb{N}{m}\}\left\langle m \right\rangle.
\]
\end{proof}

\section{Morphisms Induced by Local Changes of MOY Graphs}\label{sec-some-morph}

First, we recall some terminology.

\begin{definition}
If $M,~M'$ are matrix factorizations of the same potential over a graded commutative unital $\C$-algebra and $f,g:M\rightarrow M'$ are morphisms of matrix factorizations, we write $f \approx g$ if $\exists ~c\in \C\setminus \{0\}$ such that $f \simeq c \cdot g$.
\end{definition}

We say that two MOY graphs $\Gamma_1$ and $\Gamma_2$ have the same boundary condition if there is a one-to-one correspondence between their end points such that
\begin{itemize}
	\item every exit corresponds to an exit, and every entrance corresponds to an entrance,
	\item edges adjacent to corresponding end points have the same color.
\end{itemize}

Suppose MOY graphs $\Gamma_1$ and $\Gamma_2$ have the same boundary condition. Mark $\Gamma_1,\Gamma_2$ so that every pair of corresponding end points are assigned the same alphabet, and alphabets associated to internal marked points are pairwise disjoint. Let $\mathbb{E}_1,\mathbb{E}_2,\dots,\mathbb{E}_n$ be the alphabets assigned to the end points of $\Gamma_1,\Gamma_2$. Define $R_{\partial}=\Sym(\mathbb{E}_1|\mathbb{E}_2|\dots|\mathbb{E}_n)$ and $\tilde{R}_\partial = R_\partial \otimes_\C R_B$. Note that $C_f(\Gamma_1)$ and $C_f(\Gamma_2)$ are both objects of the category $\hmf_{\tilde{R}_\partial, w}$ (and the category $\HMF_{\tilde{R}_\partial, w}$), where $w = \sum \pm f(\mathbb{E}_i) \in \tilde{R}_\partial$, and the sign depends on whether the end point is an entrance or an exit.

Recall that the morphism space $\Hom_{\HMF_{\tilde{R}_\partial, w}}(C_f(\Gamma_1),C_f(\Gamma_2))$ is isomorphic to the homology of the chain complex $\Hom_{\tilde{R}_\partial}(C_f(\Gamma_1),C_f(\Gamma_2))$. Also, the morphism space $\Hom_{\hmf_{\tilde{R}_\partial, w}}(C_f(\Gamma_1),C_f(\Gamma_2))$ is the subspace of $\Hom_{\HMF_{\tilde{R}_\partial, w}}(C_f(\Gamma_1),C_f(\Gamma_2))$ of homogeneous elements that preserves both the $\zed_2$-grading and the total polynomial grading. When the set up is clear from context, we drop $\tilde{R}_\partial$ and $w$ from the notation and simply write $\Hom_{\HMF}(C_f(\Gamma_1),C_f(\Gamma_2))$ and $\Hom_{\hmf}(C_f(\Gamma_1),C_f(\Gamma_2))$. 

From Lemma \ref{nice-b-contract}, it is easy to see that $\Hom_{\HMF}(C_f(\Gamma_1),C_f(\Gamma_2))$ and, therefore, $\Hom_{\hmf}(C_f(\Gamma_1),C_f(\Gamma_2))$ do not depend on the choice of the marking.

In this section, we show that certain local changes of MOY graphs induce morphisms of matrix factorizations. The constructions of these morphisms are very similar to those in \cite[Section 7]{Wu-color}. Moreover, we will show that the functor $\varpi_0$ (see Lemma \ref{MOY-object-of-hmf}) changes the morphisms defined here to the corresponding morphism defined in \cite[Section 7]{Wu-color}.

\subsection{Bouquet move} We call the moves in Figure \ref{bouquet-move-figure} bouquet moves. From Corollary \ref{contract-expand}, we know bouquet moves induce homotopy equivalence of matrix factorizations. Next, we show that, up to homotopy and scaling, a bouquet move induces a unique homotopy equivalence  of matrix factorizations.

\begin{figure}[ht]

\setlength{\unitlength}{1pt}

\begin{picture}(360,100)(-180,-50)


\put(-100,25){$\Gamma_1$:}

\put(-60,10){\vector(0,1){10}}

\put(-60,20){\vector(-1,1){20}}

\put(-60,20){\vector(1,1){10}}

\put(-50,30){\vector(-1,1){10}}

\put(-50,30){\vector(1,1){10}}

\put(-75,3){\tiny{$i+j+k$}}

\put(-55,21){\tiny{$j+k$}}

\put(-80,42){\tiny{$i$}}

\put(-60,42){\tiny{$j$}}

\put(-40,42){\tiny{$k$}}


\put(-15,25){$\longleftrightarrow$}


\put(20,25){$\Gamma'_1$:}

\put(60,10){\vector(0,1){10}}

\put(60,20){\vector(1,1){20}}

\put(60,20){\vector(-1,1){10}}

\put(50,30){\vector(1,1){10}}

\put(50,30){\vector(-1,1){10}}

\put(45,3){\tiny{$i+j+k$}}

\put(38,21){\tiny{$i+j$}}

\put(80,42){\tiny{$k$}}

\put(60,42){\tiny{$j$}}

\put(40,42){\tiny{$i$}}


\put(-100,-25){$\Gamma_2$:}

\put(-60,-30){\vector(0,-1){10}}

\put(-80,-10){\vector(1,-1){20}}

\put(-50,-20){\vector(-1,-1){10}}

\put(-60,-10){\vector(1,-1){10}}

\put(-40,-10){\vector(-1,-1){10}}

\put(-75,-47){\tiny{$i+j+k$}}

\put(-55,-29){\tiny{$j+k$}}

\put(-80,-8){\tiny{$i$}}

\put(-60,-8){\tiny{$j$}}

\put(-40,-8){\tiny{$k$}}


\put(-15,-25){$\longleftrightarrow$}


\put(20,-25){$\Gamma'_2$:}

\put(60,-30){\vector(0,-1){10}}

\put(80,-10){\vector(-1,-1){20}}

\put(50,-20){\vector(1,-1){10}}

\put(60,-10){\vector(-1,-1){10}}

\put(40,-10){\vector(1,-1){10}}

\put(45,-47){\tiny{$i+j+k$}}

\put(38,-29){\tiny{$i+j$}}

\put(80,-8){\tiny{$k$}}

\put(60,-8){\tiny{$j$}}

\put(40,-8){\tiny{$i$}}

\end{picture}

\caption{}\label{bouquet-move-figure}

\end{figure}

\begin{lemma}\label{bouquet-move-lemma}
Suppose that $\Gamma_1$, $\Gamma'_1$, $\Gamma_2$ and $\Gamma'_2$ are MOY graphs shown in Figure \ref{bouquet-move-figure}. Then, as $\zed_2\oplus\zed$-graded vector spaces over $\C$, 
\begin{eqnarray*}
& & \Hom_\HMF (C_f(\Gamma_1), C_f(\Gamma'_1)) \cong \Hom_\HMF (C_f(\Gamma_2),C_f(\Gamma'_2)) \\
& \cong &  C_f(\emptyset)\{\qb{N}{i+j+k}\qb{i+j+k}{k}\qb{i+j}{j}q^{(i+j+k)(N-i-j-k)+ij+jk+ki}\}.
\end{eqnarray*}
In particular, $\Hom_\hmf (C_f(\Gamma_1), C_f(\Gamma'_1)) \cong \Hom_\hmf (C_f(\Gamma_2),C_f(\Gamma'_2)) \cong \C$. 

Therefore, the homotopy equivalence of matrix factorizations induced by the bouquet move is unique up to homotopy and scaling.
\end{lemma}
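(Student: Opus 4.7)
The plan is to reduce the computation to the analogous Hom calculation in \cite{Wu-color} by using the MOY equivalences from Section \ref{sec-mf-MOY} together with the reduction functor $\varpi_0$.

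First, I would apply Corollary \ref{contract-expand} twice to identify, up to homotopy equivalence of graded matrix factorizations over $\tilde{R}_\partial$, both $C_f(\Gamma_1)$ and $C_f(\Gamma'_1)$ with a common matrix factorization $C_f(\Gamma_0)$, where $\Gamma_0$ is the MOY graph consisting of a single vertex joining one edge of color $i+j+k$ to three edges of colors $i,j,k$. The same contraction applied to the reflected graphs identifies $C_f(\Gamma_2)$ and $C_f(\Gamma'_2)$ with a reflected version carrying isomorphic endomorphism data. Hence the computation reduces to evaluating $\End_{\HMF}(C_f(\Gamma_0))$ as a $\zed_2 \oplus \zed$-graded vector space.

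Next, I would form the Hom complex $\Hom_{\tilde{R}_\partial}(C_f(\Gamma_0), C_f(\Gamma_0))$, which is itself a graded matrix factorization over $\tilde{R}_\partial$ with potential $0$. Since $C_f(\Gamma_0)$ is a Koszul matrix factorization whose underlying module is free over $\tilde{R}_\partial$, this Hom complex is naturally isomorphic to $C_f(\Gamma_0) \otimes_{\tilde{R}_\partial} C_f(\Gamma_0)^\vee$ (up to overall shifts). Reducing modulo $\mathfrak{I}_B$ then gives precisely the Hom complex $\Hom_{R_\partial}(C(\Gamma_0), C(\Gamma_0))$ from \cite{Wu-color}, i.e.\ the functor $\varpi_0$ of Corollary \ref{reduce-base-functor} commutes with this internal Hom construction. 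By the corresponding lemma in \cite[Section 7]{Wu-color}, the reduced Hom space is isomorphic, as a bigraded $\C$-vector space, to $\C\{\qb{N}{i+j+k}\qb{i+j+k}{k}\qb{i+j}{j}q^{(i+j+k)(N-i-j-k)+ij+jk+ki}\}$. Using Lemma \ref{reduce-base-homotopic-equivalence} and Lemma \ref{homotopy-finite-reduce-base} together with the freeness statement in Theorem \ref{part-symm-str-alter}, I would then lift this to the equivariant setting by tensoring with $R_B$, yielding the claimed identification with $C_f(\emptyset)\{\cdots\}$. The assertion for $\Hom_{\hmf}$ follows by extracting the component of $\zed_2$-degree $0$ and total polynomial degree $0$, which is forced to be one-dimensional by the shift $q^{(i+j+k)(N-i-j-k)+ij+jk+ki}$, giving $\C$. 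The uniqueness up to homotopy and scaling of the bouquet move equivalence then follows immediately.

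The main obstacle I anticipate is the bookkeeping needed to confirm that $\varpi_0$ intertwines the internal Hom construction in the precise way required and that the $R_B$-module structure on $\Hom_{\HMF}(C_f(\Gamma_0), C_f(\Gamma_0))$ is free of rank equal to $\qb{N}{i+j+k}\qb{i+j+k}{k}\qb{i+j}{j}$ (with the correct overall quantum shift), rather than a more complicated finitely generated $R_B$-module. Both issues reduce to the freeness of the Koszul matrix factorization $C_f(\Gamma_0)$ over $R_B$, which is a direct consequence of Theorem \ref{part-symm-str-alter} applied iteratively to the alphabets attached to $\Gamma_0$, so no serious new input is required beyond careful tracking of gradings.
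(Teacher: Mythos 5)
Your first step matches the paper's: Corollary \ref{contract-expand} identifies $C_f(\Gamma_1)$ with $C_f(\Gamma'_1)$, reducing everything to an endomorphism computation, which via the standard duality becomes $H_f(\Gamma)$ for the closed doubled graph of Figure \ref{bouquet-move-figure-2}. Where you diverge is the second step, and that is where the gap sits. The paper does not reduce mod $\mathfrak{I}_B$ at this point: it computes $H_f(\Gamma)$ directly in the equivariant setting by applying the already-established equivariant Decomposition (II) (Theorem \ref{decomp-II}) and Proposition \ref{circle-module}, each of which produces a homotopy equivalence over $R_B$ between the relevant matrix factorization and a direct sum of shifted copies of $C_f(\emptyset)$, i.e.\ a free $R_B$-module with zero differential. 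The stated answer, including its freeness over $R_B$, then falls out with no further argument.

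Your route --- compute $\varpi_0$ of the Hom complex, quote the non-equivariant answer from \cite{Wu-color}, then ``tensor back up with $R_B$'' --- needs more than you supply. Homology does not commute with reduction mod $\mathfrak{I}_B$, and a complex of free $R_B$-modules can perfectly well have non-free homology; so knowing the homology of $M/\mathfrak{I}_B M$ does not by itself determine the homology of $M$ as a graded $R_B$-module. Consequently, freeness of the \emph{underlying modules} of $C_f(\Gamma_0)$ (which is all that ``freeness of the Koszul matrix factorization'' gives) is not the relevant input: what is needed is that the Hom complex is homotopy equivalent over $R_B$ to a free module with vanishing differential. Similarly, Lemma \ref{reduce-base-homotopic-equivalence} only upgrades a morphism you have already constructed, and you never exhibit the comparison morphism from $C_f(\emptyset)\{\cdots\}$ to the Hom complex to which it would be applied. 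The correct patch is exactly the paper's: Theorem \ref{part-symm-str-alter} enters through Lemma \ref{nice-a-contract} and Proposition \ref{circle-module} to give the equivariant homotopy type of the colored circle, and combining this with Theorem \ref{decomp-II} computes $H_f(\Gamma)$ directly, making the lifting step unnecessary.
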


\begin{figure}[ht]

\setlength{\unitlength}{1pt}

\begin{picture}(360,75)(-180,-15)

\qbezier(-40,50)(-40,60)(0,60)

\qbezier(40,50)(40,60)(0,60)

\qbezier(-40,10)(-40,0)(0,0)

\qbezier(40,10)(40,0)(0,0)

\put(40,50){\vector(0,-1){40}}

\qbezier(-40,10)(0,15)(0,30)

\qbezier(-40,50)(0,45)(0,30)

\put(0,30){\vector(0,1){0}}

\qbezier(-40,10)(-60,10)(-60,20)

\qbezier(-40,50)(-60,50)(-60,40)

\put(-60,20){\vector(0,1){0}}

\qbezier(-60,20)(-80,20)(-80,30)

\qbezier(-60,20)(-40,20)(-40,30)

\qbezier(-60,40)(-80,40)(-80,30)

\qbezier(-60,40)(-40,40)(-40,30)

\put(-40,50){\vector(1,0){0}}

\put(-80,30){\vector(0,1){0}}

\put(-40,30){\vector(0,1){0}}

\put(45,30){\tiny{$i+j+k$}}

\put(-75,10){\tiny{$i+j$}}

\put(-75,45){\tiny{$i+j$}}

\put(5,30){\tiny{$k$}}

\put(-35,30){\tiny{$j$}}

\put(-90,30){\tiny{$i$}}

\put(-5,-15){$\Gamma$}

\end{picture}

\caption{}\label{bouquet-move-figure-2}

\end{figure}

\begin{proof}
Similar to the proof of \cite[Lemma 7.4]{Wu-color}, we only compute $\Hom_\HMF (C_f(\Gamma_1), C_f(\Gamma'_1))$. The computation of $\Hom_\HMF (C_f(\Gamma_2),C_f(\Gamma'_2))$ is similar. By Corollary \ref{contract-expand}, one can see that 
\begin{eqnarray*}
\Hom_\HMF (C_f(\Gamma_1), C_f(\Gamma'_1)) & \cong & \Hom_\HMF (C_f(\Gamma'_1), C_f(\Gamma'_1)) \\
& \cong & H_f(\Gamma)\left\langle i+j+k \right\rangle\{q^{(i+j+k)(N-i-j-k)+ij+jk+ki}\},
\end{eqnarray*}
where $\Gamma$ is the MOY graph in Figure \ref{bouquet-move-figure-2}. Using Decomposition (II) (Theorems \ref{decomp-II}) and Proposition \ref{circle-module}, we have that
\[
H_f(\Gamma) \cong C_f(\emptyset)\left\langle i+j+k \right\rangle\{\qb{N}{i+j+k}\qb{i+j+k}{k}\qb{i+j}{j}\}.
\]
And the lemma follows.
\end{proof}

\begin{lemma}\label{varpi-0-reduce-bouquet}
Let $\varpi_0$ be the functor given in Lemma \ref{MOY-object-of-hmf}. For the MOY graphs $\Gamma_1$, $\Gamma'_1$, $\Gamma_2$ and $\Gamma'_2$ Figure \ref{bouquet-move-figure}, denote by $h_i: C_f(\Gamma_i) \rightarrow C_f(\Gamma_i')$ the homotopy equivalence induced by the bouquet move. Then, up to homotopy and scaling, $\varpi_0(h_i): C(\Gamma_i) \rightarrow C(\Gamma_i')$ is the homotopy equivalence induced by bouquet move given in \cite[Subsection 7.2]{Wu-color}.
\end{lemma}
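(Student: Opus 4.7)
The plan is to combine the one-dimensionality of the relevant $\Hom_{\hmf}$ spaces with the fact that $\varpi_0$ sends homotopy equivalences to homotopy equivalences. Since the bouquet homotopy equivalence in each category is already known to be unique up to homotopy and scaling, it suffices to verify that $\varpi_0(h_i)$ is again a homotopy equivalence lying in the appropriate morphism space.

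First, by Lemma \ref{MOY-object-of-hmf} one has $\varpi_0(C_f(\Gamma_i)) = C(\Gamma_i)$ and $\varpi_0(C_f(\Gamma_i')) = C(\Gamma_i')$. Since $\varpi_0$ is a functor into $\hmf_{R_\partial,\pi_0(w)}$, the induced map $\varpi_0(h_i):C(\Gamma_i)\to C(\Gamma_i')$ is automatically a morphism in $\hmf$, so it preserves both the $\zed_2$-grading and the total polynomial grading. Next, the total polynomial gradings of $C_f(\Gamma_i)$ and $C_f(\Gamma_i')$ are bounded below, as is immediate from the construction of $C_f$ together with the fact that $\tilde{R}_\partial$ is generated in positive degree. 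Hence Lemma \ref{reduce-base-homotopic-equivalence} applies: since $h_i$ is a homotopy equivalence of matrix factorizations over $\tilde{R}_\partial$, its image $\varpi_0(h_i)$ is a homotopy equivalence of matrix factorizations over $R_\partial$, and in particular not null-homotopic.

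Finally, the uncolored analogue of Lemma \ref{bouquet-move-lemma}, namely \cite[Lemma 7.4]{Wu-color}, shows that $\Hom_{\hmf}(C(\Gamma_i), C(\Gamma_i')) \cong \C$ is one-dimensional, with generator the bouquet homotopy equivalence constructed in \cite[Subsection 7.2]{Wu-color}. Since $\varpi_0(h_i)$ is a nonzero element of this one-dimensional space, it must coincide with the generator up to a nonzero scalar, which is the claim. There is no genuine obstacle in the argument: the only thing that could have gone wrong is that $\varpi_0$ collapses the homotopy equivalence, and this possibility is ruled out precisely by Lemma \ref{reduce-base-homotopic-equivalence}.
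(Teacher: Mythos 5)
Your argument is correct and is essentially the paper's own proof: apply Lemma \ref{reduce-base-homotopic-equivalence} to see that $\varpi_0(h_i)$ is a homotopy equivalence (hence not null-homotopic), and then invoke the one-dimensionality of $\Hom_{\hmf}(C(\Gamma_i),C(\Gamma_i'))$ from \cite[Lemma 7.4]{Wu-color} to identify it, up to scaling, with the bouquet-move equivalence of \cite[Subsection 7.2]{Wu-color}. Nothing further is needed.
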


\begin{proof}
By Lemma \ref{reduce-base-homotopic-equivalence}, $\varpi_0(h_i)$ is a homotopy equivalence. By \cite[Lemma 7.4]{Wu-color}, up to homotopy and scaling, there is only one morphism $C(\Gamma_i) \rightarrow C(\Gamma_i')$ that preserves both gradings and is not null-homotopic. And the lemma follows.
\end{proof}

\subsection{Circle creation and annihilation}

\begin{lemma}\label{circle-empty-hmf}
Let $\bigcirc_m$ be a circle colored by $m$. Then, as $\zed_2\oplus\zed$-graded $R_B$-modules,
\[
\Hom_{HMF}(C_f(\bigcirc_m),C_f(\emptyset)) \cong \Hom_{HMF}(C_f(\emptyset),C_f(\bigcirc_m)) \cong C_f(\emptyset)\{\qb{N}{m}\}\left\langle m \right\rangle,
\]
where $C_f(\emptyset)$ is the matrix factorization $R_B\rightarrow 0 \rightarrow R_B$.

In particular, the subspaces of $\Hom_{HMF}(C_f(\emptyset),C_f(\bigcirc_m))$ and $\Hom_{HMF}(C_f(\bigcirc_m),C_f(\emptyset))$ of homogeneous elements of total polynomial degree $-m(N-m)$ are $1$-dimensional. This leads to the following definitions.
\end{lemma}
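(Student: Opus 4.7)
The plan is to deduce this essentially as a corollary of Proposition \ref{circle-module}, which gives the homotopy equivalence
\[
C_f(\bigcirc_m)\simeq C_f(\emptyset)\{\qb{N}{m}\}\langle m\rangle
\]
in $\hmf_{R_B,0}$. Since $\Hom$ in $\HMF$ depends only on the homotopy type of its arguments, both Hom spaces in the lemma can be computed after replacing $C_f(\bigcirc_m)$ by the right-hand side.

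First I would compute the base case $\Hom_{HMF}(C_f(\emptyset),C_f(\emptyset))$. Because $C_f(\emptyset)=(R_B\to 0 \to R_B)$, the $\Hom$ complex is simply $R_B$ sitting in $\zed_2$-degree $0$ with zero differential, so its homology is $R_B = C_f(\emptyset)$ as a $\zed_2\oplus\zed$-graded $R_B$-module.

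Next I would use the functorial behavior of $\Hom_{HMF}$ under grading shifts: for any $M,N$ and any shifts $\langle a\rangle$, $\{q^b\}$,
\[
\Hom_{HMF}(M\langle a\rangle\{q^b\},N)\cong \Hom_{HMF}(M,N)\langle -a\rangle\{q^{-b}\},\qquad \Hom_{HMF}(M,N\langle a\rangle\{q^b\})\cong \Hom_{HMF}(M,N)\langle a\rangle\{q^b\},
\]
and this extends $\C$-linearly to shifts by Laurent polynomials. The two key simplifications are that the quantum binomial $\qb{N}{m}$ is palindromic under $q\leftrightarrow q^{-1}$, so $\{\qb{N}{m}\}$ is unchanged by the inversion, and that $\langle -m\rangle = \langle m\rangle$ because the $\zed_2$-grading is mod $2$. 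Applying these to $C_f(\bigcirc_m)\simeq C_f(\emptyset)\{\qb{N}{m}\}\langle m\rangle$ on either side therefore yields
\[
\Hom_{HMF}(C_f(\bigcirc_m),C_f(\emptyset))\cong\Hom_{HMF}(C_f(\emptyset),C_f(\bigcirc_m))\cong C_f(\emptyset)\{\qb{N}{m}\}\langle m\rangle,
\]
proving the first part.

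Finally, for the one-dimensional subspace claim at total polynomial degree $-m(N-m)$: the quantum binomial $\qb{N}{m}(q)$ has lowest term $q^{-m(N-m)}$ with coefficient $1$, and $R_B=\C[B_1,\ldots,B_N]$ is concentrated in non-negative even polynomial degrees with $(R_B)_0=\C$. Hence the homogeneous piece of $R_B\{\qb{N}{m}\}$ of total polynomial degree $-m(N-m)$ equals $(R_B)_0=\C$, giving exactly a one-dimensional $\C$-subspace in each of the two Hom spaces. Since everything reduces to Proposition \ref{circle-module} plus the standard shift formalism, the only real care required is careful bookkeeping of the $\zed_2\oplus\zed$-graded shifts and the palindromicity of $\qb{N}{m}$; there is no substantial obstacle.
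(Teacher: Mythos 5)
Your proposal is correct and follows essentially the same route as the paper: compute $\Hom_{HMF}(C_f(\emptyset),C_f(\emptyset)) \cong C_f(\emptyset)$ and transport the answer through the homotopy equivalence of Proposition \ref{circle-module}, using the palindromicity of $\qb{N}{m}$ and $\left\langle -m\right\rangle = \left\langle m\right\rangle$ to absorb the contravariant sign on the shifts. Your explicit justification of the one-dimensionality claim (lowest term of $\qb{N}{m}$ is $q^{-m(N-m)}$ with coefficient $1$, and $(R_B)_0=\C$) is also correct.
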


\begin{proof}
It is clear that, as $\zed_2\oplus\zed$-graded $R_B$-modules, $\Hom_{HMF}(C_f(\emptyset),C_f(\emptyset)) \cong C_f(\emptyset)$. By Proposition \ref{circle-module}, $C_f(\bigcirc_m) \simeq C_f(\emptyset)\{\qb{N}{m}\}\left\langle m \right\rangle$. So
\begin{eqnarray*}
\Hom_{HMF}(C_f(\bigcirc_m),C_f(\emptyset)) & \cong & \Hom_{HMF}(C_f(\emptyset),C_f(\emptyset)) \{\qb{N}{m}\}\left\langle m \right\rangle \\
& \cong & C_f(\emptyset)\{\qb{N}{m}\}\left\langle m \right\rangle
\end{eqnarray*}
and, similarly,
\[
\Hom_{HMF}(C_f(\emptyset),C_f(\bigcirc_m)) \cong C_f(\emptyset)\{\qb{N}{m}\}\left\langle m \right\rangle.
\]
\end{proof}

\begin{definition}
Let $\bigcirc_m$ be a circle colored by $m$. Associate to the circle creation a homogeneous morphism 
\[
\iota: C_f(\emptyset)(\cong R_B) \rightarrow C_f(\bigcirc_m)
\] 
of total polynomial degree $-m(N-m)$ not homotopic to $0$.

Associate to the circle annihilation a homogeneous morphism 
\[
\epsilon:C_f(\bigcirc_m) \rightarrow C_f(\emptyset) (\cong R_B)
\] 
of total polynomial degree $-m(N-m)$ not homotopic to $0$.

By Lemma \ref{circle-empty-hmf}, $\iota$ and $\epsilon$ are unique up to homotopy and scaling. Both of them have $\zed_2$-degree $m$.
\end{definition}

Mark $\bigcirc_m$ by a single alphabet $\mathbb{X}$. Let $\mathbb{B}$ be an alphabet of $N$ indeterminates. Identify $R_B$ and $\Sym(\mathbb{B})$ as in Proposition \ref{circle-module}. By Proposition \ref{circle-module}, we know that there is an ($\Sym(\mathbb{X}|\mathbb{B})$-linear) homotopy equivalence of matrix factorizations over $R_B$
\[
P: C_f(\bigcirc_m) \rightarrow C_f(\emptyset) \otimes_{R_B} \Sym(\mathbb{X}|\mathbb{B})/ \mathcal{H} \{q^{-m(N-m)}\} \left\langle m \right\rangle,
\]
where $\mathcal{H}$ is the ideal of $\Sym(\mathbb{X}|\mathbb{B})$ generated by 
\[
\{h_N(\mathbb{X}-\mathbb{B}), h_{N-1}(\mathbb{X}-\mathbb{B}), \dots, h_{N+1-m}(\mathbb{X}-\mathbb{B})\}.
\]
Let $Q$ be an ($R_B$-linear) homotopic inverse of $P$. $P$ and $Q$ induce quasi-isomorphisms of $\zed_2$-graded chain complexes
\[
\Hom_{R_B}(\Sym(\mathbb{X}|\mathbb{B})/ \mathcal{H} \{q^{-m(N-m)}\} \left\langle m \right\rangle, C_f(\emptyset)) \xrightarrow{P^\sharp} \Hom_{R_B} (C_f(\bigcirc_m), C_f(\emptyset)).
\]
and 
\[
\Hom_{R_B}(C_f(\emptyset),\Sym(\mathbb{X}|\mathbb{B})/ \mathcal{H} \{q^{-m(N-m)}\} \left\langle m \right\rangle) \xrightarrow{Q_\sharp} \Hom_{R_B} (C_f(\emptyset),C_f(\bigcirc_m)).
\]

Let $\jmath: R_B \rightarrow \Sym(\mathbb{X}|\mathbb{B})/ \mathcal{H}$ be the standard $R_B$-linear inclusion given by $\jmath(1)=1$, and $\zeta$ the Sylvester operator given in Theorem \ref{part-symm-str-alter}. Then $P^\sharp(\zeta) = \zeta \circ P$ and $Q_\sharp (\jmath)= Q \circ \jmath$ are $R_B$-linear homogeneous morphisms of total polynomial degree $-m(N-m)$. Denote by $\mathfrak{m}(\ast)$ the morphism $C_f(\bigcirc_m) \rightarrow C_f(\bigcirc_m)$ given by the multiplication of $\ast$. Then, by Theorem \ref{part-symm-str-alter}, we have
\begin{eqnarray*}
&& P^\sharp(\zeta) \circ \mathfrak{m}(S_\lambda(\mathbb{X}) \cdot S_\mu(\mathbb{X}-\mathbb{B})) \circ Q_\sharp (\jmath) \\
& = & \zeta \circ P \circ \mathfrak{m}(S_\lambda(\mathbb{X}) \cdot S_\mu(\mathbb{X}-\mathbb{B})) \circ Q \circ \jmath \\
& = & \zeta \circ \mathfrak{m}(S_\lambda(\mathbb{X}) \cdot S_\mu(\mathbb{X}-\mathbb{B})) \circ P \circ Q \circ \jmath \\
& \simeq & \zeta \circ \mathfrak{m}(S_\lambda(\mathbb{X}) \cdot S_\mu(\mathbb{X}-\mathbb{B})) \circ \jmath \\
& = & 
\begin{cases}
    \id_{R_B} & \text{if } \lambda_j + \mu_{m+1-j} =N-m ~\forall j=1,\dots,m, \\
    0 & \text{otherwise.}
\end{cases}
\end{eqnarray*}
This implies that $P^\sharp(\zeta)$ and $Q_\sharp (\jmath)$ are not null-homotopic. Therefore, $\epsilon \approx P^\sharp(\zeta)$ and $\iota \approx Q_\sharp (\jmath)$. And we have the following corollary.

\begin{corollary}\label{iota-epsilon-composition}
Denote by $\mathfrak{m}(\ast)$ the morphism $C_f(\bigcirc_m)\rightarrow C_f(\bigcirc_m)$ induced by multiplication by $\ast$. Then, for any $\lambda, \mu \in \Lambda_{m,N-m}$,
\begin{equation}\label{iota-epsilon-composition-eq}
\epsilon \circ \mathfrak{m}(S_\lambda(\mathbb{X}) \cdot S_\mu(\mathbb{X}-\mathbb{B})) \circ \iota \approx
\begin{cases}
\id_{C_f(\emptyset)} & \text{if } \lambda_j + \mu_{m+1-j} =N-m ~\forall j=1,\dots,m, \\
0 & \text{otherwise.} 
\end{cases}
\end{equation}
\end{corollary}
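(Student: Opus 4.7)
The proof essentially amounts to assembling the chain of identifications already set up in the paragraphs preceding the statement, so my plan is to make that assembly explicit. First, I would invoke the uniqueness from Lemma \ref{circle-empty-hmf}: both $\iota$ and $\epsilon$ live in one-dimensional subspaces (of homogeneous morphisms of total polynomial degree $-m(N-m)$), so it suffices to check \eqref{iota-epsilon-composition-eq} after replacing $\epsilon$ by any nonzero representative of $P^{\sharp}(\zeta)=\zeta\circ P$ and $\iota$ by any nonzero representative of $Q_{\sharp}(\jmath)=Q\circ \jmath$. The fact that $P^{\sharp}(\zeta)$ and $Q_{\sharp}(\jmath)$ are not null-homotopic — and therefore genuinely represent $\epsilon$ and $\iota$ up to nonzero scalar — was already observed from the computation displayed just above, so that point is in hand.

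Next I would substitute these representatives into the composition to rewrite
\[
\epsilon \circ \mathfrak{m}(S_\lambda(\mathbb{X})\cdot S_\mu(\mathbb{X}-\mathbb{B})) \circ \iota
\approx \zeta \circ P \circ \mathfrak{m}(S_\lambda(\mathbb{X})\cdot S_\mu(\mathbb{X}-\mathbb{B})) \circ Q \circ \jmath.
\]
The key step is then to commute $\mathfrak{m}(S_\lambda(\mathbb{X})\cdot S_\mu(\mathbb{X}-\mathbb{B}))$ past $P$. This is legitimate because $P$ is $\Sym(\mathbb{X}|\mathbb{B})$-linear (as recorded right after Proposition \ref{circle-module}), and multiplication by an element of $\Sym(\mathbb{X}|\mathbb{B})$ is a morphism in the relevant category. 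After commuting, I use $P\circ Q\simeq \id$ to cancel the middle to obtain
\[
\zeta\circ \mathfrak{m}(S_\lambda(\mathbb{X})\cdot S_\mu(\mathbb{X}-\mathbb{B}))\circ \jmath.
\]

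Finally I would apply the Sylvester operator formula from Theorem \ref{part-symm-str-alter}. Since $\jmath(1)=1$, the composite sends $1\in R_B$ to $\zeta(S_\lambda(\mathbb{X})\cdot S_\mu(\mathbb{X}-\mathbb{B}))\in R_B$, which by the theorem equals $1$ when $\lambda_j+\mu_{m+1-j}=N-m$ for all $j$, and $0$ otherwise. In the first case the resulting $R_B$-linear map $R_B\to R_B$ sending $1\mapsto 1$ is $\id_{C_f(\emptyset)}$, and in the second case it is zero; passing back through the $\approx$'s introduced in the first step gives the two cases of \eqref{iota-epsilon-composition-eq}.

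The only delicate point I foresee is the commutation of $\mathfrak{m}(S_\lambda(\mathbb{X})\cdot S_\mu(\mathbb{X}-\mathbb{B}))$ past $P$: one must be careful that $P$ is not merely $R_B$-linear but $\Sym(\mathbb{X}|\mathbb{B})$-linear, so that multiplication by the $\mathbb{X}$-dependent polynomial $S_\lambda(\mathbb{X})\cdot S_\mu(\mathbb{X}-\mathbb{B})$ actually commutes with $P$. Granted that (and it is guaranteed by the construction of $P$ through Lemma \ref{nice-a-contract} in the proof of Proposition \ref{circle-module}), the corollary is a direct translation of Theorem \ref{part-symm-str-alter} into the language of the morphisms $\iota$ and $\epsilon$.
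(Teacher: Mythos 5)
Your proposal is correct and follows essentially the same route as the paper: the paper performs exactly the computation $\zeta\circ P\circ\mathfrak{m}(S_\lambda(\mathbb{X})\cdot S_\mu(\mathbb{X}-\mathbb{B}))\circ Q\circ\jmath$, commutes the multiplication past the $\Sym(\mathbb{X}|\mathbb{B})$-linear map $P$, cancels $P\circ Q\simeq\id$, and applies the Sylvester operator formula, then identifies $\epsilon\approx P^\sharp(\zeta)$ and $\iota\approx Q_\sharp(\jmath)$ via the resulting non-null-homotopy and the one-dimensionality from Lemma \ref{circle-empty-hmf}. The delicate point you flag ($\Sym(\mathbb{X}|\mathbb{B})$-linearity of $P$) is exactly the point the paper records parenthetically when defining $P$.
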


\begin{lemma}\label{varpi-0-reduce-circle}
Let $\varpi_0$ be the functor given in Lemma \ref{MOY-object-of-hmf}. Then $\varpi_0(\iota): C(\emptyset) \rightarrow C(\bigcirc_m)$ and $\varpi_0(\epsilon): C(\bigcirc_m) \rightarrow C(\emptyset)$ are the morphisms associated to circle creation and annihilation defined in \cite[Definition 7.7]{Wu-color}.
\end{lemma}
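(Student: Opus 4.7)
The plan is to apply the functor $\varpi_0$ to the defining characterization of $\iota$ and $\epsilon$, and then invoke uniqueness in the non-equivariant setting. The two things to check are that $\varpi_0(\iota)$ and $\varpi_0(\epsilon)$ are morphisms of the correct bigrading and that they are not null-homotopic.

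First I would verify the gradings. By Lemma \ref{MOY-object-of-hmf}, $\varpi_0(C_f(\emptyset)) = C(\emptyset)$ and $\varpi_0(C_f(\bigcirc_m)) = C(\bigcirc_m)$, so $\varpi_0(\iota)$ and $\varpi_0(\epsilon)$ are morphisms between the correct objects in $\hmf_{R_\partial, \pi_0(w)}$. Since $\varpi_0(\tilde{M}) = \tilde{M}/\mathfrak{I}_B \tilde{M}$ preserves the $\zed_2$-grading and the total polynomial grading inherited from $\mathbb{X}$, the morphisms $\varpi_0(\iota)$ and $\varpi_0(\epsilon)$ remain homogeneous of $\zed_2$-degree $m$ and total polynomial degree $-m(N-m)$.

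Next I would show neither morphism is null-homotopic by pushing Corollary \ref{iota-epsilon-composition} through $\varpi_0$. Take $\lambda = \emptyset$ and $\mu = (N-m,\dots,N-m) \in \Lambda_{m,N-m}$, so that $\lambda_j + \mu_{m+1-j} = N-m$ for all $j$. From \eqref{def-complete-poly-diff}, the projection $B_i \mapsto 0$ sends $h_k(\mathbb{X}-\mathbb{B}) \mapsto h_k(\mathbb{X})$, and consequently $S_\mu(\mathbb{X}-\mathbb{B}) \mapsto S_\mu(\mathbb{X})$. Thus, applying the functor $\varpi_0$ to \eqref{iota-epsilon-composition-eq}, we obtain
\[
\varpi_0(\epsilon) \circ \mathfrak{m}(S_\mu(\mathbb{X})) \circ \varpi_0(\iota) \approx \id_{C(\emptyset)}.
\]
In particular, neither $\varpi_0(\iota)$ nor $\varpi_0(\epsilon)$ is null-homotopic.

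Finally I would invoke uniqueness. The non-equivariant analog of Lemma \ref{circle-empty-hmf} (proved in \cite{Wu-color}) shows that the subspaces of $\Hom_{\HMF}(C(\emptyset), C(\bigcirc_m))$ and $\Hom_{\HMF}(C(\bigcirc_m), C(\emptyset))$ consisting of homogeneous elements of total polynomial degree $-m(N-m)$ are each one-dimensional, and the morphisms of \cite[Definition 7.7]{Wu-color} are, up to $\approx$, the unique non-null-homotopic such morphisms. By the grading and non-null-homotopy checks above, $\varpi_0(\iota)$ and $\varpi_0(\epsilon)$ therefore coincide, up to homotopy and scaling, with the circle creation and annihilation morphisms of \cite[Definition 7.7]{Wu-color}.

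The only mild subtlety — and thus the main point requiring care — is the explicit identification $\varpi_0(S_\mu(\mathbb{X}-\mathbb{B})) = S_\mu(\mathbb{X})$ under the projection $B_i \mapsto 0$, which is precisely what allows the equivariant Sylvester/pairing relation of Corollary \ref{iota-epsilon-composition} to descend to the nonequivariant setting. Everything else is a direct translation through $\varpi_0$.
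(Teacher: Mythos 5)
Your proposal is correct and follows essentially the same route as the paper: apply $\varpi_0$ to the composition formula of Corollary \ref{iota-epsilon-composition} (using that $S_\mu(\mathbb{X}-\mathbb{B})$ specializes to $S_\mu(\mathbb{X})$ when $B_i\mapsto 0$) to see that $\varpi_0(\iota)$ and $\varpi_0(\epsilon)$ are not null-homotopic, then conclude by the degree count and the uniqueness statement in \cite[Lemma 7.6 and Definition 7.7]{Wu-color}. Your explicit verification of the specialization $S_\mu(\mathbb{X}-\mathbb{B})\mapsto S_\mu(\mathbb{X})$ is a detail the paper leaves implicit, but the argument is the same.
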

\begin{proof}
By Corollary \ref{iota-epsilon-composition}, one can see that 
\[
\varpi_0(\epsilon) \circ \mathfrak{m}(S_\lambda(\mathbb{X}) \cdot S_\mu(\mathbb{X})) \circ \varpi_0(\iota) \approx
\begin{cases}
\id_{C(\emptyset)} & \text{if } \lambda_j + \mu_{m+1-j} =N-m ~\forall j=1,\dots,m, \\
0 & \text{otherwise.} 
\end{cases}
\]
This implies that $\varpi_0(\iota)$ and $\varpi_0(\epsilon)$ are not null-homotopic. Note that they are both homogeneous of quantum degree $-m(N-m)$. The lemma then follows from \cite[Lemma 7.6 and Definition 7.7]{Wu-color}.
\end{proof}

\subsection{Edge splitting and merging} Let $\Gamma_0$ and $\Gamma_1$ be the MOY graphs in Figure \ref{edge-splitting}. We call the change $\Gamma_0\leadsto\Gamma_1$ an edge splitting and the change $\Gamma_1\leadsto\Gamma_0$ an edge merging. In this subsection, we define morphisms $\phi$ and $\overline{\phi}$ associated to edge splitting and merging.

\begin{figure}[ht]

\setlength{\unitlength}{1pt}

\begin{picture}(360,75)(-180,-90)


\put(-67,-45){\tiny{$m+n$}}

\put(-70,-75){\vector(0,1){50}}

\put(-71,-50){\line(1,0){2}}

\put(-67,-30){\small{$\mathbb{X}$}}

\put(-95,-53){\small{$\mathbb{A}\cup\mathbb{E}$}}

\put(-67,-75){\small{$\mathbb{Y}$}}

\put(-75,-90){$\Gamma_0$}


\put(-25,-50){\vector(1,0){50}}

\put(25,-60){\vector(-1,0){50}}

\put(-5,-47){\small{$\phi$}}

\put(-5,-70){\small{$\overline{\phi}$}}


\put(70,-75){\vector(0,1){10}}

\put(70,-35){\vector(0,1){10}}

\qbezier(70,-65)(60,-65)(60,-55)

\qbezier(70,-35)(60,-35)(60,-45)

\put(60,-55){\vector(0,1){10}}

\put(59,-55){\line(1,0){2}}

\qbezier(70,-65)(80,-65)(80,-55)

\qbezier(70,-35)(80,-35)(80,-45)

\put(80,-55){\vector(0,1){10}}

\put(79,-55){\line(1,0){2}}

\put(73,-30){\tiny{$m+n$}}

\put(73,-70){\tiny{$m+n$}}

\put(83,-50){\tiny{$n$}}

\put(51,-50){\tiny{$m$}}

\put(60,-30){\small{$\mathbb{X}$}}

\put(60,-75){\small{$\mathbb{Y}$}}

\put(50,-58){\small{$\mathbb{A}$}}

\put(83,-58){\small{$\mathbb{E}$}}

\put(65,-90){$\Gamma_1$}

\end{picture}

\caption{}\label{edge-splitting}

\end{figure}

\begin{lemma}\label{edge-splitting-lemma}
Let $\Gamma_0$ and $\Gamma_1$ be the colored MOY graphs in Figure \ref{edge-splitting}. Then, as $\zed_2\oplus \zed$-graded $R_B$-modules, 
\[
\Hom_\HMF(C_f(\Gamma_0),C_f(\Gamma_0 )) \cong C_f(\emptyset) \{q^{(N-m-n)(m+n)}\qb{N}{m+n}\}
\]
and
\begin{eqnarray*}
\Hom_{HMF}(C_f(\Gamma_0),C_f(\Gamma_1)) & \cong & \Hom_{HMF}(C_f(\Gamma_1),C_f(\Gamma_0)) \\
& \cong & C_f(\emptyset) \{q^{(N-m-n)(m+n)}\qb{N}{m+n} \qb{m+n}{m}\}.
\end{eqnarray*}
In particular, the lowest total polynomial gradings of the above spaces are $-mn$, and the subspaces of these spaces of homogeneous elements of total polynomial grading $-mn$ are all $1$-dimensional.
\end{lemma}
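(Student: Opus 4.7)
The plan is to compute all three Hom spaces by closing the two MOY graphs into a single closed MOY graph and then applying Direct Sum Decomposition (II) together with Proposition \ref{circle-module}, following the strategy already used in the proof of Lemma \ref{bouquet-move-lemma}. For any two MOY graphs $\Gamma$ and $\Gamma'$ sharing the boundary condition of $\Gamma_0$ (two endpoints of color $m+n$ marked $\mathbb{X}$ and $\mathbb{Y}$), the standard $\Hom$-via-closure identification for matrix factorizations gives
\[
\Hom_{HMF}(C_f(\Gamma), C_f(\Gamma')) \cong H_f(\widehat{\Gamma \sqcup \Gamma'})\{q^{(N-m-n)(m+n)}\}\langle m+n\rangle,
\]
where $\widehat{\Gamma \sqcup \Gamma'}$ is the closed MOY graph obtained by reversing $\Gamma$ on the boundary and gluing it to $\Gamma'$ along the matched endpoints. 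The boundary shift $\{q^{(N-m-n)(m+n)}\}\langle m+n\rangle$ is exactly the single-edge specialization of the shift bookkeeping already carried out in Lemma \ref{bouquet-move-lemma}.

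For $\Hom_{HMF}(C_f(\Gamma_0), C_f(\Gamma_0))$, the closure is simply the circle $\bigcirc_{m+n}$. Proposition \ref{circle-module} gives $H_f(\bigcirc_{m+n}) \cong C_f(\emptyset)\{\qb{N}{m+n}\}\langle m+n\rangle$, and combining with the boundary shift (the two $\langle m+n\rangle$ factors cancel in the $\zed_2$-grading) yields the claimed $C_f(\emptyset)\{q^{(N-m-n)(m+n)}\qb{N}{m+n}\}$.

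For $\Hom_{HMF}(C_f(\Gamma_0), C_f(\Gamma_1))$, the closure is an edge of color $m+n$ meeting the $(m,n)$-lens from $\Gamma_1$; that is, a circle of color $m+n$ with a parallel pair of internal edges of colors $m$ and $n$. Direct Sum Decomposition (II) (Theorem \ref{decomp-II}) collapses the parallel $(m,n)$-pair back to a single $m+n$ edge at the cost of a multiplicity factor $\{\qb{m+n}{m}\}$, reducing the closure to $C_f(\bigcirc_{m+n})\{\qb{m+n}{m}\}$. A second application of Proposition \ref{circle-module} then gives $C_f(\emptyset)\{\qb{N}{m+n}\qb{m+n}{m}\}\langle m+n\rangle$, and incorporating the boundary shift yields the stated formula. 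The case $\Hom_{HMF}(C_f(\Gamma_1), C_f(\Gamma_0))$ is identical by the symmetry of the closure operation.

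The last assertions are a direct inspection of Gaussian polynomials: $\qb{N}{m+n}$ is palindromic in $q^{\pm 1}$ with minimum exponent $-(N-m-n)(m+n)$ and leading coefficient $1$, while $\qb{m+n}{m}$ is palindromic with minimum exponent $-mn$ and leading coefficient $1$. Multiplying by $q^{(N-m-n)(m+n)}$ leaves the minimum exponent of the shift factor equal to $-mn$ with coefficient $1$; since $R_B = C_f(\emptyset)$ is a positively graded polynomial ring with $1$-dimensional degree-$0$ part, the subspace of total polynomial degree $-mn$ in both $\Hom_{HMF}(C_f(\Gamma_0), C_f(\Gamma_1))$ and $\Hom_{HMF}(C_f(\Gamma_1), C_f(\Gamma_0))$ is indeed $1$-dimensional over $\C$. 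The one point that needs care is verifying the precise boundary shift $\{q^{(N-m-n)(m+n)}\}\langle m+n\rangle$ in the first display, but this is a routine unwinding of Definition \ref{MOY-mf-def} applied to the trivial (single edge) vertex data of $\Gamma_0$, directly parallel to the analogous shift computed in the proof of Lemma \ref{bouquet-move-lemma}.
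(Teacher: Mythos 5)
Your proposal is correct and follows essentially the same route as the paper: the paper first applies Decomposition (II) to get $C_f(\Gamma_1)\simeq C_f(\Gamma_0)\{\qb{m+n}{m}\}$ and then identifies $\Hom_{\HMF}(C_f(\Gamma_0),C_f(\Gamma_0))$ with a shifted $C_f(\bigcirc_{m+n})$ via Proposition \ref{circle-module}, whereas you close up first and apply Decomposition (II) to the closure — the same two ingredients in a different order. The degree bookkeeping at the end is also fine.
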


\begin{proof}
By Theorem \ref{decomp-II}, $C_f(\Gamma_1) \simeq C_f(\Gamma_0)\{\qb{m+n}{m}\}$. So 
\[
\Hom(C_f(\Gamma_0),C_f(\Gamma_1)) \simeq \Hom(C_f(\Gamma_0),C_f(\Gamma_0 ))\{\qb{m+n}{m}\} \simeq \Hom(C_f(\Gamma_1),C_f(\Gamma_0)).
\]
Denote by $\bigcirc_{m+n}$ the circle colored by $m+n$. Then, by Proposition \ref{circle-module}, 
\begin{eqnarray*}
\Hom_{\Sym(\mathbb{X}|\mathbb{Y})}(C_f(\Gamma_0),C_f(\Gamma_0 )) & \cong & C_f(\bigcirc_{m+n})\{q^{(N-m-n)(m+n)}\} \left\langle m+n \right\rangle \\
& \simeq & C_f(\emptyset) \{q^{(N-m-n)(m+n)}\qb{N}{m+n}\},
\end{eqnarray*}
and the lemma follows.
\end{proof}

\begin{definition}\label{morphism-edge-splitting-merging-def}
Let $\Gamma_0$ and $\Gamma_1$ be the colored MOY graphs in Figure \ref{edge-splitting}. Associate to the edge splitting a homogeneous morphism 
\[
\phi: C_f(\Gamma_0) \rightarrow C_f(\Gamma_1)
\] 
of total polynomial degree $-mn$ not homotopic to $0$.

Associate to the edge merging a homogeneous morphism 
\[
\overline{\phi}:C_f(\Gamma_1) \rightarrow C_f(\Gamma_0)
\] 
of total polynomial degree $-mn$ not homotopic to $0$.
\end{definition}

\begin{lemma}\label{phibar-compose-phi}
Let $\Gamma_0$ and $\Gamma_1$ be the MOY graphs in Figure \ref{edge-splitting}. Then
\begin{equation}\label{phibar-compose-phi-eq}
\overline{\phi} \circ \mathfrak{m}(S_{\lambda}(\mathbb{A})\cdot S_{\mu}(-\mathbb{E})) \circ \phi \approx 
\begin{cases}
    \id_{C_f(\Gamma_0)} & \text{if } \lambda_j + \mu_{m+1-j} = n ~\forall j=1,\dots,m, \\ 
    0 & \text{otherwise,}
\end{cases}
\end{equation}
where $\lambda,\mu\in \Lambda_{m,n}$ and $\mathfrak{m}(S_{\lambda}(\mathbb{A})\cdot S_{\mu}(-\mathbb{E}))$ is the morphism induced by the multiplication of $S_{\lambda}(\mathbb{A})\cdot S_{\mu}(-\mathbb{E})$.
\end{lemma}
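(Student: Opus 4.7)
My plan is to mimic the calculation in Corollary \ref{iota-epsilon-composition}. The key is to realize $\phi$ and $\overline{\phi}$ concretely via an explicit Grassmannian-type decomposition of $C_f(\Gamma_1)$ over $C_f(\Gamma_0)$. I would first obtain from (the proof of) Theorem \ref{decomp-II} an explicit homotopy equivalence
\[
P:C_f(\Gamma_1)\xrightarrow{\simeq} C_f(\Gamma_0)\otimes_{\Sym(\mathbb{A}\cup\mathbb{E})}\Sym(\mathbb{A}|\mathbb{E})
\]
(with appropriate grading shift) together with a homotopy inverse $Q$. Here $\Sym(\mathbb{A}|\mathbb{E})$ is a graded-free $\Sym(\mathbb{A}\cup\mathbb{E})$-module of graded rank $\qb{m+n}{m}$, with Schur basis $\{S_\lambda(\mathbb{A})\}_{\lambda\in\Lambda_{m,n}}$, corresponding to the $(m,m+n)$-Grassmannian version of the setup in Theorem \ref{part-symm-str-alter}.

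Next, applying Theorem \ref{part-symm-str-alter} with $(m,N)$ replaced by $(m,m+n)$ and with an auxiliary alphabet $\mathbb{D}$ of size $m+n$ identified with $\mathbb{A}\cup\mathbb{E}$ via elementary symmetric polynomials, I would obtain a Sylvester operator $\zeta:\Sym(\mathbb{A}|\mathbb{E})\to\Sym(\mathbb{A}\cup\mathbb{E})$ characterized by $\zeta(S_\lambda(\mathbb{A})\cdot S_\mu(-\mathbb{E}))=1$ if $\lambda_j+\mu_{m+1-j}=n$ for all $j$ and $0$ otherwise. The role of $\mathbb{X}-\mathbb{B}$ in Theorem \ref{part-symm-str-alter} is played here by $\mathbb{A}-(\mathbb{A}\cup\mathbb{E})=-\mathbb{E}$ via Lemma \ref{schur-poly-diff-cancel}, which accounts for the $-\mathbb{E}$ in the lemma statement. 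Let $\jmath:\Sym(\mathbb{A}\cup\mathbb{E})\hookrightarrow\Sym(\mathbb{A}|\mathbb{E})$ denote the unit-preserving inclusion.

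By Lemma \ref{edge-splitting-lemma}, the degree-$(-mn)$ subspaces of $\Hom_\HMF(C_f(\Gamma_0),C_f(\Gamma_1))$ and $\Hom_\HMF(C_f(\Gamma_1),C_f(\Gamma_0))$ are each one-dimensional over $\C$. Since $Q\circ(1\otimes\jmath)$ and $(1\otimes\zeta)\circ P$ are homogeneous of degree $-mn$, once I verify that they are not null-homotopic I may conclude
\[
\phi\approx Q\circ(1\otimes\jmath),\qquad \overline{\phi}\approx(1\otimes\zeta)\circ P.
\]
Then, mirroring the calculation immediately preceding Corollary \ref{iota-epsilon-composition} and using $\Sym(\mathbb{A}\cup\mathbb{E})$-linearity of $P,Q$ to commute $\mathfrak{m}(S_\lambda(\mathbb{A})\cdot S_\mu(-\mathbb{E}))$ past them,
\begin{align*}
\overline{\phi}\circ\mathfrak{m}(S_\lambda(\mathbb{A})\cdot S_\mu(-\mathbb{E}))\circ\phi
&\approx(1\otimes\zeta)\circ\mathfrak{m}(S_\lambda(\mathbb{A})\cdot S_\mu(-\mathbb{E}))\circ(1\otimes\jmath)\\
&=\mathfrak{m}\bigl(\zeta(S_\lambda(\mathbb{A})\cdot S_\mu(-\mathbb{E}))\bigr)\circ\id_{C_f(\Gamma_0)},
\end{align*}
and \eqref{phibar-compose-phi-eq} follows from the defining property of $\zeta$.

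The main obstacle is a mild circularity in the identifications above, as each relies on the right-hand side not being null-homotopic. I would break this by testing the single instance $\lambda=(n^m)$, $\mu=\emptyset$, which are complementary; the computation above then shows the composition equals a nonzero scalar multiple of $\id_{C_f(\Gamma_0)}$, simultaneously confirming non-null-homotopy of both $Q\circ(1\otimes\jmath)$ and $(1\otimes\zeta)\circ P$ and fixing the scalars up to $\C^{\times}$.
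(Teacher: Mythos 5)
Your proposal is correct and follows essentially the same route as the paper: the paper's proof simply defers to \cite[Lemma 7.11]{Wu-color}, whose argument is exactly the Sylvester-operator computation you describe and which the present paper spells out in full for the parallel case of $\iota$ and $\epsilon$ leading to Corollary \ref{iota-epsilon-composition}. One small precision: to commute $\mathfrak{m}(S_{\lambda}(\mathbb{A})\cdot S_{\mu}(-\mathbb{E}))$ past the homotopy equivalence you need $P$ to be $\Sym(\mathbb{A}|\mathbb{E})$-linear (not merely $\Sym(\mathbb{A}\cup\mathbb{E})$-linear, since $S_{\lambda}(\mathbb{A})S_{\mu}(-\mathbb{E})$ does not lie in the smaller ring), which the contraction Lemmas \ref{nice-b-contract} and \ref{nice-a-contract} provide; $Q$ then only needs to satisfy $P\circ Q\simeq\id$, exactly as in the paper's treatment of $P^\sharp(\zeta)$ and $Q_\sharp(\jmath)$.
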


\begin{proof}
The proof of \eqref{phibar-compose-phi-eq} is nearly identical to that of \cite[Lemma 7.11]{Wu-color}. See \cite[Subsection 7.4]{Wu-color} for more details.
\end{proof}

\begin{lemma}\label{varpi-0-reduce-edge-split}
Let $\varpi_0$ be the functor given in Lemma \ref{MOY-object-of-hmf}. Then $\varpi_0(\phi):C(\Gamma_0) \rightarrow C(\Gamma_1)$ and $\varpi_0(\overline{\phi}):C_f(\Gamma_1) \rightarrow C_f(\Gamma_0)$ are the morphisms associated to edge splitting and edge merging in \cite[Definition 7.10]{Wu-color}.
\end{lemma}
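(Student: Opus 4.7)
The plan is to mimic the strategy used for the two preceding reduction lemmas (Lemmas \ref{varpi-0-reduce-bouquet} and \ref{varpi-0-reduce-circle}): apply $\varpi_0$ to the algebraic identity already established for $\phi$ and $\overline{\phi}$ in the equivariant setting, verify that the images are not null-homotopic and sit in the correct degree, and then appeal to the uniqueness-up-to-homotopy-and-scaling that characterizes the morphisms in \cite[Definition 7.10]{Wu-color}.

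First I would observe that $\varpi_0$ is the reduction $\tilde{M} \mapsto \tilde{M}/\mathfrak{I}_B\tilde{M}$, which is $R$-linear and commutes with multiplication operators coming from alphabets marking the MOY graph. Hence, from Lemma \ref{phibar-compose-phi}, for any $\lambda,\mu\in\Lambda_{m,n}$ we get
\[
\varpi_0(\overline{\phi}) \circ \mathfrak{m}(S_{\lambda}(\mathbb{A})\cdot S_{\mu}(-\mathbb{E})) \circ \varpi_0(\phi) \approx
\begin{cases}
\id_{C(\Gamma_0)} & \text{if } \lambda_j + \mu_{m+1-j} = n~\forall j, \\
0 & \text{otherwise.}
\end{cases}
\]
Choosing any partition pair for which the composition equals $\id_{C(\Gamma_0)}$ shows that neither $\varpi_0(\phi)$ nor $\varpi_0(\overline{\phi})$ is null-homotopic. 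Moreover, since $\phi$ and $\overline{\phi}$ are both homogeneous of total polynomial degree $-mn$ and $\zed_2$-degree $0$, the same is true of their images under $\varpi_0$, because $\varpi_0$ preserves both gradings.

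Next I would invoke the non-equivariant analogue of Lemma \ref{edge-splitting-lemma} in \cite{Wu-color}, which states that the subspace of $\Hom_{\mathrm{HMF}}(C(\Gamma_0),C(\Gamma_1))$ (resp.\ $\Hom_{\mathrm{HMF}}(C(\Gamma_1),C(\Gamma_0))$) of homogeneous elements of total polynomial degree $-mn$ is one-dimensional. Consequently, any non-null-homotopic homogeneous morphism in that degree agrees, up to homotopy and nonzero scalar, with the morphism fixed in \cite[Definition 7.10]{Wu-color}. Applying this uniqueness to $\varpi_0(\phi)$ and $\varpi_0(\overline{\phi})$ identifies them with the edge-splitting and edge-merging morphisms of \cite{Wu-color}, which is precisely the statement of the lemma.

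The main obstacle, such as it is, is verifying that $\varpi_0$ genuinely commutes with the multiplication morphisms $\mathfrak{m}(S_{\lambda}(\mathbb{A})\cdot S_{\mu}(-\mathbb{E}))$, because the symmetric polynomials in $\mathbb{A}$ and $\mathbb{E}$ live in $\tilde{R}_\partial$ and one must check their reductions modulo $\mathfrak{I}_B$ act as the corresponding multiplications on $C(\Gamma_0)$. Since the alphabets $\mathbb{A}$ and $\mathbb{E}$ are independent of the $B_k$'s, this commutation is immediate; so the argument is essentially just bookkeeping plus the uniqueness statement from \cite{Wu-color}.
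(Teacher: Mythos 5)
Your proof is correct and follows essentially the same route as the paper: apply $\varpi_0$ to the composition formula of Lemma \ref{phibar-compose-phi}, deduce that $\varpi_0(\phi)$ and $\varpi_0(\overline{\phi})$ are homotopically non-trivial of the right degree, and conclude via the uniqueness built into \cite[Definition 7.10]{Wu-color}. The extra bookkeeping you include (that $\varpi_0$ commutes with the multiplication morphisms and preserves both gradings) is implicit in the paper's argument and is harmless to spell out.
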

\begin{proof}
From the definition of $\varpi_0$, it is clear that $\varpi_0(\id_{C_f(\Gamma_0)}) = \id_{C(\Gamma_0)}$. So, by Lemma \ref{phibar-compose-phi}, we have
\[
\varpi_0(\overline{\phi}) \circ \mathfrak{m}(S_{\lambda}(\mathbb{A})\cdot S_{\mu}(-\mathbb{E})) \circ \varpi_0(\phi) \approx 
\begin{cases}
    \id_{C(\Gamma_0)} & \text{if } \lambda_j + \mu_{m+1-j} = n ~\forall j=1,\dots,m, \\ 
    0 & \text{otherwise,}
\end{cases}
\]
Note that $\id_{C(\Gamma_0)}$ is not null-homotopic since $C(\Gamma_0)$ is not null-homotopic. So $\varpi_0(\overline{\phi})$ and $\varpi_0(\phi)$ are not null-homotopic. By \cite[Definition 7.10]{Wu-color}, they are the morphisms associated to edge splitting and edge merging for the matrix factorizations $C(\Gamma_i)$.
\end{proof}

\subsection{$\chi$-morphisms}

\begin{figure}[ht]

\setlength{\unitlength}{1pt}

\begin{picture}(360,75)(-180,-15)


\put(-120,0){\vector(1,1){20}}

\put(-100,20){\vector(1,-1){20}}

\put(-100,40){\vector(0,-1){20}}

\put(-100,40){\vector(1,1){20}}

\put(-120,60){\vector(1,-1){20}}

\put(-101,30){\line(1,0){2}}

\put(-132,45){\tiny{$_{m+n-l}$}}

\put(-115,15){\tiny{$_l$}}

\put(-90,45){\tiny{$_m$}}

\put(-90,15){\tiny{$_n$}}

\put(-95,28){\tiny{$_{n-l}$}}

\put(-130,55){\small{$\mathbb{A}$}}

\put(-75,0){\small{$\mathbb{Y}$}}

\put(-113,27){\small{$\mathbb{D}$}}

\put(-130,0){\small{$\mathbb{E}$}}

\put(-75,55){\small{$\mathbb{X}$}}

\put(-102,-15){$\Gamma_0$}


\put(-30,35){\vector(1,0){60}}

\put(30,25){\vector(-1,0){60}}

\put(-3,40){\small{$\chi^0$}}

\put(-3,15){\small{$\chi^1$}}


\put(60,10){\vector(1,1){20}}

\put(60,50){\vector(1,-1){20}}

\put(80,30){\vector(1,0){20}}

\put(100,30){\vector(1,1){20}}

\put(100,30){\vector(1,-1){20}}

\put(68,45){\tiny{$_{m+n-l}$}}

\put(70,15){\tiny{$_{l}$}}

\put(108,45){\tiny{$_m$}}

\put(106,15){\tiny{$_n$}}

\put(81,32){\tiny{$_{m+n}$}}

\put(50,45){\small{$\mathbb{A}$}}

\put(122,10){\small{$\mathbb{Y}$}}

\put(50,10){\small{$\mathbb{E}$}}

\put(122,45){\small{$\mathbb{X}$}}

\put(88,-15){$\Gamma_1$}

\end{picture}

\caption{}\label{general-general-chi-maps-figure}

\end{figure}

\begin{proposition}\label{general-general-chi-maps}
Let $\Gamma_0$ and $\Gamma_1$ be the MOY graphs in Figure \ref{general-general-chi-maps-figure}, where $1\leq l\leq n <m+n \leq N$. There exist homogeneous morphisms $\chi^0:C_f(\Gamma_0) \rightarrow C_f(\Gamma_1)$ and $\chi^1:C_f(\Gamma_1) \rightarrow C_f(\Gamma_0)$ satisfying
\begin{enumerate}[(i)]
	\item Both $\chi^0$ and $\chi^1$ have $\zed_2$-grading $0$ and total polynomial grading $ml$. $$ $$
	\item \begin{eqnarray*}
	\chi^1 \circ \chi^0 & \simeq & S_{\lambda_{l,m}}(\mathbb{E}-\mathbb{X}) \cdot \id_{C_f(\Gamma_0)} = (\sum_{\lambda\in\Lambda_{l,m}} (-1)^{|\lambda|} S_{\lambda'}(\mathbb{X}) S_{\lambda^c}(\mathbb{E})) \cdot \id_{C_f(\Gamma_0)}, \\
	\chi^0 \circ \chi^1 & \simeq & S_{\lambda_{l,m}}(\mathbb{E}-\mathbb{X}) \cdot \id_{C_f(\Gamma_1)} = (\sum_{\lambda\in\Lambda_{l,m}} (-1)^{|\lambda|} S_{\lambda'}(\mathbb{X}) S_{\lambda^c}(\mathbb{E})) \cdot \id_{C_f(\Gamma_1)}, \\
	\end{eqnarray*}
	where 
	\begin{eqnarray*}
	\lambda_{l,m} & = & (\underbrace{m\geq m \geq\cdots\geq m}_{l \text{ parts}}), \\
	\Lambda_{l,m} & = & \{\mu=(\mu_1\geq\cdots\geq\mu_l) ~|~ \mu_1 \leq m\},
	\end{eqnarray*}
	$\lambda'~(\in \Lambda_{m,l})$ is the conjugate of $\lambda ~(\in \Lambda_{l,m})$, and $\lambda^c ~(\in \Lambda_{l,m})$ is the complement of $\lambda ~(\in \Lambda_{l,m})$ in $\Lambda_{l,m}$, i.e., if $\lambda=(\lambda_1\geq\cdots\geq\lambda_l)\in \Lambda_{l,m}$, then $\lambda^c = (m-\lambda_l\geq\cdots\geq m-\lambda_1)$.
\end{enumerate}
\end{proposition}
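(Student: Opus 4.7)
The plan is to lift the construction and verification of $\chi$-morphisms from \cite[Section 7]{Wu-color} to the equivariant setting, using the functor $\varpi_0$ of Lemma \ref{MOY-object-of-hmf} to import identities from the $R$-setting to the $\tilde{R}$-setting. The explicit formulas defining $\chi^0, \chi^1$ in \cite{Wu-color} admit direct $\tilde{R}$-module-level analogues—either unchanged or with complete symmetric polynomials $h_k(\mathbb{X})$ replaced by their ``$B$-enhanced'' counterparts $h_k(\mathbb{X}-\mathbb{B})$ as dictated by Lemma \ref{power-derive-complete}.

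First, I would fix a marking of $\Gamma_0$ and $\Gamma_1$ matching Figure \ref{general-general-chi-maps-figure} and present each $C_f(\Gamma_i)$ as an explicit Koszul matrix factorization over $\tilde{R}=\Sym(\mathbb{A}|\mathbb{E}|\mathbb{X}|\mathbb{Y}|\mathbb{D})\otimes_\C R_B$, with the $U_j$'s chosen via Lemma \ref{power-derive-complete}. Then I would define $\chi^0, \chi^1$ by the equivariant analogues of the formulas in \cite{Wu-color}. Part (i) follows by the identical degree computation. The chain-map condition reduces to polynomial identities in the $B$-enhanced $U_j$'s; every $B$-contribution originates from the potential $f$ and enters symmetrically on both sides of each identity, so it cancels, and the verification from \cite{Wu-color} carries through.

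For (ii), I would compute $\chi^1\circ\chi^0$ and $\chi^0\circ\chi^1$ at the underlying module level and exhibit an explicit $\tilde{R}$-linear null-homotopy between each composition and multiplication by $S_{\lambda_{l,m}}(\mathbb{E}-\mathbb{X})$, built from Koszul-type division operators exactly as in \cite{Wu-color}. The homotopy identity $d_f\,h + h\,d_f = \mathfrak{m}(S_{\lambda_{l,m}}(\mathbb{E}-\mathbb{X})) - \chi^1\circ\chi^0$ reduces to the corresponding identity over $R$ by the same symmetric cancellation of $B$-contributions. As a consistency check, applying $\varpi_0$ to both sides recovers the known identity of \cite{Wu-color} for $C(\Gamma_i)$, which, combined with Lemma \ref{reduce-base-homotopic-equivalence}, rules out scaling ambiguity in the lift.

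The main obstacle is the bookkeeping: for every polynomial identity used in the corresponding proof in \cite{Wu-color}, one must verify that its $\tilde{R}$-analogue holds with $U_j$ and $h_k$ replaced by their $B$-enhanced versions. The structure is routine but tedious, since the $B$-contributions to $U_j$ and to $h_k(\mathbb{X}-\mathbb{B})$ propagate through the Koszul calculations. No conceptually new ingredient beyond \cite{Wu-color} and Lemma \ref{power-derive-complete} is needed; the Schur polynomial identity $S_{\lambda_{l,m}}(\mathbb{E}-\mathbb{X}) = \sum_{\lambda\in\Lambda_{l,m}} (-1)^{|\lambda|} S_{\lambda'}(\mathbb{X})\,S_{\lambda^c}(\mathbb{E})$ is a standard Schur function identity valid over any commutative ring and presents no further difficulty.
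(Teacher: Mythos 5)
Your overall strategy (lift the construction of \cite[Subsections 7.5--7.6]{Wu-color} to $\tilde{R}$ and control it with $\varpi_0$) is in the right spirit, but it misses the one observation that makes the paper's proof essentially a one-liner, and in doing so it leaves its critical step unjustified. The paper's proof is: the construction of $\chi^0$, $\chi^1$ and of the homotopies establishing (ii) in \cite{Wu-color} uses \emph{only the right columns} of the Koszul matrix factorizations $C(\Gamma_0)$, $C(\Gamma_1)$ (the entries $X_i-Y_i$, which encode the relations among the alphabets on the MOY graph), never the left-column entries $U_j$. Since passing from $C(\Gamma_i)$ to $C_f(\Gamma_i)$ changes only the $U_j$'s (and the isomorphism type is independent of their choice by \cite[Lemma 2.18]{Wu-color}), the right columns of $C_f(\Gamma_0)$ and $C_f(\Gamma_1)$ are literally identical to those of $C(\Gamma_0)$ and $C(\Gamma_1)$, and the entire construction --- morphisms, degree counts, and the null-homotopies giving $\chi^1\circ\chi^0\simeq S_{\lambda_{l,m}}(\mathbb{E}-\mathbb{X})\cdot\id$ --- applies verbatim with no modification. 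Note also that $S_{\lambda_{l,m}}(\mathbb{E}-\mathbb{X})$ involves only the graph alphabets, not $\mathbb{B}$, so the target of the homotopy is the same element in both settings.

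By contrast, your plan is to rewrite the $U_j$'s as their $\mathbb{B}$-enhanced versions via Lemma \ref{power-derive-complete} and then assert that ``every $B$-contribution \dots enters symmetrically on both sides of each identity, so it cancels.'' That assertion is the entire content of the proof on your route, and you do not verify it; a priori the chain-map conditions and the homotopy identities could involve the left-column entries nontrivially, and checking that they do not (or that the $\mathbb{B}$-terms cancel) is exactly the ``routine but tedious bookkeeping'' you defer. The correct resolution of that worry is not a cancellation argument but the structural fact above: the left columns simply never enter. Your $\varpi_0$-consistency check at the end is fine but redundant once this is observed; $\varpi_0$ is genuinely needed elsewhere in the paper (e.g.\ Lemma \ref{general-general-chi-maps-HMF}, where one must show $\chi^0,\chi^1$ are homotopically nontrivial), but not for the existence statement of this proposition.
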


\begin{proof}
First, by the definition of $S_{\lambda}(\mathbb{E}-\mathbb{X})$, it is easy to check that 
\[
S_{\lambda_{l,m}}(\mathbb{E}-\mathbb{X}) = \sum_{\lambda\in\Lambda_{l,m}} (-1)^{|\lambda|} S_{\lambda'}(\mathbb{X}) S_{\lambda^c}(\mathbb{E}).
\]
See \cite{Lascoux-notes} for more details.

Note that, in \cite[Subsections 7.5-7.6]{Wu-color}, only the right columns of the Koszul matrix factorizations $C(\Gamma_0)$ and $C(\Gamma_1)$ are explicitly used in the construction of the $\chi$-morphisms. But the right columns of $C_f(\Gamma_0)$ and $C_f(\Gamma_1)$ are identical to that of $C(\Gamma_0)$ and $C(\Gamma_1)$. So the construction in \cite[Subsections 7.5-7.6]{Wu-color} applies to $C_f(\Gamma_0)$ and $C_f(\Gamma_1)$ without any visible change. Thus, the morphisms $\chi^0$ and $\chi^1$ with the desired properties exist. (See \cite[Subsections 7.5-7.6]{Wu-color} for more details.)  
\end{proof}

\begin{lemma}\label{general-general-chi-maps-HMF}
Let $\Gamma_0$, $\Gamma_1$, $\chi^0$ and $\chi^1$ be as in Proposition \ref{general-general-chi-maps}. Then, up to homotopy and scaling, $\chi^0$ (resp. $\chi^1$) is the unique homotopically non-trivial homogeneous morphism of total polynomial degree $ml$ from $C(\Gamma_0)$ to $C(\Gamma_1)$ (resp. from $C(\Gamma_1)$ to $C(\Gamma_0)$.)
\end{lemma}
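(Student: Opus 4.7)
The plan is to transport the uniqueness from the equivariant setting to the non-equivariant setting via the functor $\varpi_0$ of Lemma \ref{MOY-object-of-hmf}, exactly in the spirit of Lemmas \ref{varpi-0-reduce-bouquet}, \ref{varpi-0-reduce-circle} and \ref{varpi-0-reduce-edge-split}. First I would apply $\varpi_0$ to the two composition identities in Proposition \ref{general-general-chi-maps}(ii). Since $\mathbb{E}$ and $\mathbb{X}$ are boundary alphabets, disjoint from the $\mathbb{B}$-variables defining $R_B$, the Schur polynomial $S_{\lambda_{l,m}}(\mathbb{E}-\mathbb{X})$ lies in $R_\partial \subset \tilde{R}_\partial$ and is therefore unaffected by the quotient map $\pi_0:\tilde{R}_\partial\rightarrow R_\partial$ that defines $\varpi_0$. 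Consequently
\[
\varpi_0(\chi^1)\circ\varpi_0(\chi^0) \simeq S_{\lambda_{l,m}}(\mathbb{E}-\mathbb{X})\cdot\id_{C(\Gamma_0)}, \qquad \varpi_0(\chi^0)\circ\varpi_0(\chi^1) \simeq S_{\lambda_{l,m}}(\mathbb{E}-\mathbb{X})\cdot\id_{C(\Gamma_1)}.
\]
Once the two right-hand sides are shown to be non-null-homotopic in $\hmf_{R_\partial,\pi_0(w)}$, this forces $\varpi_0(\chi^0)$ and $\varpi_0(\chi^1)$ to be non-null-homotopic as well.

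Next I would invoke the non-equivariant counterpart of the present lemma, proved for the $\chi$-morphisms in \cite[Subsections 7.5-7.6]{Wu-color}, which asserts that up to homotopy and scaling there is a unique non-null-homotopic homogeneous morphism of total polynomial degree $ml$ from $C(\Gamma_0)$ to $C(\Gamma_1)$ (and symmetrically in the opposite direction). Combined with the previous paragraph, this identifies $\varpi_0(\chi^0)$ (resp.\ $\varpi_0(\chi^1)$) with that unique morphism up to homotopy and scaling and yields the conclusion of the lemma. Observe that the construction in Proposition \ref{general-general-chi-maps} only uses the right columns of the Koszul factorizations, which are identical to those in \cite{Wu-color}; this is the compatibility needed for $\varpi_0(\chi^0)$ to match Wu's $\chi$-morphism on the nose before passing to the uniqueness statement.

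The principal obstacle is verifying that $S_{\lambda_{l,m}}(\mathbb{E}-\mathbb{X})\cdot\id_{C(\Gamma_0)}$ (and the analogous morphism on $C(\Gamma_1)$) is not null-homotopic in the non-equivariant category. I would obtain this by an explicit computation of $\Hom_{\HMF}(C(\Gamma_0),C(\Gamma_0))$ using Decomposition~(II) (Theorem \ref{decomp-II}), Proposition \ref{circle-module}, and the edge-contraction results (Lemma \ref{edge-contraction} and Corollary \ref{contract-expand}): these reduce the computation to a closed colored MOY graph whose homology is a free graded $\Sym(\mathbb{E}|\mathbb{X})$-module, and the bidegrees show that multiplication by $S_{\lambda_{l,m}}(\mathbb{E}-\mathbb{X})$ is a non-zero operator. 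A secondary point is to match conventions carefully with \cite[Section 7]{Wu-color}, so that total polynomial degree $ml$ is recognized as the lowest bidegree in which $\Hom_{\hmf}(C(\Gamma_0),C(\Gamma_1))$ is nontrivial; this is the setting in which the uniqueness statement is available.
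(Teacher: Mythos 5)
There is a genuine gap, on two counts. First, despite the ``$C$'' in the statement (a typo for $C_f$), the lemma is a statement about the \emph{equivariant} complexes: the paper's proof reduces $C_f(\Gamma_0)$ and $C_f(\Gamma_1)$ to Koszul form, computes $\Hom_\HMF(C_f(\Gamma_0),C_f(\Gamma_1))$ and $\Hom_\HMF(C_f(\Gamma_1),C_f(\Gamma_0))$ as graded $R_B$-modules using Corollary \ref{contract-expand}, Decomposition (II) (Theorem \ref{decomp-II}) and Proposition \ref{circle-module}, observes that the lowest non-vanishing total polynomial degree is $ml$ with a one-dimensional piece there, and is then left only with showing $\chi^0,\chi^1$ are homotopically non-trivial. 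Your argument instead establishes uniqueness among morphisms $C(\Gamma_0)\to C(\Gamma_1)$ by citing \cite[Proposition 7.29]{Wu-color}; that is essentially the content of Lemma \ref{varpi-0-reduce-chi}, not of the present lemma. The uniqueness does not transport back up through $\varpi_0$: the functor is not faithful on morphism spaces (it kills, for instance, multiplication by any $B_i$), so $\Hom_\hmf(C(\Gamma_0),C(\Gamma_1))\cong\C$ does not by itself yield $\Hom_\hmf(C_f(\Gamma_0),C_f(\Gamma_1))\cong\C$. The equivariant $\Hom$ computation is the heart of the paper's proof and is missing from yours.

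Second, your non-triviality step rests on showing that $\mathfrak{m}(S_{\lambda_{l,m}}(\mathbb{E}-\mathbb{X}))$ is not null-homotopic as an endomorphism of $C(\Gamma_0)$, and you assert that ``the bidegrees show'' it is a non-zero operator. Degree bookkeeping can show that the target degree of the endomorphism ring is occupied, but not that this particular symmetric polynomial lies outside the annihilator ideal of $\id_{C(\Gamma_0)}$; that requires knowing the ring structure, not the graded dimensions. The paper circumvents exactly this point: it pre- and post-composes $\chi^0\circ\chi^1$ with $\phi_1\otimes\phi_2$ and $\overline{\phi}_1\otimes\overline{\phi}_2$ together with a further multiplication by $S_{\lambda_{m,n}}(-\mathbb{Y})\cdot S_{\lambda_{l,n-l}}(-\mathbb{A})$ (Figure \ref{general-general-chi-maps-non-vanishing}), expands $S_{\lambda_{l,m}}(\mathbb{E}-\mathbb{X})=\sum_{\lambda\in\Lambda_{l,m}}(-1)^{|\lambda|}S_{\lambda'}(\mathbb{X})S_{\lambda^c}(\mathbb{E})$, and applies Lemma \ref{phibar-compose-phi} term by term to land on the identity up to scaling. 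If you want to keep your overall route, you would still need to carry out such a composition (or an explicit identification of the endomorphism ring) in place of the degree appeal, and you would separately need the equivariant $\Hom$-space computation to get uniqueness for $C_f$.
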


\begin{figure}[ht]

\setlength{\unitlength}{1pt}

\begin{picture}(360,75)(-180,-15)


\put(-130,10){\vector(1,1){10}}

\put(-120,20){\line(1,-1){10}}

\put(-120,40){\vector(0,-1){20}}

\put(-120,40){\line(1,1){10}}

\put(-130,50){\vector(1,-1){10}}

\put(-152,45){\tiny{$_{m+n-l}$}}

\put(-135,15){\tiny{$_l$}}

\put(-115,28){\tiny{$_{n-l}$}}

\qbezier(-110,10)(-100,0)(-90,10)

\qbezier(-110,50)(-100,60)(-90,50)

\qbezier(-130,10)(-140,0)(-100,0)

\qbezier(-30,10)(-20,0)(-100,0)

\qbezier(-130,50)(-140,60)(-100,60)

\qbezier(-30,50)(-30,60)(-100,60)

\put(-90,-15){$\Gamma$}

\put(-90,10){\vector(1,1){20}}

\put(-90,50){\vector(1,-1){20}}

\put(-70,30){\vector(1,0){20}}

\put(-50,30){\line(1,1){20}}

\put(-50,30){\line(1,-1){20}}

\put(-82,45){\tiny{$_{m}$}}

\put(-80,15){\tiny{$_{n}$}}

\put(-69,32){\tiny{$_{m+n}$}}


\put(30,40){\vector(3,1){30}}

\put(30,40){\line(1,1){10}}

\put(20,50){\vector(1,-1){10}}

\put(120,45){\tiny{$_{m+n-l}$}}

\put(67,45){\tiny{$_{m+n-l}$}}

\put(33,50){\tiny{$_{m}$}}

\put(120,15){\tiny{$_l$}}

\put(40,40){\tiny{$_{n-l}$}}

\qbezier(40,50)(50,60)(60,50)

\qbezier(60,10)(50,0)(90,0)

\qbezier(120,10)(130,0)(90,0)

\qbezier(20,50)(10,60)(50,60)

\qbezier(120,50)(130,60)(50,60)

\put(60,-15){$\Gamma'$}

\put(60,10){\vector(1,1){20}}

\put(60,50){\vector(1,-1){20}}

\put(80,30){\vector(1,0){20}}

\put(100,30){\line(1,1){20}}

\put(100,30){\line(1,-1){20}}

\put(81,32){\tiny{$_{m+n}$}}

\end{picture}

\caption{}\label{general-general-chi-maps-HMF-figure}

\end{figure}

\begin{proof}(Following \cite[Proposition 7.29]{Wu-color})
Similar to \cite[Lemmas 7.22 and 7.23]{Wu-color}, we can reduce $C_f(\Gamma_0)$ and $C_f(\Gamma_1)$ to Koszul matrix factorizations over the ring $\Sym(\mathbb{X}|\mathbb{Y}|\mathbb{A}|\mathbb{E}) \otimes_\C R_B$. Then it is easy to check that, as graded $R_B$-modules,
\begin{eqnarray*}
\Hom_\HMF(C_f(\Gamma_1),C_f(\Gamma_0)) & \cong & H_f(\Gamma) \left\langle m+n\right\rangle  \{q^{(m+n)(N-m-n)+mn+ml+nl-l^2}\}, \\
\Hom_\HMF(C_f(\Gamma_0),C_f(\Gamma_1)) & \cong & H_f(\overline{\Gamma}) \left\langle m+n\right\rangle \{q^{(m+n)(N-m-n)+mn+ml+nl-l^2}\}, 
\end{eqnarray*}
where $\Gamma$ is the MOY graph in Figure \ref{general-general-chi-maps-HMF-figure}, and $\overline{\Gamma}$ is $\Gamma$ with orientation reversed. Using Corollary \ref{contract-expand}, Decomposition (II) (Theorem \ref{decomp-II}) and Corollary \ref{circle-module}, we have
\[
H_f(\Gamma) \cong H_f(\overline{\Gamma}) \cong C_f(\emptyset) \left\langle m+n \right\rangle \{\qb{m+n-l}{m} \qb{m+n}{l}\qb{N}{m+n}\}.
\]
Thus, as graded $R_B$-modules,
\begin{eqnarray*}
& & \Hom_\HMF(C_f(\Gamma_1),C_f(\Gamma_0)) \\
& \cong & \Hom_\HMF(C_f(\Gamma_0),C_f(\Gamma_1)) \\
& \cong & C_f(\emptyset) \{\qb{m+n-l}{m} \qb{m+n}{l}\qb{N}{m+n} q^{(m+n)(N-m-n)+mn+ml+nl-l^2}\}. \\
\end{eqnarray*}

In particular, the lowest non-vanishing total polynomial grading of the above spaces is $ml$, and the subspaces of these spaces of homogeneous elements of total polynomial degree $ml$ are $1$-dimensional. So, to prove the proposition, we only need to show that $\chi^0$ and $\chi^1$ are homotopically non-trivial. To prove this, we use the diagram in Figure \ref{general-general-chi-maps-non-vanishing}, where $\phi_1$ and $\overline{\phi}_1$ (resp. $\phi_2$ and $\overline{\phi}_2$) are induced by the edge splitting and merging of the upper (resp. lower ) bubble, and $\chi^0$ and $\chi^1$ are the morphisms from Proposition \ref{general-general-chi-maps}.

\begin{figure}[ht]
$
\xymatrix{
\input{v-vector-m+n} \ar@<1ex>[rr]^{\phi_1\otimes\phi_2}  & &  \ar@<1ex>[ll]^{\overline{\phi}_1\otimes\overline{\phi}_2} \input{v-vector-two-bubbles}  \ar@<1ex>[rr]^{\chi^1} & & \input{v-vector-theta-shape} \ar@<1ex>[ll]^{\chi^0} \\
}
$
\caption{}\label{general-general-chi-maps-non-vanishing}

\end{figure}

Let us compute the composition 
\[
(\overline{\phi}_1\otimes\overline{\phi}_2) \circ \mathfrak{m}(S_{\lambda_{m,n}}(-\mathbb{Y}) \cdot S_{\lambda_{l,n-l}}(-\mathbb{A})) \circ \chi^0 \circ \chi^1 \circ (\phi_1\otimes\phi_2),
\]
where $\mathfrak{m}(S_{\lambda_{m,n}}(-\mathbb{Y}) \cdot S_{\lambda_{l,n-l}}(-\mathbb{A}))$ is the morphism induced by multiplication by $S_{\lambda_{m,n}}(-\mathbb{Y}) \cdot S_{\lambda_{l,n-l}}(-\mathbb{A})$. By Proposition \ref{general-general-chi-maps}, we have
\begin{eqnarray*}
&& (\overline{\phi}_1\otimes\overline{\phi}_2) \circ \mathfrak{m}(S_{\lambda_{m,n}}(-\mathbb{Y}) \cdot S_{\lambda_{l,n-l}}(-\mathbb{A})) \circ \chi^0 \circ \chi^1 \circ (\phi_1\otimes\phi_2) \\
& \simeq & (\overline{\phi}_1\otimes\overline{\phi}_2) \circ \mathfrak{m}(S_{\lambda_{m,n}}(-\mathbb{Y}) \cdot S_{\lambda_{l,n-l}}(-\mathbb{A}) \cdot (\sum_{\lambda\in\Lambda_{l,m}} (-1)^{|\lambda|} S_{\lambda'}(\mathbb{X}) S_{\lambda^c}(\mathbb{E})))  \circ (\phi_1\otimes\phi_2) \\
& = & \sum_{\lambda\in\Lambda_{l,m}} (-1)^{|\lambda|} (\overline{\phi}_1 \circ \mathfrak{m}(S_{\lambda_{m,n}}(-\mathbb{Y}) \cdot S_{\lambda'}(\mathbb{X})) \circ \phi_1) \otimes (\overline{\phi}_2 \circ \mathfrak{m}(S_{\lambda_{l,n-l}}(-\mathbb{A}) \cdot S_{\lambda^c}(\mathbb{E})) \circ \phi_2).
\end{eqnarray*}
But, by Lemma \ref{phibar-compose-phi}, we have that, for $\lambda\in\Lambda_{l,m}$,
\begin{eqnarray*}
\overline{\phi}_1 \circ \mathfrak{m}(S_{\lambda_{m,n}}(-\mathbb{Y}) \cdot S_{\lambda'}(\mathbb{X})) \circ \phi_1 & \approx & \begin{cases}
\id & \text{if } \lambda=(0\geq\cdots\geq0), \\
0 & \text{if } \lambda\neq(0\geq\cdots\geq0),
\end{cases} \\
\overline{\phi}_2 \circ \mathfrak{m}(S_{\lambda_{l,n-l}}(-\mathbb{A}) \cdot S_{\lambda^c}(\mathbb{E})) \circ \phi_2 & \approx & \begin{cases}
\id & \text{if } \lambda=(0\geq\cdots\geq0), \\
0 & \text{if } \lambda\neq(0\geq\cdots\geq0).
\end{cases} \\
\end{eqnarray*}
So, 
\begin{equation}\label{chi-non-trivial-eq}
(\overline{\phi}_1\otimes\overline{\phi}_2) \circ \mathfrak{m}(S_{\lambda_{m,n}}(-\mathbb{Y}) \cdot S_{\lambda_{l,n-l}}(-\mathbb{A})) \circ \chi^0 \circ \chi^1 \circ (\phi_1\otimes\phi_2) \approx \id,
\end{equation}
which implies that $\chi^0$ and $\chi^1$ are not homotopic to $0$.
\end{proof}

\begin{lemma}\label{varpi-0-reduce-chi}
Let $\varpi_0$ be the functor given in Lemma \ref{MOY-object-of-hmf}. Then $\varpi_0(\chi^0):C(\Gamma_0) \rightarrow C(\Gamma_1)$ and $\varpi_0(\chi^1):C(\Gamma_1) \rightarrow C(\Gamma_0)$ are the $\chi$-morphisms given in \cite[Proposition 7.20]{Wu-color}.
\end{lemma}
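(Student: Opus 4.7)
The plan is to mirror the strategy of Lemma \ref{varpi-0-reduce-edge-split}: use $\varpi_0$ to translate the composition identities from Proposition \ref{general-general-chi-maps}(ii) into corresponding identities for $\varpi_0(\chi^0)$ and $\varpi_0(\chi^1)$, and then invoke uniqueness of morphisms of the appropriate bidegree. Since $\varpi_0$ is a functor sending $\id_{C_f(\Gamma_i)}$ to $\id_{C(\Gamma_i)}$, and since the Schur polynomial $S_{\lambda_{l,m}}(\mathbb{E}-\mathbb{X}) \in \Sym(\mathbb{X}|\mathbb{E})$ involves no $B_i$ and is therefore fixed by the projection $\pi_0$, applying $\varpi_0$ to the composition identities of Proposition \ref{general-general-chi-maps}(ii) yields
\begin{align*}
\varpi_0(\chi^1)\circ\varpi_0(\chi^0) &\simeq S_{\lambda_{l,m}}(\mathbb{E}-\mathbb{X})\cdot\id_{C(\Gamma_0)},\\
\varpi_0(\chi^0)\circ\varpi_0(\chi^1) &\simeq S_{\lambda_{l,m}}(\mathbb{E}-\mathbb{X})\cdot\id_{C(\Gamma_1)}.
\end{align*}

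The right-hand sides are not null-homotopic: this is precisely the non-vanishing part of the composition identity for the $\chi$-morphisms given in \cite[Proposition 7.20]{Wu-color}. Consequently $\varpi_0(\chi^0)$ and $\varpi_0(\chi^1)$ are not null-homotopic. Moreover, since $\varpi_0$ preserves both the $\zed_2$-grading and the total polynomial grading, they are homogeneous of $\zed_2$-degree $0$ and total polynomial degree $ml$.

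Finally, the analog of Lemma \ref{general-general-chi-maps-HMF} for $C(\Gamma_0)$ and $C(\Gamma_1)$ (which is exactly what is proved in \cite[Proposition 7.29]{Wu-color} by essentially the same computation) shows that the subspaces of $\Hom_{\HMF}(C(\Gamma_0),C(\Gamma_1))$ and $\Hom_{\HMF}(C(\Gamma_1),C(\Gamma_0))$ consisting of homogeneous elements of $\zed_2$-degree $0$ and total polynomial degree $ml$ are each one-dimensional. Together with the non-null-homotopy just established, this forces $\varpi_0(\chi^0)$ and $\varpi_0(\chi^1)$ to agree up to $\approx$ with the $\chi$-morphisms of \cite[Proposition 7.20]{Wu-color}. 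The only step requiring any care is the verification that the right-hand sides of the two composition identities remain non-null-homotopic after passing to $C(\Gamma_i)$, but this is immediate from the non-triviality of the corresponding identity already established in \cite{Wu-color}.
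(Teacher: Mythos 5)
Your proof has the same skeleton as the paper's (establish that $\varpi_0(\chi^0)$ and $\varpi_0(\chi^1)$ are homogeneous of the right degree and not null-homotopic, then invoke the uniqueness statement of \cite[Proposition 7.29]{Wu-color}), but the way you establish non-triviality has a gap. From
\[
\varpi_0(\chi^1)\circ\varpi_0(\chi^0) \simeq \mathfrak{m}\bigl(S_{\lambda_{l,m}}(\mathbb{E}-\mathbb{X})\bigr),
\]
you can only conclude that $\varpi_0(\chi^0)$ and $\varpi_0(\chi^1)$ are non-trivial if you already know that multiplication by $S_{\lambda_{l,m}}(\mathbb{E}-\mathbb{X})$ is \emph{not} null-homotopic on $C(\Gamma_i)$. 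That is not part of the statement of \cite[Proposition 7.20]{Wu-color} (whose analogue here, Proposition \ref{general-general-chi-maps}, only asserts the composition identity, not the non-vanishing of its right-hand side), and it is not automatic: a positive-degree multiplication operator on a matrix factorization can perfectly well be null-homotopic, and a composite of two non-trivial morphisms can be trivial. So the sentence ``this is precisely the non-vanishing part of the composition identity'' is citing something that the cited proposition does not contain.

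The fact you need is true, but the clean way to get it is exactly what the paper does: apply $\varpi_0$ to the sandwich identity \eqref{chi-non-trivial-eq},
\[
(\overline{\phi}_1\otimes\overline{\phi}_2) \circ \mathfrak{m}(S_{\lambda_{m,n}}(-\mathbb{Y}) \cdot S_{\lambda_{l,n-l}}(-\mathbb{A})) \circ \chi^0 \circ \chi^1 \circ (\phi_1\otimes\phi_2) \approx \id,
\]
whose right-hand side becomes $\id_{C(\cdot)}$ after applying $\varpi_0$ and is manifestly non-trivial; this forces $\varpi_0(\chi^0)$ and $\varpi_0(\chi^1)$ to be non-trivial with no further input. (Equivalently, you could cite the \emph{proof} of \cite[Proposition 7.29]{Wu-color}, where the analogous sandwich identity is what establishes non-triviality of $\mathfrak{m}(S_{\lambda_{l,m}}(\mathbb{E}-\mathbb{X}))$ as a byproduct.) With that substitution your argument is correct and coincides with the paper's.
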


\begin{proof}
Note that $\varpi_0(\chi^0)$ and $\varpi_0(\chi^1)$ are homogeneous morphisms of quantum degree $ml$. Applying $\varpi_0$ to \eqref{chi-non-trivial-eq}, one can see that $\varpi_0(\chi^0)$ and $\varpi_0(\chi^1)$ are not homotopic to $0$. Then the lemma follows from \cite[Proposition 7.29]{Wu-color}.
\end{proof}

\subsection{Saddle move} We call the local change given in Figure \ref{saddle-move-figure} a saddle move. Next, we define the morphism $\eta$ induced by the saddle move.

\begin{figure}[ht]

\setlength{\unitlength}{1pt}

\begin{picture}(360,80)(-180,-15)


\put(-5,35){$\eta$}

\put(-25,30){\vector(1,0){50}}


\qbezier(-140,10)(-120,30)(-140,50)

\put(-140,50){\vector(-1,1){0}}

\qbezier(-100,10)(-120,30)(-100,50)

\put(-100,10){\vector(1,-1){0}}

\multiput(-130,30)(4.5,0){5}{\line(1,0){2}}

\put(-150,50){\small{$\mathbb{X}$}}

\put(-140,30){\tiny{$m$}}

\put(-150,10){\small{$\mathbb{A}$}}

\put(-105,30){\tiny{$m$}}

\put(-95,50){\small{$\mathbb{Y}$}}

\put(-95,10){\small{$\mathbb{E}$}}

\put(-125,-15){$\Gamma_0$}


\qbezier(100,10)(120,30)(140,10)

\put(140,10){\vector(1,-1){0}}

\qbezier(100,50)(120,30)(140,50)

\put(100,50){\vector(-1,1){0}}

\put(90,50){\small{$\mathbb{X}$}}

\put(120,45){\tiny{$m$}}

\put(90,10){\small{$\mathbb{A}$}}

\put(145,50){\small{$\mathbb{Y}$}}

\put(120,13){\tiny{$m$}}

\put(145,10){\small{$\mathbb{E}$}}

\put(115,-15){$\Gamma_1$}

\end{picture}

\caption{}\label{saddle-move-figure}

\end{figure}

\begin{lemma}\label{saddle-hmf}
Let $\Gamma_0$ and $\Gamma_1$ be the colored MOY graphs in Figure \ref{saddle-move-figure}. Then, as bigraded $R_B$-module,
\[
\Hom_{HMF}(C_f(\Gamma_0),C_f(\Gamma_1)) \cong C_f(\emptyset) \{\qb{N}{m} q^{2m(N-m)}\} \left\langle m \right\rangle.
\]
In particular, the subspace of $\Hom_{HMF}(C_f(\Gamma_0),C_f(\Gamma_1))$ of homogeneous elements of total polynomial degree $m(N-m)$ is $1$-dimensional.
\end{lemma}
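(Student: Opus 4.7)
The plan is to mimic the duality argument used in the proof of Lemma \ref{general-general-chi-maps-HMF}: realize the morphism space as the homology of a closed MOY graph with an appropriate grading shift, and then evaluate that homology using Proposition \ref{circle-module}. Mark the boundary of both $\Gamma_0$ and $\Gamma_1$ by the common alphabets $\mathbb{X},\mathbb{Y},\mathbb{A},\mathbb{E}$ so that $C_f(\Gamma_0)$ and $C_f(\Gamma_1)$ are Koszul matrix factorizations over $\Sym(\mathbb{X}|\mathbb{Y}|\mathbb{A}|\mathbb{E})\otimes_\C R_B$ with the same potential $f(\mathbb{X})+f(\mathbb{Y})-f(\mathbb{A})-f(\mathbb{E})$.

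First I would apply the standard Hom-as-tensor-with-dual identification for Koszul matrix factorizations (as invoked in the proof of Lemma \ref{general-general-chi-maps-HMF}), yielding
\[
\Hom_{\HMF}(C_f(\Gamma_0),C_f(\Gamma_1))\;\cong\;H_f(\Gamma)\,\langle m\rangle\,\{q^{2m(N-m)}\}
\]
as graded $R_B$-modules, where $\Gamma$ is the closed MOY graph formed by reversing the orientation of $\Gamma_0$ and gluing it to $\Gamma_1$ along the four marked alphabets. Tracing the resulting oriented closed graph one checks that the two vertical strands of $\overline{\Gamma_0}$ together with the top arc and bottom arc of $\Gamma_1$ concatenate into a single simple circle $\bigcirc_m$ colored by $m$; the shift $q^{2m(N-m)}$ arises from the two width-$m$ strands of $\Gamma_0$, each contributing $q^{m(N-m)}$ under the duality, while the $\zed_2$-shift $\langle m\rangle$ is the usual one for a width-$m$ closure.

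Next I would apply Proposition \ref{circle-module} to get $H_f(\bigcirc_m)\cong C_f(\emptyset)\,\{\qb{N}{m}\}\,\langle m\rangle$. Substituting and using $\langle m\rangle\langle m\rangle=\langle 0\rangle$ in $\zed_2$ (so the total $\zed_2$-shift collapses correctly — I need to be careful here and compare with the statement, which keeps a single $\langle m\rangle$, which simply records the overall $\zed_2$-parity of the morphism space), one obtains
\[
\Hom_{\HMF}(C_f(\Gamma_0),C_f(\Gamma_1))\;\cong\;C_f(\emptyset)\,\{\qb{N}{m}\,q^{2m(N-m)}\}\,\langle m\rangle.
\]
The final statement about the degree-$m(N-m)$ subspace follows immediately: the lowest-degree monomial in $\qb{N}{m}$ is $q^{-m(N-m)}$ with coefficient $1$, so after the shift $q^{2m(N-m)}$ the lowest nonzero total polynomial grading in the above module is $m(N-m)$, and in that degree $R_B=C_f(\emptyset)$ contributes only its $1$-dimensional degree-$0$ piece $\C$.

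The main obstacle is the bookkeeping of the grading shift in the Hom-duality step: one must carefully combine the explicit column-by-column form of the Koszul matrix factorizations $C_f(\Gamma_0)$ and $C_f(\Gamma_1)$ (both over $\tilde R_\partial$) with the rules for dualizing and tensoring matrix factorizations, and verify that the total polynomial shift is precisely $q^{2m(N-m)}$. This is a direct analogue of the computation performed in the proof of Lemma \ref{general-general-chi-maps-HMF}, and since the right columns of $C_f$ agree with those of $C$ it can equally well be checked by applying $\varpi_0$ and comparing with the corresponding shift in \cite{Wu-color}.
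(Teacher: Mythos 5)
Your approach is the same as the paper's, which simply observes $\Hom_{\HMF}(C_f(\Gamma_0),C_f(\Gamma_1))\cong C_f(\bigcirc_m)\{q^{2m(N-m)}\}$ for the circle with four marked points and then invokes Proposition \ref{circle-module}. The one point you flag but resolve incorrectly is the $\zed_2$-shift: in the duality step the shift is $\left\langle 2m\right\rangle=\left\langle 0\right\rangle$ (half the total boundary color, as in Lemmas \ref{bouquet-move-lemma} and \ref{general-general-chi-maps-HMF}), not $\left\langle m\right\rangle$, so the single $\left\langle m\right\rangle$ in the answer comes entirely from Proposition \ref{circle-module} and nothing needs to "cancel."
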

\begin{proof}
Let $\bigcirc_m$ be a circle colored by $m$ (with $4$ marked points.) One can see that $\Hom(C_f(\Gamma_0),C_f(\Gamma_1))\cong C_f(\bigcirc_m) \{q^{2m(N-m)}\}$. The lemma follows from this and Proposition \ref{circle-module}.
\end{proof}

\begin{definition}
Let $\Gamma_0$ and $\Gamma_1$ be the colored MOY graphs in Figure \ref{saddle-move-figure}. Associate to the saddle move $\Gamma_0 \leadsto \Gamma_1$ a homogeneous morphism
\[
\eta : C_f(\Gamma_0) \rightarrow C_f(\Gamma_1)
\]
of total polynomial degree $m(N-m)$ not homotopic to $0$. By Lemma \ref{saddle-hmf}, $\eta$ is well defined up to homotopy and scaling, and $\deg_{\zed_2} \eta=m$. 
\end{definition}

Next we give the two composition formulas for $\eta$. 

Note that, in the proof of the First Composition Formula for $C(\Gamma)$ \cite[Proposition 7.36]{Wu-color}, only the right columns of the Koszul matrix factorizations associated to MOY graphs are explicitly used. So that proof applies to the First Composition Formula for $C_f(\Gamma)$ (Proposition \ref{creation+compose+saddle} below) without any visible changes.

\begin{figure}[ht]

\setlength{\unitlength}{1pt}

\begin{picture}(360,70)(-180,-10)
\put(-110,30){\tiny{$m$}}

\put(-100,0){\vector(0,1){60}}

\put(-65,35){$\iota$}

\put(-80,30){\vector(1,0){40}}

\put(-103,-10){$\Gamma$}


\put(-20,30){\tiny{$m$}}

\put(-10,0){\vector(0,1){60}}

\put(12,48){\tiny{$m$}}

\put(15,30){\oval(20,30)}

\put(25,35){\vector(0,1){0}}

\multiput(-10,30)(5,0){3}{\line(1,0){3}}

\put(0,-10){$\Gamma'$}

\put(45,35){$\eta$}

\put(30,30){\vector(1,0){40}}


\put(90,55){\vector(0,1){5}}

\qbezier(90,55)(90,45)(100,45)

\qbezier(100,45)(110,45)(110,30)

\qbezier(110,30)(110,15)(100,15)

\qbezier(100,15)(90,15)(90,5)

\put(90,0){\line(0,1){5}}

\put(115,30){\tiny{$m$}}

\put(90,-10){$\Gamma$}

\end{picture}

\caption{}\label{creation+saddle+figure}

\end{figure}

\begin{proposition}\label{creation+compose+saddle}
Let $\Gamma$ and $\Gamma'$ be the colored MOY graphs in Figure \ref{creation+saddle+figure}, $\iota:C_f(\Gamma)\rightarrow C_f(\Gamma')$ the morphism associated to the circle creation and $\eta:C_f(\Gamma')\rightarrow C_f(\Gamma)$ the morphism associated to the saddle move. Then $\eta\circ \iota \approx \id_{C_f(\Gamma)}$.
\end{proposition}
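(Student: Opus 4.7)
The plan is to show $\eta \circ \iota \simeq c \cdot \id_{C_f(\Gamma)}$ for some $c \in \C$ by a one-dimensional Hom computation, and then to show $c \neq 0$ by importing the homotopy from the unequivariant case \cite[Proposition 7.36]{Wu-color}, exploiting the fact (noted in the paragraph preceding this proposition) that its proof uses only the right columns of the Koszul matrix factorizations, which are identical for $C$ and $C_f$.

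For the first step, I observe that $\eta \circ \iota$ has $\zed_2$-grading $0$ (since $\deg_{\zed_2}\iota = \deg_{\zed_2}\eta = m$) and total polynomial grading $-m(N-m) + m(N-m) = 0$, so it lives in the bi-degree $(0,0)$ part of $\Hom_{\hmf}(C_f(\Gamma), C_f(\Gamma))$. Closing $\Gamma$ off to the colored circle $\bigcirc_m$ and invoking Proposition \ref{circle-module}, this Hom space is isomorphic, as a bigraded $R_B$-module, to $C_f(\emptyset)\{\qb{N}{m}\}$ with the bigrading shifts that place $\id_{C_f(\Gamma)}$ in bi-degree $(0,0)$. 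Since $R_B = \C[B_1, \dots, B_N]$ has no elements of negative total polynomial degree, the bi-degree $(0,0)$ subspace is precisely $\C \cdot \id_{C_f(\Gamma)}$. Hence $\eta \circ \iota \simeq c \cdot \id_{C_f(\Gamma)}$ for some $c \in \C$, and the task reduces to showing $c \neq 0$.

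For the second step, I would follow the strategy flagged before the proposition. The proof of \cite[Proposition 7.36]{Wu-color} picks specific cycle representatives of $\iota_C$ and $\eta_C$ and constructs an explicit contracting homotopy witnessing $\eta_C \circ \iota_C \simeq \id_{C(\Gamma)}$, all in terms of the right-column entries $\{X_i - Y_i\}$ of the Koszul matrix factorizations $C(\Gamma)$, $C(\Gamma')$. Because these right columns are literally the same as those of $C_f(\Gamma)$, $C_f(\Gamma')$, the very same formulas define morphisms over $\tilde{R} = \Sym(\mathbb{X}|\cdots) \otimes_\C R_B$ and yield a homotopy $\eta \circ \iota \simeq \id_{C_f(\Gamma)}$, forcing $c \neq 0$. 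The main (mostly bookkeeping) obstacle is to verify that the homotopy formulas remain $\tilde{R}$-linear and chain-level with respect to the modified differentials of $C_f$; this holds because the differentials of $C_f$ differ from those of $C$ only by $R_B$-linear modifications of the \emph{left} columns (the choice of the $U_i$'s), which right-column-only constructions do not interact with. Combining the uniqueness from the first step with this explicit homotopy yields $\eta \circ \iota \approx \id_{C_f(\Gamma)}$.
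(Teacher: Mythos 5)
Your proposal is correct and matches the paper's argument: the paper's entire proof is to cite \cite[Proposition 7.36]{Wu-color}, justified by the observation (stated just before the proposition) that the unequivariant proof uses only the right columns of the Koszul matrix factorizations, which coincide for $C$ and $C_f$ — exactly your second step. Your first step (the one-dimensionality of the bi-degree $(0,0)$ part of $\Hom_{\hmf}(C_f(\Gamma),C_f(\Gamma))$) is a harmless but redundant addition, since $\iota$ and $\eta$ are each only defined up to homotopy and scaling, so transporting the explicit homotopy already yields $\eta\circ\iota\approx\id_{C_f(\Gamma)}$ for all choices of representatives.
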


\begin{proof}
See the proof of \cite[Proposition 7.36]{Wu-color} in \cite[Subsection 7.9]{Wu-color}.
\end{proof}

Now we consider the Second Composition Formula. The proof of the Second Composition Formula for $C(\Gamma)$ \cite[Proposition 7.41]{Wu-color} is very complex and involves the left column of the Koszul matrix factorization $C(\Gamma)$. So, a direct generalization of that proof would be very complex and require many not-so-easy modifications. Fortunately, there is a simple proof of the Second Composition Formula for $C_f(\Gamma)$ (Proposition \ref{saddle+compose+annihilation} below) based on Lemma \ref{reduce-base-homotopic-equivalence}, Proposition \ref{creation+compose+saddle} and the Second Composition Formula for $C(\Gamma)$. We need the following lemma.

\begin{lemma}\label{varpi-0-reduce-saddle}
Let $\varpi_0$ be the functor given in Lemma \ref{MOY-object-of-hmf}, $\Gamma_0$, $\Gamma_1$ be the colored MOY graphs in Figure \ref{saddle-move-figure}, and $\eta : C_f(\Gamma_0) \rightarrow C_f(\Gamma_1)$ the morphism associated to the saddle move. Then $\varpi_0(\eta): C(\Gamma_0) \rightarrow C(\Gamma_1)$ is the morphism associated to the saddle move defined in \cite[Definition 7.34]{Wu-color}.
\end{lemma}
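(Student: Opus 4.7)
The plan is to imitate the pattern used for all the previous \emph{varpi-0-reduce} lemmas (Lemmas \ref{varpi-0-reduce-bouquet}, \ref{varpi-0-reduce-circle}, \ref{varpi-0-reduce-edge-split}, \ref{varpi-0-reduce-chi}): check that $\varpi_0(\eta)$ lives in the right homogeneous component of the morphism space between $C(\Gamma_0)$ and $C(\Gamma_1)$, and then verify it is not null-homotopic. Since the saddle morphism in \cite[Definition 7.34]{Wu-color} is characterized uniquely up to homotopy and scaling as a non-null-homotopic homogeneous morphism of total polynomial degree $m(N-m)$ (this is the uncolored analogue of Lemma \ref{saddle-hmf} proved in \cite{Wu-color}), these two checks suffice.

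The grading check is immediate: $\eta$ is homogeneous of $\zed_2$-degree $m$ and total polynomial degree $m(N-m)$, and $\varpi_0$ is defined by reducing modulo $\mathfrak{I}_B$, which preserves the $\zed_2$-grading and the total polynomial grading on the underlying matrix factorization. Hence $\varpi_0(\eta):C(\Gamma_0)\rightarrow C(\Gamma_1)$ is homogeneous of the required bidegree.

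For the non-triviality step, I would apply the functor $\varpi_0$ to the First Composition Formula (Proposition \ref{creation+compose+saddle}): since $\eta\circ\iota\approx \id_{C_f(\Gamma)}$ for $\Gamma$ as in Figure \ref{creation+saddle+figure}, functoriality of $\varpi_0$ yields
\[
\varpi_0(\eta)\circ\varpi_0(\iota)\approx \id_{C(\Gamma)}.
\]
By Lemma \ref{varpi-0-reduce-circle}, $\varpi_0(\iota)$ is the circle creation morphism of \cite[Definition 7.7]{Wu-color}. Because $C(\Gamma)$ is not null-homotopic (it is, after all, homotopic to $C_f(\emptyset)\{\qb{N}{m}\}\langle m\rangle$ after reduction, which computes to a nonzero graded vector space), the identity $\id_{C(\Gamma)}$ is not null-homotopic, so $\varpi_0(\eta)$ cannot be null-homotopic either.

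Finally, invoking the uniqueness (up to homotopy and scaling) of a homotopically non-trivial homogeneous morphism $C(\Gamma_0)\rightarrow C(\Gamma_1)$ of total polynomial degree $m(N-m)$ — exactly the content of \cite[Definition 7.34]{Wu-color} together with the uncolored analogue of Lemma \ref{saddle-hmf} — we conclude that $\varpi_0(\eta)$ agrees, up to homotopy and a nonzero scalar, with the saddle morphism defined in \cite[Definition 7.34]{Wu-color}. I do not expect any substantial obstacle here: the entire argument is a two-line application of functoriality plus the uniqueness characterization, and the only mildly nontrivial input is that the First Composition Formula in the equivariant setting has already been established in Proposition \ref{creation+compose+saddle}, so no further matrix computation is required.
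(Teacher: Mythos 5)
Your proposal is correct and follows exactly the paper's argument: the paper likewise notes that $\varpi_0(\eta)$ is homogeneous of quantum degree $m(N-m)$, applies $\varpi_0$ to the First Composition Formula $\eta\circ\iota\approx\id_{C_f(\Gamma)}$ from Proposition \ref{creation+compose+saddle} to conclude $\varpi_0(\eta)\circ\varpi_0(\iota)\approx\id_{C(\Gamma)}$ and hence non-triviality, and then invokes the uniqueness characterization from \cite[Lemma 7.33]{Wu-color}. No gaps.
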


\begin{proof}
Note that $\varpi_0(\eta)$ is homogeneous of quantum degree $m(N-m)$. According to \cite[Lemma 7.33]{Wu-color}, to prove the lemma, we only need to show that $\varpi_0(\eta)$ is not null-homotopic. In the setup of Proposition \ref{creation+compose+saddle}, apply $\varpi_0$ to $\eta\circ \iota \approx \id_{C_f(\Gamma)}$. We get $\varpi_0(\eta)\circ \varpi_0(\iota) \approx \varpi_0(\id_{C_f(\Gamma)})=\id_{C(\Gamma)}$. This implies that $\varpi_0(\eta)$ is not null-homotopic.
\end{proof}

\begin{figure}[ht]

\setlength{\unitlength}{1pt}

\begin{picture}(360,70)(-180,-10)


\put(-110,55){\vector(0,1){5}}

\qbezier(-110,55)(-110,45)(-100,45)

\qbezier(-100,45)(-90,45)(-90,30)

\qbezier(-90,30)(-90,15)(-100,15)

\qbezier(-100,15)(-110,15)(-110,5)

\put(-110,0){\line(0,1){5}}

\multiput(-100,15)(0,5){6}{\line(0,1){3}}

\put(-85,30){\tiny{$m$}}

\put(-55,35){$\eta$}

\put(-70,30){\vector(1,0){40}}

\put(-103,-10){$\Gamma$}


\put(-20,30){\tiny{$m$}}

\put(-10,0){\vector(0,1){60}}

\put(2,48){\tiny{$m$}}

\put(5,30){\oval(20,30)}

\put(45,35){$\varepsilon$}

\put(30,30){\vector(1,0){40}}

\put(-5,-10){$\Gamma'$}


\put(90,30){\tiny{$m$}}

\put(100,0){\vector(0,1){60}}

\put(97,-10){$\Gamma$}

\end{picture}

\caption{}\label{saddle+annihilation+figure}

\end{figure}

\begin{proposition}\label{saddle+compose+annihilation}
Let $\Gamma$ and $\Gamma'$ be the MOY graphs in Figure \ref{saddle+annihilation+figure}, $\eta:C_f(\Gamma)\rightarrow C_f(\Gamma')$ the morphism associated to the saddle move and $\epsilon:C_f(\Gamma')\rightarrow C_f(\Gamma)$ the morphism associated to circle annihilation. Then $\epsilon \circ \eta \approx \id_{C_f(\Gamma)}$.
\end{proposition}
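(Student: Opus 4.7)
The plan is to recognize $\epsilon\circ\eta$ as a scalar multiple of $\id_{C_f(\Gamma)}$ by a Hom-space computation, and then use the functor $\varpi_0$ together with the Second Composition Formula over $C(\Gamma)$ to show that this scalar is nonzero. Note first that both $\epsilon\circ\eta$ and $\id_{C_f(\Gamma)}$ are homogeneous endomorphisms of $C_f(\Gamma)$ of $\zed_2$-degree $0$ and total polynomial degree $0$, since $\deg\eta = m(N-m) = -\deg\epsilon$.

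For the Hom-space computation, I would argue as in Lemma~\ref{saddle-hmf}: using Lemma~\ref{edge-contraction} (or Corollary~\ref{contract-expand}) to reduce $\Gamma$ glued to its reverse to a circle, and then invoking Proposition~\ref{circle-module}, one obtains
\[
\End_{\HMF}(C_f(\Gamma)) \cong C_f(\bigcirc_m) \simeq C_f(\emptyset)\{\qb{N}{m}\}\langle m\rangle.
\]
A grading count then shows that the subspace of bidegree $(0,0)$ on the right is one-dimensional over $\C$, spanned (up to homotopy) by $\id_{C_f(\Gamma)}$. Hence $\epsilon\circ\eta \simeq c\cdot\id_{C_f(\Gamma)}$ for some $c\in\C$.

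To show $c\neq 0$, I would apply the functor $\varpi_0$ of Lemma~\ref{MOY-object-of-hmf}. By Lemmas~\ref{varpi-0-reduce-saddle} and~\ref{varpi-0-reduce-circle}, $\varpi_0(\eta)$ and $\varpi_0(\epsilon)$ coincide, up to homotopy and scaling, with the saddle and circle-annihilation morphisms of \cite{Wu-color}. The Second Composition Formula \cite[Proposition~7.41]{Wu-color} for $C(\Gamma)$ yields $\varpi_0(\epsilon)\circ\varpi_0(\eta) \approx \id_{C(\Gamma)}$, while applying $\varpi_0$ to $\epsilon\circ\eta\simeq c\cdot\id_{C_f(\Gamma)}$ gives $\varpi_0(\epsilon\circ\eta)\simeq c\cdot\id_{C(\Gamma)}$. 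Comparing and using that $C(\Gamma)$ is not null-homotopic (so $\id_{C(\Gamma)}\not\simeq 0$), we conclude $c\neq 0$, and therefore $\epsilon\circ\eta\approx\id_{C_f(\Gamma)}$.

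The main technical obstacle is the Hom-space computation, specifically verifying that the bidegree-$(0,0)$ piece of $\End_{\HMF}(C_f(\Gamma))$ is precisely one-dimensional over $\C$ and that $\id_{C_f(\Gamma)}$ generates it. This requires carefully tracking the $\zed_2$ and quantum grading shifts (notably the $\langle m\rangle$ shift coming from Proposition~\ref{circle-module}) through the chain of identifications. Once this is secured, the remainder is a routine translation via $\varpi_0$, exactly analogous to the proofs of Lemmas~\ref{varpi-0-reduce-bouquet} through~\ref{varpi-0-reduce-saddle}.
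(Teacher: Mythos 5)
Your argument is essentially the paper's proof with the two steps in the opposite order: the paper first applies $\varpi_0$ and \cite[Proposition 7.41]{Wu-color} (via Lemmas \ref{varpi-0-reduce-circle} and \ref{varpi-0-reduce-saddle}) to conclude that $\epsilon\circ\eta$ is a homotopy equivalence, and then uses the computation $\Hom_\HMF(C_f(\Gamma),C_f(\Gamma))\cong C_f(\emptyset)\{q^{m(N-m)}\qb{N}{m}\}$ from Lemma \ref{edge-splitting-lemma} to see that $\Hom_\hmf(C_f(\Gamma),C_f(\Gamma))\cong\C$ is spanned by $\id_{C_f(\Gamma)}$, whereas you first pin down $\epsilon\circ\eta\simeq c\cdot\id$ and then use $\varpi_0$ to show $c\neq 0$; both orderings work with the same ingredients. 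One bookkeeping correction: your formula $\Hom_\HMF(C_f(\Gamma),C_f(\Gamma))\cong C_f(\emptyset)\{\qb{N}{m}\}\left\langle m\right\rangle$ is missing the overall shift $\{q^{m(N-m)}\}$ and retains a spurious $\left\langle m\right\rangle$ (which cancels against the $\left\langle m\right\rangle$ in Proposition \ref{circle-module}); as written its bidegree-$(0,0)$ part would be $0$ for odd $m$, so the shifts must be tracked as in Lemma \ref{edge-splitting-lemma} to get the one-dimensionality you assert.
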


\begin{proof}
By Lemmas \ref{varpi-0-reduce-circle}, \ref{varpi-0-reduce-saddle} and \cite[Proposition 7.41]{Wu-color}, we have that 
\[
\varpi_0(\epsilon \circ \eta) = \varpi_0(\epsilon) \circ \varpi_0(\eta) \approx \id_{C(\Gamma)}.
\]
So, by Lemma \ref{reduce-base-homotopic-equivalence} $\epsilon \circ \eta: C_f(\Gamma) \rightarrow C_f(\Gamma)$ is a homotopy equivalence of matrix factorizations. Note that $\epsilon \circ \eta$ is homogeneous of total polynomial degree $0$ and $\zed_2$-degree $0$. From Lemma \ref{edge-splitting-lemma}, we have 
\[
\Hom_\HMF(C_f(\Gamma),C_f(\Gamma)) \cong C_f(\emptyset) \{q^{m(N-m)}\qb{N}{m}\}.
\]
So $\Hom_\hmf(C_f(\Gamma),C_f(\Gamma)) \cong \C$ and is spanned by $\id_{C_f(\Gamma)}$. (Otherwise, $\id_{C_f(\Gamma)} \simeq 0$, which implies $C_f(\Gamma) \simeq 0$ and $\Hom_\HMF(C_f(\Gamma),C_f(\Gamma)) \cong 0$, a contradiction.) This implies that $\epsilon \circ \eta \approx \id_{C_f(\Gamma)}$.
\end{proof}

\subsection{Summary} We call the bouquet move, circle creation and annihilation, edge splitting and merging, the saddle move and the local changes corresponding to the $\chi$-maps basic local changes of MOY graphs. Each of these induces morphisms of matrix factorizations $C_f$ and $C$. We have shown that the functor $\varpi_0$ defined in Lemma \ref{MOY-object-of-hmf} changes the morphism of $C_f$ induced by a basic local change to the morphism of $C$ induces by the same basic local change. For later reference, we state this in the following proposition.

\begin{proposition}\label{basic-changes-varpi-0}
Suppose $\Gamma$ and $\Gamma'$ are MOY graphs which differ by a basic local change, and $\alpha:C_f(\Gamma) \rightarrow C_f(\Gamma')$ is the morphism induced by this basic local change as defined in this section. Let $\varpi_0$ be the functor given in Lemma \ref{MOY-object-of-hmf}. Then $\varpi_0(\alpha) :C(\Gamma) \rightarrow C(\Gamma')$ is the morphism induced by this basic local change as defined in \cite[Section 7]{Wu-color}.
\end{proposition}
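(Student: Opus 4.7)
The plan is to observe that Proposition \ref{basic-changes-varpi-0} is essentially a packaging statement: its content for each of the six types of basic local change has already been verified individually in this section. Concretely, Lemma \ref{varpi-0-reduce-bouquet} handles the bouquet move, Lemma \ref{varpi-0-reduce-circle} handles circle creation and annihilation, Lemma \ref{varpi-0-reduce-edge-split} handles edge splitting and merging, Lemma \ref{varpi-0-reduce-chi} handles the $\chi$-morphisms, and Lemma \ref{varpi-0-reduce-saddle} handles the saddle move. So the proof of the proposition is simply to enumerate the types of basic local change and cite the corresponding lemma in each case.

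Under the hood, each of those lemmas follows the same two-step pattern, which is worth recording explicitly in the proof. First, one uses the definition of $\varpi_0$ and the fact that $\varpi_0$ preserves both the $\zed_2$-grading and the total polynomial grading to verify that $\varpi_0(\alpha)$ is a homogeneous morphism $C(\Gamma) \to C(\Gamma')$ of the correct bidegree (matching the degree data used to characterize the morphism in \cite[Section 7]{Wu-color}). Second, one checks that $\varpi_0(\alpha)$ is not null-homotopic; this is done either directly from a composition identity (e.g.\ Corollary \ref{iota-epsilon-composition}, Lemma \ref{phibar-compose-phi}, or equation \eqref{chi-non-trivial-eq}) by applying the functor $\varpi_0$ and using that $\varpi_0(\id)=\id$ together with $\id_{C(\Gamma)}\not\simeq 0$, or, as in the saddle case, by deducing it from a previously established composition formula via Proposition \ref{creation+compose+saddle}. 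The conclusion then follows from the uniqueness (up to homotopy and scaling) statements in \cite[Section 7]{Wu-color}, which pin down the target morphism of $C$ by its bidegree together with the fact that it is not null-homotopic.

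The proof is thus a short summary: for each basic local change, apply the corresponding lemma above. There is no real obstacle left, since all of the nontrivial verifications (that the relevant $\Hom$ spaces in $\hmf$ are one-dimensional in the prescribed bidegree, and that $\varpi_0(\alpha)$ lands in this one-dimensional subspace nontrivially) have already been carried out in the preceding subsections. The only thing to be careful about is making sure that the list of ``basic local changes'' in the statement really is exhausted by the six cases already treated; this is true by the definition of basic local change given at the start of Subsection ``Summary,'' so the proof reduces to
\[
\text{Proposition \ref{basic-changes-varpi-0}} \;\Longleftarrow\; \text{Lemmas \ref{varpi-0-reduce-bouquet}, \ref{varpi-0-reduce-circle}, \ref{varpi-0-reduce-edge-split}, \ref{varpi-0-reduce-chi}, \ref{varpi-0-reduce-saddle}.}
\]
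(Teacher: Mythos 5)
Your proposal matches the paper's proof exactly: the proposition is proved by citing Lemmas \ref{varpi-0-reduce-bouquet}, \ref{varpi-0-reduce-circle}, \ref{varpi-0-reduce-edge-split}, \ref{varpi-0-reduce-chi} and \ref{varpi-0-reduce-saddle}, one for each basic local change. Your additional commentary on the common two-step pattern (degree check plus non-triviality of $\varpi_0(\alpha)$) correctly describes how those lemmas were established but is not needed for the proposition itself.
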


\begin{proof}
This follows from Lemmas \ref{varpi-0-reduce-bouquet}, \ref{varpi-0-reduce-circle}, \ref{varpi-0-reduce-edge-split}, \ref{varpi-0-reduce-chi} and \ref{varpi-0-reduce-saddle}.
\end{proof}

\section{Direct Sum Decompositions (I), (III), (IV), (V)}\label{sec-decomps}

Using the morphisms defined in Section \ref{sec-some-morph}, we are ready to prove Direct sum decompositions (I), (III), (IV) and (V). The proofs are mostly straightforward adaptations of those in \cite{Wu-color}.

\subsection{Direct sum decomposition (I)} 

\begin{theorem}[Direction Sum Decomposition (I)]\label{decomp-I}
\[
C_f(\setlength{\unitlength}{.75pt}
\begin{picture}(60,40)(-30,40)
\put(0,0){\vector(0,1){30}}
\put(0,30){\vector(0,1){20}}
\put(0,50){\vector(0,1){30}}

\put(-1,40){\line(1,0){2}}

\qbezier(0,30)(25,20)(25,30)
\qbezier(0,50)(25,60)(25,50)
\put(25,50){\vector(0,-1){20}}

\put(5,75){\tiny{$_{m}$}}
\put(5,5){\tiny{$_{m}$}}
\put(-30,38){\tiny{$_{m+n}$}}
\put(14,60){\tiny{$_{n}$}}
\end{picture}) \simeq C_f(\setlength{\unitlength}{.75pt}
\begin{picture}(40,40)(-20,40)
\put(0,0){\vector(0,1){80}}
\put(5,75){\tiny{$_{m}$}}
\end{picture})\{ \qb{N-m}{n}\}\left\langle n \right\rangle. \vspace{30pt}
\]
\end{theorem}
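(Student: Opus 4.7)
The plan is to translate the proof of the corresponding non-equivariant Decomposition (I) from \cite[Section 11]{Wu-color} to the equivariant setting. The guiding observation, exploited throughout this paper, is that only the left columns of the underlying Koszul matrix factorizations change when passing from $C$ to $C_f$, while the right columns remain the same. Thus the reduction strategy of \cite{Wu-color} carries over once the contraction lemmas there are replaced with our Lemma \ref{nice-a-contract}, which is tailored to handle the enlarged base $\hat{R} = R_B$ through the nice-pair formalism of Definition \ref{def-nice-pair}.

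First, mark $\Gamma$ so that its two $m$-colored strands bear $\mathbb{X}$ at the top and $\mathbb{Y}$ at the bottom, and so that the $n$-colored bubble edge bears a single alphabet $\mathbb{A}$ of size $n$. Using Lemma \ref{edge-contraction} to dispose of any auxiliary marking on the internal $(m+n)$-edge, we express $C_f(\Gamma)$ as a Koszul matrix factorization over $\tilde{R} := \Sym(\mathbb{X}|\mathbb{Y}|\mathbb{A}) \otimes_\C R_B$. Elementary row operations on its right column, combined with the identity $e_i(\mathbb{X}\cup\mathbb{A}) - e_i(\mathbb{Y}\cup\mathbb{A}) = \sum_{j=0}^{i} e_{i-j}(\mathbb{A})(e_j(\mathbb{X}) - e_j(\mathbb{Y}))$, bring the right column into the form $\{X_i - Y_i\}_{i=1}^{m}$ together with $n$ further entries that, modulo $\{X_i - Y_i\}$, depend only on $\mathbb{X}$ and $\mathbb{A}$. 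The first $m$ entries already reproduce the right column of $C_f(\Gamma_1)$.

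Next, use Lemma \ref{power-derive-complete} to choose the left-column entries on the final $n$ rows so that, modulo $\{X_i - Y_i\}$, they become $a_j = (-1)^{m+j+1}(N+1)\, h_{N+1-m-j}(\mathbb{X}\cup\mathbb{A} - \mathbb{B})$ for $j = 1, \ldots, n$, where $\mathbb{B}$ is an alphabet of $N$ indeterminates identified with $R_B$ via $e_k(\mathbb{B}) = B_k$. An adaptation of Theorem \ref{part-symm-str-alter}, viewing $\mathbb{A}$ as the varying size-$n$ alphabet while absorbing $\mathbb{X}$ into the base (formally replacing $\mathbb{B}$ by the virtual alphabet $\mathbb{B} - \mathbb{X}$ in the statement of that theorem), shows that $\{a_1, \ldots, a_n\}$ is a regular sequence in $\Sym(\mathbb{X}|\mathbb{A}|\mathbb{B})/(\mathbb{X}\cup\mathbb{A})$ and that the quotient $\Sym(\mathbb{X}|\mathbb{A}|\mathbb{B})/(a_1,\dots,a_n)$ is graded-free of rank $\qb{N-m}{n}$ over $\Sym(\mathbb{X})\otimes_\C R_B$. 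Hence the pair satisfies the nice-pair hypothesis.

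Applying Lemma \ref{nice-a-contract} with $\ell = n$ to these rows collapses the matrix factorization to $C_f(\Gamma_1)$ tensored over $\Sym(\mathbb{X})\otimes_\C R_B$ with the rank-$\qb{N-m}{n}$ free module above, producing the factor $\{\qb{N-m}{n}\}$ and the homological shift $\langle n \rangle$ from the contraction. The quantum shifts assembled from Lemma \ref{nice-a-contract} together with the intrinsic vertex shifts $\{q^{-mn}\}$ at each of the two vertices cancel as in \cite[Section 11]{Wu-color}, leaving precisely the shift $\{\qb{N-m}{n}\}\langle n\rangle$ in the statement. The main obstacle is the first half of Step 3: verifying the nice-pair hypothesis over the asymmetric base $\Sym(\mathbb{X})\otimes_\C R_B$. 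Once this is established by a Sylvester-type argument essentially identical to that of \cite[Section 11]{Wu-color}, the remaining shift bookkeeping is routine.
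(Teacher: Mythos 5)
Your route is genuinely different from the paper's. The paper never manipulates the Koszul data of the bubble graph directly: it first proves the special case $n=N-m$ (Lemma \ref{decomp-I-special}) purely with the morphism calculus of Section \ref{sec-some-morph} ($\psi=\chi^0\circ\iota$, $\overline{\psi}=\epsilon\circ\chi^1$, the composition formulas, and the functor $\varpi_0$ together with \cite[Lemmas 3.14 and 5.15]{Wu-color}), and then obtains the general case by saturating the bubble to total width $N$ and chaining Lemma \ref{decomp-I-special}, Corollary \ref{contract-expand} and Decomposition (II) through the graphs $\Gamma_1\leadsto\Gamma_3\leadsto\Gamma_4\leadsto\Gamma_5\leadsto\Gamma_0$ of Figure \ref{decomp-I-proof-figure}. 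Your direct contraction is instead the natural extension of the proof of Proposition \ref{circle-module}; it would give a self-contained computation, but the paper's detour exists precisely to avoid the algebra you then have to confront, and as written your argument has two real gaps.

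First, the nice-pair hypothesis. You need $\Sym(\mathbb{X}|\mathbb{A}|\mathbb{B})/(h_{N-m}(\mathbb{X}\cup\mathbb{A}-\mathbb{B}),\dots,h_{N-m-n+1}(\mathbb{X}\cup\mathbb{A}-\mathbb{B}))$ to be graded-free over $\Sym(\mathbb{X}|\mathbb{B})$ of graded rank $q^{n(N-m-n)}\qb{N-m}{n}$. Theorem \ref{part-symm-str-alter} is stated only for an honest alphabet in the second slot; ``formally replacing $\mathbb{B}$ by the virtual alphabet $\mathbb{B}-\mathbb{X}$'' is not a proof, and no such relative version is established anywhere in the paper, so you must supply one. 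Second, you cannot ``choose'' the left entries of the last $n$ rows. Once the two vertices are glued, the right column $\{e_i(\mathbb{X}\cup\mathbb{A})-e_i(\mathbb{Y}\cup\mathbb{A})\}_{i=1}^{m+n}$ is no longer a regular sequence (every entry lies in the ideal $(X_1-Y_1,\dots,X_m-Y_m)$), so \cite[Lemma 2.18]{Wu-color} does not apply; the left entries are pinned down by the construction and must be tracked through the row operations. What one actually finds on the rows with vanishing right entry is $(-1)^{m+j+1}(N+1)h_{N+1-m-j}(\mathbb{X}\cup\mathbb{A}-\mathbb{B})$ only modulo the ideal generated by the surviving entries $X_i-Y_i$, whereas Lemma \ref{nice-a-contract} must be applied to the true entries; you therefore still need to show that the quotient by the corrected sequence is free of the same graded rank (a graded Nakayama/flatness argument works, but it is missing). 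Finally, a bookkeeping slip: only the splitting vertex contributes $\{q^{-mn}\}$ (the merging vertex has a single outgoing edge, so $k=1$ in Definition \ref{MOY-mf-def} and its shift is trivial); counting $\{q^{-mn}\}$ ``at each of the two vertices'' would leave a spurious $q^{-mn}$ in your final answer.
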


\begin{figure}[ht]
$
\xymatrix{
\input{v-vector-m} \ar@<1ex>[rr]^{\iota}  & &  \ar@<1ex>[ll]^{\epsilon} \input{v-vector-m-circle-n}  \ar@<1ex>[rr]^{\chi^0} & & \input{v-vector-m-loop-n} \ar@<1ex>[ll]^{\chi^1} \\
}
$
\caption{}\label{loop-addition-explicit}

\end{figure}

\begin{lemma}\label{decomp-I-special}
Consider the MOY graphs and morphisms in Figure \ref{loop-addition-explicit}. Define $\psi = \chi^0 \circ \iota$ and $\overline{\psi} = \epsilon \circ \chi^1$. Then $\psi$ and $\overline{\psi}$ are both homogeneous morphisms of $\zed_2$-degree $n$ and total polynomial degree $-n(N-n-m)$. Moreover, for a partition $\lambda=(\lambda_1\geq \cdots\geq \lambda_n)$, 
\[
\overline{\psi} \circ \mathfrak{m}(S_{\lambda}(\mathbb{B})) \circ \psi  \approx \begin{cases}
\id_{C_f(\Gamma_0)} & \text{if } \lambda = \lambda_{n,N-m-n} = (\underbrace{N-m-n\geq\cdots\geq N-m-n}_{n \text{ parts}}), \\
0 & \text{if } |\lambda|= \lambda_1 + \cdots + \lambda_n < n(N-n-m).
\end{cases}
\]

In particular, if $n=N-m$, then $C_f(\Gamma_1) \simeq C_f(\Gamma_0) \left\langle N-m \right\rangle$.
\end{lemma}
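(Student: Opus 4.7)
My first step is to verify the degree statement by summing the degrees of the constituents of $\psi = \chi^0\circ\iota$ and $\overline{\psi} = \epsilon\circ\chi^1$: by construction $\iota$ and $\epsilon$ each have $\zed_2$-degree $n$ and total polynomial degree $-n(N-n)$, while $\chi^0$ and $\chi^1$ (arising from Proposition \ref{general-general-chi-maps} with parameter $l=n$, the color of the loop/circle) have $\zed_2$-degree $0$ and total polynomial degree $mn$. The sums yield $\zed_2$-degree $n$ and total polynomial degree $-n(N-n)+mn = -n(N-m-n)$ for both $\psi$ and $\overline{\psi}$, as claimed.

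For the main composition formula the most direct approach is to unpack
\[
\overline{\psi}\circ\mathfrak{m}(S_\lambda(\mathbb{B}))\circ\psi \;=\; \epsilon\circ\chi^1\circ\mathfrak{m}(S_\lambda(\mathbb{B}))\circ\chi^0\circ\iota.
\]
Since $\mathbb{B}$ sits on an edge shared by $\Gamma_1$ and $\Gamma_2$, multiplication by $S_\lambda(\mathbb{B})$ commutes with both $\chi^0$ and $\chi^1$, so Proposition \ref{general-general-chi-maps}(ii) reduces the composition (up to homotopy) to $\epsilon\circ\mathfrak{m}(S_\lambda(\mathbb{B})\cdot S_{\lambda_{n,m}}(\mathbb{B}-\mathbb{X}))\circ\iota$. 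Expanding $S_{\lambda_{n,m}}(\mathbb{B}-\mathbb{X})=\sum_{\mu\in\Lambda_{n,m}}(-1)^{|\mu|}S_{\mu'}(\mathbb{X})S_{\mu^c}(\mathbb{B})$ and pulling each factor $\mathfrak{m}(S_{\mu'}(\mathbb{X}))$ out past $\epsilon$ (since $\mathbb{X}$ labels $\Gamma_0$), the problem reduces to evaluating sums of $\epsilon\circ\mathfrak{m}(S_\lambda(\mathbb{B})S_{\mu^c}(\mathbb{B}))\circ\iota$ via the Sylvester operator implicit in Corollary \ref{iota-epsilon-composition} and Theorem \ref{part-symm-str-alter}. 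A cleaner route, more in line with the paper's translation strategy, is to apply the functor $\varpi_0$ of Lemma \ref{MOY-object-of-hmf}: by Proposition \ref{basic-changes-varpi-0}, $\varpi_0(\psi)$ and $\varpi_0(\overline{\psi})$ agree up to homotopy and scaling with the analogous morphisms in \cite{Wu-color}, whose composition formula we may invoke. When $|\lambda|<n(N-m-n)$, the cone of $\overline{\psi}\circ\mathfrak{m}(S_\lambda(\mathbb{B}))\circ\psi$ becomes null-homotopic after $\varpi_0$, and Lemma \ref{homotopy-finite-reduce-base}(b) applied to this cone lifts null-homotopy back to $\tilde{R}$. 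When $\lambda=\lambda_{n,N-m-n}$, the $\varpi_0$-reduction is homotopic to $\id_{C(\Gamma_0)}$ and in particular not null-homotopic, so the original composition is not null-homotopic either; combining this with the fact that the $\zed_2$-degree-$0$, polynomial-degree-$0$ part of $\Hom_{\hmf}(C_f(\Gamma_0),C_f(\Gamma_0))$ is one-dimensional over $\C$ and spanned by $\id_{C_f(\Gamma_0)}$ forces the composition to be a nonzero scalar multiple of the identity.

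For the final claim, when $n=N-m$ the partition $\lambda_{n,N-m-n}$ is empty, and the composition formula specialises to $\overline{\psi}\circ\psi\approx\id_{C_f(\Gamma_0)}$. The main obstacle is that this retract identity alone does not imply a two-sided homotopy equivalence $C_f(\Gamma_1)\simeq C_f(\Gamma_0)\langle N-m\rangle$. To clear it I would again appeal to $\varpi_0$: the analog of this special case in \cite{Wu-color} yields that $\varpi_0(\psi)$ is already a homotopy equivalence in the non-equivariant setting, and Lemma \ref{reduce-base-homotopic-equivalence} lifts this conclusion to $\psi$ itself over $\tilde{R}$, producing the desired homotopy equivalence.
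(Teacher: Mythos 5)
Most of your proposal tracks the paper: the degree count is the same, your direct unpacking of $\overline{\psi}\circ\mathfrak{m}(S_{\lambda_{n,N-m-n}}(\mathbb{B}))\circ\psi$ via Proposition \ref{general-general-chi-maps}(ii) and Corollary \ref{iota-epsilon-composition} is exactly how the paper handles the identity case (the paper does \emph{not} route this through $\varpi_0$, though your alternative of citing the non-equivariant formula and then using the one-dimensionality of the degree-zero part of $\Hom_{\hmf}(C_f(\Gamma_0),C_f(\Gamma_0))$ is also sound), and your argument for $C_f(\Gamma_1)\simeq C_f(\Gamma_0)\left\langle N-m\right\rangle$ --- apply $\varpi_0$, quote the corresponding fact from \cite{Wu-color}, lift back with Lemma \ref{reduce-base-homotopic-equivalence} --- is verbatim the paper's.

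The genuine gap is your treatment of the vanishing case $|\lambda|<n(N-n-m)$. First, the cone of a null-homotopic morphism is \emph{not} null-homotopic (it splits as a shifted direct sum of source and target); a contractible cone would instead certify a homotopy equivalence, the opposite of what you want. Second, and more fundamentally, the implication ``$\varpi_0(g)\simeq 0$ implies $g\simeq 0$'' that you are implicitly relying on is false: Lemmas \ref{reduce-base-homotopic-equivalence} and \ref{homotopy-finite-reduce-base} lift homotopy equivalences and contractibility of \emph{objects} across $\varpi_0$, not null-homotopy of \emph{morphisms}. For example, $\mathfrak{m}(B_1)\cdot\id_{C_f(\Gamma_0)}$ is killed by $\varpi_0$ but is not null-homotopic, because $\Hom_{\HMF}(C_f(\Gamma_0),C_f(\Gamma_0))\cong C_f(\emptyset)\{\qb{N}{m}q^{m(N-m)}\}$ is a free $R_B$-module on which multiplication by $B_1$ acts injectively. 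The correct (and shorter) argument, which is the paper's, is a degree count using precisely this computation: the lowest non-vanishing total polynomial grading of $\Hom_{\HMF}(C_f(\Gamma_0),C_f(\Gamma_0))$ is $0$, whereas $\overline{\psi}\circ\mathfrak{m}(S_{\lambda}(\mathbb{B}))\circ\psi$ is homogeneous of total polynomial degree $2|\lambda|-2n(N-n-m)<0$, so it is forced to be null-homotopic.
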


\begin{proof}
The homogeneity and degrees of $\psi$ and $\overline{\psi}$ follow easily from their definitions. Moreover, one can check that 
\[
\Hom_\HMF (C_f(\Gamma_0), C_f(\Gamma_0)) \cong H_f(\bigcirc_m)\{q^{m(N-m)}\}\left\langle m \right\rangle \cong C_f(\emptyset)\{\qb{N}{m}q^{m(N-m)}\}.
\]
So the lowest non-vanishing total polynomial grading of $\Hom_\HMF (C_f(\Gamma_0), C_f(\Gamma_0))$ is $0$. Note that the total polynomial degree of $\overline{\psi} \circ \mathfrak{m}(S_{\lambda}(\mathbb{B})) \circ \psi$ is $2|\lambda|-2n(N-n-m)$. So $\overline{\psi} \circ \mathfrak{m}(S_{\lambda}(\mathbb{B})) \circ \psi  \approx 0$ if $|\lambda| < n(N-n-m)$.

By Proposition \ref{general-general-chi-maps}, we have
\begin{eqnarray*}
&& \epsilon \circ \chi^1 \circ \mathfrak{m}(S_{\lambda_{n,N-m-n}}(\mathbb{B})) \circ \chi^0 \circ \iota \\
& = & \epsilon \circ \mathfrak{m}(S_{\lambda_{n,N-m-n}}(\mathbb{B})) \circ \chi^1  \circ \chi^0 \circ \iota \\ 
& = & \epsilon \circ \mathfrak{m}(S_{\lambda_{n,N-m-n}}(\mathbb{B})\cdot\sum_{\lambda\in\Lambda_{n,m}} (-1)^{|\lambda|} S_{\lambda'}(\mathbb{X}) S_{\lambda^c}(\mathbb{B})) \circ \iota \\
& = & \sum_{\lambda\in\Lambda_{n,m}} (-1)^{|\lambda|} S_{\lambda'}(\mathbb{X}) \cdot \epsilon \circ \mathfrak{m}(S_{\lambda_{n,N-m-n}}(\mathbb{B})\cdot S_{\lambda^c}(\mathbb{B})) \circ \iota
\end{eqnarray*}
where $\Lambda_{n,m}=\{\mu ~|~ \mu \leq \lambda_{n,m}\} =  \{\mu=(\mu_1\geq\cdots\geq\mu_n) ~|~ l(\mu)\leq n,~ \mu_1 \leq m\}$, $\lambda'\in \Lambda_{m,n}$ is the conjugate of $\lambda$, and $\lambda^c$ is the complement of $\lambda$ in $\Lambda_{n,m}$, i.e., if $\lambda=(\lambda_1\geq\cdots\geq\lambda_n)\in \Lambda_{n,m}$, then $\lambda^c = (m-\lambda_n\geq\cdots\geq m-\lambda_1)$. By Corollary \ref{iota-epsilon-composition}, we have, for $\lambda\in\Lambda_{n,m}$,
\[
\epsilon \circ \mathfrak{m}(S_{\lambda_{n,N-m-n}}(\mathbb{B})\cdot S_{\lambda^c}(\mathbb{B})) \circ \iota \approx \begin{cases}
\id_{C(\Gamma_0)} & \text{if } \lambda = (0\geq\cdots\geq0), \\
0 & \text{if } \lambda \neq (0\geq\cdots\geq0).
\end{cases}
\]
Thus, 
\[
\overline{\psi} \circ \mathfrak{m}(S_{\lambda_{n,N-m-n}}(\mathbb{B})) \circ \psi  \approx \id_{C_f(\Gamma_0)}.
\]

When $n=N-m$, $\psi$ and $\overline{\psi}$ are both homogeneous morphisms of $\zed_2$ degree $N-m$ and total polynomial degree $0$. And, from above, we have $\overline{\psi} \circ \psi  \approx \id_{C_f(\Gamma_0)}$. Let $\varpi_0$ be the functor given in Lemma \ref{MOY-object-of-hmf}. Then $\varpi_0(\overline{\psi}) \circ \varpi(\psi)  \approx \id_{C(\Gamma_0)}$. By \cite[Lemma 5.15]{Wu-color}, we know that $C(\Gamma_1) \simeq C(\Gamma_0) \left\langle N-m \right\rangle$. By \cite[Lemma 3.14]{Wu-color}, this implies that $\varpi_0(\overline{\psi})$ and $\varpi(\psi)$ are homotopy equivalences of matrix factorizations. By Lemma \ref{reduce-base-homotopic-equivalence}, $\psi$ and $\overline{\psi}$ are also homotopy equivalences of matrix factorizations. Thus, $C_f(\Gamma_1) \simeq C_f(\Gamma_0) \left\langle N-m \right\rangle$.
\end{proof}

\begin{remark}
In Lemma \ref{decomp-I-special}, we allow $m=0$. In this case, $\psi$ and $\overline{\psi}$ becomes $\iota$ and $\epsilon$. Note that Lemma \ref{decomp-I-special} remains true in this case. 
\end{remark}

\begin{figure}[ht]

\setlength{\unitlength}{1pt}

\begin{picture}(360,160)(-180,-100)


\put(-120,0){\vector(0,1){15}}

\put(-120,15){\vector(0,1){15}}

\put(-120,30){\line(0,1){15}}

\put(-120,45){\vector(0,1){15}}

\qbezier(-120,15)(-100,5)(-100,25)

\qbezier(-120,45)(-100,55)(-100,35)

\put(-100,35){\vector(0,-1){10}}

\put(-130,55){\tiny{$m$}}

\put(-130,0){\tiny{$m$}}

\put(-145,30){\tiny{$m+n$}}

\put(-95,30){\tiny{$n$}}

\put(-123,-10){$\Gamma_1$}


\put(0,0){\vector(0,1){60}}

\put(-10,30){\tiny{$m$}}

\put(-3,-10){$\Gamma_0$}


\put(120,0){\vector(0,1){5}}

\put(120,5){\vector(0,1){25}}

\put(120,30){\line(0,1){15}}

\put(120,45){\vector(0,1){15}}

\qbezier(120,5)(170,0)(170,25)

\qbezier(120,55)(170,60)(170,35)

\put(170,35){\vector(0,-1){10}}

\qbezier(120,15)(130,5)(130,25)

\qbezier(120,45)(130,55)(130,35)

\put(130,35){\vector(0,-1){10}}

\put(110,55){\tiny{$m$}}

\put(110,0){\tiny{$m$}}

\put(95,45){\tiny{$m+n$}}

\put(110,30){\tiny{$N$}}

\put(95,10){\tiny{$m+n$}}

\put(130,30){\tiny{$N-m-n$}}

\put(175,30){\tiny{$n$}}

\put(117,-10){$\Gamma_3$}


\put(120,-90){\vector(0,1){5}}

\put(120,-85){\vector(0,1){25}}

\put(120,-60){\line(0,1){15}}

\put(120,-45){\vector(0,1){15}}

\qbezier(150,-80)(170,-70)(170,-65)

\qbezier(150,-40)(170,-50)(170,-55)

\put(170,-55){\vector(0,-1){10}}

\qbezier(150,-80)(130,-70)(130,-65)

\qbezier(150,-40)(130,-50)(130,-55)

\put(130,-55){\vector(0,-1){10}}

\put(120,-40){\vector(1,0){30}}

\put(150,-80){\vector(-1,0){30}}

\put(110,-35){\tiny{$m$}}

\put(110,-90){\tiny{$m$}}

\put(110,-60){\tiny{$N$}}

\put(130,-60){\tiny{$N-m-n$}}

\put(125,-38){\tiny{$N-m$}}

\put(125,-87){\tiny{$N-m$}}

\put(175,-60){\tiny{$n$}}

\put(117,-100){$\Gamma_4$}


\put(0,-90){\vector(0,1){15}}

\put(0,-75){\vector(0,1){15}}

\put(0,-60){\line(0,1){15}}

\put(0,-45){\vector(0,1){15}}

\qbezier(0,-75)(20,-85)(20,-65)

\qbezier(0,-45)(20,-35)(20,-55)

\put(20,-55){\vector(0,-1){10}}

\put(-10,-35){\tiny{$m$}}

\put(-10,-60){\tiny{$N$}}

\put(-10,-90){\tiny{$m$}}

\put(25,-60){\tiny{$N-m$}}

\put(-3,-100){$\Gamma_5$}

\end{picture}

\caption{}\label{decomp-I-proof-figure}

\end{figure}

\begin{proof}[Proof of Theorem \ref{decomp-I}]
Consider the colored MOY graphs in Figure \ref{decomp-I-proof-figure}. By Lemma \ref{decomp-I-special}, $C(\Gamma_1)\simeq C(\Gamma_3)\left\langle N-m-n\right\rangle$. By Corollary \ref{contract-expand}, $C(\Gamma_3)\simeq C(\Gamma_4)$. By direct sum decomposition (II) (Theorem \ref{decomp-II},) $C(\Gamma_4)\simeq C(\Gamma_5)\qb{N-m}{n}$. And by Lemma \ref{decomp-I-special} again, $C(\Gamma_5)\simeq C(\Gamma_0)\left\langle N-m\right\rangle$. Putting everything together, we get $C(\Gamma_1) \simeq C(\Gamma_0)\{ \qb{N-m}{n}\}\left\langle n \right\rangle$.
\end{proof}

\subsection{Direct sum decomposition (III)}

\begin{theorem}[Direct Sum Decomposition (III)]\label{decomp-III} 
\[
C_f(\input{decomp-III-1-slide}) \simeq C_f(\setlength{\unitlength}{.75pt}
\begin{picture}(60,30)(-30,30)

\put(-20,0){\vector(0,1){60}}

\put(20,60){\vector(0,-1){60}}

\put(-25,30){\tiny{$_1$}}

\put(22,30){\tiny{$_m$}}
\end{picture}) ~\bigoplus~ C_f(\setlength{\unitlength}{.75pt}
\begin{picture}(60,30)(100,30)

\put(110,0){\vector(1,1){20}}

\put(130,20){\vector(1,-1){20}}

\put(130,40){\vector(0,-1){20}}

\put(130,40){\vector(-1,1){20}}

\put(150,60){\vector(-1,-1){20}}

\put(105,0){\tiny{$_1$}}

\put(105,55){\tiny{$_1$}}

\put(152,0){\tiny{$_m$}}

\put(152,55){\tiny{$_m$}}

\put(132,30){\tiny{$_{m-1}$}}

\end{picture})\{[N-m-1]\} \left\langle 1 \right\rangle. \vspace{23pt}
\]
The above homotopy equivalence of matrix factorizations remains true if the orientations of these MOY graphs are reversed.
\end{theorem}

\begin{proof}
Define
\[
\widehat{\iota}: C_f(\emptyset)  \rightarrow C_f(\input{decomp-III-pro-1}) \vspace{30pt}
\]
to be the composition
\[
C_f(\emptyset) \xrightarrow{\iota} C_f(\bigcirc_{m+1}) \xrightarrow{\phi_1 \otimes \phi_2} C_f(\input{decomp-III-pro-1}), \vspace{30pt}
\]
and 
\[
\widehat{\epsilon}:  C_f(\input{decomp-III-pro-1})  \rightarrow C_f(\emptyset)  \vspace{30pt}
\]
to be the composition
\[
C_f(\input{decomp-III-pro-1}) \xrightarrow{\overline{\phi_1} \otimes \overline{\phi_2}} C_f(\bigcirc_{m+1})  \xrightarrow{\epsilon} C_f(\emptyset), \vspace{30pt}
\]
where $\bigcirc_{m+1}$ is a circle colored by $m+1$, $\iota$ and $\epsilon$ are the morphisms induced by the creation/annihilation of $\bigcirc_{m+1}$, and $\phi_1$, $\phi_2$ (resp. $\overline{\phi_1}$, $\overline{\phi_2}$) are the morphisms induces by the two apparent edge splittings (resp. merging.) Then define
\[
\xymatrix{
C_f() \ar@<-1ex>[rr]^{F} &&  C_f(\input{decomp-III-1-slide}) \ar@<3ex>[ll]^{G}
} \vspace{23pt}
\]
to be the compositions
\[
\xymatrix{
C_f() \ar@<-1ex>[r]^<<<<<<{\widehat{\iota}} & \ar@<3ex>[l]^>>>>>>{\widehat{\epsilon}} C_f(\input{decomp-III-pro-2}) \ar@<-1ex>[rr]^<<<<<<<<<<{\eta_1 \otimes \eta_2} && C_f(\input{decomp-III-1-slide}) \ar@<3ex>[ll]^>>>>>>>>>>{\eta_3 \otimes \eta_4}, 
}\vspace{30pt}
\]
where $\widehat{\iota}$ and $\widehat{\epsilon}$ are the morphisms defined above and $\eta_1$, $\eta_2$ $\eta_3$ and $\eta_4$ are induced by the apparent saddle moves. Note that $F$ and $G$ are homogeneous morphisms preserving both gradings. From Proposition \ref{basic-changes-varpi-0} and \cite[Proposition 8.8]{Wu-color}, we know that $\varpi_0(G \circ F) \approx \id$, where $\varpi_0$ is the functor given in Lemma \ref{MOY-object-of-hmf} and $\id$ is the identity morphism of $C() \vspace{23pt}$. By Lemma \ref{reduce-base-homotopic-equivalence}, this implies that $G\circ F$ is a homotopy equivalence of matrix factorizations preserving both gradings. An easy computation shows that $\Hom_\hmf (C_f(), C_f()) \cong \C \vspace{23pt}$ and is spanned by the identity morphism of $C_f() \vspace{23pt}$. So $G \circ F \approx \id$, where $\id$ is the identity morphism of $C_f() \vspace{23pt}$.

Next, define
\[
\xymatrix{
C_f() \ar@<-1ex>[rr]^{\alpha} &&  C_f(\input{decomp-III-1-slide}) \ar@<3ex>[ll]^{\beta}
} \vspace{23pt}
\]
to be the compositions 
\[
\xymatrix{
C_f(\input{decomp-III-3-slide-marked})  \ar@<-1ex>[r]^{\psi} & \ar@<3ex>[l]^{\overline{\psi}} C_f(\input{decomp-III-pro-3}) \ar@<-1ex>[rr]^{\chi^0_1 \otimes \chi^0_2} &&  C_f(\input{decomp-III-1-slide-marked}) \ar@<3ex>[ll]^{\chi^1_1 \otimes \chi^1_2},
} \vspace{30pt}
\]
where $\psi$ and $\overline{\psi}$ are defined in Lemma \ref{decomp-I-special}, and $\chi^0_1$, $\chi^0_2$, $\chi^1_1$, $\chi^1_2$ are the apparent $\chi$-morphisms. Then, for $j=0,1,\dots,N-m-2$, define
\[
\xymatrix{
C_f(\input{decomp-III-3-slide-marked})\{q^{N-m-2-2j}\} \left\langle 1\right\rangle \ar@<-1ex>[rr]^>>>>>>>>>>{\alpha_j} &&  C_f(\input{decomp-III-1-slide-marked}) \ar@<3ex>[ll]^<<<<<<<<<<{\beta_j}
} \vspace{30pt}
\]
by 
\begin{eqnarray*}
\alpha_j & = & \mathfrak{m}(s^{N-m-2-j}) \circ \alpha, \\
\beta_j & = & \beta \circ \mathfrak{m}(s^j).
\end{eqnarray*}
Note that $\alpha_j$ and $\beta_j$ are homogeneous morphisms preserving both gradings. Moreover, by Proposition \ref{general-general-chi-maps} and Lemma \ref{decomp-I-special}, we have
\[
\beta_i \circ \alpha_j \approx \begin{cases}
\id & \text{if } i=j, \\
0 & \text{if } i<j.
\end{cases}
\]
Let $\vec{\alpha} = (\alpha_0,\dots,\alpha_{N-m-2})$ and $\vec{\beta}=(\beta_0,\dots,\beta_{N-m-2})^T$. Then 
\[
\xymatrix{
C_f()  \{[N-m-1]\} \left\langle 1 \right\rangle \ar@<-1ex>[rr]^>>>>>>>>>>{\vec{\alpha}} &&  C_f(\input{decomp-III-1-slide}) \ar@<3ex>[ll]^<<<<<<<<<<{\vec{\beta}}
} \vspace{23pt}
\]
are homogeneous morphisms preserving both gradings. And $\vec{\beta} \circ \vec{\alpha}$ is lower-triangular with homotopy equivalences of matrix factorizations along the diagonal. So $\vec{\beta} \circ \vec{\alpha}$ is a homotopy equivalence of matrix factorizations.

Consider the morphisms
\[
\xymatrix{
C_f() \bigoplus C_f()\{[N-m-1]\} \left\langle 1 \right\rangle \ar@<-1ex>[rr]^>>>>>>>>>>{\left(%
\begin{array}{cc}
F & \vec{\alpha}
\end{array}%
\right)} && C_f(\input{decomp-III-1-slide})  \ar@<3ex>[ll]^>>>>>>>>>>{\left(%
\begin{array}{c}
G \\
\vec{\beta}
\end{array}%
\right)}  
}. \vspace{23pt}
\]
Let $\varpi_0$ be the functor given in Lemma \ref{MOY-object-of-hmf}. By Proposition \ref{basic-changes-varpi-0} and \cite[Lemmas 8.14 and 8.15]{Wu-color}, we know that $\varpi_0(F,\vec{\alpha}) = (\varpi_0(F),\varpi_0(\vec{\alpha}))$ and $\varpi_0\left(%
\begin{array}{c}
G \\
\vec{\beta}
\end{array}%
\right) = \left(%
\begin{array}{c}
\varpi_0(G) \\
\varpi_0(\vec{\beta})
\end{array}%
\right)$ are homotopy equivalences of matrix factorizations. Thus, by Lemma \ref{reduce-base-homotopic-equivalence}, $(F,\vec{\alpha})$ and $\left(%
\begin{array}{c}
G \\
\vec{\beta}
\end{array}%
\right)$ are homotopy equivalences of matrix factorizations.
\end{proof}

\subsection{Direct sum decomposition (IV)}

\begin{theorem}[Direct Sum Decomposition (IV)]\label{decomp-IV} 
Let $l,m,n$ be integers satisfying $0\leq n \leq m \leq N$ and $0\leq l, m+l-1 \leq N$. Then
\[
C_f(\input{decomp-IV-1-slide}) \simeq C_f(\setlength{\unitlength}{.75pt}
\begin{picture}(85,45)(-30,45)

\put(-20,0){\vector(0,1){45}}

\put(-20,45){\vector(0,1){45}}

\put(20,0){\vector(0,1){45}}

\put(20,45){\vector(0,1){45}}

\put(20,45){\vector(-1,0){40}}

\put(-27,20){\tiny{$_1$}}

\put(23,20){\tiny{$_{m+l-1}$}}

\put(-27,65){\tiny{$_l$}}

\put(23,65){\tiny{$_m$}}

\put(-5,38){\tiny{$_{l-1}$}}

\end{picture}) \{\qb{m-1}{n}\} ~\bigoplus~ C_f(\setlength{\unitlength}{.75pt}
\begin{picture}(65,45)(110,45)

\put(110,0){\vector(2,3){20}}

\put(150,0){\vector(-2,3){20}}

\put(130,30){\vector(0,1){30}}

\put(130,60){\vector(-2,3){20}}

\put(130,60){\vector(2,3){20}}

\put(117,20){\tiny{$_1$}}

\put(140,20){\tiny{$_{m+l-1}$}}

\put(117,65){\tiny{$_l$}}

\put(140,65){\tiny{$_m$}}

\put(133,42){\tiny{$_{m+l}$}}
\end{picture})\{\qb{m-1}{n-1}\}. \vspace{34pt}
\]
The above homotopy equivalence of matrix factorizations remains true if the orientations of these MOY graphs are reversed.
\end{theorem}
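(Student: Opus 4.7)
The plan is to follow exactly the strategy used for Theorem \ref{decomp-III}: rather than re-deriving the Koszul matrix factorization details of \cite{Wu-color}, I would construct the required morphisms as compositions of the basic local morphisms introduced in Section \ref{sec-some-morph}, and then invoke Proposition \ref{basic-changes-varpi-0} together with Lemma \ref{reduce-base-homotopic-equivalence} to reduce the statement to the corresponding Direct Sum Decomposition (IV) for $C$ already established in \cite[Section 8]{Wu-color}.

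Concretely, denote the left-hand MOY graph by $\Gamma$, the first summand on the right by $\Gamma'$ (the two parallel strips bridged by the edge of color $l-1$), and the second summand by $\Gamma''$ (with the central $(m+l)$-colored edge). I would write down two families of homogeneous morphisms preserving both gradings,
\[
\vec{\alpha}:\ C_f(\Gamma')\{\qb{m-1}{n}\} \longrightarrow C_f(\Gamma), \qquad \vec{F}:\ C_f(\Gamma'')\{\qb{m-1}{n-1}\} \longrightarrow C_f(\Gamma),
\]
together with companions $\vec{\beta}$ and $\vec{G}$ running the other way. Each component of $\vec\alpha$, $\vec\beta$ is assembled from a pair of $\chi^0$/$\chi^1$-morphisms applied to the two horizontal ``ladder rungs'' of $\Gamma$ (of colors $l+n-1$ and $n$), together with multiplication by Schur polynomials $S_\lambda$ indexed by $\lambda \in \Lambda_{n, m-1-n}$; each component of $\vec F$, $\vec G$ is assembled from an edge splitting/merging at the central $(m+l)$-vertex of $\Gamma''$, a bouquet move, and multiplication by Schur polynomials indexed by $\Lambda_{n-1, m-n}$. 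The Schur indexing is chosen exactly so as to reproduce the one in \cite[Section 8]{Wu-color}, and the total polynomial degree shifts from the $\chi$-morphisms ($+ml$), from edge splitting/merging ($-mn$), and from the Schur multiplications combine to yield bidegree-zero morphisms matching the stated shifts $\qb{m-1}{n}$ and $\qb{m-1}{n-1}$.

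Applying the functor $\varpi_0$ of Lemma \ref{MOY-object-of-hmf} and invoking Proposition \ref{basic-changes-varpi-0}, each constructed component morphism is sent, up to homotopy and nonzero scaling, to the analogous morphism appearing in the proof of Direct Sum Decomposition (IV) for $C$ in \cite{Wu-color}. Thus the combined pair
\[
\bigl(\vec F\ \ \vec\alpha\bigr): C_f(\Gamma'')\{\qb{m-1}{n-1}\} \oplus C_f(\Gamma')\{\qb{m-1}{n}\} \to C_f(\Gamma),
\]
and its companion $\bigl(\vec G\ \ \vec\beta\bigr)^T$, become under $\varpi_0$ the homotopy equivalences constructed in \cite{Wu-color}. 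Lemma \ref{reduce-base-homotopic-equivalence} then upgrades these to homotopy equivalences of graded matrix factorizations over $\tilde{R}_\partial$, proving the theorem. The orientation-reversed version is obtained by running the identical argument on the reversed graphs.

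The main obstacle is purely bookkeeping: one must take care that the explicit Schur-polynomial multiplications are chosen so as to reproduce, under $\varpi_0$, precisely the morphisms from \cite[Section 8]{Wu-color}, and verify that all grading shifts add up correctly. Once this is set up, the remainder of the argument is formal and requires no additional matrix factorization computation beyond what is already encapsulated in Proposition \ref{basic-changes-varpi-0} and Lemma \ref{reduce-base-homotopic-equivalence}.
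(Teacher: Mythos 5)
Your proposal follows essentially the same route as the paper: the morphisms $\vec F,\vec G,\vec\alpha,\vec\beta$ are built from the basic local morphisms of Section \ref{sec-some-morph} composed with Schur-polynomial multiplications indexed by $\Lambda_{n,m-n-1}$ and $\Lambda_{m-n,n-1}$, and the homotopy-equivalence claim is reduced via Proposition \ref{basic-changes-varpi-0} and Lemma \ref{reduce-base-homotopic-equivalence} to the decomposition already proved for $C$ in \cite{Wu-color}. Apart from minor bookkeeping (your labels for which pair of morphisms targets which summand are swapped relative to the paper, and the relevant reference is \cite[Section 9]{Wu-color}), this matches the paper's argument.
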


\begin{proof}
Define morphisms 
\[
F: C_f() \rightarrow C_f(\input{decomp-IV-1-slide})\vspace{34pt}
\]
\[
G: C_f(\input{decomp-IV-1-slide}) \rightarrow C_f()\vspace{34pt} 
\]
by the following diagram.
\[
\xymatrix{
\input{decomp-IV-pro-1} \ar@<1ex>[rr]^{G} \ar@<1ex>[d]^{\chi^1} && \input{decomp-IV-pro-2} \ar@<1ex>[ll]^{F} \ar@<1ex>[d]^{\phi} \\
\input{decomp-IV-pro-3} \ar@<1ex>[rr]^{h_1} \ar@<1ex>[u]^{\chi^0} && \input{decomp-IV-pro-4} \ar@<1ex>[ll]^{h_0} \ar@<1ex>[u]^{\overline{\phi}}\\
}
\]
That is, $F= \chi^0 \circ h_0 \circ \phi$ and $G = \overline{\phi} \circ h_1 \circ \chi^1$, where $\chi^0 , \chi^1, h_0 , h_1 , \phi, \overline{\phi}$ are the morphisms induced by the apparent basic changes of the MOY graphs.

Let $\Lambda=\Lambda_{n,m-n-1} =\{\lambda~|~l(\lambda)\leq n,~\lambda_1\leq m-n-1\}$. For $\lambda=(\lambda_1\geq\cdots\geq\lambda_n) \in \Lambda$, define $\lambda^c=(\lambda^c_1\geq\cdots\geq\lambda^c_n) \in \Lambda$ by $\lambda^c_j =m-n-1-\lambda_{n+1-j}$, $j=1,\dots,n$. For $\lambda \in \Lambda$, define $F_{\lambda} = \mathfrak{m}(S_{\lambda}(\mathbb{A})) \circ F$ and $G_{\lambda} = G \circ \mathfrak{m}(S_{\lambda^c}(-\mathbb{Y}))$. Then
\[
F_{\lambda}: C_f()\{q^{2|\lambda|-n(m-n-1)}\} \rightarrow C_f(\input{decomp-IV-1-slide}),\vspace{34pt}
\]
\[
G_{\lambda}: C_f(\input{decomp-IV-1-slide}) \rightarrow C_f()\{q^{2|\lambda|-n(m-n-1)}\}\vspace{34pt} 
\]
are homogeneous morphisms preserving both gradings. Define $\vec{F}= \sum_{\lambda \in \Lambda} F_{\lambda}$ and $\vec{G}= \sum_{\lambda \in \Lambda} G_{\lambda}$. Then 

\[
\vec{F}: C_f()\{\qb{m-1}{n}\} \rightarrow C_f(\input{decomp-IV-1-slide}),\vspace{34pt}
\]
\[
\vec{G}: C_f(\input{decomp-IV-1-slide}) \rightarrow C_f()\{\qb{m-1}{n}\}\vspace{34pt} 
\]
are homogeneous morphisms preserving both gradings.

Next, define morphisms 
\[
\alpha: C_f() \rightarrow C_f(\input{decomp-IV-1-slide})\vspace{34pt}
\]
\[
\beta: C_f(\input{decomp-IV-1-slide}) \rightarrow C_f()\vspace{34pt} 
\]
by the following diagram.
\[
\xymatrix{
\input{decomp-IV-pro-1} \ar@<1ex>[rr]^{\beta} \ar@<1ex>[d]^{\chi^0} && \input{decomp-IV-pro-5} \ar@<1ex>[ll]^{\alpha} \ar@<1ex>[d]^{\phi} \\
\input{decomp-IV-pro-6} \ar@<1ex>[rr]^{h_0} \ar@<1ex>[u]^{\chi^1} && \input{decomp-IV-pro-7} \ar@<1ex>[ll]^{h_1} \ar@<1ex>[u]^{\overline{\phi}}\\
}
\]
That is, $\alpha= \chi^1 \circ h_1 \circ \phi$ and $\beta = \overline{\phi} \circ h_0 \circ \chi^0$, where $\chi^0 , \chi^1, h_0 , h_1 , \phi, \overline{\phi}$ are the morphisms induced by the apparent basic changes of the MOY graphs.

Let $\Lambda' = \Lambda_{m-n,n-1} =\{\lambda~|~l(\lambda)\leq m-n,~\lambda_1\leq n-1\}$. For $\lambda=(\lambda_1\geq\cdots\geq\lambda_{m-n}) \in \Lambda'$, define $\lambda^\ast=(\lambda^\ast_1\geq\cdots\geq\lambda^\ast_{m-n}) \in \Lambda'$ by $\lambda^\ast_j =n-1-\lambda_{m-n+1-j}$, $j=1,\dots,m-n$. For $\lambda \in \Lambda'$, define $\alpha_{\lambda} = \mathfrak{m}(S_{\lambda}(\mathbb{Y})) \circ \alpha$ and $\beta_{\lambda} = \beta \circ \mathfrak{m}(S_{\lambda^\ast}(-\mathbb{A}))$. Then
\[
\alpha_{\lambda}: C_f()\{q^{2|\lambda|-(n-1)(m-n)}\} \rightarrow C_f(\input{decomp-IV-1-slide}),\vspace{34pt}
\]
\[
\beta_{\lambda}: C_f(\input{decomp-IV-1-slide}) \rightarrow C_f()\{q^{2|\lambda|-(n-1)(m-n)}\}\vspace{34pt} 
\]
are homogeneous morphisms preserving both gradings. Define $\vec{\alpha}= \sum_{\lambda \in \Lambda'} \alpha_{\lambda}$ and $\vec{\beta}= \sum_{\lambda \in \Lambda'} \beta_{\lambda}$. Then 
\[
\vec{\alpha}: C_f()\{\qb{m-1}{n-1}\} \rightarrow C_f(\input{decomp-IV-1-slide}),\vspace{34pt}
\]
\[
\vec{\beta}: C_f(\input{decomp-IV-1-slide}) \rightarrow C_f()\{\qb{m-1}{n-1}\}\vspace{34pt} 
\]
are homogeneous morphisms preserving both gradings.

From Proposition \ref{basic-changes-varpi-0}, one can see that it is proved in \cite[Section 9]{Wu-color} that
\[
C()\{\qb{m-1}{n}\} \bigoplus C()\{\qb{m-1}{n-1}\} \xrightarrow{(\varpi(\vec{F}),\varpi(\vec{\alpha}))} C(\input{decomp-IV-1-slide})\vspace{34pt}
\]
and 
\[
C_f(\input{decomp-IV-1-slide}) \xrightarrow{\left(%
\begin{array}{c}
\varpi(\vec{G}) \\
\varpi(\vec{\beta})
\end{array}%
\right)} C_f()\{\qb{m-1}{n}\} \bigoplus C_f()\{\qb{m-1}{n-1}\} \vspace{34pt} 
\]
are homotopy equivalences of matrix factorizations. So, by Lemma \ref{reduce-base-homotopic-equivalence},
\[
C_f()\{\qb{m-1}{n}\} \bigoplus C_f()\{\qb{m-1}{n-1}\} \xrightarrow{(\vec{F},\vec{\alpha})} C_f(\input{decomp-IV-1-slide})\vspace{34pt}
\]
and 
\[
C_f(\input{decomp-IV-1-slide}) \xrightarrow{\left(%
\begin{array}{c}
\vec{G} \\
\vec{\beta}
\end{array}%
\right)} C_f()\{\qb{m-1}{n}\} \bigoplus C_f()\{\qb{m-1}{n-1}\} \vspace{34pt} 
\]
are also homotopy equivalences of matrix factorizations.
\end{proof}

\subsection{Direct sum decomposition (V)}

\begin{theorem}\label{decomp-V}
Let $m,n,l$ be non-negative integers satisfying $N\geq n+l,m+l$. Then, for $\max\{m-n,0\}\leq k \leq m+l$,
\[ 
C_f(\input{decomp-V-1-slide})  \simeq  \bigoplus_{j=\max\{m-n,0\}}^m C_f(\input{decomp-V-2-slide} ) \{\qb{l}{k-j}\}, \vspace{30pt}
\]
where we use the convention $\qb{a}{b}=0$ if $b<0$ or $b>a$. The above homotopy equivalence of matrix factorizations remains true if the orientations of all the MOY graphs are reversed.
\end{theorem}

\begin{proof}
The inductive proof of \cite[Theorem 10.1]{Wu-color} applies here without any essential changes. See \cite[Section 10]{Wu-color} for more details.
\end{proof}

\section{Chain Complexes Associated to Knotted MOY Graphs}\label{sec-chain-complex-def}

Let us first recall the definitions of knotted MOY graphs and their markings in \cite{Wu-color}.

\begin{definition}\label{knotted-MOY-def}
A knotted MOY graph is an immersion of an abstract MOY graph into $\mathbb{R}^2$ such that
\begin{itemize}
	\item the only singularities are finitely many transversal double points in the interior of edges (i.e. away from the vertices),
	\item we specify the upper edge and the lower edge at each of these transversal double points.
\end{itemize}
Each transversal double point in a knotted MOY graph is called a crossing. We follow the usual sign convention for crossings.

If there are crossings in an edge, these crossing divide the edge into several parts. We call each part a segment of the edge.
\end{definition}

Note that colored oriented link/tangle diagrams and (embedded) MOY graphs are special cases of knotted MOY graphs.

\begin{definition}\label{knotted-MOY-marking-def}
A marking of a knotted MOY graph $D$ consists the following:
\begin{enumerate}
	\item A finite collection of marked points on $D$ such that
	\begin{itemize}
	\item every segment of every edge of $D$ has at least one marked point;
	\item all the end points (vertices of valence $1$) are marked;
	\item none of the crossings and interior vertices (vertices of valence at least $2$) is marked.
  \end{itemize}
  \item An assignment of pairwise disjoint alphabets to the marked points such that the alphabet associated to a marked point on an edge of color $m$ has $m$ independent indeterminates. (Recall that an alphabet is a finite collection of homogeneous indeterminates of degree $2$.)
\end{enumerate}
\end{definition}

Given a knotted MOY graph $D$ with a marking, we cut $D$ open at the marked points. This produces a collection $\{D_1,\dots,D_m\}$ of simple knotted MOY graphs marked only at their end points. We call each $D_i$ a piece of $D$. It is easy to see that each $D_i$ is one of the following:
\begin{enumerate}[(i)]
	\item an oriented arc from one marked point to another,
	\item a star-shaped neighborhood of a vertex in an (embedded) MOY graph,
	\item a crossing with colored branches.
\end{enumerate}

\begin{definition}\label{def-complex-embedded-pieces}
If $D_i$ is of type (i) or (ii), then it is an (embedded) MOY graph, and its matrix factorization $C_f(D_i)$ is an object of $\hmf_{R_i,w_i}$. We define the chain complex associated to $D_i$, which is denoted by $\hat{C}_f(D_i)=C_f(D_i)$, to be 
\[
0 \rightarrow C_f(D_i) \rightarrow 0,
\]
where $C_f(D_i)$  has homological grading $0$.
\end{definition}

The definitions of $\hat{C}_f(D_i)$ and $C_f(D_i)$ for a colored crossing are more complex. Next, we adapt the construction in \cite{Wu-color} to define the chain complexes associated to colored crossings.

\subsection{Chain complex associated to a colored crossing}

\begin{lemma}\label{complex-computing-gamma-HMF-lemma}
Suppose that $M$ is an object of $\HMF_{\tilde{R},w}$, where 
\begin{eqnarray*}
\tilde{R} & = & \Sym(\mathbb{X}|\mathbb{Y}|\mathbb{A}|\mathbb{B})\otimes_\C R_B, \\
w & = & f(\mathbb{X}) +f(\mathbb{Y}) -f(\mathbb{A}) -f(\mathbb{B}).
\end{eqnarray*}
Then, 
\[
\Hom_\HMF (C_f(\input{complex-computing-HMF}), M) \cong H(M \otimes_{\tilde{R}} C_f(\input{complex-computing-HMF-rev})) \left\langle m+n+l \right\rangle \{q^{(l+m+n)(N-l)-m^2-n^2}\}, \vspace{30pt}
\]
where $H(M \otimes_{\tilde{R}} C_f(\input{complex-computing-HMF-rev}))\vspace{30pt}$ is the usual homology of the chain complex $M \otimes_{\tilde{R}} C_f(\input{complex-computing-HMF-rev}) \vspace{30pt}$.
\end{lemma}

\begin{proof}
The proof of \cite[Lemma 11.9]{Wu-color} applies here without change.
\end{proof}

\begin{figure}[ht]
$
\xymatrix{
\input{square-m-n-k-left} && \input{square-m-n-k-right}
}
$
\caption{}\label{decomp-V-special-1-figure}

\end{figure}

\begin{lemma}\label{decomp-V-special-1} 
Let $m,n$ be integers such that $0\leq m,n \leq N$. For $\max\{m-n,0\} \leq k \leq m$, define $\Gamma_k^L$ and $\Gamma_k^R$ to be the MOY graphs in Figure \ref{decomp-V-special-1-figure}. Then $C_f(\Gamma_k^L) \simeq C_f(\Gamma_k^R)$.
\end{lemma}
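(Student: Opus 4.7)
The plan is to recognize this lemma as nothing more than the specialization $l = 0$ of Direct Sum Decomposition (V) (Theorem \ref{decomp-V}). Reading the pictures of $\Gamma_k^L$ and $\Gamma_k^R$, one sees that $\Gamma_k^L$ is precisely the left-hand MOY graph of Theorem \ref{decomp-V} with $l = 0$ (the right vertical column is colored $m,\, m-k,\, n$, matching $m+l,\, m+l-k,\, n+l$ when $l=0$), while $\Gamma_k^R$ is precisely the summand of the right-hand side indexed by $j = k$ with $l = 0$ (the right vertical column is colored $m,\, n+k,\, n$, matching $m+l,\, n+l+j,\, n+l$ when $l=0,\ j=k$).

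Consequently, applying Theorem \ref{decomp-V} with $l = 0$ yields
\[
C_f(\Gamma_k^L) \simeq \bigoplus_{j=\max\{m-n,0\}}^{m} C_f(\Gamma_j^R)\{\qb{0}{k-j}\}.
\]
Using the convention that $\qb{0}{k-j}$ equals $1$ when $k = j$ and $0$ otherwise, every summand with $j \neq k$ disappears, and we are left with
\[
C_f(\Gamma_k^L) \simeq C_f(\Gamma_k^R),
\]
as required. One should also verify that the index range $\max\{m-n,0\} \leq k \leq m+l = m$ in Theorem \ref{decomp-V} matches the hypothesis $\max\{m-n,0\} \leq k \leq m$ in the lemma, which it does.

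There is no essential obstacle here; the entire content of the lemma is the combinatorial observation that the $q$-binomial $\qb{0}{k-j}$ is a Kronecker delta. The only thing worth being careful about is matching the pictures: making sure the orientations of the horizontal edges in $\Gamma_k^L$ and $\Gamma_k^R$ (together with the middle-edge colors $n+k$ versus $m-k$) correspond correctly to the LHS and the $j=k$ summand of the RHS in Theorem \ref{decomp-V} when $l = 0$. Once that identification is made, the result is immediate.
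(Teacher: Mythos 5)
Your proof is correct and is exactly the paper's argument: the paper's proof of this lemma is the one-line statement that it is a special case of Theorem \ref{decomp-V}, which is precisely your $l=0$ specialization with the Kronecker-delta collapse of $\qb{0}{k-j}$. Your additional care in matching the pictures and index ranges is sound but adds nothing beyond what the paper implicitly assumes.
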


\begin{proof}
This is a special case of Theorem \ref{decomp-V}.
\end{proof}

\begin{lemma}\label{colored-crossing-res-HMF}
Let $m,n$ be integers such that $0\leq m,n \leq N$. Define $\Gamma_k^L$ and $\Gamma_k^R$ to be the MOY graphs in Figure \ref{decomp-V-special-1-figure}. For $\max\{m-n,0\} \leq j,k \leq m$, 
\begin{eqnarray*}
& & \Hom_\HMF (C_f(\Gamma_j^L), C_f(\Gamma_k^L)) \cong \Hom_\HMF (C_f(\Gamma_j^R), C_f(\Gamma_k^L)) \\
& \cong & \Hom_\HMF (C_f(\Gamma_j^L), C_f(\Gamma_k^R))  \cong \Hom_\HMF (C_f(\Gamma_j^R), C_f(\Gamma_k^R)) \\
& \cong & C_f(\emptyset) \{\qb{_{n+j+k-m}}{_k}\qb{_{n+j+k-m}}{_j} \qb{_{N+m-n-j-k}}{_{m-k}} \qb{_{N+m-n-j-k}}{_{m-j}} \qb{_N}{_{n+j+k-m}} q^{(m+n)N -n^2-m^2}\}.
\end{eqnarray*}

In particular, the lowest non-vanishing total polynomial grading of these spaces are all $(k-j)^2$. And the subspaces of homogeneous elements of total polynomial degree $(k-j)^2$ of these spaces are $1$-dimensional and have $\zed_2$ grading $0$.
\end{lemma}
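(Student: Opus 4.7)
The plan is to reduce all four Hom spaces to a single one via Lemma \ref{decomp-V-special-1}, then express that Hom space as $H_f$ of a closed MOY graph, and finally evaluate using the direct sum decompositions (I)--(V) together with Proposition \ref{circle-module}. Since $\Gamma_k^L$ and $\Gamma_k^R$ have the same boundary condition and Lemma \ref{decomp-V-special-1} gives $C_f(\Gamma_k^L) \simeq C_f(\Gamma_k^R)$, all four Hom spaces are isomorphic as $\zed_2\oplus\zed$-graded $R_B$-modules. It therefore suffices to analyze $\Hom_\HMF(C_f(\Gamma_j^L), C_f(\Gamma_k^L))$.

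Next, following the same adjunction used in the proof of Lemma \ref{complex-computing-gamma-HMF-lemma}, this Hom space is isomorphic (up to an explicit overall shift $\{q^\sigma\}\langle \tau\rangle$ determined by the widths $m$ and $n$ of the boundary edges) to $H_f(\widetilde{\Gamma}_{j,k})$, where $\widetilde{\Gamma}_{j,k}$ is the closed MOY graph obtained by gluing $\Gamma_k^L$ on top of the orientation-reversed $\Gamma_j^L$ along the four common boundary edges of widths $n,m,m,n$. Thus the task becomes the computation of $H_f(\widetilde{\Gamma}_{j,k})$.

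The main calculation is to evaluate $H_f(\widetilde{\Gamma}_{j,k})$ using the decompositions in Section \ref{sec-decomps}. First I would apply Corollary \ref{contract-expand} (bouquet moves) at the matched vertices of the two squares, then apply Direct Sum Decomposition (V) (Theorem \ref{decomp-V}) to the glued square to split $C_f(\widetilde{\Gamma}_{j,k})$ according to the two internal widths; each summand can then be further simplified by iterated use of Decomposition (II) (Theorem \ref{decomp-II}), producing families of ``bubbles''. The resulting closed diagrams reduce, via Lemma \ref{edge-contraction} and Proposition \ref{circle-module}, to copies of $C_f(\emptyset)$ weighted by products of quantum binomial coefficients. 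Careful bookkeeping of the indices shows that exactly the stated product $\qb{n+j+k-m}{k}\qb{n+j+k-m}{j}\qb{N+m-n-j-k}{m-k}\qb{N+m-n-j-k}{m-j}\qb{N}{n+j+k-m}$ arises, with overall shift $q^{(m+n)N-n^2-m^2}$.

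The main obstacle is the combinatorial bookkeeping: the MOY graph $\widetilde{\Gamma}_{j,k}$ has several colored edges, and the correct application of Decompositions (II) and (V) at each step must be tracked along with the corresponding quantum and $\zed_2$ shifts. Once the formula is established, the final statements about the lowest nonvanishing grading and its dimension follow by a direct check: the lowest exponent of $q$ in $\qb{a}{b}$ equals $-b(a-b)$, and summing the five such contributions with the overall shift $(m+n)N-n^2-m^2$ gives
\[
-k(n+j-m) - j(n+k-m) - (m-k)(N-n-j) - (m-j)(N-n-k)
\]
\[
- (n+j+k-m)(N+m-n-j-k) + (m+n)N - n^2 - m^2 = (k-j)^2,
\]
after the $N$-linear terms cancel and the remaining quadratic terms collapse to $k^2 - 2jk + j^2$. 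Since each $\qb{a}{b}$ has a one-dimensional component in its bottom degree, so does the product, and because no nontrivial $\langle \cdot \rangle$ shift survives in the formula, this bottom-degree element has $\zed_2$-grading $0$.
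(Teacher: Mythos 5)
Your proposal follows the same skeleton as the paper's (very terse) proof: the paper cites exactly Lemma \ref{complex-computing-gamma-HMF-lemma}, Corollary \ref{contract-expand} and Decompositions (I)--(II), i.e., one reduces all four spaces to $\Hom_\HMF(C_f(\Gamma_j^L),C_f(\Gamma_k^L))$ via $C_f(\Gamma_k^L)\simeq C_f(\Gamma_k^R)$, rewrites that as $H_f$ of the closed graph obtained by gluing, and evaluates the closed graph by the MOY decompositions; your computation that the bottom degree is $(k-j)^2$ is correct and is exactly the intended check. The one place you deviate is the evaluation step: you invoke Decomposition (V) (Theorem \ref{decomp-V}), which produces a \emph{direct sum} of terms indexed over an integer, whereas the stated answer is a single product of quantum binomials. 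The tool the paper actually uses is Decomposition (I) (Theorem \ref{decomp-I}): after bouquet moves the glued graph becomes a circle of color $n+j+k-m$ carrying two parallel double edges of colors $k$ and $j$ (removed by Decomposition (II), contributing $\qb{n+j+k-m}{k}$ and $\qb{n+j+k-m}{j}$) and two loops of colors $m-k$ and $m-j$ (removed by Decomposition (I), contributing $\qb{N+m-n-j-k}{m-k}$ and $\qb{N+m-n-j-k}{m-j}$), the remaining circle giving $\qb{N}{n+j+k-m}$ by Proposition \ref{circle-module}; the $\left\langle\cdot\right\rangle$ shifts from (I), the circle, and the adjunction sum to $2(m+n)\equiv 0$, which justifies the $\zed_2$-degree claim you left implicit. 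As written, your Decomposition (V) step would leave you needing a $q$-Vandermonde-type identity to recombine the summands into the stated product, which you do not supply; replacing (V) by (I) closes that gap.
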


\begin{proof}
This lemma follows easily from Lemma \ref{complex-computing-gamma-HMF-lemma}, Corollary \ref{contract-expand} and Decompositions (I-II) (Theorems \ref{decomp-I} and \ref{decomp-II}.) See the proof of \cite[Lemma 11.11]{Wu-color} for more details.
\end{proof}

\begin{corollary}\label{left-right-naturally-homotopic}
Let $m,n$ be integers such that $0\leq m,n \leq N$. For $\max\{m-n,0\} \leq k \leq m$, the matrix factorizations $C_f(\Gamma_k^L)$ and $C_f(\Gamma_k^R)$ are naturally homotopic in the sense that the homotopy equivalences $C_f(\Gamma_k^L) \xrightarrow{\simeq} C_f(\Gamma_k^R)$ and $C_f(\Gamma_k^R) \xrightarrow{\simeq} C_f(\Gamma_k^L)$ are unique up to homotopy and scaling.
\end{corollary}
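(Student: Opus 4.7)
The plan is to combine the existence statement from Lemma \ref{decomp-V-special-1} with the Hom-space computation of Lemma \ref{colored-crossing-res-HMF} to pin down the homotopy equivalence up to a scalar.

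First I would note that Lemma \ref{decomp-V-special-1} already guarantees the \emph{existence} of homotopy equivalences $\alpha:C_f(\Gamma_k^L)\xrightarrow{\simeq} C_f(\Gamma_k^R)$ and $\beta:C_f(\Gamma_k^R)\xrightarrow{\simeq} C_f(\Gamma_k^L)$. What remains is uniqueness. For this, I would specialize Lemma \ref{colored-crossing-res-HMF} to $j=k$: the lowest non-vanishing total polynomial grading of
\[
\Hom_\HMF(C_f(\Gamma_k^L),C_f(\Gamma_k^R))\quad \text{and}\quad \Hom_\HMF(C_f(\Gamma_k^R),C_f(\Gamma_k^L))
\]
is $(k-k)^2=0$, and in each case the subspace of homogeneous elements of total polynomial degree $0$ is one-dimensional and sits in $\zed_2$-degree $0$. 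In the notation of Section \ref{sec-some-morph}, this says precisely that $\Hom_\hmf(C_f(\Gamma_k^L),C_f(\Gamma_k^R))\cong\C$ and $\Hom_\hmf(C_f(\Gamma_k^R),C_f(\Gamma_k^L))\cong\C$.

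Next I would observe that any homotopy equivalence of graded matrix factorizations lies in $\Hom_\hmf$, since it must preserve both the $\zed_2$-grading and the total polynomial grading. Thus, if $\alpha,\alpha':C_f(\Gamma_k^L)\to C_f(\Gamma_k^R)$ are both homotopy equivalences, they are two elements of the one-dimensional space $\Hom_\hmf(C_f(\Gamma_k^L),C_f(\Gamma_k^R))\cong\C$, and the class of $\alpha$ in this space is non-zero (otherwise $\alpha\simeq 0$, forcing $\id_{C_f(\Gamma_k^L)}\simeq\beta\circ\alpha\simeq 0$, contradicting the fact that $C_f(\Gamma_k^L)\not\simeq 0$, which follows for instance by applying $\varpi_0$ and using \cite{Wu-color}). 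Hence $\alpha'\simeq c\cdot \alpha$ for some $c\in\C$, and since $\alpha'$ is itself a homotopy equivalence we must have $c\neq 0$. The same argument applies in the reverse direction.

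The only subtle point is verifying that the homotopy equivalence produced by Lemma \ref{decomp-V-special-1} is non-null-homotopic as an element of $\Hom_\hmf$; this is automatic from the contradiction above, so there is in fact no real obstacle. Putting these pieces together yields the claimed naturality: the homotopy equivalences $C_f(\Gamma_k^L)\xrightarrow{\simeq} C_f(\Gamma_k^R)$ and $C_f(\Gamma_k^R)\xrightarrow{\simeq} C_f(\Gamma_k^L)$ exist and are each unique up to homotopy and a non-zero scalar.
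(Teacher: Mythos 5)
Your proposal is correct and follows exactly the paper's argument: existence from Lemma \ref{decomp-V-special-1} and uniqueness from the $j=k$ case of Lemma \ref{colored-crossing-res-HMF}, with the one-dimensionality of $\Hom_\hmf$ forcing any two homotopy equivalences to agree up to a non-zero scalar. The extra detail you supply (non-vanishing of the class of a homotopy equivalence) is a routine elaboration of what the paper leaves implicit.
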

\begin{proof}
The existence of these homotopy equivalences follows from Lemma \ref{decomp-V-special-1}. The uniqueness follows from the $j=k$ case of Lemma \ref{colored-crossing-res-HMF}.
\end{proof}

\begin{definition}\label{complex-colored-crossing-chain-maps-def}
We define $d_k^{\pm}: C_f (\Gamma_k^L) \rightarrow C_f (\Gamma_{k\mp1}^L)$ to be a homotopically non-trivial homogeneous morphism of total polynomial degree $1$ and $\zed_2$ degree $0$. By Lemma \ref{colored-crossing-res-HMF}, $d_k^{\pm}$ is uniquely defined up to homotopy and scaling.

By Corollary \ref{left-right-naturally-homotopic}, $C_f (\Gamma_k^L)$ and $C_f (\Gamma_k^R)$ are naturally homotopic (up scaling.) Abusing the notation, we let 
\begin{eqnarray*}
d_k^{\pm}&:& C_f (\Gamma_k^L) \rightarrow C_f (\Gamma_{k\mp1}^R) \\
d_k^{\pm}&:& C_f (\Gamma_k^R) \rightarrow C_f (\Gamma_{k\mp1}^L) \\
d_k^{\pm}&:& C_f (\Gamma_k^R) \rightarrow C_f (\Gamma_{k\mp1}^R)
\end{eqnarray*}
be the morphisms corresponding to $d_k^{\pm}: C_f (\Gamma_k^L) \rightarrow C_f (\Gamma_{k\mp1}^L)$ under the natural homotopy equivalence. These morphisms are all homotopically non-trivial homogeneous morphisms of total polynomial degree $1$ and $\zed_2$ degree $0$, which, by Lemma \ref{colored-crossing-res-HMF},  uniquely defines these morphisms up to homotopy and scaling.
\end{definition}

\begin{corollary}\label{left-right-naturally-homotopic-2}
Up to homotopy and scaling, every square in the following diagram commutes, where the vertical morphisms are either identity or the natural homotopy equivalence.
\[
\xymatrix{
C_f (\Gamma_k^L) \ar[rr]^{d_k^{\pm}}  \ar[d]^{\simeq} && C_f (\Gamma_{k\mp1}^L) \ar[d]^{=} \\
C_f (\Gamma_k^R) \ar[rr]^{d_k^{\pm}}  \ar[d]^{\simeq} && C_f (\Gamma_{k\mp1}^L) \ar[d]^{\simeq} \\
C_f (\Gamma_k^L) \ar[rr]^{d_k^{\pm}}  \ar[d]^{\simeq} && C_f (\Gamma_{k\mp1}^R) \ar[d]^{=} \\
C_f (\Gamma_k^R) \ar[rr]^{d_k^{\pm}}   && C_f (\Gamma_{k\mp1}^R)
}
\]
\end{corollary}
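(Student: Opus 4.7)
The strategy is a dimension count. Each of the three squares in the diagram produces two parallel compositions from $C_f(\Gamma_k^{L\text{ or }R})$ to $C_f(\Gamma_{k\mp 1}^{L\text{ or }R})$, and each such composition is a homogeneous morphism of $\zed_2$-degree $0$ and total polynomial degree $1$. Applying Lemma~\ref{colored-crossing-res-HMF} with the target shifted by $k\mp 1$, the value $(k-(k\mp 1))^2 = 1$ is precisely the minimal nonvanishing total polynomial degree, and the subspace of homogeneous morphisms of that degree is $1$-dimensional over $\C$ in each of the four Hom-spaces listed there. Consequently, once we show that both compositions in a given square are not null-homotopic, they must be scalar multiples of each other, which is exactly the assertion of commutativity up to homotopy and scaling ($\approx$).

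To verify non-triviality, I would argue as follows. The vertical morphisms in the diagram are either the identity or the natural homotopy equivalences provided by Corollary~\ref{left-right-naturally-homotopic}; in either case they are homotopy equivalences and therefore induce isomorphisms on the Hom-spaces at the level of $\HMF$. Hence pre- or post-composition with any vertical map sends non-null-homotopic morphisms to non-null-homotopic morphisms. The horizontal map $d_k^\pm$ in each of its four incarnations (pure $L\to L$, mixed $L\to R$, $R\to L$, and pure $R\to R$) was, by Definition~\ref{complex-colored-crossing-chain-maps-def}, declared to be not null-homotopic — the mixed and pure-$R$ versions are \emph{defined} as the transport of the pure-$L$ version along natural homotopy equivalences, and these transports are necessarily non-null-homotopic because the natural equivalences are homotopy equivalences. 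Thus every two-step composition appearing in any square is a non-null-homotopic homogeneous morphism of total polynomial degree $1$ and $\zed_2$-degree $0$.

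Combining the two observations, for each of the three squares the two compositions lie in the $1$-dimensional subspace of $\Hom_\hmf$ identified by Lemma~\ref{colored-crossing-res-HMF} and are both non-zero in that subspace; hence they agree up to multiplication by a nonzero element of $\C$, which is exactly the relation $\approx$. The only real subtlety — and the mildest of obstacles — is keeping straight which representative of the natural homotopy equivalence is being used in each position; but since Corollary~\ref{left-right-naturally-homotopic} already asserts that these equivalences are unique up to homotopy and scaling, any ambiguity is harmlessly absorbed into the scalar factor in $\approx$. No further computation with explicit Koszul matrix factorizations is required, and in particular we do not need to unwind the construction of $d_k^\pm$ beyond its defining property of being homotopically non-trivial of the prescribed bidegree.
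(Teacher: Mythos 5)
Your proposal is correct and is precisely the argument the paper intends: its proof of this corollary is the one-line remark that it "follows easily from Definition \ref{complex-colored-crossing-chain-maps-def} and Lemma \ref{colored-crossing-res-HMF}," and your write-up simply spells out that compressed argument — both compositions in each square are homotopically non-trivial homogeneous morphisms of total polynomial degree $1$ and $\zed_2$-degree $0$, and by the $|j-k|=1$ case of Lemma \ref{colored-crossing-res-HMF} the space of such morphisms is $1$-dimensional, so they agree up to homotopy and scaling. No gaps.
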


\begin{proof}
This follows easily from Definition \ref{complex-colored-crossing-chain-maps-def} and Lemma \ref{colored-crossing-res-HMF}.
\end{proof}

\begin{theorem}\label{complex-colored-crossing-well-defined}
$d_{k\mp1}^{\pm} \circ d_k^{\pm} \simeq 0$.
\end{theorem}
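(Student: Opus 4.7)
The plan is to deduce the claim purely from the $\Hom$-space computation in Lemma \ref{colored-crossing-res-HMF}, by a degree count. Since $d_k^{\pm}$ has total polynomial degree $1$ and $\zed_2$-degree $0$, the composition
\[
d_{k\mp 1}^{\pm}\circ d_k^{\pm}\colon C_f(\Gamma_k^L)\longrightarrow C_f(\Gamma_{k\mp 2}^L)
\]
is a homogeneous morphism of total polynomial degree $2$ and $\zed_2$-degree $0$.

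First I would apply Lemma \ref{colored-crossing-res-HMF} with source $\Gamma_k^L$ and target $\Gamma_{k\mp 2}^L$, i.e. with $j$ replaced by $k$ and $k$ replaced by $k\mp 2$ in the statement of that lemma. The lemma tells us that the lowest non-vanishing total polynomial grading in $\Hom_{\HMF}(C_f(\Gamma_k^L), C_f(\Gamma_{k\mp 2}^L))$ is
\[
(k-(k\mp 2))^2 = 4.
\]
In particular, there are no homotopically non-trivial homogeneous morphisms $C_f(\Gamma_k^L)\to C_f(\Gamma_{k\mp 2}^L)$ of total polynomial degree $2$ (or of any degree strictly less than $4$). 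Since $d_{k\mp 1}^{\pm}\circ d_k^{\pm}$ is such a morphism, it must be null-homotopic.

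The one thing to check is that the indices $k\mp 1$ and $k\mp 2$ lie in the allowable range $\max\{m-n,0\}\le\,\cdot\,\le m$ so that the MOY graphs $\Gamma_{k\mp 1}^L$ and $\Gamma_{k\mp 2}^L$, and hence the morphisms $d_{k\mp 1}^{\pm}$, are defined; if not, the relevant matrix factorization is zero by Theorem \ref{decomp-V} (or by the convention $\qb{a}{b}=0$ for $b<0$ or $b>a$), and the statement is vacuous. There is no real obstacle here: once Lemma \ref{colored-crossing-res-HMF} is in hand, the theorem is immediate from the degree count, with no need to unwind the explicit formulas for $d_k^{\pm}$.
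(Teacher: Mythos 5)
Your argument is exactly the paper's proof: the composition $d_{k\mp1}^{\pm}\circ d_k^{\pm}$ has total polynomial degree $2$, while Lemma \ref{colored-crossing-res-HMF} shows the lowest non-vanishing total polynomial grading of $\Hom_\HMF(C_f(\Gamma_k^L),C_f(\Gamma_{k\mp2}^L))$ is $(k-(k\mp2))^2=4$, so the composition is null-homotopic. The proposal is correct and matches the paper's approach.
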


\begin{proof}
Note that $d_{k\mp1}^{\pm} \circ d_k^{\pm} : C_f (\Gamma_k^L) \rightarrow C_f (\Gamma_{k\mp2}^L)$ has total polynomial grading $2$. But, by Lemma \ref{colored-crossing-res-HMF}, the lowest non-vanishing total polynomial grading of $\Hom_\HMF(C(\Gamma_k^L), C(\Gamma_{k\mp2}^L))$ is $2^2=4$. So $d_{k\mp1}^{\pm} \circ d_k^{\pm} \simeq 0$.
\end{proof}

\begin{definition}\label{complex-colored-crossing-def}
Let $\tilde{R}=\Sym(\mathbb{X}|\mathbb{Y}|\mathbb{A}|\mathbb{D})\otimes R_B$, $w= f(\mathbb{X}) +f(\mathbb{Y}) -f(\mathbb{A}) - f(\mathbb{D})$, and $\Gamma^L_k$ the MOY graph in Figure \ref{decomp-V-special-1-figure}. Following \cite[Definition 2.33]{Wu-color}, denote by $\hch(\hmf_{\tilde{R},w})$ the homotopy category of bounded chain complexes over $\hmf_{\tilde{R},w}$.

We define the unnormalized chain complex $\hat{C}_f$ first.

$\hat{C}_f (\setlength{\unitlength}{1pt}
\begin{picture}(65,20)(-110,0)
\put(-100,-20){\vector(1,1){40}}

\put(-60,-20){\line(-1,1){15}}

\put(-85,5){\vector(-1,1){15}}

\put(-92,16){\tiny{$_m$}}

\put(-70,16){\tiny{$_n$}}

\put(-105,-20){\tiny{$\mathbb{A}$}}
\put(-105,15){\tiny{$\mathbb{X}$}}

\put(-55,-20){\tiny{$\mathbb{D}$}}
\put(-55,15){\tiny{$\mathbb{Y}$}}\end{picture}) \vspace{20pt}$ is defined to be the object 
\[
0\rightarrow C_f(\Gamma^L_m) \xrightarrow{d^{+}_m} C_f(\Gamma^L_{m-1})\{q^{-1}\} \xrightarrow{d^{+}_{m-1}} \cdots \xrightarrow{d^{+}_1} C_f(\Gamma^L_0)\{q^{-m}\} \rightarrow 0
\]
of $\hch(\hmf_{\tilde{R},w})$, where the homological grading on $\hat{C}_f () \vspace{20pt}$ is defined so that the term $C_f(\Gamma^L_{k})\{q^{-(m-k)}\}$ have homological grading $m-k$. Note that, if $m>n$, then the last homotopically non-trivial term in the chain complex $\hat{C}_f () \vspace{20pt}$ is $C_f(\Gamma^L_{m-n})\{q^{-n}\}$ since $C_f(\Gamma^L_{k}) \simeq 0$ if $k < \max \{0, m-n\}$.

$\hat{C}_f (\setlength{\unitlength}{1pt}
\begin{picture}(65,20)(50,0)
\put(100,-20){\vector(-1,1){40}}

\put(60,-20){\line(1,1){15}}

\put(85,5){\vector(1,1){15}}

\put(68,16){\tiny{$_m$}}

\put(90,16){\tiny{$_n$}}

\put(55,-20){\tiny{$\mathbb{A}$}}
\put(55,15){\tiny{$\mathbb{X}$}}

\put(105,-20){\tiny{$\mathbb{D}$}}
\put(105,15){\tiny{$\mathbb{Y}$}}
\end{picture}) \vspace{20pt}$ is defined to be the object 
\[
0\rightarrow C_f(\Gamma^L_0)\{q^{m}\} \xrightarrow{d^{-}_0} \cdots \xrightarrow{d^{-}_{m-2}} C_f(\Gamma^L_{m-1}) \{ q \} \xrightarrow{d^{-}_{m-1}} C_f(\Gamma^L_m) \rightarrow 0
\]
of $\hch(\hmf_{\tilde{R},w})$, where the homological grading on $\hat{C}_f () \vspace{20pt}$ is defined so that the term $C_f(\Gamma^L_{k})\{q^{m-k}\}$ has homological grading $k-m$. Again, note that, if $m>n$, then the first homotopically non-trivial term in the chain complex $\hat{C}_f () \vspace{20pt}$ is $C_f(\Gamma_{m-n})\{q^{n}\}$ since $C_f(\Gamma^L_{k}) \simeq 0$ if $k < \max \{0, m-n\}$.

By Corollaries \ref{left-right-naturally-homotopic} and \ref{left-right-naturally-homotopic-2}, changing $\Gamma^L_k$ in the above construction to $\Gamma^R_k$ does not change the isomorphism type of $\hat{C}_f () \vspace{20pt}$ and $\hat{C}_f () \vspace{20pt}$.

The normalized chain complexes $C_f () \vspace{20pt}$ and $C_f () \vspace{20pt}$ are defined to be
\begin{eqnarray*}
C_f() & = & \begin{cases}
\hat{C}_f()\left\langle m \right\rangle\|-m\| \{q^{m(N+1-m)}\} & \text{if } m=n,  \vspace{20pt} \\
\hat{C}_f() & \text{if } m\neq n, \vspace{20pt}
\end{cases} \\
C_f() & = & \begin{cases}
\hat{C}_f()\left\langle m \right\rangle\|m\| \{q^{-m(N+1-m)}\} & \text{if } m=n,  \vspace{20pt}\\
\hat{C}_f() & \text{if } m\neq n. \vspace{20pt}
\end{cases}
\end{eqnarray*}
Here, $\|m\|$ means shifting the homological grading by $m$. (See \cite[Definition 2.33]{Wu-color}.)
\end{definition}

\subsection{The chain complex associated to a knotted MOY graph} Let $D$ be a knotted MOY graph with a marking, and $D_1,\dots,D_l$ its pieces. In Definitions \ref{def-complex-embedded-pieces} and \ref{complex-colored-crossing-def}, we have defined the chain complexes $\hat{C}_f(D_i)$ and $C_f(D_i)$ for each $D_i$.

\begin{definition}\label{complex-knotted-MOY-def}
\begin{eqnarray*}
\hat{C}_f(D) & := & \bigotimes_{i=1}^m \hat{C}_f(D_i), \\
C_f(D) & := & \bigotimes_{i=1}^m C_f(D_i),
\end{eqnarray*}
where the tensor product is done over the common end points. More precisely, for two pieces $D_{i_1}$ and $D_{i_2}$ of $D$, let $\mathbb{W}_1,\dots,\mathbb{W}_j$ be the alphabets associated to their common end points. Then, in the above tensor product, 
\[
C_f(D_{i_1}) \otimes C_f(D_{i_2}) := C_f(D_{i_1}) \otimes_{\Sym(\mathbb{W}_1|\cdots|\mathbb{W}_j)\otimes_\C R_B} C_f(D_{i_2}).
\]

If $D$ is closed, i.e. has no endpoints, then $C_f(D)$ is an object of $\hch(\hmf_{R_B,0})$. Assume $D$ has endpoints. Let $\mathbb{E}_1,\dots,\mathbb{E}_n$ be the alphabets assigned to all end points of $D$, among which $\mathbb{E}_1,\dots,\mathbb{E}_k$ are assigned to exits and $\mathbb{E}_{k+1},\dots,\mathbb{E}_n$ are assigned to entrances. Let $\tilde{R}=\Sym(\mathbb{E}_1|\cdots|\mathbb{E}_n)\otimes_\C R_B$ and $w= \sum_{i=1}^k f(\mathbb{E}_i) - \sum_{j=k+1}^n f(\mathbb{E}_j)$. In this case, $C_f(D)$ is an object of $\hch(\hmf_{\tilde{R},w})$.

Note that $C_f(D)$ has a $\zed_2$-grading, a total polynomial grading and a homological grading.
\end{definition}

\begin{corollary}\label{complex-knotted-MOY-marking-independence}
The isomorphism type of the chain complexes $\hat{C}_f(D)$ and $C_f(D)$ associated to a knotted MOY graph $D$ is independent of the choice of the marking of $D$.
\end{corollary}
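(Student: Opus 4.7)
The strategy is to reduce the statement to the invariance of $\hat{C}_f(D)$ (since the normalization in Definition \ref{complex-colored-crossing-def} depends only on the combinatorial data of $D$, not on the marking), and then to connect any two markings of $D$ by a finite sequence of elementary moves, each of which adds or removes a single marked point from a segment of $D$ (plus obvious relabelling of alphabets, which is harmless). I would then verify that each elementary move induces an isomorphism in $\hch(\hmf_{\tilde{R},w})$, treating separately the case where the affected segment is disjoint from every crossing and the case where it abuts a crossing.

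For an elementary move on a segment disjoint from every crossing, the move is entirely internal to a sub-MOY graph that appears identically in every resolution used to build $\hat{C}_f(D)$. Lemma \ref{marking-independence} (which is itself Lemma \ref{nice-b-contract} applied to the Koszul factor attached at the new marked point) produces a homotopy equivalence of matrix factorizations over $\tilde{R}$ in each homological degree, and because the same local tensor factor is modified identically in every term of the chain complex, these degree-wise homotopy equivalences assemble into a strict chain isomorphism in $\hch(\hmf_{\tilde{R},w})$. So this case follows directly from the MOY-graph case.

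The genuinely new case is an elementary move on a segment adjacent to a crossing, say a positive $(m,n)$-crossing expanded as in Definition \ref{complex-colored-crossing-def}. Adding a marked point on the incoming $m$-colored segment produces, in every resolution $\Gamma_k^L$ (or $\Gamma_k^R$), an extra marked point on the same edge, and Lemma \ref{marking-independence} supplies a homotopy equivalence $f_k : C_f(\Gamma_k^L) \xrightarrow{\simeq} C_f((\Gamma_k^L)')$ in $\hmf_{\tilde{R},w}$ for each $k$. It remains to check that the $f_k$ intertwine the differentials $d_k^{\pm}$. The composition $f_{k\mp1}\circ d_k^{\pm}\circ f_k^{-1}$ is a homogeneous morphism $C_f((\Gamma_k^L)')\to C_f((\Gamma_{k\mp1}^L)')$ of total polynomial degree $1$ and $\zed_2$-degree $0$. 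By Lemma \ref{colored-crossing-res-HMF}, which applies equally to primed resolutions since they are homotopy equivalent to the unprimed ones in $\hmf_{\tilde{R},w}$, the subspace of $\Hom_\hmf$ consisting of such morphisms is one-dimensional and spanned by the corresponding $d_k^{\pm}$ of the primed resolutions. Since $f_k$ is a homotopy equivalence and $d_k^{\pm}$ is homotopically non-trivial, $f_{k\mp1}\circ d_k^{\pm}\circ f_k^{-1}\simeq c_k\, d_k^{\pm}$ for some nonzero $c_k\in\C$. Replacing each $f_k$ by $\lambda_k f_k$ with the scalars $\lambda_k$ chosen inductively to satisfy $\lambda_{k\mp1}=c_k\lambda_k$ (which is possible because the chain complex is bounded), the squares commute on the nose in $\hmf_{\tilde{R},w}$, so the rescaled $(\lambda_k f_k)$ form a chain isomorphism in $\hch(\hmf_{\tilde{R},w})$.

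The only subtlety I anticipate is the global coherence of these rescalings when $D$ has many crossings and the elementary moves are applied in sequence: I handle this by doing one move at a time, so at each step only one sub-chain-complex (the one attached to a single crossing) needs its internal scalars adjusted, and the tensor structure over the ambient polynomial ring lets this local adjustment propagate trivially through the rest of $C_f(D)$. Iterating over a chain of elementary moves connecting the two markings yields the desired isomorphism in $\hch(\hmf)$, and normalization gives the same statement for $C_f(D)$.
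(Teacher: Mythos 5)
Your proposal is correct and follows essentially the same route as the paper, which simply cites Lemma \ref{marking-independence} together with the uniqueness of the differentials up to homotopy and scaling (the mechanism behind Corollary \ref{left-right-naturally-homotopic-2}) and defers the details to \cite[Corollary 11.18]{Wu-color}; your write-up spells out exactly that argument, including the rescaling of the degree-wise homotopy equivalences needed to make the squares commute.
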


\begin{proof}
This follows easily from Lemma \ref{marking-independence} and Corollary \ref{left-right-naturally-homotopic-2}. See \cite[Corollary 11.18]{Wu-color} for a more detailed proof.
\end{proof}

\subsection{A null-homotopic chain complex}\label{subsec-null-chain} Next we introduce a null-homotopic chain complex that will appear in our proof of the invariance under fork sliding. The construction of this chain complex is similar to the chain complex of a colored crossing.

\begin{figure}[ht]
$
\xymatrix{
\input{square-m-n-1-right-k-low} && \input{square-m-n-1-right-j-high}
}
$
\caption{}\label{decomp-V-special-2-figure}

\end{figure}

\begin{lemma}\label{decomp-V-special-2} 
Let $m,n$ be integers such that $0\leq m,n \leq N-1$. For $\max\{m-n,0\} \leq k \leq m+1$ and $\max\{m-n,0\} \leq j \leq m$, define $\Gamma_k$ and $\Gamma_j'$ to be the MOY graphs in Figure \ref{decomp-V-special-2-figure}. Then, for $\max\{m-n,0\} \leq k \leq m+1$, 
\[
C_f(\Gamma_k) \simeq \begin{cases}
C_f(\Gamma_m') & \text{if } k=m+1, \\
C_f(\Gamma_k') \oplus C_f(\Gamma_{k-1}') & \text{if } \max\{m-n,0\}+1 \leq k \leq m, \\
C_f(\Gamma_{\max\{m-n,0\}}') & \text{if } k = \max\{m-n,0\}.
\end{cases}
\]
\end{lemma}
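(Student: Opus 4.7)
The plan is to recognize that Lemma \ref{decomp-V-special-2} is nothing more than the special case $l = 1$ of Direct Sum Decomposition (V) (Theorem \ref{decomp-V}), together with a case analysis coming from the fact that $\qb{1}{k-j}$ is concentrated on just two values of $j$.

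First, I would match up the MOY graphs. Comparing Figure \ref{decomp-V-special-2-figure} with the MOY graphs appearing in Theorem \ref{decomp-V}, one sees that $\Gamma_k$ (resp.\ $\Gamma_j'$) here is obtained from the graph on the left-hand side (resp.\ the summand on the right-hand side) of Theorem \ref{decomp-V} by setting $l = 1$: the right-hand vertical edges get colors $m+1$ and $n+1$, the middle right edge has color $m+1-k$ (resp.\ $n+1+j$), while the horizontal edges carry the colors $k$ and $n+k-m$ (resp.\ $j$ and $n+j-m$), exactly as required. Thus Theorem \ref{decomp-V} gives
\[
C_f(\Gamma_k) \simeq \bigoplus_{j=\max\{m-n,0\}}^{m} C_f(\Gamma_j') \{\qb{1}{k-j}\}.
\]

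Second, I would carry out the case analysis. Since $\qb{1}{k-j} = 1$ when $k-j \in \{0,1\}$ and $\qb{1}{k-j} = 0$ otherwise, only the terms with $j = k$ or $j = k-1$ can appear. For $k = m+1$ we have $j = k = m+1$ out of the allowed range, so only $j = m$ contributes and $C_f(\Gamma_{m+1}) \simeq C_f(\Gamma_m')$. For $\max\{m-n,0\}+1 \leq k \leq m$, both $j = k$ and $j = k-1$ are admissible, yielding $C_f(\Gamma_k) \simeq C_f(\Gamma_k') \oplus C_f(\Gamma_{k-1}')$. For $k = \max\{m-n,0\}$ the value $j = k-1$ falls below the lower bound of the summation, leaving only $j = k$ and giving $C_f(\Gamma_{\max\{m-n,0\}}) \simeq C_f(\Gamma_{\max\{m-n,0\}}')$.

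There is essentially no obstacle here, since all the heavy lifting has been done in Theorem \ref{decomp-V}; the only care needed is in verifying the identification of the two families of MOY graphs (in particular that the color conventions and orientations on the horizontal edges match after specializing $l=1$) and in correctly handling the endpoints of the summation range in the case analysis.
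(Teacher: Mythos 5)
Your proposal is correct and is exactly the paper's argument: the paper's proof consists of the single sentence that this is a special case of Decomposition (V), and your specialization to $l=1$ together with the case analysis on when $\qb{1}{k-j}$ is nonzero is precisely the verification that sentence leaves implicit.
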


\begin{proof}
This is a special case of Decomposition (V) (Theorem \ref{decomp-V}.)
\end{proof}

\begin{lemma}\label{trivial-complex-lemma-1}
Let $\Gamma_k$ and $\Gamma_j'$ be as in Lemma \ref{decomp-V-special-2}. Then
\begin{eqnarray*}
&& \Hom_\HMF (C_f(\Gamma_j'), C_f(\Gamma_k)) \\
& \cong & C_f(\emptyset) \{\qb{_{n+k+j-m}}{_k} \qb{_{n+k+j-m}}{_j} \qb{_{N+m-n-k-j}}{_{m-j}} \qb{_{N+m-n-k-j}}{_{m+1-k}} \qb{_N}{_{n+k+j-m}} q^{(m+n+1)(N-1)-m^2-n^2} \}.
\end{eqnarray*}
In particular, the space is concentrated on $\zed_2$-grading $0$. The lowest non-vanishing total polynomial grading of the above space is $(j-k)(j-k+1)$. Moreover, the subspace of homogeneous elements of total polynomial degree $(j-k)(j-k+1)$ is $1$-dimensional.
\end{lemma}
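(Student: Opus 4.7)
The proof follows the same strategy as Lemma \ref{colored-crossing-res-HMF}. First, since $\Gamma_j'$ is built from exactly the same kind of Koszul matrix factorizations as $\Gamma_k^R$ from Lemma \ref{complex-computing-gamma-HMF-lemma} (with the parameter ``$l$'' there specialized to $1$), the same adjunction argument using only the right columns of these Koszul matrix factorizations yields
\[
\Hom_\HMF(C_f(\Gamma_j'), C_f(\Gamma_k)) \cong H_f(\Gamma_{j,k}) \left\langle m+n+1 \right\rangle \{q^{(m+n+1)(N-1) - m^2 - n^2}\},
\]
where $\Gamma_{j,k}$ is the closed MOY graph obtained by glueing $\Gamma_k$ to a suitable vertical reflection (with orientation reversal) of $\Gamma_j'$ along their common boundary $\mathbb{X},\mathbb{Y},\mathbb{A},\mathbb{D}$.

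Next I would simplify $\Gamma_{j,k}$ using the basic moves. Repeated applications of Corollary \ref{contract-expand} and Lemma \ref{edge-contraction} collapse the two nested $H$-shapes in $\Gamma_{j,k}$ to a single closed loop of color $n+k+j-m$ with four Decomposition-(II) bigons attached, corresponding to the color pairs $(k,\, n+k+j-m)$, $(j,\, n+k+j-m)$, $(m-j,\, N+m-n-k-j)$, and $(m+1-k,\, N+m-n-k-j)$. Applying Theorem \ref{decomp-II} to each of these bigons strips off the four quantum binomial factors
\[
\qb{n+k+j-m}{k},\ \qb{n+k+j-m}{j},\ \qb{N+m-n-k-j}{m-j},\ \qb{N+m-n-k-j}{m+1-k},
\]
and reduces $\Gamma_{j,k}$ to the single circle $\bigcirc_{n+k+j-m}$. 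Proposition \ref{circle-module} then contributes the remaining factor $\qb{N}{n+k+j-m}$ together with an appropriate $\zed_2$-shift. When all $\left\langle\cdot\right\rangle$ shifts (from the adjunction and from Proposition \ref{circle-module}) are combined and the resulting parity is computed, they cancel exactly as in the proof of Lemma \ref{colored-crossing-res-HMF}, so the resulting Hom space is concentrated in $\zed_2$-grading $0$ and agrees with the formula displayed in the lemma.

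Finally, each $\qb{a}{b}$ is a polynomial in $q^{\pm 1}$, symmetric under $q \leftrightarrow q^{-1}$, whose lowest term is $q^{-b(a-b)}$ with coefficient $1$. A direct computation of the lowest exponent of $q$ in the product of the five binomial factors, combined with the overall $q^{(m+n+1)(N-1)-m^2-n^2}$ shift, gives $(j-k)(j-k+1)$: the $N$-dependent contributions from $\qb{N}{n+k+j-m}$ cancel the $(m+n+1)N$ piece of the shift, and grouping the remaining terms produces $(j-k)(j-k+1)$. The $1$-dimensionality of the bottom graded piece is then immediate because the lowest summand of each binomial factor is $1$-dimensional with coefficient $1$. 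The only real difficulty is this bookkeeping for the minimal polynomial grading, which I would carry out in parallel with the analogous calculation in \cite[Lemma 11.11]{Wu-color}.
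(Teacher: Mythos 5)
Your overall route is exactly the paper's: the paper's proof of this lemma is a one-line citation of Lemma \ref{complex-computing-gamma-HMF-lemma}, Corollary \ref{contract-expand} and Decompositions (I--II), deferring the detailed computation to \cite[Lemma 11.20]{Wu-color}, and your adjunction step, your reduction of the closed graph, and your bookkeeping for the minimal degree (which does come out to $(j-k)(j-k+1)$, with the $N$-dependent terms cancelling against the $(m+n+1)(N-1)$ shift) are all sound.

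There is, however, one concrete misstep in the middle of your reduction: you claim that all four binomial factors other than $\qb{N}{n+k+j-m}$ arise from ``Decomposition-(II) bigons,'' including the pairs $(m-j,\,N+m-n-k-j)$ and $(m+1-k,\,N+m-n-k-j)$. Decomposition (II) strips $\qb{a}{b}$ from a bigon whose thick edge actually has color $a$ in the graph, and no edge of color $N+m-n-k-j$ exists anywhere in $\Gamma_{j,k}$ (all its colors are built from $m,n,j,k$ and are at most $m+n+1\leq N$). The factors $\qb{N+m-n-k-j}{m-j}$ and $\qb{N+m-n-k-j}{m+1-k}$ are produced instead by Decomposition (I) (Theorem \ref{decomp-I}): after the genuine Decomposition (II) steps remove $\qb{n+k+j-m}{k}$ and $\qb{n+k+j-m}{j}$, what remains is a circle of color $n+k+j-m$ carrying two loops of colors $m-j$ and $m+1-k$, and removing a loop of color $a$ from a strand of color $b=n+k+j-m$ contributes $\qb{N-b}{a}=\qb{N+m-n-k-j}{a}$ --- this is precisely how $N$ enters before the final circle is evaluated. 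The intermediate configuration you describe (four bigons with those color pairs) does not exist, so the step as written would fail; replacing it with Decomposition (I) for the two loops, as the paper's citation indicates, repairs the argument without changing anything else.
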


\begin{proof}
This follows easily from Lemma \ref{complex-computing-gamma-HMF-lemma}, Corollary \ref{contract-expand} and Decompositions (I-II) (Theorems \ref{decomp-I} and \ref{decomp-II}.) See \cite[Lemma 11.20]{Wu-color} for a detailed proof.
\end{proof}

The next two lemmas are easy consequences of Lemma \ref{trivial-complex-lemma-1}. See \cite[Lemmas 11.21 and 11.22]{Wu-color} for their proofs.

\begin{lemma}\label{trivial-complex-lemma-2}
For $\max\{m-n,0\} \leq i,j \leq m$,
\[
\Hom_\hmf( C_f(\Gamma_i'),C_f(\Gamma_j')) \cong \begin{cases}
\C & \text{if } i=j, \\
0 & \text{if } i \neq j.
\end{cases}
\]
In the case $i=j$, $\Hom_\hmf( C_f(\Gamma_i'),C_f(\Gamma_i'))$ is spanned by $\id_{C_f(\Gamma_i')}$.
\end{lemma}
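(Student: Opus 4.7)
The plan is to reduce the lemma to Lemma~\ref{trivial-complex-lemma-1} by means of the direct-sum decomposition of Lemma~\ref{decomp-V-special-2}. For every valid $k$, applying Lemma~\ref{decomp-V-special-2} gives an identification of graded $R_B$-modules
\[
\Hom_{\HMF}(C_f(\Gamma_j'),C_f(\Gamma_k)) \cong \Hom_{\HMF}(C_f(\Gamma_j'),C_f(\Gamma_k')) \oplus \Hom_{\HMF}(C_f(\Gamma_j'),C_f(\Gamma_{k-1}')),
\]
with only a single summand at the two boundary values $k=m+1$ and $k=\max\{m-n,0\}$.

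Set $H(i,j):=\Hom_{\HMF}(C_f(\Gamma_j'),C_f(\Gamma_i'))$, and let $\delta_{i,j}$ denote the $\C$-dimension of the total-polynomial-degree-$0$ subspace of $H(i,j)$. Since Lemma~\ref{trivial-complex-lemma-1} asserts that $\Hom_{\HMF}(C_f(\Gamma_j'),C_f(\Gamma_k))$ lives entirely in $\zed_2$-degree $0$, so does each $H(i,j)$, and therefore $\delta_{i,j} = \dim_{\C}\Hom_{\hmf}(C_f(\Gamma_j'),C_f(\Gamma_i'))$. Lemma~\ref{trivial-complex-lemma-1} further pins down the polynomial-degree-$0$ part of $\Hom_{\HMF}(C_f(\Gamma_j'),C_f(\Gamma_k))$ as $1$-dimensional when $(j-k)(j-k+1)=0$ (that is, $k\in\{j,j+1\}$) and as $0$-dimensional otherwise. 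Feeding this into the splitting above yields, for $\max\{m-n,0\}+1\leq k\leq m$, the recursion
\[
\delta_{k,j}+\delta_{k-1,j} = \begin{cases} 1 & \text{if } k\in\{j,j+1\},\\ 0 & \text{otherwise,}\end{cases}
\]
together with boundary equations from the single-summand cases: $\delta_{m,j}$ equals $1$ if $j=m$ and $0$ otherwise, and $\delta_{\max\{m-n,0\},j}$ equals $1$ if $j=\max\{m-n,0\}$ and $0$ otherwise.

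The identity morphism $\id_{C_f(\Gamma_j')}$ is a nonzero total-polynomial-degree-$0$ element of $H(j,j)$, since $C_f(\Gamma_j')\not\simeq 0$ by the $k=j$ instance of Lemma~\ref{trivial-complex-lemma-1}. Thus $\delta_{j,j}\geq 1$, and substituting into the recursions $\delta_{j,j}+\delta_{j-1,j}=1$ and $\delta_{j+1,j}+\delta_{j,j}=1$ forces $\delta_{j,j}=1$ and $\delta_{j\pm 1,j}=0$. The remaining recursions read $\delta_{k,j}+\delta_{k-1,j}=0$; combined with nonnegativity of the $\delta$'s, induction on $|i-j|$ (propagating from $j\pm 1$ to the boundary, where compatibility is verified from the single-summand equations) forces $\delta_{i,j}=0$ for all $i\neq j$. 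Consequently $\Hom_{\hmf}(C_f(\Gamma_i'),C_f(\Gamma_j'))=0$ for $i\neq j$, while for $i=j$ the space is one-dimensional and thus spanned by $\id_{C_f(\Gamma_i')}$. The only subtle point is ensuring that the two single-summand boundary equations remain consistent with the interior propagation, which follows from a direct inspection of the two endpoint cases of Lemma~\ref{decomp-V-special-2}.
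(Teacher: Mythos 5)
Your argument is correct and is essentially the approach the paper intends: the paper states that this lemma is an easy consequence of Lemma \ref{trivial-complex-lemma-1} (deferring details to \cite[Lemma 11.21]{Wu-color}), and that consequence is exactly your combination of the splitting $C_f(\Gamma_k)\simeq C_f(\Gamma_k')\oplus C_f(\Gamma_{k-1}')$ from Lemma \ref{decomp-V-special-2} with the degree count in Lemma \ref{trivial-complex-lemma-1}. The resulting system of equations for the $\delta_{i,j}$, anchored either by the boundary cases or by the non-vanishing of the identity morphism as you do, pins down the answer.
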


\begin{lemma}\label{trivial-complex-lemma-3}
For $\max\{m-n,0\} \leq j,k \leq m+1$,
\[
\Hom_\hmf( C_f(\Gamma_j),C_f(\Gamma_k)) \cong \begin{cases}
\C \oplus \C & \text{if } \max\{m-n,0\}+1 \leq j=k \leq m, \\
\C & \text{if } j=k=\max\{m-n,0\} \text{ or } m+1, \\
\C & \text{if } |j-k|=1,\\
0 & \text{if } |j-k|>1.
\end{cases}
\]
\end{lemma}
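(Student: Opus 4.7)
The plan is to reduce the computation to Lemma \ref{trivial-complex-lemma-2} by using the direct sum decomposition in Lemma \ref{decomp-V-special-2}. That lemma writes each $C_f(\Gamma_k)$ as a direct sum of one or two $C_f(\Gamma_i')$'s, depending on whether $k$ is an interior index or a boundary index in the range $\max\{m-n,0\}\leq k \leq m+1$:
\[
C_f(\Gamma_k)\simeq
\begin{cases}
C_f(\Gamma_m') & k=m+1,\\
C_f(\Gamma_k')\oplus C_f(\Gamma_{k-1}') & \max\{m-n,0\}+1\leq k\leq m,\\
C_f(\Gamma_{\max\{m-n,0\}}') & k=\max\{m-n,0\}.
\end{cases}
\]
Since $\Hom_{\hmf}$ is additive in both variables, once we substitute these decompositions into $\Hom_\hmf(C_f(\Gamma_j),C_f(\Gamma_k))$, the answer becomes a direct sum of pieces $\Hom_\hmf(C_f(\Gamma_i'),C_f(\Gamma_{i'}'))$, each of which is either $\C$ (when $i=i'$) or $0$ (when $i\neq i'$) by Lemma \ref{trivial-complex-lemma-2}.

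The proof then proceeds by a short case analysis. First I would handle the generic interior case $\max\{m-n,0\}+1\leq j=k\leq m$: both sides decompose as a sum of two terms with the same index set $\{k,k-1\}$, so exactly two diagonal pairs $(\Gamma_k',\Gamma_k')$ and $(\Gamma_{k-1}',\Gamma_{k-1}')$ contribute a copy of $\C$ each, giving $\C\oplus\C$. Next, for $j=k\in\{\max\{m-n,0\},m+1\}$, the side that reaches the boundary has only one summand, so only one diagonal pair contributes, yielding $\C$. For $|j-k|=1$, say $k=j+1$, the index sets of the two decompositions share exactly one element (the element $j$, appearing as the ``upper'' summand of $C_f(\Gamma_j)$ and as the ``lower'' summand of $C_f(\Gamma_{j+1})$), so again we get $\C$; the boundary subcases ($j=\max\{m-n,0\}$ or $k=m+1$) also produce a single matching pair and thus $\C$. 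Finally, when $|j-k|>1$, the two index sets are disjoint, so every summand of $\Hom$ vanishes by Lemma \ref{trivial-complex-lemma-2}, giving $0$.

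There is no real obstacle: the only thing to watch is the bookkeeping at the boundary values $j,k\in\{\max\{m-n,0\},m+1\}$, where one of the decompositions has one summand rather than two; in each such subcase one simply checks that the index sets overlap in the claimed number of elements. Since the lemma only asserts an isomorphism of $\C$-vector spaces (no grading or homotopy-class content beyond this), this case analysis is the entire proof.
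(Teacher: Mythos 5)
Your argument is correct and is essentially the paper's intended one: the paper derives this lemma (via the reference to \cite[Lemma 11.22]{Wu-color}) from the decomposition of Lemma \ref{decomp-V-special-2} together with the one-dimensionality statements coming from Lemma \ref{trivial-complex-lemma-1}, which is exactly the decompose-and-count bookkeeping you carry out. Your only variation is to decompose both arguments and invoke Lemma \ref{trivial-complex-lemma-2} rather than decomposing one side and applying Lemma \ref{trivial-complex-lemma-1} directly; since Lemma \ref{trivial-complex-lemma-2} is itself a consequence of Lemma \ref{trivial-complex-lemma-1}, this is the same proof.
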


\begin{definition}\label{trivial-complex-differential-def}
Denote by 
\begin{eqnarray*}
J_{k,k} & : & C_f(\Gamma_k') \rightarrow C_f(\Gamma_k) \\
J_{k,k-1} & : & C_f(\Gamma_{k-1}') \rightarrow C_f(\Gamma_k) \\
P_{k,k} & : & C_f(\Gamma_k) \rightarrow C_f(\Gamma_k') \\
P_{k,k-1} & : & C_f(\Gamma_k) \rightarrow C_f(\Gamma_{k-1}') 
\end{eqnarray*}
the inclusion and projection morphisms in the decomposition 
\[
C_f(\Gamma_k) \simeq C_f(\Gamma_k') \oplus C_f(\Gamma_{k-1}').
\]
Define 
\begin{eqnarray*}
\delta_k^+ & = & J_{k-1,k-1}\circ P_{k,k-1} : C_f(\Gamma_k) \rightarrow C_f(\Gamma_{k-1}), \\
\delta_k^- & = & J_{k+1,k} \circ P_{k,k} : C_f(\Gamma_k) \rightarrow C_f(\Gamma_{k+1}).
\end{eqnarray*}
$\delta_k^+$ and $\delta_k^-$ are both homotopically non-trivial homogeneous morphisms preserving both the $\zed_2$-grading and the total polynomial grading. 
By Lemma \ref{trivial-complex-lemma-3}, up to homotopy and scaling, $\delta_k^+$ and $\delta_k^-$ are the unique morphisms with such properties.
\end{definition}

\begin{lemma}\label{trivial-complex-lemma-4}
$\delta_{k-1}^+\circ\delta_k^+ \simeq 0$, $\delta_{k+1}^-\circ\delta_k^- \simeq 0$.
\end{lemma}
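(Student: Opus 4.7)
The plan is to observe that both $\delta_k^+$ and $\delta_k^-$ are defined to be homogeneous morphisms preserving the $\zed_2$-grading and the total polynomial grading (as projections and inclusions in a direct sum decomposition of graded matrix factorizations, per Definition \ref{trivial-complex-differential-def}). Consequently, each of the compositions $\delta_{k-1}^+\circ\delta_k^+ : C_f(\Gamma_k) \rightarrow C_f(\Gamma_{k-2})$ and $\delta_{k+1}^-\circ\delta_k^- : C_f(\Gamma_k) \rightarrow C_f(\Gamma_{k+2})$ is itself a morphism of matrix factorizations preserving both gradings, that is, an element of the corresponding $\Hom_\hmf$-space.

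The key is then to invoke Lemma \ref{trivial-complex-lemma-3}. For the first composition, the source is $C_f(\Gamma_k)$ and the target is $C_f(\Gamma_{k-2})$, so the relevant indices differ by $2 > 1$, and the lemma gives
\[
\Hom_\hmf(C_f(\Gamma_k),C_f(\Gamma_{k-2})) = 0.
\]
Hence $\delta_{k-1}^+\circ\delta_k^+ \simeq 0$. The same reasoning applies verbatim to the second composition, whose indices also differ by $2$.

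One should just quickly check that the compositions are actually defined, i.e. that the index $k-2$ (resp. $k+2$) lies in the allowed range $\max\{m-n,0\} \leq \cdot \leq m+1$; outside that range the target $C_f(\Gamma_{\cdot})$ is zero and the statement is vacuous. There is no real obstacle here: the entire argument is a formal grading-degree count combined with the Hom-vanishing already established in Lemma \ref{trivial-complex-lemma-3}. The main (and only) thing to be careful about is that the definition of $\delta_k^\pm$ really does yield morphisms of bidegree $(0,0)$, which is immediate from the fact that the splitting $C_f(\Gamma_k) \simeq C_f(\Gamma_k') \oplus C_f(\Gamma_{k-1}')$ of Lemma \ref{decomp-V-special-2} is a direct sum decomposition in $\hmf$.
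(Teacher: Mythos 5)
Your proposal is correct and is essentially identical to the paper's own proof: both observe that the compositions preserve the $\zed_2$-grading and the total polynomial grading, and then conclude from the vanishing $\Hom_\hmf(C_f(\Gamma_k),C_f(\Gamma_{k\mp2}))\cong 0$ of Lemma \ref{trivial-complex-lemma-3} that they are null-homotopic. The extra remark about the index range is harmless but not needed.
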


\begin{proof}
By definition, $\delta_{k-1}^+\circ\delta_k^+$ and $\delta_{k+1}^-\circ\delta_k^-$ preserve both the $\zed_2$-grading and the total polynomial grading. But, from Lemma \ref{trivial-complex-lemma-3}, we have that 
\[
\Hom_\hmf( C_f(\Gamma_k),C_f(\Gamma_{k-2})) \cong \Hom_\hmf( C_f(\Gamma_k),C_f(\Gamma_{k+2})) \cong0.
\] 
So $\delta_{k-1}^+\circ\delta_k^+ \simeq 0$, $\delta_{k+1}^-\circ\delta_k^- \simeq 0$.
\end{proof}

Let $\tilde{R}=\Sym(\mathbb{X}|\mathbb{Y}|\mathbb{A}|\mathbb{D})\otimes_\C R_B$ and $w= f(\mathbb{X}) +f(\mathbb{Y}) -f(\mathbb{A}) - f(\mathbb{D})$. The above lemmas imply the following.

\begin{proposition}\label{trivial-complex-prop}
Let $k_1$ and $k_2$ be integers such that $\max\{m-n,0\}+1 \leq k_1\leq k_2 \leq m$. Then 
\[
\xymatrix{
0 \ar[r] & C_f(\Gamma_{k_2}')\ar[r]^{J_{k_2,k_2}} & C_f(\Gamma_{k_2}) \ar[r]^{\delta_{k_2}^+} & \cdots \ar[r]^{\delta_{k_1+1}^+} & C_f(\Gamma_{k_1}) \ar[r]^{P_{k_1,k_1-1}} & C_f(\Gamma_{k_1-1}') \ar[r] & 0, \\
0 \ar[r] & C_f(\Gamma_{k_1-1}')\ar[r]^{J_{k_1,k_1-1}} & C_f(\Gamma_{k_1}) \ar[r]^{\delta_{k_1}^-} & \cdots \ar[r]^{\delta_{k_2-1}^-} & C_f(\Gamma_{k_2}) \ar[r]^{P_{k_2,k_2}} & C_f(\Gamma_{k_2}') \ar[r] & 0
}
\]
are both chain complexes over $\hmf_{\tilde{R},w}$ and are isomorphic in $\ch(\hmf_{\tilde{R},w})$ to
\[
\bigoplus_{j=k_1-1}^{k_2} (\xymatrix{
0 \ar[r] & C_f(\Gamma_j') \ar[r]^{\simeq} & C_f(\Gamma_j') \ar[r] & 0,
})
\]
which is homotopic to $0$ (i.e. isomorphic in $\hch(\hmf_{\tilde{R},w})$ to $0$.)
\end{proposition}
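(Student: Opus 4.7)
The plan is to verify the chain complex condition first, and then use the direct sum decomposition of Lemma \ref{decomp-V-special-2} to recognize each complex as a direct sum of elementary two-term identity complexes, which are patently null-homotopic.

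First I would check that the indicated compositions vanish up to homotopy. By construction $P_{k,k}\circ J_{k,k}\simeq\id_{C_f(\Gamma_k')}$, $P_{k,k-1}\circ J_{k,k-1}\simeq\id_{C_f(\Gamma_{k-1}')}$, while the cross compositions $P_{k,k-1}\circ J_{k,k}$ and $P_{k,k}\circ J_{k,k-1}$ are homotopic to zero by Lemma \ref{trivial-complex-lemma-2}, since their sources and targets are non-isomorphic pieces of the direct sum. Using Definition \ref{trivial-complex-differential-def}, this immediately yields $\delta_{k_2}^+\circ J_{k_2,k_2}=J_{k_2-1,k_2-1}\circ(P_{k_2,k_2-1}\circ J_{k_2,k_2})\simeq 0$ and, similarly, $P_{k_1,k_1-1}\circ \delta_{k_1+1}^+\simeq 0$, while the interior compositions $\delta_{k-1}^+\circ\delta_k^+\simeq 0$ are given by Lemma \ref{trivial-complex-lemma-4}. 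The analogous check works verbatim for the $\delta^-$ complex. So both sequences are genuine chain complexes in $\ch(\hmf_{\tilde R,w})$.

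Next I would decompose each middle term using the homotopy equivalence $C_f(\Gamma_k)\simeq C_f(\Gamma_k')\oplus C_f(\Gamma_{k-1}')$ of Lemma \ref{decomp-V-special-2} (for the range $\max\{m-n,0\}+1\le k\le m$) and compute $\delta_k^\pm$ in block-matrix form with respect to these splittings. Using $\delta_k^+=J_{k-1,k-1}\circ P_{k,k-1}$ together with $P_{k,k-1}\circ J_{k,k}\simeq 0$, $P_{k,k-1}\circ J_{k,k-1}\simeq \id$, $P_{k-1,k-1}\circ J_{k-1,k-1}\simeq\id$, and $P_{k-1,k-2}\circ J_{k-1,k-1}\simeq 0$, one sees that $\delta_k^+\colon C_f(\Gamma_k')\oplus C_f(\Gamma_{k-1}')\to C_f(\Gamma_{k-1}')\oplus C_f(\Gamma_{k-2}')$ is homotopic to the matrix whose only nonzero entry is the identity $C_f(\Gamma_{k-1}')\xrightarrow{\id}C_f(\Gamma_{k-1}')$. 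An identical computation identifies $\delta_k^-$ with the identity from the $C_f(\Gamma_k')$ summand to the $C_f(\Gamma_k')$ summand in the next term. Combined with the boundary maps $J_{k_2,k_2}$, $P_{k_1,k_1-1}$ (resp.\ $J_{k_1,k_1-1}$, $P_{k_2,k_2}$), these identities pair up successive summands along the complex: the $C_f(\Gamma_j')$-summand sitting in degree corresponding to $\Gamma_{j+1}$ is mapped by the identity to the $C_f(\Gamma_j')$-summand sitting in the adjacent degree corresponding to $\Gamma_j$, for each $k_1-1\le j\le k_2$.

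After this identification, the complex decomposes in $\ch(\hmf_{\tilde R,w})$ as a direct sum over $j=k_1-1,\dots,k_2$ of two-term complexes $0\to C_f(\Gamma_j')\xrightarrow{\simeq}C_f(\Gamma_j')\to 0$, which are null-homotopic via the obvious homotopy. The main obstacle is the bookkeeping in the second step: choosing compatible representative morphisms for the splittings so that the matrix of $\delta_k^\pm$ really is strictly the identity in one entry and zero elsewhere (up to homotopy), and then organizing the resulting "staircase" pairing across the whole length of the complex. This is streamlined by Lemma \ref{trivial-complex-lemma-2}, which forces all off-diagonal maps between distinct $C_f(\Gamma_j')$'s to vanish up to homotopy, so no auxiliary Gaussian-elimination step beyond the decomposition itself is required.
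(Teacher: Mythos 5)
Your proposal is correct and follows essentially the same route as the paper, which states that the proposition "follows from the above lemmas" without spelling out the details: you verify the chain-complex conditions from Definition \ref{trivial-complex-differential-def} and Lemmas \ref{trivial-complex-lemma-2} and \ref{trivial-complex-lemma-4}, then use the splitting $C_f(\Gamma_k)\simeq C_f(\Gamma_k')\oplus C_f(\Gamma_{k-1}')$ of Lemma \ref{decomp-V-special-2} to exhibit the staircase of identity maps. Since the differentials in $\ch(\hmf_{\tilde R,w})$ are homotopy classes of morphisms, your homotopy-level block computations give exactly the required isomorphism of complexes, so the argument is complete.
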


\subsection{Explicit forms of the differential maps} In this subsection, we give an explicit construction of the differential maps of the chain complex of a colored crossing and the null-homotopic chain complex introduced above.

\begin{figure}[ht]
$
\xymatrix{
\input{square-m-n-l-right-k-low} \ar@<1ex>[rr]^{\mathsf{d}_k^+} \ar@<1ex>[d]^{\phi_{k,1}} & & \input{square-m-n-l-right-k-1-low} \ar@<1ex>[ll]^{\mathsf{d}_{k-1}^-} \ar@<1ex>[d]^{\phi_{k,2}} \\
\input{square-m-n-l-right-k-low-bubble} \ar@<1ex>[r]^{\chi^1\otimes\chi^1} \ar@<1ex>[u]^{\overline{\phi}_{k,1}} & \input{double-square-m-n-l-right-k-low} \ar@<1ex>[r]^{h_k} \ar@<1ex>[l]^{\chi^0\otimes\chi^0} & \input{square-m-n-l-right-k-1-low-bubble} \ar@<1ex>[l]^{\overline{h}_k} \ar@<1ex>[u]^{\overline{\phi}_{k,2}}
}
$
\caption{}\label{explicit-differential-general-figure1}

\end{figure}

Consider the MOY graphs and the morphisms in Figure \ref{explicit-differential-general-figure1}, where $\phi_{k,1}, ~ \overline{\phi}_{k,1},  ~\chi^0\otimes\chi^0, ~\chi^1\otimes \chi^1, h_k, ~\overline{h}_k,~\phi_{k,2}, ~ \overline{\phi}_{k,2}$ are the morphisms induced by the apparent basic local changes of MOY graphs as defined in Section \ref{sec-some-morph}. Define $\mathsf{d}_k^+$ and $\mathsf{d}_{k-1}^-$ by
\begin{eqnarray*}
\mathsf{d}_k^+ & = & \overline{\phi}_{k,2} \circ h_k \circ (\chi^1\otimes\chi^1) \circ \phi_{k,1}. \\
\mathsf{d}_{k-1}^- & = & \overline{\phi}_{k,1} \circ (\chi^0\otimes\chi^0) \circ \overline{h}_k \circ \phi_{k,2}.
\end{eqnarray*}

\begin{theorem}\label{explicit-differential-general}
$\mathsf{d}_k^+$ and $\mathsf{d}_{k-1}^-$ are homotopically non-trivial homogeneous morphisms of $\zed_2$-degree $0$ and quantum degree $1-l$. 

When $l=0$, $\mathsf{d}_k^+$ and $\mathsf{d}_{k-1}^-$ are homotopic to scalar multiples of the differential maps $d_k^+$ and $d_{k-1}^-$ of the chain complexes associated to colored crossings defined in Definition \ref{complex-colored-crossing-def}. 

When $l=1$, $\mathsf{d}_k^+$ and $\mathsf{d}_{k-1}^-$ are homotopic to scalar multiples of the differential maps $\delta_k^+$ and $\delta_{k-1}^-$ of the null-homotopic chain complexes in Proposition \ref{trivial-complex-prop}.
\end{theorem}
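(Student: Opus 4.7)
The plan is to establish the theorem in three steps: a grading computation, a non-triviality argument via the functor $\varpi_0$, and an identification via uniqueness up to scalar.

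First I would compute the bigrading of the composition. Each factor of $\mathsf{d}_k^\pm$ is a basic morphism whose $\zed_2$-degree and total polynomial degree are recorded in Section~\ref{sec-some-morph}: the edge splitting/merging morphisms $\phi_{k,i}, \overline{\phi}_{k,i}$ contribute the shifts prescribed by Definition~\ref{morphism-edge-splitting-merging-def} (each in $\zed_2$-degree $0$), the tensored $\chi$-morphisms contribute the degree shift $ml$ prescribed by Proposition~\ref{general-general-chi-maps} for each of their two factors (with $\zed_2$-degree $0$), and $h_k$, $\overline{h}_k$ (bouquet-type moves) contribute trivially. Summing these shifts and comparing with the $q$-shift built into the definition of the normalized chain complex gives $\zed_2$-degree $0$ and quantum degree $1-l$.

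Next I would prove that $\mathsf{d}_k^\pm$ is not null-homotopic by reducing to the non-equivariant case. By Proposition~\ref{basic-changes-varpi-0} the functor $\varpi_0$ of Lemma~\ref{MOY-object-of-hmf} sends each basic morphism in the composition to its counterpart in \cite{Wu-color}, so $\varpi_0(\mathsf{d}_k^\pm)$ equals (up to homotopy) the explicit composition considered in \cite[Section~11 and the analogue for the null-homotopic complex]{Wu-color}. In those papers this composition is shown to agree up to a non-zero scalar with the differential $d_k^\pm$ (when $l=0$) or $\delta_k^\pm$ (when $l=1$), and in particular is not null-homotopic. Since $\varpi_0$ preserves null-homotopies, this implies $\mathsf{d}_k^\pm$ itself is not null-homotopic.

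Finally I would identify $\mathsf{d}_k^\pm$ with the target differential by appealing to the uniqueness-up-to-scalar results. In the case $l=0$, Lemma~\ref{colored-crossing-res-HMF} with $j=k\mp1$ shows that the subspace of $\Hom_\HMF(C_f(\Gamma_k^L),C_f(\Gamma_{k\mp1}^L))$ of elements of $\zed_2$-degree $0$ and total polynomial degree $1$ is one-dimensional; both $\mathsf{d}_k^\pm$ and $d_k^\pm$ sit in this subspace and are homotopically non-trivial, forcing $\mathsf{d}_k^\pm \simeq c\cdot d_k^\pm$ for some $c\in\C\setminus\{0\}$. In the case $l=1$, Lemma~\ref{trivial-complex-lemma-3} gives $\Hom_\hmf(C_f(\Gamma_k),C_f(\Gamma_{k\mp1}))\cong\C$, and Definition~\ref{trivial-complex-differential-def} identifies the (unique up to scalar) non-trivial morphism in that one-dimensional space as $\delta_k^\pm$, yielding $\mathsf{d}_k^\pm \simeq c\cdot \delta_k^\pm$.

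The main obstacle will be the grading bookkeeping in the first step: the diagram in Figure~\ref{explicit-differential-general-figure1} contains edge splittings/mergings and $\chi$-morphisms whose degree shifts depend on the colors $n+k,n+k-m,m+l-k,\ldots$ that vary with $k$ and $l$, and one must check that all these contributions, together with the $q$-shifts built into the normalization in Definition~\ref{complex-colored-crossing-def}, cancel to give exactly $1-l$. The non-triviality and identification steps are then essentially formal consequences of Proposition~\ref{basic-changes-varpi-0} combined with the low-degree $\Hom$-computations already established.
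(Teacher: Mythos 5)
Your proposal is correct and follows essentially the same route as the paper: check the bidegree of the composition, invoke the uniqueness-up-to-homotopy-and-scaling of the differentials (Lemma \ref{colored-crossing-res-HMF} for $l=0$, Lemma \ref{trivial-complex-lemma-3} for $l=1$), and establish homotopical non-triviality by applying $\varpi_0$ and citing Proposition \ref{basic-changes-varpi-0} together with the corresponding result in \cite{Wu-color}. The only cosmetic difference is the order of the last two steps.
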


\begin{proof}
It is easy to check that $\mathsf{d}_k^+$ and $\mathsf{d}_{k-1}^-$ are homogeneous morphisms of $\zed_2$-degree $0$ and total polynomial degree $1-l$. Recall that the differential maps of the complexes in Definition \ref{complex-colored-crossing-def} and Proposition \ref{trivial-complex-prop} are homotopically non-trivial homogeneous morphisms uniquely determined up to homotopy and scaling by their total polynomial degrees. So, to prove Theorem \ref{explicit-differential-general}, we only need to prove that $\mathsf{d}_k^+$ and $\mathsf{d}_{k-1}^-$ are homotopically non-trivial.

Let $\varpi_0$ be the functor given in Lemma \ref{MOY-object-of-hmf}. By Proposition \ref{basic-changes-varpi-0} and \cite[Theorem 11.26]{Wu-color}, we know that $\varpi_0(\mathsf{d}_k^+)$ and $\varpi_0(\mathsf{d}_{k-1}^-)$ are homotopically non-trivial, which implies that $\mathsf{d}_k^+$ and $\mathsf{d}_{k-1}^-$ are homotopically non-trivial.
\end{proof}

\begin{corollary}\label{differentials-varpi-0}
The functor $\varpi_0$ given in Lemma \ref{MOY-object-of-hmf} induces functors of the categories $\ch(\hmf)$ and $\hch(\hmf)$, which are again denoted by $\varpi_0$.

If $D$ is a knotted MOY graph, then $\varpi_0(C_f(D)) \cong C(D)$ and $\varpi_0(\hat{C}_f(D)) \cong \hat{C}(D)$, where $C(D)$ and $\hat{C}(D)$ are the chain complexes associated to $D$ in \cite[Definition 11.4]{Wu-color}.

If we apply $\varpi_0$ to the null-homotopic chain complexes defined in Proposition \ref{trivial-complex-prop}, then we get the corresponding null-homotopic chain complexes defined in \cite[Proposition 11.25]{Wu-color}.
\end{corollary}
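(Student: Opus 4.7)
The plan is to use the fact that $\varpi_0$ is already known to act correctly on the matrix factorizations of MOY graphs (Lemma~\ref{MOY-object-of-hmf}) and on the basic local morphisms (Proposition~\ref{basic-changes-varpi-0}), and then extend this action termwise and morphismwise to chain complexes. First I would observe that the extension to $\ch(\hmf)$ is purely formal: $\varpi_0$ is an additive functor on $\hmf_{\tilde{R},w}$ (Corollary~\ref{reduce-base-functor}), and any additive functor extends termwise to bounded chain complexes while sending chain maps to chain maps and chain homotopies to chain homotopies. Since Lemma~\ref{reduce-base-homotopic-equivalence} shows $\varpi_0$ preserves homotopy equivalences, this descent to $\hch(\hmf_{\tilde{R},w})$ is well-defined.

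Next I would verify $\varpi_0(C_f(D)) \cong C(D)$ piece by piece relative to a chosen marking. For pieces $D_i$ of type (i) or (ii) (embedded MOY pieces), $\hat{C}_f(D_i)$ is concentrated in homological degree $0$ with the single nontrivial term $C_f(D_i)$, and Lemma~\ref{MOY-object-of-hmf} gives $\varpi_0(C_f(D_i)) = C(D_i)$ on the nose. For crossing pieces, Definition~\ref{complex-colored-crossing-def} presents $\hat{C}_f(D_i)$ as a sequence of terms $C_f(\Gamma_k^L)$ (possibly with quantum, $\zed_2$, and homological shifts) and differentials $d_k^{\pm}$. Applying $\varpi_0$ termwise, each $\varpi_0(C_f(\Gamma_k^L))$ equals $C(\Gamma_k^L)$ again by Lemma~\ref{MOY-object-of-hmf}, with all gradings preserved.

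The substance of the argument lies in matching the differentials. Here I would invoke Theorem~\ref{explicit-differential-general}: with $l=0$, the differentials $d_k^{\pm}$ are homotopic, up to nonzero scalar, to the explicit composites $\mathsf{d}_k^{\pm}$ built from basic local morphisms $\phi$, $\overline{\phi}$, $\chi^0$, $\chi^1$, and their tensor products. Proposition~\ref{basic-changes-varpi-0} says that $\varpi_0$ sends each basic morphism for $C_f$ to the corresponding basic morphism for $C$, and $\varpi_0$ respects tensor products and composition. Consequently $\varpi_0(\mathsf{d}_k^{\pm})$ equals the analogous composite in the $C$-world, which by the $l=0$ case of the corresponding statement in \cite{Wu-color} is homotopic (up to scalar) to the differential of $C(D)$. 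Since Lemma~\ref{colored-crossing-res-HMF} pins down these differentials uniquely up to homotopy and scaling, we conclude $\varpi_0(\hat{C}_f(D)) \cong \hat{C}(D)$ in $\hch(\hmf)$, and the normalization shifts of Definition~\ref{complex-colored-crossing-def} yield $\varpi_0(C_f(D)) \cong C(D)$. The tensor product over common endpoint alphabets in Definition~\ref{complex-knotted-MOY-def} is preserved since $\varpi_0$ is induced by the ring map $\pi_0$ which is the identity on the alphabets assigned to marked points.

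Finally, the same line of argument handles the null-homotopic complexes of Proposition~\ref{trivial-complex-prop}: the differentials $\delta_k^{\pm}$ are again uniquely determined up to homotopy and scaling (Lemma~\ref{trivial-complex-lemma-3}), and the $l=1$ case of Theorem~\ref{explicit-differential-general} realizes them as the same kind of composites of basic morphisms. Applying Proposition~\ref{basic-changes-varpi-0} termwise identifies the image under $\varpi_0$ with the null-homotopic complexes of \cite[Proposition 11.25]{Wu-color}. The main obstacle, minor but worth flagging, is bookkeeping the scalar ambiguities: since each differential in sight is determined only up to a nonzero scalar, the isomorphism $\varpi_0(C_f(D)) \cong C(D)$ holds only in $\hch(\hmf)$, which is exactly what the statement claims; there is no need to track precise constants.
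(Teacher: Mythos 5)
Your proposal is correct and follows essentially the same route as the paper: both reduce the matching of differentials to Proposition \ref{basic-changes-varpi-0} combined with Theorem \ref{explicit-differential-general} (and its counterpart in \cite{Wu-color}), using the uniqueness-up-to-homotopy-and-scaling of the differentials to conclude. The only minor quibble is your closing remark that the isomorphism holds "only in $\hch(\hmf)$": since the nonzero scalars can be absorbed into isomorphisms of the individual terms and the differentials then agree as morphisms of $\hmf$, the isomorphism in fact holds already in $\ch(\hmf)$, as the paper uses later (e.g.\ in Corollary \ref{relate-C-C-f-pi}).
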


\begin{proof}
Since $\varpi_0$ is induced by a projection map of the base ring, it is clear that $\varpi_0$ induces functors of the categories $\ch(\hmf)$ and $\hch(\hmf)$. By Lemma \ref{MOY-object-of-hmf}, $\varpi_0(\hat{C}_f(D))$ and $\hat{C}(D)$ are chain complexes over $\hmf$ with the same terms. By Proposition \ref{basic-changes-varpi-0}, Theorem \ref{explicit-differential-general} and \cite[Theorem 11.26]{Wu-color}, we know that the chain maps of $\varpi_0(\hat{C}_f(D))$ and $\hat{C}(D)$ are the same (up to homotopy and scaling of morphisms of matrix factorizations for each piece of $D$.) So $\varpi_0(\hat{C}_f(D)) \cong \hat{C}(D)$. Note that $\varpi_0({C}_f(D))$ and ${C}(D)$ are define from $\varpi_0(\hat{C}_f(D))$ and $\hat{C}(D)$ by the same grading shifts. So $\varpi_0(C_f(D)) \cong C(D)$.

The result about chain complexes defined in Proposition \ref{trivial-complex-prop} can be proved similarly and is left to the reader.
\end{proof}

\begin{corollary}\label{explicit-differential-1-n-crossings--res}
\[
\hat{C}_f(\setlength{\unitlength}{1pt}
\begin{picture}(40,20)(-20,0)

\put(-20,-20){\vector(1,1){40}}

\put(20,-20){\line(-1,1){15}}

\put(-5,5){\vector(-1,1){15}}

\put(-11,15){\tiny{$_1$}}

\put(9,15){\tiny{$_n$}}

\end{picture}) \cong ``0 \rightarrow C_f(\setlength{\unitlength}{1pt}
\begin{picture}(50,20)(-25,20)

\put(-20,0){\vector(2,1){20}}

\put(20,0){\vector(-2,1){20}}

\put(0,30){\vector(-2,1){20}}

\put(0,30){\vector(2,1){20}}

\put(0,10){\vector(0,1){20}}

\put(-17,7){\tiny{$_n$}}

\put(15,7){\tiny{$_1$}}

\put(-17,33){\tiny{$_1$}}

\put(15,33){\tiny{$_n$}}

\put(3,20){\tiny{$_{n+1}$}}

\end{picture}) \xrightarrow{\chi^1} C_f(\setlength{\unitlength}{1pt}
\begin{picture}(50,20)(-25,20)

\put(-20,0){\vector(0,1){20}}

\put(-20,20){\vector(0,1){20}}

\put(20,0){\vector(0,1){20}}

\put(20,20){\vector(0,1){20}}

\put(-20,20){\vector(1,0){40}}

\put(-17,35){\tiny{$_1$}}

\put(15,35){\tiny{$_n$}}

\put(-17,0){\tiny{$_n$}}

\put(15,0){\tiny{$_1$}}

\put(-8,23){\tiny{$_{n-1}$}}

\end{picture})\{q^{-1}\} \rightarrow 0", \vspace{20pt} 
\]
\[
\hat{C}_f(\setlength{\unitlength}{1pt}
\begin{picture}(40,20)(-20,0)

\put(20,-20){\vector(-1,1){40}}

\put(-20,-20){\line(1,1){15}}

\put(5,5){\vector(1,1){15}}

\put(-11,15){\tiny{$_1$}}

\put(9,15){\tiny{$_n$}}

\end{picture}) \cong ``0 \rightarrow C_f()\{q\} \xrightarrow{\chi^0} C_f() \rightarrow 0",\vspace{20pt} 
\]
\[
\hat{C}_f(\setlength{\unitlength}{1pt}
\begin{picture}(40,20)(-20,0)

\put(-20,-20){\vector(1,1){40}}

\put(20,-20){\line(-1,1){15}}

\put(-5,5){\vector(-1,1){15}}

\put(-11,15){\tiny{$_m$}}

\put(9,15){\tiny{$_1$}}

\end{picture}) \cong ``0 \rightarrow C_f(\setlength{\unitlength}{1pt}
\begin{picture}(50,20)(-25,20)

\put(-20,0){\vector(2,1){20}}

\put(20,0){\vector(-2,1){20}}

\put(0,30){\vector(-2,1){20}}

\put(0,30){\vector(2,1){20}}

\put(0,10){\vector(0,1){20}}

\put(-17,7){\tiny{$_1$}}

\put(15,7){\tiny{$_m$}}

\put(-17,33){\tiny{$_m$}}

\put(15,33){\tiny{$_1$}}

\put(3,20){\tiny{$_{m+1}$}}

\end{picture}) \xrightarrow{\chi^1} C_f(\setlength{\unitlength}{1pt}
\begin{picture}(50,20)(-25,20)

\put(-20,0){\vector(0,1){20}}

\put(-20,20){\vector(0,1){20}}

\put(20,0){\vector(0,1){20}}

\put(20,20){\vector(0,1){20}}

\put(20,20){\vector(-1,0){40}}

\put(-17,35){\tiny{$_m$}}

\put(15,35){\tiny{$_1$}}

\put(-17,0){\tiny{$_1$}}

\put(15,0){\tiny{$_m$}}

\put(-8,23){\tiny{$_{m-1}$}}

\end{picture})\{q^{-1}\} \rightarrow 0", \vspace{20pt} 
\]
\[
\hat{C}_f(\setlength{\unitlength}{1pt}
\begin{picture}(40,20)(-20,0)

\put(20,-20){\vector(-1,1){40}}

\put(-20,-20){\line(1,1){15}}

\put(5,5){\vector(1,1){15}}

\put(-11,15){\tiny{$_m$}}

\put(9,15){\tiny{$_1$}}

\end{picture}) \cong ``0 \rightarrow C_f()\{q\} \xrightarrow{\chi^0} C_f() \rightarrow 0",\vspace{20pt} 
\]
where the $\chi^0$- and $\chi^1$-morphisms are defined in Proposition \ref{general-general-chi-maps} and the homological gradings are given in Definition \ref{complex-colored-crossing-def}.
\end{corollary}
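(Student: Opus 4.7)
The plan is to unfold Definition~\ref{complex-colored-crossing-def} explicitly in the case where one strand of the crossing is colored $1$, and then identify the resulting differential with a $\chi$-morphism from Proposition~\ref{general-general-chi-maps} by invoking the uniqueness furnished by Lemma~\ref{colored-crossing-res-HMF}.

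Concretely, for the positive $1$-$n$ crossing, take $m=1$ in the notation of Definition~\ref{complex-colored-crossing-def}. Then the index $k$ parametrizing the terms $\Gamma^L_k$ runs only over $\max\{1-n,0\}\le k\le 1$, i.e. $k\in\{0,1\}$, so $\hat{C}_f$ collapses to a two-term complex $0\to C_f(\Gamma^L_1)\xrightarrow{d_1^+} C_f(\Gamma^L_0)\{q^{-1}\}\to 0$ concentrated in homological degrees $0$ and $1$. Reading off colors from Figure~\ref{decomp-V-special-1-figure} with $m=1$ and $l=0$: at $k=1$ the middle-right column has color $m+l-k=0$ and collapses, while the bottom horizontal of color $k=1$ and the top horizontal of color $n+k-m=n$ become the merge and split vertices of the triangular MOY graph with middle edge colored $n+1$; at $k=0$ the bottom horizontal has color $k=0$ and disappears while only the top horizontal of color $n-1$ survives, producing precisely the square MOY graph displayed in the corollary. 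The two positive $m$-$1$ cases are analogous with $(m,n)$ interchanged: now $k\in\{m-1,m\}$, and the same color bookkeeping yields the triangle with middle edge $m+1$ (at $k=m$, where $m+l-k=0$) and the square with a horizontal $(m-1)$-edge (at $k=m-1$, where $n+k-m=0$ causes the top horizontal to vanish).

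To identify $d_1^+$ with $\chi^1$, note that by Definition~\ref{complex-colored-crossing-def} it is a homotopically non-trivial homogeneous morphism of total polynomial degree $1$ and $\zed_2$-degree $0$; by Lemma~\ref{colored-crossing-res-HMF} (the case $j=1$, $k=0$), the subspace of $\Hom_\HMF(C_f(\Gamma^L_1),C_f(\Gamma^L_0))$ in total polynomial degree $(k-j)^2=1$ is $1$-dimensional, so such a morphism is unique up to homotopy and scaling. Matching the triangle with $\Gamma_1$ and the square with $\Gamma_0$ of Proposition~\ref{general-general-chi-maps} with parameter choice $(m,n,l)_{\mathrm{Prop}}=(1,n,1)$ produces $\chi^1:C_f(\Gamma_1)\to C_f(\Gamma_0)$, which is homogeneous of total polynomial degree $m_{\mathrm{Prop}}l_{\mathrm{Prop}}=1$ and homotopically non-trivial by Proposition~\ref{general-general-chi-maps}(ii) together with the non-vanishing of $S_{\lambda_{1,1}}(\mathbb{E}-\mathbb{X})\cdot\id$. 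Uniqueness then forces $d_1^+\approx\chi^1$, and the positive $1$-$n$ formula follows. The negative $1$-$n$ complex is treated identically, with $d_0^-$ matched to $\chi^0$ in the opposite direction; the $m$-$1$ crossings use the same argument with Prop's parameters $(m,n,l)_{\mathrm{Prop}}=(1,m,1)$.

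The main bookkeeping obstacle is correctly matching our collapsed $\Gamma^L_k$ (with a color-$0$ edge suppressed) against Proposition~\ref{general-general-chi-maps}'s canonical MOY graphs $\Gamma_0,\Gamma_1$ in each of the four cases and verifying that $m_{\mathrm{Prop}}l_{\mathrm{Prop}}=1$ throughout; the conceptual work is already done by Lemma~\ref{colored-crossing-res-HMF}. Finally, the grading shifts in the corollary's statement are read off directly from Definition~\ref{complex-colored-crossing-def}: since $m\ne n$ in each of the four crossings (all involve a color-$1$ strand against a color $\ne 1$), the normalized and unnormalized complexes coincide and no extra $\{q^{\pm m(N+1-m)}\}$ shift is introduced.
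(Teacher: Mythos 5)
Your proposal is correct and follows essentially the same route as the paper: unfold Definition~\ref{complex-colored-crossing-def} to get the two-term complex with differential $d_1^{+}$ (resp.\ $d_0^{-}$), then use Lemma~\ref{colored-crossing-res-HMF} to see that a homotopically non-trivial homogeneous morphism of total polynomial degree $1$ between the two resolutions is unique up to homotopy and scaling, forcing $d_1^{+}\approx\chi^1$. The only small caveat is your justification of the homotopical non-triviality of $\chi^1$: Proposition~\ref{general-general-chi-maps}(ii) alone does not give it (one would still have to show that multiplication by $S_{\lambda_{1,1}}(\mathbb{E}-\mathbb{X})$ is not null-homotopic); the clean reference is Lemma~\ref{general-general-chi-maps-HMF}, which establishes exactly this.
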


\begin{proof}
We only prove
\[
\hat{C}_f() \cong ``0 \rightarrow C_f() \xrightarrow{\chi^1} C_f()\{q^{-1}\} \rightarrow 0". \vspace{20pt} 
\]
The proofs of the other isomorphisms are very similar and left to the reader. By Definition \ref{complex-colored-crossing-def}, we know that
\[
\hat{C}_f() = ``0 \rightarrow C_f() \xrightarrow{d_1^+} C_f()\{q^{-1}\} \rightarrow 0", \vspace{20pt} 
\]
where $d_1^+$ is given in Definition \ref{complex-colored-crossing-chain-maps-def}. From Definition \ref{complex-colored-crossing-chain-maps-def} and Proposition \ref{general-general-chi-maps}, we know $C_f() \xrightarrow{\chi^1} C_f() \vspace{20pt}$ and $C_f() \xrightarrow{d_1^+} C_f()$ are both homotopically non-trivial homogeneous morphisms of total polynomial degree $1$. By Lemma \ref{colored-crossing-res-HMF}, such a morphism is unique up to homotopy and scaling. So $\chi^1 \approx d_1^+$. This shows that 
\[
\hat{C}_f() \cong ``0 \rightarrow C_f() \xrightarrow{\chi^1} C_f()\{q^{-1}\} \rightarrow 0". \vspace{20pt} 
\]
\end{proof}

\begin{remark}\label{generalizing-krasner}
Corollary \ref{explicit-differential-1-n-crossings--res} shows that the chain complex $C_f(D)$ defined in Definition \ref{complex-knotted-MOY-def} generalizes the chain complex used by Krasner in \cite{Krasner}.
\end{remark}

\section{Invariance under Fork Sliding}\label{sec-inv-fork}

The goal of this section is to prove Theorem \ref{fork-sliding-invariance-general}, which is the key step in the proof of invariance of the equivariant colored $\mathfrak{sl}(N)$-homology.

\begin{theorem}\label{fork-sliding-invariance-general}
\[
\xymatrix{
\hat{C}_f(\setlength{\unitlength}{1pt}
\begin{picture}(44,20)(-22,20)

\put(0,0){\line(0,1){8}}

\put(0,12){\vector(0,1){8}}

\put(0,20){\vector(1,1){20}}

\put(0,20){\vector(-1,1){20}}

\put(-20,10){\vector(1,0){40}}

\put(-13,35){\tiny{$_m$}}

\put(10,35){\tiny{$_l$}}

\put(3,2){\tiny{$_{m+l}$}}

\put(10,13){\tiny{$_n$}}

\end{picture})\simeq \hat{C}_f(\setlength{\unitlength}{1pt}
\begin{picture}(44,20)(-22,20)

\put(0,0){\vector(0,1){20}}

\put(0,20){\line(1,1){8}}

\put(0,20){\line(-1,1){8}}

\put(12,32){\vector(1,1){8}}

\put(-12,32){\vector(-1,1){8}}

\put(-20,30){\vector(1,0){40}}

\put(-13,35){\tiny{$_m$}}

\put(10,35){\tiny{$_l$}}

\put(3,2){\tiny{$_{m+l}$}}

\put(12,26){\tiny{$_n$}}

\end{picture}) & \hat{C}_f(\setlength{\unitlength}{1pt}
\begin{picture}(44,20)(-22,20)
\put(0,0){\vector(0,1){20}}

\put(0,20){\vector(1,1){20}}

\put(0,20){\vector(-1,1){20}}

\put(-20,10){\line(1,0){18}}

\put(2,10){\vector(1,0){18}}

\put(-13,35){\tiny{$_m$}}

\put(10,35){\tiny{$_l$}}

\put(3,2){\tiny{$_{m+l}$}}

\put(10,13){\tiny{$_n$}}

\end{picture})\simeq \hat{C}_f(\setlength{\unitlength}{1pt}
\begin{picture}(44,20)(-22,20)
\put(0,0){\vector(0,1){20}}

\put(0,20){\vector(1,1){20}}

\put(0,20){\vector(-1,1){20}}

\put(-20,30){\line(1,0){8}}

\put(-8,30){\line(1,0){16}}

\put(12,30){\vector(1,0){8}}

\put(-13,35){\tiny{$_m$}}

\put(10,35){\tiny{$_l$}}

\put(3,2){\tiny{$_{m+l}$}}

\put(12,26){\tiny{$_n$}}

\end{picture}) \vspace{20pt} \\
\hat{C}_f(\setlength{\unitlength}{1pt}
\begin{picture}(44,20)(-22,20)

\put(0,8){\vector(0,-1){8}}

\put(0,20){\line(0,-1){8}}

\put(20,40){\vector(-1,-1){20}}

\put(-20,40){\vector(1,-1){20}}

\put(20,10){\vector(-1,0){40}}

\put(-13,35){\tiny{$_m$}}

\put(10,35){\tiny{$_l$}}

\put(3,2){\tiny{$_{m+l}$}}

\put(10,13){\tiny{$_n$}}

\end{picture})\simeq \hat{C}_f(\setlength{\unitlength}{1pt}
\begin{picture}(44,20)(-22,20)

\put(0,20){\vector(0,-1){20}}

\put(8,28){\vector(-1,-1){8}}

\put(-8,28){\vector(1,-1){8}}

\put(20,40){\line(-1,-1){8}}

\put(-20,40){\line(1,-1){8}}

\put(20,30){\vector(-1,0){40}}

\put(-14,36){\tiny{$_m$}}

\put(11,36){\tiny{$_l$}}

\put(3,2){\tiny{$_{m+l}$}}

\put(12,26){\tiny{$_n$}}

\end{picture}) & \hat{C}_f(\setlength{\unitlength}{1pt}
\begin{picture}(44,20)(-22,20)

\put(0,20){\vector(0,-1){20}}

\put(20,40){\vector(-1,-1){20}}

\put(-20,40){\vector(1,-1){20}}

\put(20,10){\line(-1,0){18}}

\put(-2,10){\vector(-1,0){18}}

\put(-13,35){\tiny{$_m$}}

\put(10,35){\tiny{$_l$}}

\put(3,2){\tiny{$_{m+l}$}}

\put(10,13){\tiny{$_n$}}

\end{picture})\simeq \hat{C}_f(\setlength{\unitlength}{1pt}
\begin{picture}(44,20)(-22,20)

\put(0,20){\vector(0,-1){20}}

\put(20,40){\vector(-1,-1){20}}

\put(-20,40){\vector(1,-1){20}}

\put(20,30){\line(-1,0){8}}

\put(8,30){\line(-1,0){16}}

\put(-12,30){\vector(-1,0){8}}

\put(-14,36){\tiny{$_m$}}

\put(11,36){\tiny{$_l$}}

\put(3,2){\tiny{$_{m+l}$}}

\put(12,26){\tiny{$_n$}}

\end{picture}) \vspace{20pt} \\
\hat{C}_f(\setlength{\unitlength}{1pt}
\begin{picture}(44,20)(-22,20)

\put(0,0){\line(0,1){8}}

\put(0,12){\vector(0,1){8}}

\put(0,20){\vector(1,1){20}}

\put(0,20){\vector(-1,1){20}}

\put(20,10){\vector(-1,0){40}}

\put(-13,35){\tiny{$_m$}}

\put(10,35){\tiny{$_l$}}

\put(3,2){\tiny{$_{m+l}$}}

\put(10,13){\tiny{$_n$}}

\end{picture})\simeq \hat{C}_f(\setlength{\unitlength}{1pt}
\begin{picture}(44,20)(-22,20)

\put(0,0){\vector(0,1){20}}

\put(0,20){\line(1,1){8}}

\put(0,20){\line(-1,1){8}}

\put(12,32){\vector(1,1){8}}

\put(-12,32){\vector(-1,1){8}}

\put(20,30){\vector(-1,0){40}}

\put(-13,35){\tiny{$_m$}}

\put(10,35){\tiny{$_l$}}

\put(3,2){\tiny{$_{m+l}$}}

\put(12,26){\tiny{$_n$}}

\end{picture}) & \hat{C}_f(\setlength{\unitlength}{1pt}
\begin{picture}(44,20)(-22,20)

\put(0,0){\vector(0,1){20}}

\put(0,20){\vector(1,1){20}}

\put(0,20){\vector(-1,1){20}}

\put(20,10){\line(-1,0){18}}

\put(-2,10){\vector(-1,0){18}}

\put(-13,35){\tiny{$_m$}}

\put(10,35){\tiny{$_l$}}

\put(3,2){\tiny{$_{m+l}$}}

\put(10,13){\tiny{$_n$}}

\end{picture})\simeq \hat{C}_f(\setlength{\unitlength}{1pt}
\begin{picture}(44,20)(-22,20)

\put(0,0){\vector(0,1){20}}

\put(0,20){\vector(1,1){20}}

\put(0,20){\vector(-1,1){20}}

\put(20,30){\line(-1,0){8}}

\put(8,30){\line(-1,0){16}}

\put(-12,30){\vector(-1,0){8}}

\put(-13,35){\tiny{$_m$}}

\put(10,35){\tiny{$_l$}}

\put(3,2){\tiny{$_{m+l}$}}

\put(12,26){\tiny{$_n$}}

\end{picture}) \vspace{20pt} \\
\hat{C}_f(\setlength{\unitlength}{1pt}
\begin{picture}(44,20)(-22,20)

\put(0,8){\vector(0,-1){8}}

\put(0,20){\line(0,-1){8}}

\put(20,40){\vector(-1,-1){20}}

\put(-20,40){\vector(1,-1){20}}

\put(-20,10){\vector(1,0){40}}

\put(-13,35){\tiny{$_m$}}

\put(10,35){\tiny{$_l$}}

\put(3,2){\tiny{$_{m+l}$}}

\put(10,13){\tiny{$_n$}}

\end{picture})\simeq \hat{C}_f(\setlength{\unitlength}{1pt}
\begin{picture}(44,20)(-22,20)
\put(0,20){\vector(0,-1){20}}

\put(8,28){\vector(-1,-1){8}}

\put(-8,28){\vector(1,-1){8}}

\put(20,40){\line(-1,-1){8}}

\put(-20,40){\line(1,-1){8}}

\put(-20,30){\vector(1,0){40}}

\put(-14,36){\tiny{$_m$}}

\put(11,36){\tiny{$_l$}}

\put(3,2){\tiny{$_{m+l}$}}

\put(12,26){\tiny{$_n$}}

\end{picture}) & \hat{C}_f(\setlength{\unitlength}{1pt}
\begin{picture}(44,20)(-22,20)

\put(0,20){\vector(0,-1){20}}

\put(20,40){\vector(-1,-1){20}}

\put(-20,40){\vector(1,-1){20}}

\put(-20,10){\line(1,0){18}}

\put(2,10){\vector(1,0){18}}

\put(-13,35){\tiny{$_m$}}

\put(10,35){\tiny{$_l$}}

\put(3,2){\tiny{$_{m+l}$}}

\put(10,13){\tiny{$_n$}}

\end{picture})\simeq \hat{C}_f(\setlength{\unitlength}{1pt}
\begin{picture}(44,20)(-22,20)

\put(0,20){\vector(0,-1){20}}

\put(20,40){\vector(-1,-1){20}}

\put(-20,40){\vector(1,-1){20}}

\put(-20,30){\line(1,0){8}}

\put(-8,30){\line(1,0){16}}

\put(12,30){\vector(1,0){8}}

\put(-14,36){\tiny{$_m$}}

\put(11,36){\tiny{$_l$}}

\put(3,2){\tiny{$_{m+l}$}}

\put(12,26){\tiny{$_n$}}

\end{picture})
}
\]\vspace{20pt}
\end{theorem}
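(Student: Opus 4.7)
The plan is to translate the proof of the corresponding fork sliding invariance from \cite{Wu-color} to the equivariant setting via the functor $\varpi_0$ of Lemma \ref{MOY-object-of-hmf}. For each of the eight homotopy equivalences, I would construct explicit chain maps $F$ and $G$ between the two chain complexes, then show that they are homotopy equivalences by reducing to the statement for $C$ through $\varpi_0$ and invoking the result of \cite{Wu-color}. All eight cases follow the same template, differing only in orientation conventions and in whether the crossing is positive or negative, so it suffices to describe the argument for one representative case.

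Concretely, for a representative case one writes out both $\hat{C}_f$ complexes using Definition \ref{complex-colored-crossing-def}: each is a bounded chain complex whose terms are the matrix factorizations $C_f(\Gamma_k^L)$ (or $C_f(\Gamma_k^R)$) of MOY graphs obtained by resolving the crossing and attaching the fork. The fork slide, viewed on each resolved MOY graph, induces a homotopy equivalence of matrix factorizations assembled from bouquet moves, $\chi$-morphisms, and edge splitting/merging morphisms from Section \ref{sec-some-morph}. Stacking these termwise gives candidate chain maps $F$ and $G$; the chain-map condition reduces, thanks to the explicit form of the crossing differentials from Theorem \ref{explicit-differential-general}, to algebraic identities among compositions of basic morphisms that have already been checked in the non-equivariant case in \cite{Wu-color}. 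By Proposition \ref{basic-changes-varpi-0}, $\varpi_0(F)$ and $\varpi_0(G)$ coincide (up to scaling and homotopy) with the fork-sliding chain maps constructed for $C$ in \cite{Wu-color}, which are known there to be mutually inverse homotopy equivalences.

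The main obstacle is lifting this back from the non-equivariant setting, since termwise homotopy equivalences do not a priori assemble into a chain-complex homotopy equivalence, and chain homotopies do not automatically lift through $\varpi_0$. I plan to handle this by establishing a chain-complex analogue of Lemma \ref{reduce-base-homotopic-equivalence}: a chain map $\tilde{F}$ between bounded complexes over $\hmf_{\tilde{R},w}$ whose image $\varpi_0(\tilde{F})$ is a homotopy equivalence in $\hch(\hmf_R)$ is itself a homotopy equivalence in $\hch(\hmf_{\tilde{R},w})$, provided all total polynomial gradings are bounded below. The cleanest route is via the mapping cone: $\tilde{F}$ is a homotopy equivalence iff $\mathrm{Cone}(\tilde{F})$ is contractible, and since $\varpi_0$ commutes with mapping cones, contractibility of $\varpi_0(\mathrm{Cone}(\tilde{F}))$ together with a Nakayama-type filtration argument over the $R_B$-grading (in the spirit of Lemmas \ref{reduce-base-homotopic-equivalence} and \ref{homotopy-finite-reduce-base}) forces $\mathrm{Cone}(\tilde{F})$ itself to be contractible. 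Once this technical lemma is in place, all eight fork sliding equivalences of the theorem follow uniformly by transporting the corresponding arguments of \cite{Wu-color} through $\varpi_0$.
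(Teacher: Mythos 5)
Your overall philosophy --- verify homotopy non-triviality and homotopy equivalence by pushing through $\varpi_0$ and citing \cite{Wu-color} --- is indeed the paper's main device, and your proposed ``chain-complex analogue of Lemma \ref{reduce-base-homotopic-equivalence}'' is a reasonable technical statement. But the core of your construction does not work as described. In a fork slide the two chain complexes do \emph{not} have matching terms: on one side the $(m+l)$-colored strand passes through a single crossing, so $\hat{C}_f$ is a single crossing complex with terms $C_f(\widetilde{\Gamma}_k)$, while on the other side the $m$- and $l$-colored strands each cross the $n$-colored strand separately, so $\hat{C}_f$ is the tensor product of two crossing complexes, with terms $C_f(\Gamma_{k,0})\oplus C_f(\Gamma_{k-1,1})$ (see \eqref{complex-D-10-+} versus \eqref{complex-D-11-+}). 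There is no termwise homotopy equivalence to ``stack,'' so your candidate chain maps $F$ and $G$ cannot be assembled the way you propose; nor does \cite{Wu-color} supply explicit fork-sliding chain maps to compare against under $\varpi_0$, since its proof (like the paper's) produces the equivalence only at the end of a cancellation procedure.

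What the proposal is missing is the actual mechanism: (i) an induction on the colors $m,l$ that reduces the general statement to the $l=1$ or $m=1$ cases of Proposition \ref{fork-sliding-invariance-special}, using Decomposition (II) and the Krull--Schmidt-type cancellation of \cite[Proposition 3.20]{Wu-color} to strip off the factor $\{[k+1]\}$; and (ii) for the base cases, decomposing the terms of the longer complex via Decompositions (II), (IV), (V) (equations \eqref{fork-special-decomp-iv-eq} and \eqref{gamma-m-1-decomp-V}), identifying the matrix entries of the differentials up to nonzero scalars by showing the relevant $\Hom_\hmf$ spaces are at most one-dimensional and the entries are homotopically non-trivial (this is where $\varpi_0$ is actually used), and then repeatedly applying Gaussian elimination (Lemma \ref{gaussian-elimination}) to cancel the contractible summands until only the shorter complex remains. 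Without these decompositions and the Gaussian elimination step, the argument cannot get off the ground, because the objects being compared are not even termwise comparable.
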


As in \cite{Wu-color}, we prove Theorem \ref{fork-sliding-invariance-general} by induction. The hard part of the induction is to prove that the initial cases are true. Here, we state these initial cases in Proposition \ref{fork-sliding-invariance-special} below.

\begin{proposition}\label{fork-sliding-invariance-special}
\[
\xymatrix{
\hat{C}_f(\setlength{\unitlength}{1pt}
\begin{picture}(44,20)(-22,20)

\put(0,0){\line(0,1){8}}

\put(0,12){\vector(0,1){8}}

\put(0,20){\vector(1,1){20}}

\put(0,20){\vector(-1,1){20}}

\put(-20,10){\vector(1,0){40}}

\put(-13,35){\tiny{$_m$}}

\put(10,35){\tiny{$_1$}}

\put(3,2){\tiny{$_{m+1}$}}

\put(10,13){\tiny{$_n$}}

\end{picture})\simeq \hat{C}_f(\setlength{\unitlength}{1pt}
\begin{picture}(44,20)(-22,20)

\put(0,0){\vector(0,1){20}}

\put(0,20){\line(1,1){8}}

\put(0,20){\line(-1,1){8}}

\put(12,32){\vector(1,1){8}}

\put(-12,32){\vector(-1,1){8}}

\put(-20,30){\vector(1,0){40}}

\put(-13,35){\tiny{$_m$}}

\put(10,35){\tiny{$_1$}}

\put(3,2){\tiny{$_{m+1}$}}

\put(12,26){\tiny{$_n$}}

\end{picture}) & \hat{C}_f(\setlength{\unitlength}{1pt}
\begin{picture}(44,20)(-22,20)
\put(0,0){\vector(0,1){20}}

\put(0,20){\vector(1,1){20}}

\put(0,20){\vector(-1,1){20}}

\put(-20,10){\line(1,0){18}}

\put(2,10){\vector(1,0){18}}

\put(-13,35){\tiny{$_m$}}

\put(10,35){\tiny{$_1$}}

\put(3,2){\tiny{$_{m+1}$}}

\put(10,13){\tiny{$_n$}}

\end{picture})\simeq \hat{C}_f(\setlength{\unitlength}{1pt}
\begin{picture}(44,20)(-22,20)
\put(0,0){\vector(0,1){20}}

\put(0,20){\vector(1,1){20}}

\put(0,20){\vector(-1,1){20}}

\put(-20,30){\line(1,0){8}}

\put(-8,30){\line(1,0){16}}

\put(12,30){\vector(1,0){8}}

\put(-13,35){\tiny{$_m$}}

\put(10,35){\tiny{$_1$}}

\put(3,2){\tiny{$_{m+1}$}}

\put(12,26){\tiny{$_n$}}

\end{picture}) \vspace{20pt} \\
\hat{C}_f(\setlength{\unitlength}{1pt}
\begin{picture}(44,20)(-22,20)

\put(0,8){\vector(0,-1){8}}

\put(0,20){\line(0,-1){8}}

\put(20,40){\vector(-1,-1){20}}

\put(-20,40){\vector(1,-1){20}}

\put(20,10){\vector(-1,0){40}}

\put(-13,35){\tiny{$_m$}}

\put(10,35){\tiny{$_1$}}

\put(3,2){\tiny{$_{m+1}$}}

\put(10,13){\tiny{$_n$}}

\end{picture})\simeq \hat{C}_f(\setlength{\unitlength}{1pt}
\begin{picture}(44,20)(-22,20)

\put(0,20){\vector(0,-1){20}}

\put(8,28){\vector(-1,-1){8}}

\put(-8,28){\vector(1,-1){8}}

\put(20,40){\line(-1,-1){8}}

\put(-20,40){\line(1,-1){8}}

\put(20,30){\vector(-1,0){40}}

\put(-14,36){\tiny{$_m$}}

\put(11,36){\tiny{$_1$}}

\put(3,2){\tiny{$_{m+1}$}}

\put(12,26){\tiny{$_n$}}

\end{picture}) & \hat{C}_f(\setlength{\unitlength}{1pt}
\begin{picture}(44,20)(-22,20)

\put(0,20){\vector(0,-1){20}}

\put(20,40){\vector(-1,-1){20}}

\put(-20,40){\vector(1,-1){20}}

\put(20,10){\line(-1,0){18}}

\put(-2,10){\vector(-1,0){18}}

\put(-13,35){\tiny{$_m$}}

\put(10,35){\tiny{$_1$}}

\put(3,2){\tiny{$_{m+1}$}}

\put(10,13){\tiny{$_n$}}

\end{picture})\simeq \hat{C}_f(\setlength{\unitlength}{1pt}
\begin{picture}(44,20)(-22,20)

\put(0,20){\vector(0,-1){20}}

\put(20,40){\vector(-1,-1){20}}

\put(-20,40){\vector(1,-1){20}}

\put(20,30){\line(-1,0){8}}

\put(8,30){\line(-1,0){16}}

\put(-12,30){\vector(-1,0){8}}

\put(-14,36){\tiny{$_m$}}

\put(11,36){\tiny{$_1$}}

\put(3,2){\tiny{$_{m+1}$}}

\put(12,26){\tiny{$_n$}}

\end{picture}) \vspace{20pt} \\
\hat{C}_f(\setlength{\unitlength}{1pt}
\begin{picture}(44,20)(-22,20)

\put(0,0){\line(0,1){8}}

\put(0,12){\vector(0,1){8}}

\put(0,20){\vector(1,1){20}}

\put(0,20){\vector(-1,1){20}}

\put(20,10){\vector(-1,0){40}}

\put(-13,35){\tiny{$_1$}}

\put(10,35){\tiny{$_l$}}

\put(3,2){\tiny{$_{l+1}$}}

\put(10,13){\tiny{$_n$}}

\end{picture})\simeq \hat{C}_f(\setlength{\unitlength}{1pt}
\begin{picture}(44,20)(-22,20)

\put(0,0){\vector(0,1){20}}

\put(0,20){\line(1,1){8}}

\put(0,20){\line(-1,1){8}}

\put(12,32){\vector(1,1){8}}

\put(-12,32){\vector(-1,1){8}}

\put(20,30){\vector(-1,0){40}}

\put(-13,35){\tiny{$_1$}}

\put(10,35){\tiny{$_l$}}

\put(3,2){\tiny{$_{l+1}$}}

\put(12,26){\tiny{$_n$}}

\end{picture}) & \hat{C}_f(\setlength{\unitlength}{1pt}
\begin{picture}(44,20)(-22,20)

\put(0,0){\vector(0,1){20}}

\put(0,20){\vector(1,1){20}}

\put(0,20){\vector(-1,1){20}}

\put(20,10){\line(-1,0){18}}

\put(-2,10){\vector(-1,0){18}}

\put(-13,35){\tiny{$_1$}}

\put(10,35){\tiny{$_l$}}

\put(3,2){\tiny{$_{l+1}$}}

\put(10,13){\tiny{$_n$}}

\end{picture})\simeq \hat{C}_f(\setlength{\unitlength}{1pt}
\begin{picture}(44,20)(-22,20)

\put(0,0){\vector(0,1){20}}

\put(0,20){\vector(1,1){20}}

\put(0,20){\vector(-1,1){20}}

\put(20,30){\line(-1,0){8}}

\put(8,30){\line(-1,0){16}}

\put(-12,30){\vector(-1,0){8}}

\put(-13,35){\tiny{$_1$}}

\put(10,35){\tiny{$_l$}}

\put(3,2){\tiny{$_{l+1}$}}

\put(12,26){\tiny{$_n$}}

\end{picture}) \vspace{20pt} \\
\hat{C}_f(\setlength{\unitlength}{1pt}
\begin{picture}(44,20)(-22,20)

\put(0,8){\vector(0,-1){8}}

\put(0,20){\line(0,-1){8}}

\put(20,40){\vector(-1,-1){20}}

\put(-20,40){\vector(1,-1){20}}

\put(-20,10){\vector(1,0){40}}

\put(-13,35){\tiny{$_1$}}

\put(10,35){\tiny{$_l$}}

\put(3,2){\tiny{$_{l+1}$}}

\put(10,13){\tiny{$_n$}}

\end{picture})\simeq \hat{C}_f(\setlength{\unitlength}{1pt}
\begin{picture}(44,20)(-22,20)
\put(0,20){\vector(0,-1){20}}

\put(8,28){\vector(-1,-1){8}}

\put(-8,28){\vector(1,-1){8}}

\put(20,40){\line(-1,-1){8}}

\put(-20,40){\line(1,-1){8}}

\put(-20,30){\vector(1,0){40}}

\put(-14,36){\tiny{$_1$}}

\put(11,36){\tiny{$_l$}}

\put(3,2){\tiny{$_{l+1}$}}

\put(12,26){\tiny{$_n$}}

\end{picture}) & \hat{C}_f(\setlength{\unitlength}{1pt}
\begin{picture}(44,20)(-22,20)

\put(0,20){\vector(0,-1){20}}

\put(20,40){\vector(-1,-1){20}}

\put(-20,40){\vector(1,-1){20}}

\put(-20,10){\line(1,0){18}}

\put(2,10){\vector(1,0){18}}

\put(-13,35){\tiny{$_1$}}

\put(10,35){\tiny{$_l$}}

\put(3,2){\tiny{$_{l+1}$}}

\put(10,13){\tiny{$_n$}}

\end{picture})\simeq \hat{C}_f(\setlength{\unitlength}{1pt}
\begin{picture}(44,20)(-22,20)

\put(0,20){\vector(0,-1){20}}

\put(20,40){\vector(-1,-1){20}}

\put(-20,40){\vector(1,-1){20}}

\put(-20,30){\line(1,0){8}}

\put(-8,30){\line(1,0){16}}

\put(12,30){\vector(1,0){8}}

\put(-14,36){\tiny{$_1$}}

\put(11,36){\tiny{$_l$}}

\put(3,2){\tiny{$_{l+1}$}}

\put(12,26){\tiny{$_n$}}

\end{picture})
}
\]\vspace{20pt}
\end{proposition}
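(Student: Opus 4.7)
The plan is to prove each of the eight homotopy equivalences in Proposition \ref{fork-sliding-invariance-special} by first constructing explicit chain maps going in both directions, and then applying the functor $\varpi_0$ from Lemma \ref{MOY-object-of-hmf} to reduce the verification to the corresponding (already established) statements for the chain complex $\hat{C}$ in \cite{Wu-color}. These eight cases naturally split into four pairs (distinguished by the sign of the crossing and which arc is slid over), each pair related by reversing orientations; I expect the same argument to handle all eight simultaneously up to symmetry.

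For a fixed pair, say the top-left equivalence $\hat{C}_f(\Gamma)\simeq\hat{C}_f(\Gamma')$, I will build chain maps
\[
F^{\bullet}: \hat{C}_f(\Gamma) \longrightarrow \hat{C}_f(\Gamma'), \qquad G^{\bullet}: \hat{C}_f(\Gamma') \longrightarrow \hat{C}_f(\Gamma)
\]
term by term, where each $F^i, G^i$ is a composition of the basic morphisms introduced in Section \ref{sec-some-morph} (namely $\iota$, $\epsilon$, $\phi$, $\overline{\phi}$, $\chi^0$, $\chi^1$, $\eta$, and those induced by bouquet moves). By Corollary \ref{explicit-differential-1-n-crossings--res}, the colored crossings involved produce two-term chain complexes whose differentials are $\chi$-morphisms, so these constructions are direct analogues of the chain maps built in the proof of the corresponding statements in \cite{Wu-color}, with each $C$-morphism replaced by its $C_f$-analog.

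Verifying that $F^{\bullet}$, $G^{\bullet}$ are genuine chain maps (i.e.\ commute with $d$ up to homotopy in $\hmf_{\tilde{R}, w}$), and that $G^{\bullet} \circ F^{\bullet} \simeq \id$, $F^{\bullet} \circ G^{\bullet} \simeq \id$ in $\hch(\hmf_{\tilde{R}, w})$, amounts to checking a finite collection of homotopy identities between morphisms of matrix factorizations. By Proposition \ref{basic-changes-varpi-0}, the functor $\varpi_0$ sends every one of these identities to its counterpart in $\hmf_{R, \pi_0(w)}$, where it holds by the work of \cite{Wu-color}. Conversely, each identity we need, asserting that some morphism $\alpha: \tilde{M} \to \tilde{M}'$ in $\mf_{\tilde{R}, w}$ is null-homotopic, reduces to the assertion that $\varpi_0(\alpha)$ is null-homotopic: this follows by applying Lemma \ref{reduce-base-homotopic-equivalence} (or more directly Lemma \ref{homotopy-finite-reduce-base}) to the mapping cone of $\alpha$, using that the total polynomial gradings of the matrix factorizations arising from (knotted) MOY graphs are bounded below.

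The main obstacle will be producing the correct explicit formulas for $F^{\bullet}$ and $G^{\bullet}$; the corresponding proof in \cite{Wu-color} is presented through long commutative-up-to-homotopy diagrams, and one must carefully transcribe each such diagram into the $\tilde{R}$-linear setting, checking at each step that the relevant matrix factorizations are objects of $\hmf_{\tilde{R}, w}$ and that the basic morphisms of Section \ref{sec-some-morph} have the properties we need. Once the transcription is done, however, the remainder is formal: all the genuinely combinatorial content has already been carried out in \cite{Wu-color}, and the functor $\varpi_0$ together with the base-change lifting principle promotes each homotopy from $\hch(\hmf_{R, \pi_0(w)})$ back to $\hch(\hmf_{\tilde{R}, w})$.
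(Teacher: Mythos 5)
There is a genuine gap in the reduction step. You claim that every identity of the form ``$\alpha\simeq 0$'' that arises can be imported from the statement ``$\varpi_0(\alpha)\simeq 0$'' by applying Lemma \ref{reduce-base-homotopic-equivalence} or Lemma \ref{homotopy-finite-reduce-base} to the mapping cone of $\alpha$. This is false: those lemmas detect when a \emph{matrix factorization} is null-homotopic and when a \emph{morphism} is a homotopy equivalence, but neither implies that $\varpi_0(\alpha)\simeq 0$ forces $\alpha\simeq 0$ (the contractibility of the cone of $\alpha$ controls whether $\alpha$ is an isomorphism in $\HMF$, not whether it vanishes). A counterexample is $\alpha=B_1\cdot\id_{\tilde M}$, which dies under $\varpi_0$ but is not null-homotopic over $\tilde R$. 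Consequently your scheme cannot verify that your proposed $F^{\bullet}$, $G^{\bullet}$ are chain maps, nor that $G^{\bullet}\circ F^{\bullet}\simeq\id$: exactly the identities of the shape ``some composite is null-homotopic'' are the ones the functor $\varpi_0$ cannot see. The paper's substitute for this non-existent lifting principle is the explicit computation of the graded spaces $\Hom_\HMF$ between the relevant $C_f(\Gamma)$'s (Lemmas \ref{colored-crossing-res-HMF}, \ref{trivial-complex-lemma-1}--\ref{trivial-complex-lemma-3}, \ref{hmf-gamma-prime-k-k-1}, etc.): a homogeneous morphism whose total polynomial degree lies below the lowest non-vanishing degree of the target $\Hom$ space is forced to be null-homotopic. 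The functor $\varpi_0$ is used only in the opposite direction, to certify that morphisms are \emph{not} null-homotopic and hence, by one-dimensionality of the relevant graded piece, agree up to scaling with the intended ones.

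Beyond this, your architecture also diverges from what is actually needed. The paper does not produce two-sided chain homotopy inverses between $\hat{C}_f$ of the two diagrams; it decomposes each term of one complex via Decompositions (IV) and (V) (e.g. $C_f(\Gamma_{k,0})\simeq C_f(\widetilde\Gamma_k)\oplus C_f(\Gamma_k')\{[m-k]\}$ and $C_f(\Gamma_{m,1})\simeq C_f(\Gamma_{m-1}'')\oplus C_f(\widetilde\Gamma_{m+1})$), pins down the matrix entries of the differential using the commutativity lemmas and the $\Hom$-space computations, and then cancels the contractible summands by Gaussian elimination (Lemma \ref{gaussian-elimination}) until the complex for the other diagram remains. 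If you want to keep your two-sided chain-map formulation you would still have to carry out all of these decompositions and degree computations to justify the required homotopies, so the ``formal'' part of your argument is not formal at all once the invalid lifting step is removed.
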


\begin{proof}[Proof of Theorem \ref{fork-sliding-invariance-general} (assuming Proposition \ref{fork-sliding-invariance-special} is true)]
Each homotopy equivalence in Theorem \ref{fork-sliding-invariance-general} can be proved by an induction on $m$ or $l$. We only give details for the proof of 
\begin{equation}\label{fork-sliding-invariance-general-induction-1-+}
\hat{C}_f()\simeq \hat{C}_f() \vspace{20pt}
\end{equation} 
The proof of the rest of Theorem \ref{fork-sliding-invariance-general} is very similar and left to the reader.

We prove \eqref{fork-sliding-invariance-general-induction-1-+} by an induction on $l$. The $l=1$ case is given in Proposition \ref{fork-sliding-invariance-special}. Assume that \eqref{fork-sliding-invariance-general-induction-1-+} is true for some $l=k\geq 1$. Consider $l=k+1$. 
By Decomposition (II) (Theorem \ref{decomp-II}), we have
\[
\hat{C}_f(\input{fork-sliding-induction-10-sli}) \simeq \hat{C}_f(\setlength{\unitlength}{1pt}
\begin{picture}(44,20)(-22,20)

\put(0,0){\line(0,1){8}}

\put(0,12){\vector(0,1){8}}

\put(0,20){\vector(1,1){20}}

\put(0,20){\vector(-1,1){20}}

\put(-20,10){\vector(1,0){40}}

\put(-13,35){\tiny{$_m$}}

\put(0,35){\tiny{$_{k+1}$}}

\put(3,2){\tiny{$_{m+k+1}$}}

\put(10,13){\tiny{$_n$}}

\end{picture})\{[k+1]\},   \vspace{30pt}
\] 
\[
\hat{C}_f(\input{fork-sliding-induction-11-sli}) \simeq \hat{C}_f(\setlength{\unitlength}{1pt}
\begin{picture}(44,20)(-22,20)

\put(0,0){\vector(0,1){20}}

\put(0,20){\line(1,1){8}}

\put(0,20){\line(-1,1){8}}

\put(12,32){\vector(1,1){8}}

\put(-12,32){\vector(-1,1){8}}

\put(-20,30){\vector(1,0){40}}

\put(-13,35){\tiny{$_m$}}

\put(0,35){\tiny{$_{k+1}$}}

\put(3,2){\tiny{$_{m+k+1}$}}

\put(12,26){\tiny{$_n$}}

\end{picture})\{[k+1]\}. \vspace{30pt}
\]
By Corollary \ref{contract-expand}, we have
\[
\hat{C}_f(\input{fork-sliding-induction-10-sli}) \simeq \hat{C}_f(\input{fork-sliding-induction-morph1-sli}), \vspace{30pt}
\]
\[
\hat{C}_f(\input{fork-sliding-induction-11-sli}) \simeq \hat{C}_f(\input{fork-sliding-induction-morph4-sli}). \vspace{30pt}
\]
By the $l=k$ case of \eqref{fork-sliding-invariance-general-induction-1-+}, we have
\[
\hat{C}_f(\input{fork-sliding-induction-morph1-sli}) \simeq \hat{C}_f(\input{fork-sliding-induction-morph2-sli}). \vspace{30pt}
\]
By the $l=1$ case of \eqref{fork-sliding-invariance-general-induction-1-+}, we have
\[
\hat{C}_f(\input{fork-sliding-induction-morph2-sli}) \simeq \hat{C}_f(\input{fork-sliding-induction-morph3-sli}). \vspace{30pt}
\]
By Proposition \ref{fork-sliding-invariance-special}, we known that
\[
\hat{C}_f(\input{fork-sliding-induction-morph3-sli}) \simeq \hat{C}_f(\input{fork-sliding-induction-morph4-sli}). \vspace{30pt}
\]
Putting everything together, we have
\[
\hat{C}_f()\{[k+1]\} \simeq \hat{C}_f()\{[k+1]\}. \vspace{20pt}
\]
Then it follows from \cite[Proposition 3.20]{Wu-color} that 
\[
\hat{C}_f() \simeq \hat{C}_f(). \vspace{20pt}
\]
\end{proof}

In the remainder of this section, we concentrate on proving Proposition \ref{fork-sliding-invariance-special}. We only give detailed proofs of 
\begin{equation}\label{fork-sliding-invariance-special-1-+}
\hat{C}_f()\simeq \hat{C}_f() \vspace{20pt} 
\end{equation}
and
\begin{equation}\label{fork-sliding-invariance-special-1--}
\hat{C}_f()\simeq \hat{C}_f(). \vspace{20pt} 
\end{equation}
The proof of the rest of Proposition \ref{fork-sliding-invariance-special} is very similar and left to the reader.

\subsection{Chain complexes involved in the proof}\label{fork-sliding-complexes-involved} In this subsection, we list the chain complexes that will appear in the proof of \eqref{fork-sliding-invariance-special-1-+} and \eqref{fork-sliding-invariance-special-1--}.

\begin{figure}[ht]
$
\xymatrix{
\input{tilde-gamma-k} 
}
$
\caption{}\label{fork-sliding-special-complex1-fig}

\end{figure}

Denote by $\widetilde{\Gamma}_k$ the MOY graph in Figure \ref{fork-sliding-special-complex1-fig}. Then $\hat{C}_f() \vspace{20pt}$ is 
\begin{equation}\label{complex-D-10-+}
0 \rightarrow C_f(\widetilde{\Gamma}_{m+1}) \xrightarrow{\tilde{d}_{m+1}^+} C_f(\widetilde{\Gamma}_{m})\{q^{-1}\} \xrightarrow{\tilde{d}_{m}^+} \cdots \xrightarrow{\tilde{d}_{\tilde{k}_0+1}^+} C_f(\widetilde{\Gamma}_{\tilde{k}_0})\{q^{\tilde{k}_0-m-1}\} \rightarrow 0,
\end{equation}
where $\tilde{k}_0 := \max\{0,m+1-n\}$. Similarly, $\hat{C}_f() \vspace{20pt}$ is 
\begin{equation}\label{complex-D-10--}
0 \rightarrow C_f(\widetilde{\Gamma}_{\tilde{k}_0})\{q^{m+1-\tilde{k}_0}\} \xrightarrow{\tilde{d}_{\tilde{k}_0}^-} \cdots \xrightarrow{\tilde{d}_{m-1}^-} C_f(\widetilde{\Gamma}_{m})\{q\} \xrightarrow{\tilde{d}_{m}^-} C_f(\widetilde{\Gamma}_{m+1}) \rightarrow 0.
\end{equation}

\begin{figure}[ht]
$
\xymatrix{
\input{gamma-k-prime} &&& \input{gamma-k-double-prime} 
}
$
\caption{}\label{fork-sliding-special-complex2-fig}

\end{figure}

Let $\Gamma_k'$ and $\Gamma_k''$ be the MOY graphs in Figure \ref{fork-sliding-special-complex2-fig}. Let $\delta_k^\pm: C_f(\Gamma_k') \rightarrow C_f(\Gamma_{k\mp 1}')$ be the morphisms defined in Definition \ref{trivial-complex-differential-def} with explicit form given in Theorem \ref{explicit-differential-general}. Let $C^+$ be the chain complex
\begin{equation}\label{complex-D-contractible-+}
0 \rightarrow C_f(\Gamma_{m-1}'') \xrightarrow{J_{m-1,m-1}} C_f(\Gamma_{m-1}') \xrightarrow{\delta_{m-1}^+} \cdots \xrightarrow{\delta_{k_0+1}^+} C_f(\Gamma_{k_0}') \rightarrow 0,
\end{equation}
and $C^-$ the chain complex 
\begin{equation}\label{complex-D-contractible--}
0 \rightarrow C_f(\Gamma_{k_0}') \xrightarrow{\delta_{k_0}^-} \cdots \xrightarrow{\delta_{m-2}^-} C_f(\Gamma_{m-1}') \xrightarrow{P_{m-1,m-1}}  C_f(\Gamma_{m-1}'') \rightarrow 0,
\end{equation}
where $k_0 = \max \{m-n,0\}$ and $J_{m-1,m-1}$, $P_{m-1,m-1}$ are defined in Definition \ref{trivial-complex-differential-def}. Then, by Lemma \ref{decomp-V-special-2} and Proposition \ref{trivial-complex-prop}, both $C^\pm$ are isomorphic in $\ch(\hmf)$ to
\[
\bigoplus_{j=k_0}^{m-1} (\xymatrix{
0 \ar[r] & C_f(\Gamma_j'') \ar[r]^{\simeq} & C_f(\Gamma_j'') \ar[r] & 0,
})
\]
and are therefore homotopic to $0$.

\begin{figure}[ht]
$
\xymatrix{
\input{gamma-k0} && \input{gamma-k1} 
}
$
\caption{}\label{fork-sliding-special-complex3-fig}

\end{figure}

Now consider $\hat{C}_f()$ and $\hat{C}_f() \vspace{20pt}$. Note that each of these knotted MOY graphs has two crossings -- one $\pm(m,n)$-crossing and one $\pm (1,n)$-crossing. Denote by $d_k^\pm$ the differential map of the $\pm(m,n)$-crossing. From Corollary \ref{explicit-differential-1-n-crossings--res}, the differential map of the $+(1,n)$-crossing (resp. $-(1,n)$-crossing) is $\chi^1$ (resp. $\chi^0$.) Let $\Gamma_{k,0}$ and $\Gamma_{k,1}$ be the MOY graphs in Figure \ref{fork-sliding-special-complex3-fig}. Then $d_k^\pm$ acts on the left square in $\Gamma_{k,0}$ and $\Gamma_{k,1}$, and $\chi^0$, $\chi^1$ act on the upper right corners of $\Gamma_{k,0}$ and $\Gamma_{k,1}$. The chain complex $\hat{C}_f() \vspace{20pt}$ is 
{\tiny
\begin{equation}\label{complex-D-11-+}
0 \rightarrow C_f(\Gamma_{m,1}) \xrightarrow{\mathfrak{d}_m^+} \left.%
\begin{array}{c}
C_f(\Gamma_{m,0}) \{q^{-1}\}\\
\oplus \\
C_f(\Gamma_{m-1,1})\{q^{-1}\}
\end{array}%
\right. 
\xrightarrow{\mathfrak{d}_{m-1}^+} \cdots \xrightarrow{\mathfrak{d}_{k+1}^+} \left.%
\begin{array}{c}
C_f(\Gamma_{k+1,0}) \{q^{k-m}\}\\
\oplus \\
C_f(\Gamma_{k,1})\{q^{k-m}\}
\end{array}%
\right. 
\xrightarrow{\mathfrak{d}_{k}^+} \cdots\xrightarrow{\mathfrak{d}_{k_0}^+} C_f(\Gamma_{k_0,0}) \{q^{k_0-1-m}\} \rightarrow 0,
\end{equation}
}
where $k_0 = \max \{m-n,0\}$ as above and
\begin{eqnarray*}
\mathfrak{d}_m^+ & = & \left(%
\begin{array}{c}
\chi^1\\
-d_m^+
\end{array}%
\right),\\
\mathfrak{d}_k^+ & = & \left(%
\begin{array}{cc}
d_{k+1}^+ & \chi^1\\
0 & -d_k^+
\end{array}%
\right) ~\text{ for } k_0<k<m, \\
\mathfrak{d}_{k_0}^+ & = & \left(%
\begin{array}{cc}
d_{k_0 +1}^+, & \chi^1\\
\end{array}%
\right).
\end{eqnarray*}

Similarly, The chain complex $\hat{C}_f() \vspace{20pt}$ is 
{\tiny
\begin{equation}\label{complex-D-11--}
0 \rightarrow C_f(\Gamma_{k_0,0}) \{q^{m+1-k_0}\} \xrightarrow{\mathfrak{d}_{k_0}^-} \cdots \xrightarrow{\mathfrak{d}_{k-1}^-} \left.%
\begin{array}{c}
C_f(\Gamma_{k,0}) \{q^{m+1-k}\}\\
\oplus \\
C_f(\Gamma_{k-1,1})\{q^{m+1-k}\}
\end{array}%
\right. 
\xrightarrow{\mathfrak{d}_{k}^-} \cdots  \xrightarrow{\mathfrak{d}_{m-1}^-} \left.%
\begin{array}{c}
C_f(\Gamma_{m,0}) \{q\}\\
\oplus \\
C_f(\Gamma_{m-1,1})\{q\}
\end{array}%
\right. 
\xrightarrow{\mathfrak{d}_m^-} C_f(\Gamma_{m,1}) \rightarrow 0,
\end{equation}
}
where $k_0 = \max \{m-n,0\}$ as above and
\begin{eqnarray*}
\mathfrak{d}_{k_0}^- & = & \left(%
\begin{array}{c}
d_{k_0}^-\\ 
\chi^0
\end{array}%
\right),\\
\mathfrak{d}_k^- & = & \left(%
\begin{array}{cc}
d_{k}^- & 0\\
\chi^0 & -d_{k-1}^-
\end{array}%
\right) ~\text{ for } k_0<k<m, \\
\mathfrak{d}_{m}^- & = & \left(%
\begin{array}{cc}
\chi^0 & -d_{m-1}^-\\
\end{array}%
\right).
\end{eqnarray*}

\subsection{Commutativity lemmas}

\begin{figure}[ht]
$
\xymatrix{
\input{chi-commute-chi-chi-fig2} \ar@<1ex>[rrrr]^{\chi^1_\triangle} \ar@<1ex>[d]^{h_1} &&&& \input{chi-commute-chi-chi-fig0} \ar@<1ex>[llll]^{\chi^0_\triangle} \ar@<1ex>[d]^{\overline{h}_2} \\
\input{chi-commute-chi-chi-fig1} \ar@<1ex>[u]^{\overline{h}_1} \ar@<1ex>[rr]^{\chi^1_\Box} && \input{chi-commute-chi-chi-fig3} \ar@<1ex>[ll]^{\chi^0_\Box} \ar@<1ex>[rr]^{\chi^1_\dag} &&\input{chi-commute-chi-chi-fig4} \ar@<1ex>[ll]^{\chi^0_\dag} \ar@<1ex>[u]^{h_2}
}
$
\caption{}\label{chi-commute-chi-chi-figure}

\end{figure}

\begin{lemma}\label{chi-commute-chi-chi}
Consider the diagram in Figure \ref{chi-commute-chi-chi-figure}, where the morphisms are induced by the apparent basic local changes of MOY graphs. Then $\chi^1_\triangle \approx h_2 \circ \chi^1_\dag \circ \chi^1_\Box \circ h_1$ and $\chi^0_\triangle \approx  \overline{h}_1 \circ \chi^0_\Box \circ \chi^0_\dag \circ \overline{h}_2$. That is, up to homotopy and scaling, the diagram in Figure \ref{chi-commute-chi-chi-figure} commutes in both directions.
\end{lemma}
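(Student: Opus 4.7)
The plan is to apply the standard strategy used throughout Sections \ref{sec-some-morph}--\ref{sec-decomps}: for each of the two identities, show that both sides are homogeneous morphisms of the same bidegrees, that the relevant homogeneous piece of $\Hom_\hmf$ is one-dimensional, and that neither composition is null-homotopic. One-dimensionality will force the desired $\approx$ relation.

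First I would verify the bidegrees. Each bouquet-move morphism $h_i,\overline{h}_i$ is a homotopy equivalence preserving both the $\zed_2$-grading and the total polynomial grading (Corollary \ref{contract-expand} together with the uniqueness established in Lemma \ref{bouquet-move-lemma}), so the bidegree of the four-fold composition equals the sum of the bidegrees of the two $\chi^1$ factors, and similarly on the $\chi^0$ side. Using Proposition \ref{general-general-chi-maps}, the total polynomial degrees and $\zed_2$-degrees on each side come out equal; this is essentially the same colour-arithmetic check performed in \cite[Section 7]{Wu-color}.

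Next, I would pin down the relevant morphism spaces. Using Corollary \ref{contract-expand} to collapse the bouquets in $\Gamma_\triangle$ and $\Gamma_\Box$ and Lemma \ref{complex-computing-gamma-HMF-lemma} to turn $\Hom_\HMF(C_f(\Gamma_1),C_f(\Gamma_0))$ into $H_f$ of an explicit closed MOY graph, I can simplify via Direct Sum Decompositions (I) and (II) (Theorems \ref{decomp-I}, \ref{decomp-II}) and Proposition \ref{circle-module} to obtain a description as $C_f(\emptyset)$ tensored with a product of quantum binomials. A degree count then shows that the homogeneous subspace of the target bidegree is $1$-dimensional; the same computation handles the $\chi^0$ diagram (which is just the adjoint picture with orientations reversed).

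Finally, non-triviality is handled by the translator functor $\varpi_0$ from Lemma \ref{MOY-object-of-hmf}. By Proposition \ref{basic-changes-varpi-0}, $\varpi_0$ carries each of $h_i,\overline{h}_i,\chi^0_\ast,\chi^1_\ast$ to the corresponding morphism of \cite[Section 7]{Wu-color}, and the identity $\varpi_0(\chi^1_\triangle) \approx \varpi_0(h_2) \circ \varpi_0(\chi^1_\dag) \circ \varpi_0(\chi^1_\Box) \circ \varpi_0(h_1)$ together with its $\chi^0$ analogue is already proven in \cite[Section 11]{Wu-color} as a preparatory commutativity step for fork sliding (the non-equivariant version of the present lemma). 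Since the non-equivariant composite is not null-homotopic, neither is its equivariant lift; combined with the one-dimensionality above, this yields the claimed scalar relations.

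The main obstacle is the dimension computation of $\Hom_\hmf(C_f(\Gamma_1),C_f(\Gamma_0))$ in the prescribed bidegree: the MOY graphs involved are large and carry several intersecting bubbles, so extracting the correct quantum-binomial factor via Theorems \ref{decomp-I} and \ref{decomp-II} requires careful bookkeeping. Once this is in place, the reduction to the already-known non-equivariant identity through $\varpi_0$ is essentially formal.
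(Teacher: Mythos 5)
Your proposal is correct and follows essentially the same route as the paper: compute $\Hom_\HMF(C_f(\Gamma_0),C_f(\Gamma_1))$ and $\Hom_\HMF(C_f(\Gamma_1),C_f(\Gamma_0))$ by reducing to $H_f$ of a closed MOY graph and applying Corollary \ref{contract-expand}, Decomposition (II) and Proposition \ref{circle-module}, observe that the degree-$(m+1)$ homogeneous subspace is one-dimensional, and then establish non-triviality of all four composites by pushing through $\varpi_0$ to the corresponding non-equivariant statement in \cite{Wu-color} (the paper cites the proof of \cite[Lemma 12.3]{Wu-color} rather than Section 11, but this is the same preparatory commutativity result). No gaps.
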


\begin{proof}
It is easy to check that
\[
\xymatrix{
\Hom_\HMF (C_f(\input{chi-commute-chi-chi-fig2-sli}),C_f(\input{chi-commute-chi-chi-fig0-sli})) \vspace{30pt}\\
\cong H_f (\input{chi-commute-chi-chi-fig5-sli})\left\langle m+n+1 \right\rangle\{q^{(m+n+1)(N-m-n-1)+2m+2n+mn}\}.} \vspace{30pt}
\]
By Corollary \ref{contract-expand}, Theorem \ref{decomp-II} and Proposition \ref{circle-module}, we have
\[
H_f (\input{chi-commute-chi-chi-fig5-sli}) \cong C_f(\emptyset)\left\langle m+n+1 \right\rangle\{[m+1]\qb{m+n}{m+1} [m+n+1] \qb{N}{m+n+1}\}. \vspace{30pt}
\]
So 
\begin{eqnarray*}
&& \Hom_\HMF (C_f(\input{chi-commute-chi-chi-fig2-sli}),C_f(\input{chi-commute-chi-chi-fig0-sli})) \\
\\ \\
&\cong &C_f(\emptyset) \{[m+1]\qb{m+n}{m+1} [m+n+1] \qb{N}{m+n+1} q^{(m+n+1)(N-m-n-1)+2m+2n+mn}\}.
\end{eqnarray*}
Similarly,
\begin{eqnarray*}
&& \Hom_\HMF (C_f(\input{chi-commute-chi-chi-fig0-sli}),C_f(\input{chi-commute-chi-chi-fig2-sli})) \\
\\ \\
&\cong &C_f(\emptyset) \{[m+1]\qb{m+n}{m+1} [m+n+1] \qb{N}{m+n+1} q^{(m+n+1)(N-m-n-1)+2m+2n+mn}\}.
\end{eqnarray*}
So these $\Hom_\HMF$ spaces concentrate on $\zed_2$-grading $0$. The lowest non-vanishing total polynomial grading of these spaces is $m+1$. Moreover, the subspaces of homogeneous elements of total polynomial degree $m+1$ of these spaces are $1$-dimensional. 

Note that $\chi^1_\triangle$, $h_2 \circ \chi^1_\dag \circ \chi^1_\Box \circ h_1$, $\chi^0_\triangle$ and $\overline{h}_1 \circ \chi^0_\Box \circ \chi^0_\dag \circ \overline{h}_2$ are all homogeneous morphisms of total polynomial grading $m+1$. Therefore, to prove the lemma, we only need to show that all these morphisms are homotopically non-trivial. Let $\varpi_0$ be the functor given in Lemma \ref{MOY-object-of-hmf}. By Proposition \ref{basic-changes-varpi-0} and the proof of \cite[Lemma 12.3]{Wu-color}, one can see that $\varpi_0(\chi^1_\triangle)$, $\varpi_0(h_2 \circ \chi^1_\dag \circ \chi^1_\Box \circ h_1)$, $\varpi_0(\chi^0_\triangle)$ and $\varpi_0(\overline{h}_1 \circ \chi^0_\Box \circ \chi^0_\dag \circ \overline{h}_2)$ are homotopically non-trivial. So $\chi^1_\triangle$, $h_2 \circ \chi^1_\dag \circ \chi^1_\Box \circ h_1$, $\chi^0_\triangle$ and $\overline{h}_1 \circ \chi^0_\Box \circ \chi^0_\dag \circ \overline{h}_2$ are also homotopically non-trivial.
\end{proof}

\begin{figure}[ht]
$
\xymatrix{
\input{varphi-def-figure1} \ar@<1ex>[rr]^{\varphi} \ar@<1ex>[rd]^{\phi}&& \input{varphi-def-figure2} \ar@<1ex>[ll]^{\overline{\varphi}} \ar@<1ex>[ld]^{\overline{h}} \\
& \input{varphi-def-figure3} \ar@<1ex>[lu]^{\overline{\phi}} \ar@<1ex>[ru]^{h} &
}
$
\caption{}\label{varphi-def-figure}

\end{figure}

\begin{definition}\label{varphi-def}
Consider the morphisms in Figure \ref{varphi-def-figure}, where $\phi$ and $\overline{\phi}$ are the morphisms induced by the apparent edge splitting and merging, $h$ and $\overline{h}$ are induced by the apparent bouquet moves. Define $\varphi:= h \circ \phi$ and $\overline{\varphi}:= \overline{\phi} \circ \overline{h}$.

Let $\varpi_0$ be the functor given in Lemma \ref{MOY-object-of-hmf}. By Proposition \ref{basic-changes-varpi-0}, it is clear that $\varpi_0(\varphi)$ and $\varpi_0(\overline{\varphi})$ are the morphisms defined in \cite[Definition 12.4]{Wu-color}.
\end{definition}

By Corollary \ref{contract-expand}, Lemmas \ref{edge-splitting-lemma} and \ref{phibar-compose-phi}, it is easy to check that, up to homotopy and scaling, $\varphi$ and $\overline{\varphi}$ are the unique homotopically non-trivial homogeneous morphisms between $C_f(\Gamma)$ and $C_f(\widetilde{\Gamma})$ of $\zed_2$-degree $0$ and quantum degree $-mn$. And they satisfy, for $\lambda,\mu \in \Lambda_{m,n}$,
\begin{equation}\label{varphi-compose}
\overline{\varphi} \circ \mathfrak{m}(S_{\lambda}(\mathbb{X})\cdot S_{\mu}(-\mathbb{Y})) \circ \varphi \approx \left\{%
\begin{array}{ll}
    \id_{C_f(\Gamma)} & \text{if } \lambda_i + \mu_{m+1-i} = n ~\forall i=1,\dots,m, \\ 
    0 & \text{otherwise.}
\end{array}%
\right. 
\end{equation}

\begin{lemma}\label{varphis-commute}
Consider the diagram in Figure \ref{varphis-commute-figure}, where $\varphi_i$ and $\overline{\varphi}_i$ are the morphisms defined in Definition \ref{varphi-def} associated to the apparent local changes of the MOY graphs. Then $\varphi_2\circ\varphi_1 \approx \varphi_4\circ\varphi_3$ and $\overline{\varphi}_1 \circ \overline{\varphi}_2 \approx \overline{\varphi}_3 \circ \overline{\varphi}_4$. That is, the diagram in Figure \ref{varphis-commute-figure} commutes up to homotopy and scaling in both directions.
\end{lemma}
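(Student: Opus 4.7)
The plan is to prove the commutativity by combining a uniqueness argument with the translation functor $\varpi_0$. The key observation is that $\varphi_2\circ\varphi_1$ and $\varphi_4\circ\varphi_3$ are both homogeneous morphisms of $\zed_2$-degree $0$ between the same source and target $C_f(\Gamma)$ and $C_f(\Gamma')$, and their total polynomial degrees coincide (being the sum of the degrees $-m_in_i$ contributed by each individual $\varphi_i$, determined entirely by the colors involved in each edge splitting step). So the task is to show these two compositions represent the same morphism in $\hmf$ up to a nonzero scalar.

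First, I would compute $\Hom_\HMF(C_f(\Gamma),C_f(\Gamma'))$ following the template of Lemma \ref{edge-splitting-lemma}. Each of the edge splittings in the diagram of Figure \ref{varphis-commute-figure} contributes a $\qb{\cdot}{\cdot}$-factor via Decomposition (II) (Theorem \ref{decomp-II}) after reducing to $C_f(\bigcirc)$ and invoking Proposition \ref{circle-module}. This identifies $\Hom_\HMF(C_f(\Gamma),C_f(\Gamma'))$ with a free graded $R_B$-module whose graded Poincar\'e polynomial is a product of quantum binomials, from which one reads off that the subspace of homogeneous elements of the total polynomial degree matching $\deg(\varphi_2\circ\varphi_1)$ is one-dimensional over $\C$.

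Second, I would apply $\varpi_0$ (from Lemma \ref{MOY-object-of-hmf}) to each side. By Definition \ref{varphi-def} and Proposition \ref{basic-changes-varpi-0}, $\varpi_0(\varphi_i)$ is precisely the corresponding morphism from \cite[Section 12]{Wu-color}, and the analogous commutativity lemma there yields
\[
\varpi_0(\varphi_2)\circ\varpi_0(\varphi_1) \approx \varpi_0(\varphi_4)\circ\varpi_0(\varphi_3),
\]
with both sides being homotopically non-trivial. Since $\varpi_0$ is a functor sending null-homotopic morphisms to null-homotopic morphisms, it follows that neither $\varphi_2\circ\varphi_1$ nor $\varphi_4\circ\varphi_3$ can be null-homotopic. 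Combined with the one-dimensionality established in the previous step, this forces $\varphi_2\circ\varphi_1 \approx \varphi_4\circ\varphi_3$. The argument for $\overline{\varphi}_1\circ\overline{\varphi}_2 \approx \overline{\varphi}_3\circ\overline{\varphi}_4$ is completely parallel, using the analogous $\Hom_\HMF$-computation in the reverse direction.

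The main obstacle I anticipate is not conceptual but bookkeeping: verifying that the relevant subspace of homogeneous elements at the predicted total polynomial degree is actually one-dimensional requires identifying the correct quantum binomial product and its lowest-degree term. However, this is essentially the same computation as in Lemma \ref{edge-splitting-lemma}, applied twice in succession, so it reduces to routine tensor products of decomposition theorems; no new ideas beyond those in Section \ref{sec-some-morph} are needed.
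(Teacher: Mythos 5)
Your proposal is correct and follows essentially the same route as the paper: compute the relevant $\Hom_\HMF$ spaces (as a product of quantum binomials via Decomposition (II) and Proposition \ref{circle-module}) to see that the subspace of homogeneous elements of total polynomial degree $-mn-ml-nl$ is one-dimensional, then use $\varpi_0$ together with Proposition \ref{basic-changes-varpi-0} and the corresponding lemma in \cite{Wu-color} to conclude that all four compositions are homotopically non-trivial, hence agree up to scaling.
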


\begin{figure}[ht]
$
\xymatrix{
\input{varphis-commute-figure1} \ar@<1ex>[rr]^{\varphi_1} \ar@<1ex>[d]^{\varphi_3} && \input{varphis-commute-figure3} \ar@<1ex>[ll]^{\overline{\varphi}_1} \ar@<1ex>[d]^{\varphi_2} \\
\input{varphis-commute-figure2} \ar@<1ex>[u]^{\overline{\varphi}_3} \ar@<1ex>[rr]^{\varphi_4} && \input{varphis-commute-figure4} \ar@<1ex>[u]^{\overline{\varphi}_2}  \ar@<1ex>[ll]^{\overline{\varphi}_4}
}
$
\caption{}\label{varphis-commute-figure}

\end{figure}

\begin{proof}
As in the proof of \cite[Lemma 12.5]{Wu-color}, a straightforward computations shows that
{\tiny
\begin{eqnarray*}
&& \Hom_\HMF (C_f(\Gamma_1), C_f(\Gamma_0)) \cong \Hom_\HMF (C_f(\Gamma_0), C_f(\Gamma_1)) \\ 
& \cong & C_f(\emptyset) \{\qb{N-m-n-l}{j}\qb{N-m-n-l}{k}\qb{N}{m+n+l} \qb{m+n+l}{l}\qb{m+n}{n}q^{(m+n+l+j+k)(N-m-n-l)-j^2-k^2}\}.
\end{eqnarray*}
}
Thus, $\Hom_\HMF (C_f(\Gamma_1), C_f(\Gamma_0))$ and $\Hom_\HMF (C_f(\Gamma_0), C_f(\Gamma_1))$ are concentrated on $\zed_2$-degree $0$ and have lowest non-vanishing total polynomial grading $-mn-ml-nl$. And the subspaces of $\Hom_\HMF (C_f(\Gamma_1), C_f(\Gamma_0))$ and $\Hom_\HMF (C_f(\Gamma_0), C_f(\Gamma_1))$ of homogeneous elements of total polynomial grading $-mn-ml-nl$ are $1$-dimensional.

Note that $\varphi_2\circ\varphi_1$, $\varphi_4\circ\varphi_3$, $\overline{\varphi}_1 \circ \overline{\varphi}_2$ and $\overline{\varphi}_3 \circ \overline{\varphi}_4$ are all homogeneous of total polynomial degree $-mn-ml-nl$. So, to prove the lemma, we only need to show that these morphisms are all homotopically non-trivial. Let $\varpi_0$ be the functor given in Lemma \ref{MOY-object-of-hmf}. From Proposition \ref{basic-changes-varpi-0} and the proof of \cite[Lemma 12.5]{Wu-color}, one can see that $\varpi_0(\varphi_2\circ\varphi_1)$, $\varpi_0(\varphi_4\circ\varphi_3)$, $\varpi_0(\overline{\varphi}_1 \circ \overline{\varphi}_2)$ and $\varpi_0(\overline{\varphi}_3 \circ \overline{\varphi}_4)$ are all homotopically non-trivial. So $\varphi_2\circ\varphi_1$, $\varphi_4\circ\varphi_3$, $\overline{\varphi}_1 \circ \overline{\varphi}_2$ and $\overline{\varphi}_3 \circ \overline{\varphi}_4$ are all homotopically non-trivial.
\end{proof}

\subsection{Decomposing $C_f(\Gamma_{k,0})$} In this subsection, we review a special case of Decomposition (IV), including the construction of all the morphisms involved, which will be useful in our proof of the invariance under fork sliding.

By Decomposition (IV) (Theorem \ref{decomp-IV}), we have
\begin{equation}\label{fork-special-decomp-iv}
C_f(\input{fork-special-decomp-iv-1-sli}) \simeq C_f(\setlength{\unitlength}{.8pt}
\begin{picture}(105,30)(-55,30)

\put(-25,0){\vector(0,1){30}}
\put(-25,30){\vector(0,1){30}}

\put(25,0){\vector(0,1){30}}
\put(25,30){\vector(0,1){30}}

\put(-25,30){\vector(1,0){50}}

\put(-55,5){\tiny{$_{n+k-m}$}}
\put(-30,55){\tiny{$_{1}$}}

\put(-20,33){\tiny{$_{n+k-1-m}$}}

\put(28,5){\tiny{$_{m+1-k}$}}
\put(28,55){\tiny{$_{n}$}}
\end{picture}) ~\oplus~ C_f(\setlength{\unitlength}{.8pt}
\begin{picture}(85,30)(-45,30)

\put(-15,0){\vector(1,1){15}}
\put(0,45){\vector(-1,1){15}}
\put(15,0){\vector(-1,1){15}}
\put(0,45){\vector(1,1){15}}

\put(0,15){\vector(0,1){30}}

\put(-45,5){\tiny{$_{n+k-m}$}}
\put(-20,55){\tiny{$_{1}$}}

\put(18,5){\tiny{$_{m+1-k}$}}
\put(18,55){\tiny{$_{n}$}}

\put(3,30){\tiny{$_{n+1}$}}
\end{picture})\{[m-k]\}. \vspace{30pt}
\end{equation}

Define morphisms 
\[
\xymatrix{
C_f() \ar@<-1ex>[rr]^{F} && C_f(\input{fork-special-decomp-iv-1-sli}) \ar@<2ex>[ll]^{G}
} \vspace{30pt}
\]
by the diagram
\[
\xymatrix{
\setlength{\unitlength}{1pt}
\begin{picture}(100,60)(-50,0)

\put(-25,0){\vector(0,1){30}}
\put(-25,30){\vector(0,1){30}}

\put(25,0){\vector(0,1){30}}
\put(25,30){\vector(0,1){30}}

\put(-25,30){\vector(1,0){50}}

\put(-50,5){\tiny{$_{n+k-m}$}}
\put(-30,55){\tiny{$_{1}$}}

\put(-17,33){\tiny{$_{n+k-1-m}$}}

\put(28,5){\tiny{$_{m+1-k}$}}
\put(28,55){\tiny{$_{n}$}}
\end{picture} \ar@<1ex>[d]^{\phi_1} \ar@<1ex>[rr]^{F} && \input{fork-special-decomp-iv-1-ch} \ar@<1ex>[ll]^{G} \ar@<1ex>[d]^{\chi^1} \\
\input{fork-special-decomp-iv-4} \ar@<1ex>[u]^{\overline{\phi}_1} \ar@<1ex>[rr]^{h_1} && \input{fork-special-decomp-iv-5} \ar@<1ex>[ll]^{\overline{h}_1} \ar@<1ex>[u]^{\chi^0}
}.
\]
That is, 
\begin{eqnarray*}
F & = & \chi^0 \circ h_1 \circ \phi_1, \\
G & = & \overline{\phi}_1 \circ \overline{h}_1 \circ \chi^1,
\end{eqnarray*}
where the morphisms on the right hand side are induced by the apparent basic local changes of MOY graphs. Then $F$ and $G$ are both homogeneous morphisms preserving both gradings. By Lemma \ref{phibar-compose-phi} and Proposition \ref{general-general-chi-maps}, we have that, after possibly a scaling,
\begin{equation}\label{fork-special-decomp-iv-F-circ-G}
G\circ F \simeq \id,
\end{equation}
where $\id$ is the identity morphism of $C_f() \vspace{30pt}$.

Define morphisms 
\[
\xymatrix{
C_f() \ar@<-1ex>[rr]^{\alpha} && C_f(\input{fork-special-decomp-iv-1-sli}) \ar@<2ex>[ll]^{\beta}
} \vspace{30pt}
\]
by the diagram
\[
\xymatrix{
\setlength{\unitlength}{1pt}
\begin{picture}(80,60)(-40,0)

\put(-15,0){\vector(1,1){15}}
\put(0,45){\vector(-1,1){15}}
\put(15,0){\vector(-1,1){15}}
\put(0,45){\vector(1,1){15}}

\put(0,15){\vector(0,1){30}}

\put(-40,5){\tiny{$_{n+k-m}$}}
\put(-20,55){\tiny{$_{1}$}}

\put(18,5){\tiny{$_{m+1-k}$}}
\put(18,55){\tiny{$_{n}$}}

\put(3,30){\tiny{$_{n+1}$}}
\end{picture}  \ar@<1ex>[d]^{\phi_2} \ar@<1ex>[rr]^{\alpha} && \input{fork-special-decomp-iv-1-ch} \ar@<1ex>[ll]^{\beta} \ar@<1ex>[d]^{\chi^0} \\
\input{fork-special-decomp-iv-6} \ar@<1ex>[u]^{\overline{\phi}_2} \ar@<1ex>[rr]^{h_2}&& \input{fork-special-decomp-iv-7} \ar@<1ex>[ll]^{\overline{h}_2} \ar@<1ex>[u]^{\chi^1}
}
\]
That is,
\begin{eqnarray*}
\alpha & = & \chi^1 \circ h_2 \circ \phi_2, \\
\beta & = & \overline{\phi}_2 \circ \overline{h}_2 \circ \chi^0,
\end{eqnarray*}
where the morphisms on the right hand side are induced by the apparent basic local changes of MOY graphs. Then define 
\begin{eqnarray*}
\vec{\alpha} & = & \sum_{j=0}^{m-k-1} \mathfrak{m}(r^j) \circ\alpha = (\alpha, ~\mathfrak{m}(r) \circ\alpha,~\dots, ~\mathfrak{m}(r^{m-k-1}) \circ\alpha),  \\
\vec{\beta} & = & \bigoplus_{j=0}^{m-k-1} \beta\circ\mathfrak{m}((-1)^{m-k-1-j}A_{m-k-1-j}) = \left(%
\begin{array}{c}
\beta\circ\mathfrak{m}((-1)^{m-k-1}A_{m-k-1})\\
\cdots \\
\beta\circ\mathfrak{m}(-A_1)\\
\beta
\end{array}%
\right),
\end{eqnarray*}
where $A_j$ is the $j$-th elementary symmetric polynomial in $\mathbb{A}$. Then 
\[
\xymatrix{
C_f()\{[m-k]\}  \ar@<-2ex>[rr]^{\vec{\alpha}} &&  C_f(\input{fork-special-decomp-iv-1-sli}), \vspace{30pt} \\
C_f(\input{fork-special-decomp-iv-1-sli}) \ar@<-2ex>[rr]^{\vec{\beta}} && C_f()\{[m-k]\}
}\vspace{30pt}
\]
are homogeneous morphisms preserving both gradings. Moreover, by \cite[Lemma 9.12]{Wu-color},
\[
\xymatrix{
C()\{[m-k]\}  \ar@<-2ex>[rr]^{\varpi_0(\vec{\beta} \circ \vec{\alpha})} && C()\{[m-k]\}
}\vspace{30pt}
\]
is a homotopy equivalence of matrix factorizations, where $\varpi_0$ is the functor given in Lemma \ref{MOY-object-of-hmf}. So, by Lemma \ref{reduce-base-homotopic-equivalence},
\[
\xymatrix{
C_f()\{[m-k]\}  \ar@<-2ex>[rr]^{\vec{\beta} \circ \vec{\alpha}} && C_f()\{[m-k]\}
}\vspace{30pt}
\]
is a homotopy equivalence of matrix factorizations. Therefore, there exists a homogeneous morphism
\[
\xymatrix{
C_f()\{[m-k]\}  \ar@<-2ex>[rr]^{\tau} && C_f()\{[m-k]\}
}\vspace{30pt}
\]
preserving both gradings such that 
\begin{equation}\label{fork-special-decomp-iv-alpha-circ-beta}
\tau \circ \vec{\beta} \circ \vec{\alpha} \simeq \vec{\beta} \circ \vec{\alpha} \circ \tau \simeq \id,
\end{equation} 
where $\id$ is the identity morphism of $C_f()\{[m-k]\} \vspace{30pt}$.

By a computation similar to that in \cite[Lemma 9.13]{Wu-color}, one can check that the lowest non-vanishing total polynomial grading of 
$\Hom_\HMF(C_f(), C_f()) \vspace{30pt}$ and $\Hom_\HMF(C_f(), C_f())\vspace{30pt}$ is $m+1-k$, which implies that
\[
\xymatrix{
\Hom_\hmf(C_f()\{[m-k]\}, C_f()) \cong 0, \vspace{30pt}\\ \Hom_\hmf(C_f(), C_f()\{[m-k]\}) \cong 0.
}\vspace{30pt}
\]
Therefore, 
\begin{eqnarray}
\label{fork-special-decomp-iv-F-circ-beta} \vec{\beta} \circ F & \simeq & 0, \\
\label{fork-special-decomp-iv-alpha-circ-G} G \circ \vec{\alpha}& \simeq & 0.
\end{eqnarray}

From the proof of Decomposition (IV) (Theorem \ref{decomp-IV},) we know that the morphisms
\begin{equation}\label{fork-special-decomp-iv-lemma-1}
\xymatrix{
C_f(\input{fork-special-decomp-iv-1-sli}) \ar@<-1ex>[rr]^<<<<<<<<<<{\left(%
\begin{array}{c}
G\\
\tau\circ \vec{\beta}
\end{array}%
\right)} &&
C_f()  
\bigoplus 
C_f()\{[m-k]\} \ar@<2ex>[ll]^<<<<<<<<<<{\left(%
\begin{array}{cc}
F, & \vec{\alpha}
\end{array}%
\right)}
}\vspace{30pt}
\end{equation}
and
\begin{equation}\label{fork-special-decomp-iv-lemma-2}
\xymatrix{
C_f(\input{fork-special-decomp-iv-1-sli}) \ar@<-1ex>[rr]^<<<<<<<<<<{\left(%
\begin{array}{c}
G\\
\vec{\beta}
\end{array}%
\right)} &&
C_f()  
\bigoplus 
C_f()\{[m-k]\} \ar@<2ex>[ll]^<<<<<<<<<<{\left(%
\begin{array}{cc}
F, & \vec{\alpha} \circ \tau
\end{array}%
\right)}
}\vspace{30pt}
\end{equation}
are two pairs of homotopy equivalences of matrix factorizations that preserve both gradings and are inverses of each other.

\begin{figure}[ht]
$
\xymatrix{
\input{gamma-k0} & \input{tilde-gamma-k} & \input{gamma-k-prime} \\
\input{gamma-k0-bouquet} & \input{gamma-k3} 
}
$
\caption{}\label{fork-special-decomp-iv-fig4}

\end{figure}

Next, we apply the above discussion to MOY graphs that appear in the chain complexes in Subsection \ref{fork-sliding-complexes-involved}. Consider the MOY graphs in Figure \ref{fork-special-decomp-iv-fig4}. By Corollary \ref{contract-expand}, we have $C_f(\Gamma_{k,0}) \simeq C_f(\Gamma_{k,2})$ and $C_f(\widetilde{\Gamma}_k) \simeq C_f(\Gamma_{k,3})$. By \eqref{fork-special-decomp-iv}, $C_f(\Gamma_{k,2}) \simeq C_f(\Gamma_{k,3}) \oplus C_f(\Gamma_k')\{[m-k]\}$. Altogether, we have
\begin{equation}\label{fork-special-decomp-iv-eq}
C_f(\Gamma_{k,0}) \simeq C_f(\widetilde{\Gamma}_k) \oplus C_f(\Gamma_k')\{[m-k]\}.
\end{equation}

\begin{figure}[ht]
$
\xymatrix{
\input{tilde-gamma-k} \ar@<1ex>[rr]^{F_k} \ar@<1ex>[d]^{h^{(k)}} && \input{gamma-k0} \ar@<1ex>[ll]^{G_k} \ar@<1ex>[d]^{\chi^1}  \\
\input{gamma-k3} \ar@<1ex>[u]^{\overline{h}^{(k)}} \ar@<1ex>[rr]^{\varphi_1} && \input{gamma-k3-3-squares} \ar@<1ex>[ll]^{\overline{\varphi}_1} \ar@<1ex>[u]^{\chi^0}
}
$
\caption{}\label{fork-special-decomp-iv-fig5}

\end{figure}

In Figure \ref{fork-special-decomp-iv-fig5}, the morphisms $F_k$ and $G_k$ are defined by
\begin{eqnarray*}
F_k & = & \chi^0 \circ \varphi_1 \circ h^{(k)}, \\
G_k & = & \overline{h}^{(k)} \circ \overline{\varphi}_1 \circ \chi^1,
\end{eqnarray*}
where the morphisms on the right hand side are induced by the apparent basic local changes of MOY graphs. Then $F_k$ and $G_k$ are homogeneous morphisms preserving both gradings and satisfy
\begin{equation}\label{fork-special-decomp-iv-f-g-k}
G_k \circ F_k \simeq  \id_{C_f(\widetilde{\Gamma}_k)}.
\end{equation}

\begin{figure}[ht]
$
\xymatrix{
\input{gamma-k-prime} \ar@<1ex>[rr]^{\alpha_k} \ar@<1ex>[dr]^{\varphi_2} && \input{gamma-k0-marked} \ar@<1ex>[ll]^{\beta_k} \ar@<1ex>[dl]^{\chi^0} \\
& \input{gamma-k-prime-2-squares} \ar@<1ex>[ul]^{\overline{\varphi}_2} \ar@<1ex>[ur]^{\chi^1} &
}
$
\caption{}\label{fork-special-decomp-iv-fig6}

\end{figure}

In Figure \ref{fork-special-decomp-iv-fig6}, the morphisms $\alpha_k$ and $\beta_k$ are defined by 
\begin{eqnarray*}
\alpha_k & = & \chi^1 \circ \varphi_2, \\
\beta_k & = & \overline{\varphi}_2 \circ \chi^0,
\end{eqnarray*}
where the morphisms on the right hand side are induced by the apparent basic local changes of MOY graphs. Define
\begin{eqnarray*}
\vec{\alpha}_k & = & \sum_{j=0}^{m-k-1} \mathfrak{m}(r^j) \circ\alpha_k = (\alpha_k, ~\mathfrak{m}(r) \circ\alpha_k,~\dots, ~\mathfrak{m}(r^{m-k-1}) \circ\alpha_k),  \\
\vec{\beta}_k & = & \bigoplus_{j=0}^{m-k-1} \beta_k \circ\mathfrak{m}((-1)^{m-k-1-j}A_{m-k-1-j}) = \left(%
\begin{array}{c}
\beta_k \circ\mathfrak{m}((-1)^{m-k-1}A_{m-k-1})\\
\cdots \\
\beta_k \circ\mathfrak{m}(-A_1)\\
\beta_k
\end{array}%
\right).
\end{eqnarray*}
Then there is a homogeneous morphism  $\tau_k:C_f(\Gamma_k')\{[m-k]\} \rightarrow C_f(\Gamma_k')\{[m-k]\}$ preserving both gradings such that
\begin{equation}\label{fork-special-decomp-iv-alpha-beta-k}
\tau_k \circ \vec{\beta}_k \circ \vec{\alpha}_k \simeq \vec{\beta}_k \circ \vec{\alpha}_k \circ \tau_k \simeq \id_{C_f(\Gamma_k')\{[m-k]\}}.
\end{equation}

We also have
\begin{equation}\label{fork-special-decomp-iv-alpha-f-k}
G_k\circ \vec{\alpha}_k\simeq 0 \text{ and } \vec{\beta}_k\circ F_k \simeq 0.
\end{equation}

\begin{corollary}\label{fork-special-decomp-iv-corollary}
The morphisms
\begin{equation}\label{fork-special-decomp-iv-cor-1}
\xymatrix{
C_f(\Gamma_{k,0}) \ar@<1ex>[rrr]^<<<<<<<<<<<<<<<{\left(%
\begin{array}{c}
G_k\\
\tau_k \circ \vec{\beta}_k
\end{array}%
\right)} &&& 
C_f(\widetilde{\Gamma}_k) \oplus C_f(\Gamma_k')\{[m-k]\} \ar@<1ex>[lll]^<<<<<<<<<<<<<<<{\left(%
\begin{array}{cc}
F_k, & \vec{\alpha}_k
\end{array}%
\right)}
}\end{equation}
and
\begin{equation}\label{fork-special-decomp-iv-cor-2}
\xymatrix{
C_f(\Gamma_{k,0}) \ar@<1ex>[rrr]^<<<<<<<<<<<<<<<{\left(%
\begin{array}{c}
G_k\\
\vec{\beta}_k
\end{array}%
\right)} &&& 
C_f(\widetilde{\Gamma}_k) \oplus C_f(\Gamma_k')\{[m-k]\} \ar@<1ex>[lll]^<<<<<<<<<<<<<<<{\left(%
\begin{array}{cc}
F_k, & \vec{\alpha}_k \circ \tau_k
\end{array}%
\right)}
}
\end{equation}
are two pairs of homotopy equivalences of matrix factorizations that preserve both gradings and are inverses of each other.
\end{corollary}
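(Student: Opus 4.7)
The plan is to assemble Corollary \ref{fork-special-decomp-iv-corollary} directly from the relations \eqref{fork-special-decomp-iv-f-g-k}, \eqref{fork-special-decomp-iv-alpha-beta-k}, and \eqref{fork-special-decomp-iv-alpha-f-k} already established for the morphisms $F_k$, $G_k$, $\vec{\alpha}_k$, $\vec{\beta}_k$, $\tau_k$, together with the homotopy equivalence \eqref{fork-special-decomp-iv-eq} for the underlying objects. The key point is that once one checks one of the two composition matrices is homotopic to the identity, the other follows automatically, since both the source and target are objects of $\hmf$ with the same homotopy type and the structural lemma \cite[Lemma 3.14]{Wu-color} (which in this paper has already been invoked e.g.\ in the proof of Lemma \ref{decomp-I-special}) promotes any one-sided homotopy inverse between such objects to a two-sided one.

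Concretely, I would first compute
\[
\begin{pmatrix} G_k \\ \tau_k \circ \vec{\beta}_k \end{pmatrix} \circ \begin{pmatrix} F_k & \vec{\alpha}_k \end{pmatrix}
= \begin{pmatrix} G_k\circ F_k & G_k\circ \vec{\alpha}_k \\ \tau_k\circ \vec{\beta}_k\circ F_k & \tau_k\circ \vec{\beta}_k\circ \vec{\alpha}_k \end{pmatrix}.
\]
Relation \eqref{fork-special-decomp-iv-f-g-k} gives $G_k\circ F_k\simeq \id_{C_f(\widetilde{\Gamma}_k)}$; relation \eqref{fork-special-decomp-iv-alpha-f-k} gives $G_k\circ\vec{\alpha}_k\simeq 0$ and $\vec{\beta}_k\circ F_k\simeq 0$, so the two off-diagonal entries are null-homotopic; and relation \eqref{fork-special-decomp-iv-alpha-beta-k} gives $\tau_k\circ\vec{\beta}_k\circ\vec{\alpha}_k\simeq \id_{C_f(\Gamma_k')\{[m-k]\}}$. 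Thus the matrix composition is homotopic to the identity of $C_f(\widetilde{\Gamma}_k)\oplus C_f(\Gamma_k')\{[m-k]\}$. Since \eqref{fork-special-decomp-iv-eq} tells us that this object has the same homotopy type as $C_f(\Gamma_{k,0})$, \cite[Lemma 3.14]{Wu-color} then forces $(F_k,\vec{\alpha}_k)$ and $\binom{G_k}{\tau_k\circ\vec{\beta}_k}$ to be mutually inverse homotopy equivalences, proving \eqref{fork-special-decomp-iv-cor-1}.

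For \eqref{fork-special-decomp-iv-cor-2} the computation is symmetric: the analogous matrix product has diagonal $(G_k\circ F_k,\; \vec{\beta}_k\circ\vec{\alpha}_k\circ\tau_k)$, both of which are $\simeq \id$ by \eqref{fork-special-decomp-iv-f-g-k} and the other half of \eqref{fork-special-decomp-iv-alpha-beta-k}, while the off-diagonal entries $G_k\circ\vec{\alpha}_k\circ\tau_k$ and $\vec{\beta}_k\circ F_k$ are null-homotopic again by \eqref{fork-special-decomp-iv-alpha-f-k} (using that $\tau_k$ preserves both gradings and composing null-homotopic maps with any morphism remains null-homotopic). The same application of \cite[Lemma 3.14]{Wu-color} then yields the second pair.

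I do not anticipate a serious obstacle here; this corollary is a bookkeeping assembly of the hard work already done in the construction of $F_k,G_k,\alpha_k,\beta_k,\tau_k$ and in the verification of \eqref{fork-special-decomp-iv-f-g-k}--\eqref{fork-special-decomp-iv-alpha-f-k}. The only subtle point to be careful about is that both grading-preservation of all the entries and the explicit grading shift $\{[m-k]\}$ on the second summand are preserved under the matrix composition, so that the statement is genuinely one about homotopy equivalences in $\hmf$ rather than merely in $\HMF$; this is automatic since each building block $F_k$, $G_k$, $\vec{\alpha}_k$, $\vec{\beta}_k$, $\tau_k$ was constructed to preserve both the $\zed_2$-grading and the total polynomial grading.
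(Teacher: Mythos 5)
Your proof is correct and matches the paper's intent: the corollary is stated with no further argument precisely because it is the assembly of \eqref{fork-special-decomp-iv-f-g-k}, \eqref{fork-special-decomp-iv-alpha-beta-k} and \eqref{fork-special-decomp-iv-alpha-f-k} into the $2\times 2$ composition matrices, exactly as you do. The only (harmless) divergence is in how the one-sided inverse is promoted to a two-sided one: you invoke \cite[Lemma 3.14]{Wu-color} together with the object-level equivalence \eqref{fork-special-decomp-iv-eq}, whereas the paper's template for this situation (the proof of Theorem \ref{decomp-IV} and the generic pairs \eqref{fork-special-decomp-iv-lemma-1}--\eqref{fork-special-decomp-iv-lemma-2}) obtains the homotopy-equivalence property by applying $\varpi_0$ and citing the non-equivariant case via Lemma \ref{reduce-base-homotopic-equivalence}; both devices are used repeatedly in the paper, so either is acceptable here.
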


\subsection{Relating the differential maps} Consider the diagram in Figure \ref{relating-C-D-11-fig}, where $d_k^+$, $d_{k-1}^-$ act on the left square, and $\varphi_i$, $\overline{\varphi}_i$ are induced by the apparent local changes of MOY graphs. We have the following lemma, which relates the differential maps of $C^\pm$ to that of $\hat{C}_f()$ and $\hat{C}_f() \vspace{20pt}$.

\begin{figure}[ht]
$
\xymatrix{
\input{square-m-n-1-right-k-low-new-mark} \ar@<1ex>[rr]^{\delta_k^+} \ar@<1ex>[d]^{\mathfrak{m}(r^{m-k})\circ \varphi_1} && \input{square-m-n-1-right-k-1-low-new-mark} \ar@<1ex>[ll]^{\delta_{k-1}^-} \ar@<1ex>[d]^{\varphi_2} \\
\input{double-square-m-n-1-right-k-m-low} \ar@<1ex>[u]^{\overline{\varphi}_1 \circ \mathfrak{m}(r^{m-k})} \ar@<1ex>[rr]^{d_k^+} && \input{double-square-m-n-1-right-k-1-m-low} \ar@<1ex>[u]^{\overline{\varphi}_2} \ar@<1ex>[ll]^{d_{k-1}^-}
}
$
\caption{}\label{relating-C-D-11-fig}

\end{figure}

\begin{lemma}\label{relating-C-D-11-lemma}
$\delta_k^+ \approx \overline{\varphi}_2 \circ d_k^+ \circ \mathfrak{m}(r^{m-k})\circ \varphi_1$ and $\delta_{k-1}^- \approx \overline{\varphi}_1 \circ \mathfrak{m}(r^{m-k}) \circ d_{k-1}^- \circ \varphi_2$. That is, the diagram in Figure \ref{relating-C-D-11-fig} commutes up to homotopy and scaling in both directions.
\end{lemma}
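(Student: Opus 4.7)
The plan is to reduce both claimed homotopies to the non-equivariant case treated in \cite{Wu-color} by combining the functor $\varpi_0$ with the uniqueness built into Lemma \ref{trivial-complex-lemma-3}. I would only write out the argument for the first identity; the argument for $\delta_{k-1}^-$ is entirely parallel.

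First I would do the degree bookkeeping. By Definition \ref{varphi-def} (together with formula \eqref{varphi-compose}) the morphisms $\varphi_1$ and $\overline{\varphi}_2$ are homogeneous of $\zed_2$-degree $0$, and their total polynomial degrees are determined by the colors of the edges being split; the multiplication $\mathfrak{m}(r^{m-k})$ has $\zed_2$-degree $0$ and total polynomial degree $2(m-k)$; and by Definition \ref{complex-colored-crossing-chain-maps-def} the differential $d_k^+$ has $\zed_2$-degree $0$ and total polynomial degree $1$. Adding these contributions and comparing with the grading of $\delta_k^+$ given by Definition \ref{trivial-complex-differential-def}, one checks that $\overline{\varphi}_2 \circ d_k^+ \circ \mathfrak{m}(r^{m-k}) \circ \varphi_1$ is a homogeneous morphism of the same bidegree as $\delta_k^+$ in $\Hom_\hmf(C_f(\Gamma_k),C_f(\Gamma_{k-1}))$.

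Next, by Lemma \ref{trivial-complex-lemma-3} we have $\Hom_\hmf(C_f(\Gamma_k),C_f(\Gamma_{k-1})) \cong \C$, generated by $\delta_k^+$. Thus the composition in question is either null-homotopic or homotopic to a non-zero scalar multiple of $\delta_k^+$, and it suffices to rule out the first alternative.

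To do this, apply the functor $\varpi_0$ from Lemma \ref{MOY-object-of-hmf}. By Proposition \ref{basic-changes-varpi-0}, $\varpi_0(\varphi_1)$ and $\varpi_0(\overline{\varphi}_2)$ are the corresponding edge-split and edge-merge morphisms for the non-equivariant matrix factorizations $C(\Gamma)$, since the morphisms $\varphi, \overline{\varphi}$ from Definition \ref{varphi-def} are compositions of basic local changes. By Corollary \ref{differentials-varpi-0}, $\varpi_0(d_k^+)$ is the corresponding differential of the non-equivariant colored-crossing complex in \cite{Wu-color}, and clearly $\varpi_0$ commutes with multiplication by elements of $\Sym(\cdot)$ such as $r^{m-k}$. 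Hence $\varpi_0(\overline{\varphi}_2 \circ d_k^+ \circ \mathfrak{m}(r^{m-k}) \circ \varphi_1)$ is exactly the analogous composition computed in the non-equivariant setting of \cite{Wu-color}, where the corresponding version of the present lemma is established as part of the fork-sliding invariance proof. In particular that composition is not null-homotopic there, and since any null-homotopy in $\hmf_{\tilde{R},w}$ passes under the functor $\varpi_0$ to a null-homotopy in $\hmf_{R,\pi_0(w)}$, the original composition cannot be null-homotopic either. Therefore $\overline{\varphi}_2 \circ d_k^+ \circ \mathfrak{m}(r^{m-k}) \circ \varphi_1 \approx \delta_k^+$, and symmetric reasoning gives $\overline{\varphi}_1 \circ \mathfrak{m}(r^{m-k}) \circ d_{k-1}^- \circ \varphi_2 \approx \delta_{k-1}^-$.

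The main obstacle is really the degree bookkeeping at the first step, where one must chase all the quantum shifts through the definitions of $\varphi$, $\overline{\varphi}$, $d_k^\pm$ and $\delta_k^\pm$ to confirm that the proposed composition lands in the correct one-dimensional piece of the $\Hom_\hmf$-space; once that is settled, the uniqueness from Lemma \ref{trivial-complex-lemma-3} and the translation via $\varpi_0$ do essentially all of the remaining work.
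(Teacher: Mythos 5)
Your proposal is correct and follows essentially the same route as the paper: reduce to the one-dimensionality of $\Hom_\hmf(C_f(\Gamma_k),C_f(\Gamma_{k\mp1}))$ from Lemma \ref{trivial-complex-lemma-3}, then rule out null-homotopy by applying $\varpi_0$ (via Proposition \ref{basic-changes-varpi-0} and Corollary \ref{differentials-varpi-0}) and invoking the corresponding non-equivariant statement \cite[Lemma 12.8]{Wu-color}. No gaps.
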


\begin{proof}
By Lemma \ref{trivial-complex-lemma-3}, 
\[
\xymatrix{
\Hom_\hmf(C_f(\input{square-m-n-1-right-k-low-new-mark-sli}), C_f(\input{square-m-n-1-right-k-1-low-new-mark-sli})) \cong \C, \vspace{30pt}\\ \Hom_\hmf(C_f(\input{square-m-n-1-right-k-1-low-new-mark-sli}), C_f(\input{square-m-n-1-right-k-low-new-mark-sli})) \cong \C.
}\vspace{30pt}
\]
Note that $\delta_k^+$, $\overline{\varphi}_2 \circ d_k^+ \circ \mathfrak{m}(r^{m-k})\circ \varphi_1$, $\delta_{k-1}^-$ and $\overline{\varphi}_1 \circ \mathfrak{m}(r^{m-k}) \circ d_{k-1}^- \circ \varphi_2$ are all homogeneous morphisms preserving both gradings. So, to prove the lemma, one only needs to prove that these morphisms are all homotopically non-trivial.

By their construction in Subsection \ref{subsec-null-chain}, it is easy to see that $\delta_k^+$ and $\delta_{k-1}^-$ are homotopically non-trivial. Let $\varpi_0$ be the functor given in Lemma \ref{MOY-object-of-hmf}. From Corollary \ref{differentials-varpi-0} and \cite[Proposition 11.25]{Wu-color}, one can see that $\varpi_0(\delta_k^+)$ and $\varpi_0(\delta_{k-1}^-)$ are also homotopically non-trivial. 

By Proposition \ref{basic-changes-varpi-0}, Corollary \ref{differentials-varpi-0} and \cite[Lemma 12.8]{Wu-color}, we have
\begin{eqnarray*}
\varpi_0(\delta_k^+) & \approx & \varpi_0(\overline{\varphi}_2 \circ d_k^+ \circ \mathfrak{m}(r^{m-k})\circ \varphi_1), \\
\varpi_0(\delta_{k-1}^-) & \approx & \varpi_0(\overline{\varphi}_1 \circ \mathfrak{m}(r^{m-k}) \circ d_{k-1}^- \circ \varphi_2),
\end{eqnarray*}
which implies that $\varpi_0(\overline{\varphi}_2 \circ d_k^+ \circ \mathfrak{m}(r^{m-k})\circ \varphi_1)$ and $\varpi_0(\overline{\varphi}_1 \circ \mathfrak{m}(r^{m-k}) \circ d_{k-1}^- \circ \varphi_2)$ are homotopically non-trivial. Therefore, $\overline{\varphi}_2 \circ d_k^+ \circ \mathfrak{m}(r^{m-k})\circ \varphi_1$ and $\overline{\varphi}_1 \circ \mathfrak{m}(r^{m-k}) \circ d_{k-1}^- \circ \varphi_2$ are also homotopically non-trivial.
\end{proof}

\begin{figure}[ht]
$
\xymatrix{
\input{tilde-gamma-k} \ar@<1ex>[rr]^{\tilde{d}_k^+} \ar@<1ex>[d]^{F_k} && \input{tilde-gamma-k-1} \ar@<1ex>[ll]^{\tilde{d}_{k-1}^-} \ar@<1ex>[d]^{F_{k-1}} \\
\input{gamma-k0} \ar@<1ex>[rr]^{d_k^+} \ar@<1ex>[u]^{G_k} && \input{gamma-k-1-0-no-tri} \ar@<1ex>[ll]^{d_{k-1}^-} \ar@<1ex>[u]^{G_{k-1}}
} 
$
\caption{}\label{relating-D-0-1-fig3}

\end{figure}

\begin{lemma}\label{relating-D-0-1-lemma}
In Figure \ref{relating-D-0-1-fig3}, $\tilde{d}_k^+ \approx G_{k-1} \circ d_k^+\circ F_k$ and $\tilde{d}_{k-1}^- \approx G_k \circ d_{k-1}^- \circ F_{k-1}$. That is, the diagram in Figure \ref{relating-D-0-1-fig3} commutes in both directions up to homotopy and scaling.
\end{lemma}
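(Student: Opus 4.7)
The plan is to mimic the strategy of Lemma \ref{relating-C-D-11-lemma}: first show that the relevant morphism spaces are one-dimensional in the appropriate bidegree, then verify that each side of the claimed relation is a homogeneous morphism of the correct $\zed_2$- and total polynomial degree, and finally verify that neither side is null-homotopic by pushing everything down through the functor $\varpi_0$ of Lemma \ref{MOY-object-of-hmf} and invoking the corresponding lemma in \cite{Wu-color}.

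First I would compute $\Hom_\hmf(C_f(\widetilde{\Gamma}_k), C_f(\widetilde{\Gamma}_{k-1}))$. Using Lemma \ref{complex-computing-gamma-HMF-lemma}, Corollary \ref{contract-expand} together with Decompositions (I)--(II), this Hom space in $\HMF$ is (up to the usual grading shifts) a graded-free $R_B$-module whose Poincar\'e polynomial is explicit; one checks by an elementary count (analogous to Lemma \ref{trivial-complex-lemma-1} / Lemma \ref{colored-crossing-res-HMF}) that its lowest non-vanishing total polynomial degree is $1$, and that the corresponding homogeneous subspace in $\zed_2$-grading $0$ is $1$-dimensional. Hence $\Hom_\hmf(C_f(\widetilde{\Gamma}_k), C_f(\widetilde{\Gamma}_{k-1})) \cong \C$, and the same holds in the opposite direction.

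Next I would check the bookkeeping. From Theorem \ref{explicit-differential-general} (applied with $l=0$ to the two-crossing summand), $\tilde{d}_k^+$ and $\tilde{d}_{k-1}^-$ are homogeneous of $\zed_2$-degree $0$ and total polynomial degree $1$, and they are homotopically non-trivial by the definition of $\hat{C}_f()$. The morphisms $F_k, F_{k-1}, G_k, G_{k-1}$ of Figure \ref{fork-special-decomp-iv-fig5} are homogeneous of $\zed_2$- and total polynomial degree $0$ by their construction as compositions of basic local change morphisms, while $d_k^+$ and $d_{k-1}^-$ (Definition \ref{complex-colored-crossing-chain-maps-def}) are homogeneous of $\zed_2$-degree $0$ and total polynomial degree $1$. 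Therefore $G_{k-1}\circ d_k^+ \circ F_k$ and $G_k \circ d_{k-1}^- \circ F_{k-1}$ sit in exactly the one-dimensional subspaces computed above, so each is either homotopic to $0$ or to a non-zero scalar multiple of $\tilde{d}_k^+$ or $\tilde{d}_{k-1}^-$, respectively.

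The main obstacle is ruling out the null-homotopic alternative, and this is exactly where $\varpi_0$ does its job. By Proposition \ref{basic-changes-varpi-0}, $\varpi_0(F_k), \varpi_0(G_k), \varpi_0(F_{k-1}), \varpi_0(G_{k-1})$ are the corresponding morphisms from \cite[Section 12]{Wu-color}, and by Corollary \ref{differentials-varpi-0} the same holds for $\varpi_0(d_k^+)$, $\varpi_0(d_{k-1}^-)$, $\varpi_0(\tilde{d}_k^+)$, $\varpi_0(\tilde{d}_{k-1}^-)$. The corresponding identity in the uncolored setting (the analogue of \cite[Lemma 12.9]{Wu-color} or the adjacent lemma relating $\tilde{d}$ to $F,G,d$) gives
\[
\varpi_0(\tilde{d}_k^+) \approx \varpi_0(G_{k-1}) \circ \varpi_0(d_k^+) \circ \varpi_0(F_k) = \varpi_0(G_{k-1} \circ d_k^+ \circ F_k),
\]
and similarly for $\tilde{d}_{k-1}^-$. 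Since $\varpi_0(\tilde{d}_k^+)$ is not null-homotopic, $\varpi_0(G_{k-1} \circ d_k^+ \circ F_k)$ is not null-homotopic, and therefore $G_{k-1} \circ d_k^+ \circ F_k$ itself is not null-homotopic. Combined with the one-dimensionality of $\Hom_\hmf(C_f(\widetilde{\Gamma}_k), C_f(\widetilde{\Gamma}_{k-1}))$, this forces $\tilde{d}_k^+ \approx G_{k-1}\circ d_k^+ \circ F_k$. The identity $\tilde{d}_{k-1}^- \approx G_k \circ d_{k-1}^- \circ F_{k-1}$ follows by the same argument with the arrows reversed.
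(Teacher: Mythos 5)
Your overall strategy coincides with the paper's: reduce to a one-dimensional Hom computation, match $\zed_2$- and total polynomial degrees, and rule out null-homotopy by applying $\varpi_0$ and quoting the corresponding statement for $C$ in \cite{Wu-color}. The Hom computation and the degree bookkeeping are correct in substance (though note that the relevant space is the degree-$1$ homogeneous part of $\Hom_{\HMF}(C_f(\widetilde{\Gamma}_k),C_f(\widetilde{\Gamma}_{k-1}))$, not $\Hom_{\hmf}$, which vanishes since the lowest non-vanishing total polynomial degree is $1$).

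The genuine gap is your assertion that $\tilde{d}_k^+$ and $\tilde{d}_{k-1}^-$ ``are homotopically non-trivial by the definition of $\hat{C}_f$.'' That is not what the definition gives you. The graphs $\widetilde{\Gamma}_k$ are the resolutions of a $\pm(m+1,n)$-crossing with a fork attached to the $(m+1)$-colored strand, so by Definition \ref{complex-knotted-MOY-def} the differential $\tilde{d}_k^{\pm}$ is of the form $\check{d}_k^{\pm}\otimes\id$, where $\check{d}_k^{\pm}$ denotes the $\pm(m+1,n)$-crossing differential of Definition \ref{complex-colored-crossing-chain-maps-def}. Only $\check{d}_k^{\pm}$ is non-null-homotopic by fiat; a morphism of the form $\check{d}\otimes\id$ can perfectly well become null-homotopic after tensoring, so this step requires an argument, and if it failed the conclusion $\tilde{d}_k^+\approx G_{k-1}\circ d_k^+\circ F_k$ would be false. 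The paper supplies the missing argument by a retraction: splitting the $(m+1)$-edge into an $(m,1)$-bigon and re-merging satisfies $\overline{\phi}\circ\mathfrak{m}(r^m)\circ\phi\approx\id$ by Lemma \ref{phibar-compose-phi}, and since $\phi,\overline{\phi}$ act on a part of the graph disjoint from the crossing, one obtains $\check{d}_k^{\pm}\approx\overline{\phi}\circ\tilde{d}_k^{\pm}\circ\mathfrak{m}(r^m)\circ\phi$, whence $\tilde{d}_k^{\pm}\not\simeq 0$. The same retraction applied after $\varpi_0$ yields the non-triviality of $\varpi_0(\tilde{d}_k^{\pm})$, which you also invoke without justification when concluding that $\varpi_0(G_{k-1}\circ d_k^+\circ F_k)$ is non-trivial. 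With these two non-triviality statements supplied, the rest of your argument closes exactly as in the paper.
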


\begin{proof}
From Lemma \ref{complex-computing-gamma-HMF-lemma} and Decomposition (II) (Theorem \ref{decomp-II},) we know that
\[
\Hom_\HMF(C_f(\tilde{\Gamma}_k), C_f(\tilde{\Gamma}_{k-1})) \cong \Hom_\HMF (C_f(\input{tilde-gamma-k-merged-sli}), C_f(\input{tilde-gamma-k-1-merged-sli})) \{[m+1] q^m\},
\vspace{30pt}
\]
\[ \Hom_\HMF(C_f(\tilde{\Gamma}_{k-1}), C_f(\tilde{\Gamma}_{k}))  \cong  \Hom_\HMF (C_f(\input{tilde-gamma-k-1-merged-sli}), C_f(\input{tilde-gamma-k-merged-sli})) \{[m+1] q^m\}. \vspace{30pt}
\]
By Lemma \ref{colored-crossing-res-HMF}, this means that the lowest non-vanishing total polynomial grading of $\Hom_\HMF(C_f(\tilde{\Gamma}_k), C_f(\tilde{\Gamma}_{k-1}))$ and $\Hom_\HMF(C_f(\tilde{\Gamma}_{k-1}), C_f(\tilde{\Gamma}_{k}))$ is $1$, and the subspaces of these spaces of homogeneous elements of total polynomial grading $1$ is $1$-dimensional. Note that $\tilde{d}_k^+$, $G_{k-1} \circ d_k^+\circ F_k$, $\tilde{d}_{k-1}^-$ and $G_k \circ d_{k-1}^- \circ F_{k-1}$ are all homogeneous morphisms of total polynomial degree $1$. So, to prove the lemma, we only need to check that $\tilde{d}_k^+$, $G_{k-1} \circ d_k^+\circ F_k$, $\tilde{d}_{k-1}^-$ and $G_k \circ d_{k-1}^- \circ F_{k-1}$ are homotopically non-trivial.

Let $\check{d}_k^\pm$ be the differential map associated to a $\pm(m+1,n)$-crossing. From their definitions, we know that the morphisms
\[
\xymatrix{
\input{tilde-gamma-k-merged-sli} \ar@<-1ex>[rr]^{\check{d}_k^+} && \input{tilde-gamma-k-1-merged-sli} \ar@<3ex>[ll]^{\check{d}_{k-1}^-}
}\vspace{30pt}
\]
are homotopically non-trivial. We need to show that 
\[
\xymatrix{
\input{tilde-gamma-k} \ar@<1ex>[rr]^{\tilde{d}_k^+} && \input{tilde-gamma-k-1} \ar@<1ex>[ll]^{\tilde{d}_{k-1}^-}
}
\]
are homotopically non-trivial. To do this, we only need to show that 
\[
\xymatrix{
\input{tilde-gamma-k-extended} \ar@<1ex>[rr]^{\tilde{d}_k^+} && \input{tilde-gamma-k-1-extended} \ar@<1ex>[ll]^{\tilde{d}_{k-1}^-}
}
\]
are homotopically non-trivial.

Consider the diagram
\[
\xymatrix{
\input{tilde-gamma-k-extended} \ar@<1ex>[rr]^{\tilde{d}_k^+} \ar@<1ex>[d]^>>>>>>{\overline{\phi}} && \input{tilde-gamma-k-1-extended} \ar@<1ex>[d]^>>>>>>{\overline{\phi}} \ar@<1ex>[ll]^{\tilde{d}_{k-1}^-} \\
\input{tilde-gamma-k-merged-sli} \ar@<-1ex>[rr]^{\check{d}_k^+} \ar@<1ex>[u]^>>>>{\phi} && \input{tilde-gamma-k-1-merged-sli} \ar@<3ex>[ll]^{\check{d}_{k-1}^-} \ar@<1ex>[u]^>>>>{\phi} 
} \vspace{30pt}
\]
where $\phi$ and $\overline{\phi}$ are morphisms induced by the apparent edge splitting/merging. This diagram commutes in both directs up to homotopy and scaling since the horizontal and vertical morphisms act on different parts of the MOY graphs. Thus, by Lemma \ref{phibar-compose-phi},  we have that 
\begin{eqnarray*}
\check{d}_k^+ & \approx & \check{d}_k^+ \circ \overline{\phi} \circ \mathfrak{m}(r^m) \circ \phi \approx \overline{\phi} \circ \tilde{d}_k^+ \circ \mathfrak{m}(r^m) \circ \phi, \\
\check{d}_{k-1}^- & \approx & \check{d}_{k-1}^- \circ \overline{\phi} \circ \mathfrak{m}(r^m) \circ \phi \approx \overline{\phi} \circ \tilde{d}_{k-1}^- \circ \mathfrak{m}(r^m) \circ \phi.
\end{eqnarray*}
Since $\check{d}_k^+$ and $\check{d}_{k-1}^-$ are homotopically non-trivial, this implies that $\tilde{d}_k^+$ and $\tilde{d}_{k-1}^-$ are homotopically non-trivial.

Let $\varpi_0$ be the functor given in Lemma \ref{MOY-object-of-hmf}. A similar argument shows that $\varpi_0(\tilde{d}_k^+)$ and $\varpi_0(\tilde{d}_{k-1}^-)$ are homotopically non-trivial. By Proposition \ref{basic-changes-varpi-0}, Corollary \ref{differentials-varpi-0} and \cite[Lemma 12.11]{Wu-color}, we know that
\begin{eqnarray*}
\varpi_0(\tilde{d}_k^+) & \approx &  \varpi_0(G_{k-1} \circ d_k^+\circ F_k), \\ 
\varpi_0(\tilde{d}_{k-1}^-) & \approx & \varpi_0(G_k \circ d_{k-1}^- \circ F_{k-1}).
\end{eqnarray*}
So $\varpi_0(G_{k-1} \circ d_k^+\circ F_k)$ and $\varpi_0(G_k \circ d_{k-1}^- \circ F_{k-1})$ are homotopically non-trivial, which implies that $G_{k-1} \circ d_k^+\circ F_k$ and $G_k \circ d_{k-1}^- \circ F_{k-1}$ are homotopically non-trivial.
\end{proof}

\begin{figure}[ht]
$
\xymatrix{
\input{gamma-m-prime} & \input{gamma-m-1-prime} & \input{gamma-m-1-double-prime} \\ 
\input{tilde-gamma-m+1-prime} & \input{tilde-gamma-m+1}  
} 
$
\caption{}\label{gamma-m-1-fig} 

\end{figure}

\subsection{Decomposing $C_f(\Gamma_{m,1})=C_f(\Gamma_m')$} Note that $\Gamma_{m,1}$ coincide with $\Gamma_m'$. Consider the MOY graphs in Figure \ref{gamma-m-1-fig}. By Corollary \ref{contract-expand}, $C_f(\Gamma_{m}'') \simeq C_f(\widetilde{\Gamma}_{m+1})$. By Decomposition (V) (Theorem \ref{decomp-V}), $C_f(\Gamma_m') \simeq C_f(\Gamma_{m-1}'') \oplus C_f(\Gamma_{m}'')$. So
\begin{equation}\label{gamma-m-1-decomp-V}
C_f(\Gamma_{m,1}) \simeq C_f(\Gamma_{m-1}'') \oplus C_f(\widetilde{\Gamma}_{m+1}).
\end{equation}

\begin{lemma}\label{hmf-tilde-gamma-prime-m}
\[
\Hom_\hmf(C_f(\widetilde{\Gamma}_{m+1}),C_f(\Gamma_{m-1}')) \cong \Hom_\hmf(C_f(\Gamma_{m-1}'), C_f(\widetilde{\Gamma}_{m+1})) \cong 0.
\]
\end{lemma}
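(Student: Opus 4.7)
The plan is to deduce this lemma as a short formal consequence of the decomposition \eqref{gamma-m-1-decomp-V} established immediately before the statement, combined with Lemma \ref{trivial-complex-lemma-2} applied to the pair $\Gamma_m',\Gamma_{m-1}'$. No fresh computation of matrix-factorization morphisms is required; the work has essentially been done already in Subsection \ref{subsec-null-chain}.

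First I would recall that $\Gamma_{m,1}=\Gamma_m'$ and that \eqref{gamma-m-1-decomp-V} gives a splitting
\[
C_f(\Gamma_m') \;\simeq\; C_f(\Gamma_{m-1}'') \;\oplus\; C_f(\widetilde{\Gamma}_{m+1})
\]
in $\hmf$, respecting both the $\zed_2$-grading and the total polynomial grading. Applying the additive functor $\Hom_\hmf(-,C_f(\Gamma_{m-1}'))$ to this splitting yields
\[
\Hom_\hmf(C_f(\Gamma_m'),C_f(\Gamma_{m-1}')) \;\cong\; \Hom_\hmf(C_f(\Gamma_{m-1}''),C_f(\Gamma_{m-1}')) \;\oplus\; \Hom_\hmf(C_f(\widetilde{\Gamma}_{m+1}),C_f(\Gamma_{m-1}')).
\]
Next, invoke Lemma \ref{trivial-complex-lemma-2}, which gives $\Hom_\hmf(C_f(\Gamma_m'),C_f(\Gamma_{m-1}'))\cong 0$ since the two indices $m$ and $m-1$ are distinct and both lie in the admissible range $[\max\{m-n,0\},m]$ (using $m,n\geq 1$, which is the setting throughout this section). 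Consequently both direct summands on the right vanish, proving the first half of the lemma.

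The reverse direction is handled identically: apply $\Hom_\hmf(C_f(\Gamma_{m-1}'),-)$ to the same decomposition \eqref{gamma-m-1-decomp-V} and use the same vanishing from Lemma \ref{trivial-complex-lemma-2} to conclude $\Hom_\hmf(C_f(\Gamma_{m-1}'),C_f(\widetilde{\Gamma}_{m+1}))\cong 0$. There is no genuine obstacle to carrying this out; the only point worth checking is the compatibility of the $\Gamma_k'$ notation between Subsection \ref{subsec-null-chain} and Subsection \ref{fork-sliding-complexes-involved}, and this compatibility is precisely what was asserted when $\delta_k^\pm$ was identified with the morphism of Definition \ref{trivial-complex-differential-def}.
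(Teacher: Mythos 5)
Your overall strategy is sound, non-circular, and genuinely different from the paper's proof (which simply defers to the direct $\Hom$-space computation of \cite[Lemma 12.12]{Wu-color}): the decomposition \eqref{gamma-m-1-decomp-V} rests only on Corollary \ref{contract-expand} and Decomposition (V), $\Hom_\hmf$ is additive, and a direct sum of vector spaces vanishing forces both summands to vanish. The index check ($i=m$, $j=m-1$, both in $[\max\{m-n,0\},m]$ since $m,n\geq 1$) is also fine.

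The gap is in the key input $\Hom_\hmf(C_f(\Gamma_m'),C_f(\Gamma_{m-1}'))\cong 0$. Lemma \ref{trivial-complex-lemma-2} is stated for the graphs $\Gamma_j'$ of Figure \ref{decomp-V-special-2-figure}, whose top-right boundary is a single outgoing edge colored $n+1$. The graphs called $\Gamma_k'$ in Subsection \ref{fork-sliding-complexes-involved} (Figure \ref{fork-sliding-special-complex2-fig}) carry an extra trivalent vertex splitting that edge into edges colored $1$ and $n$, so they have a different boundary; the notation is overloaded, not identical. What transfers automatically to the forked graphs is the decompositions and the morphisms $\delta_k^\pm$ (tensor with the identity on the fork) --- and that is all the identification with Definition \ref{trivial-complex-differential-def} asserts. $\Hom$-space computations do \emph{not} transfer automatically: gluing a forked graph to the reverse of another produces a digon on the $(n+1)$-colored edge, so by Decomposition (II) and the boundary-dependent shift $\tfrac12\sum_i m_i(N-m_i)$ (cf.\ Lemma \ref{complex-computing-gamma-HMF-lemma}), the forked $\Hom_\HMF$ space is the unforked one tensored with $\{[n+1]\,q^{n}\}$. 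Since $[n+1]q^{n}$ has lowest degree $0$, the lowest non-vanishing total polynomial degree is unchanged and the degree-zero part still vanishes for $i\neq j$; so your argument can be completed, but this verification is a genuine step that must be supplied rather than a consequence of anything already asserted. With that paragraph added, your route is a valid and arguably slicker alternative to the direct computation the paper relies on.
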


\begin{proof}
The proof of \cite[Lemma 12.12]{Wu-color} applies here without change.
\end{proof}

\begin{corollary}\label{hmf-tilde-gamma-double-prime-m}
\[
\Hom_\hmf(C_f(\widetilde{\Gamma}_{m+1}),C_f(\Gamma_{m-1}'')) \cong \Hom_\hmf(C_f(\Gamma_{m-1}''), C_f(\widetilde{\Gamma}_{m+1})) \cong 0.
\]
\end{corollary}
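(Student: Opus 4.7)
\textbf{Proof proposal for Corollary \ref{hmf-tilde-gamma-double-prime-m}.}
The plan is to derive the corollary from Lemma \ref{hmf-tilde-gamma-prime-m} by combining it with the direct sum decomposition already established in \eqref{gamma-m-1-decomp-V}, without redoing the MOY-graph computation from scratch. The starting point is the isomorphism $C_f(\Gamma_m') = C_f(\Gamma_{m,1}) \simeq C_f(\Gamma_{m-1}'') \oplus C_f(\widetilde{\Gamma}_{m+1})$. Because $\Hom_\hmf$ distributes over direct sums, this gives
\[
\Hom_\hmf(C_f(\widetilde{\Gamma}_{m+1}), C_f(\Gamma_m')) \cong \Hom_\hmf(C_f(\widetilde{\Gamma}_{m+1}), C_f(\Gamma_{m-1}'')) \oplus \Hom_\hmf(C_f(\widetilde{\Gamma}_{m+1}), C_f(\widetilde{\Gamma}_{m+1})),
\]
and analogously with the arguments reversed. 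So it will suffice to show that the graded dimensions of the spaces $\Hom_\hmf(C_f(\widetilde{\Gamma}_{m+1}), C_f(\Gamma_m'))$ and $\Hom_\hmf(C_f(\widetilde{\Gamma}_{m+1}), C_f(\widetilde{\Gamma}_{m+1}))$ agree, and similarly in the opposite direction.

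First I would compute $\Hom_\HMF(C_f(\widetilde{\Gamma}_{m+1}),C_f(\Gamma_m'))$ by the standard recipe: apply Lemma \ref{complex-computing-gamma-HMF-lemma} (or its $\hmf$-analogue used already in the proofs of Lemmas \ref{trivial-complex-lemma-1} and \ref{colored-crossing-res-HMF}) to identify the $\Hom$ space with $H_f$ of an explicit closed MOY graph (up to grading shifts). I would then reduce this closed graph step by step using Corollary \ref{contract-expand}, Decomposition (I) (Theorem \ref{decomp-I}), Decomposition (II) (Theorem \ref{decomp-II}), and Proposition \ref{circle-module}, ending in an expression of the form $C_f(\emptyset)\{P(q)\}$ for some explicit product of quantum binomials times an overall quantum shift. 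Repeating the computation for $\Hom_\HMF(C_f(\widetilde{\Gamma}_{m+1}),C_f(\widetilde{\Gamma}_{m+1}))$ yields a similar closed-form answer, and the bookkeeping shows that the subspace of elements of $\zed_2$-degree $0$ and total polynomial degree $0$ has the same dimension in both cases (namely $1$). By the direct sum decomposition above, this forces $\Hom_\hmf(C_f(\widetilde{\Gamma}_{m+1}),C_f(\Gamma_{m-1}''))=0$.

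For the reverse direction $\Hom_\hmf(C_f(\Gamma_{m-1}''), C_f(\widetilde{\Gamma}_{m+1})) = 0$, I would run exactly the same argument: the dual version of Lemma \ref{complex-computing-gamma-HMF-lemma} gives the same underlying closed MOY graph (its orientation and the positions of $\widetilde{\Gamma}_{m+1}$ and $\Gamma_{m-1}''$ are simply swapped), so the computation of the graded dimension is identical, and the same direct-sum argument isolates the vanishing of the $\Gamma_{m-1}''$-summand. Alternatively, since the proof of Lemma \ref{hmf-tilde-gamma-prime-m} in \cite[Lemma 12.12]{Wu-color} is a direct $\Hom_\HMF$-computation followed by a quantum-grading count, one could simply observe that replacing $\Gamma_{m-1}'$ by $\Gamma_{m-1}''$ throughout that proof leaves the computation structurally the same, up to renaming of a few internal edges.

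The main obstacle will be the explicit MOY-graph reduction in the computation of $\Hom_\HMF(C_f(\widetilde{\Gamma}_{m+1}),C_f(\Gamma_m'))$: unwinding \eqref{gamma-m-1-decomp-V} at the level of the auxiliary closed graph requires applying several of Decompositions (I)--(V) in the right order and tracking the quantum shifts carefully so that the $\zed_2$- and quantum-graded dimensions of homogeneous elements of degree $0$ come out correctly. Once that matching is in place, the vanishing is automatic from the direct sum splitting.
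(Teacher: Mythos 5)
Your proposal is correct in principle but takes a genuinely different, and considerably more laborious, route than the paper. The paper's proof is a one-liner: by Decomposition (V) (the $k=m-1$ case of Lemma \ref{decomp-V-special-2}), $C_f(\Gamma_{m-1}') \simeq C_f(\Gamma_{m-1}'') \oplus C_f(\Gamma_{m-2}'')$, so $\Hom_\hmf(C_f(\widetilde{\Gamma}_{m+1}),C_f(\Gamma_{m-1}''))$ is a direct summand of $\Hom_\hmf(C_f(\widetilde{\Gamma}_{m+1}),C_f(\Gamma_{m-1}'))$, which is already known to vanish by Lemma \ref{hmf-tilde-gamma-prime-m}; no new computation is needed. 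You instead split off $C_f(\Gamma_{m-1}'')$ from $C_f(\Gamma_{m,1})$ via \eqref{gamma-m-1-decomp-V} and then must show $\dim_\C \Hom_\hmf(C_f(\widetilde{\Gamma}_{m+1}),C_f(\Gamma_{m,1}))=1$ by a fresh closed-MOY-graph computation --- note you cannot simply quote Corollary \ref{hmf-tilde-gamma-gamma-m1} for this, since that corollary is itself deduced from the statement you are proving, so your decision to recompute from scratch is the right one and avoids circularity. Your route does work (the answer is forced to be $1$-dimensional, and the computation is of the same type as Lemmas \ref{trivial-complex-lemma-1} and \ref{colored-crossing-res-HMF}), and it has the side benefit of establishing Corollary \ref{hmf-tilde-gamma-gamma-m1} directly; but it front-loads exactly the kind of graded-dimension bookkeeping that the paper's choice of decomposition lets one skip entirely. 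The key observation you missed is that $\Gamma_{m-1}''$ sits as a summand not only inside $\Gamma_{m,1}$ but also inside $\Gamma_{m-1}'$, and the $\Hom$ space against the latter is already known to be zero.
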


\begin{proof}
By Decomposition (V) (Theorem \ref{decomp-V}), $C_f(\Gamma_{m-1}') \simeq C_f(\Gamma_{m-1}'') \oplus C_f(\Gamma_{m-2}'')$. So the corollary follows from Lemma \ref{hmf-tilde-gamma-prime-m}.
\end{proof}

\begin{lemma}\label{hmf-tilde-gamma-m} 
$\Hom_\hmf(C_f(\widetilde{\Gamma}_{m+1}),C_f(\widetilde{\Gamma}_{m+1})) \cong \C$.
\end{lemma}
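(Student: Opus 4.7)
The plan is to compute the bigraded $R_B$-module $\Hom_\HMF(C_f(\widetilde{\Gamma}_{m+1}),C_f(\widetilde{\Gamma}_{m+1}))$ directly, and then extract the subspace of homogeneous elements of total polynomial degree $0$ and $\zed_2$-degree $0$, which by definition is $\Hom_\hmf(C_f(\widetilde{\Gamma}_{m+1}),C_f(\widetilde{\Gamma}_{m+1}))$.

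First, I would invoke Lemma \ref{complex-computing-gamma-HMF-lemma} to identify this $\Hom_\HMF$ with $H_f(\Gamma)$ for the closed MOY graph $\Gamma$ obtained by gluing $\widetilde{\Gamma}_{m+1}$ to its orientation-reversal along matching endpoints, together with the explicit grading shift dictated by that lemma. Next I would systematically simplify $\Gamma$: repeated application of Corollary \ref{contract-expand} merges the interior trivalent vertices, turning $\Gamma$ into a configuration of parallel colored arcs joined to colored circles; then Decomposition (II) (Theorem \ref{decomp-II}) collapses each bigon to a single colored edge at the cost of a quantum-binomial factor; and finally Proposition \ref{circle-module} replaces each remaining colored circle $\bigcirc_j$ by $C_f(\emptyset)\{\qb{N}{j}\}\langle j\rangle$. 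Collecting all the shifts produces an isomorphism of bigraded $R_B$-modules of the form $\Hom_\HMF(C_f(\widetilde{\Gamma}_{m+1}),C_f(\widetilde{\Gamma}_{m+1})) \cong C_f(\emptyset)\{q^{A}Q(q)\}$ for an explicit integer shift $A$ and a product $Q(q)$ of $\qb{N}{\ast}$'s and $[\ast]$'s, concentrated in $\zed_2$-degree $0$.

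The conclusion will then follow by observing that $Q(q)$ is a symmetric Laurent polynomial in $q$ with constant term $1$, so the lowest non-vanishing total polynomial grading of $\Hom_\HMF$ is $0$ and the subspace of homogeneous elements of this bottom degree is $1$-dimensional over $\C$. (In fact, $\id_{C_f(\widetilde{\Gamma}_{m+1})}$ already provides a non-zero element in this subspace, hence it spans.) Since $C_f(\emptyset)$ sits in $\zed_2$-grading $0$, the $\zed_2$-degree condition is automatic, and we get $\Hom_\hmf(C_f(\widetilde{\Gamma}_{m+1}),C_f(\widetilde{\Gamma}_{m+1}))\cong\C$. The main obstacle, as in the proofs of Lemmas \ref{hmf-tilde-gamma-prime-m} and \ref{colored-crossing-res-HMF} that this argument parallels, is the bookkeeping: correctly drawing the doubled graph $\Gamma$ and tracking the several grading shifts accumulated through the applications of Lemma \ref{complex-computing-gamma-HMF-lemma}, Corollary \ref{contract-expand}, Theorem \ref{decomp-II}, and Proposition \ref{circle-module} so that the total shift $A$ really cancels the minimum exponent of $Q(q)$. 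Alternatively, one can sidestep this bookkeeping by using the splitting $C_f(\Gamma_m') \simeq C_f(\Gamma_{m-1}'')\oplus C_f(\widetilde{\Gamma}_{m+1})$ of \eqref{gamma-m-1-decomp-V}, combining with the vanishing of cross-terms from Corollary \ref{hmf-tilde-gamma-double-prime-m} to write $\Hom_\hmf(C_f(\Gamma_m'),C_f(\Gamma_m'))\cong \Hom_\hmf(C_f(\Gamma_{m-1}''),C_f(\Gamma_{m-1}''))\oplus\Hom_\hmf(C_f(\widetilde{\Gamma}_{m+1}),C_f(\widetilde{\Gamma}_{m+1}))$, and independently computing the two outer Hom spaces to show both are $\C$.
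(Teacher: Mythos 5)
Your primary plan is exactly what the paper does: its proof of this lemma is a one-line deferral to \cite[Lemma 12.14]{Wu-color}, which carries out precisely the computation you describe — apply Lemma \ref{complex-computing-gamma-HMF-lemma}, simplify the doubled graph via Corollary \ref{contract-expand}, Decomposition (II) and Proposition \ref{circle-module}, and observe that the resulting bigraded module has bottom degree $0$ with one-dimensional bottom piece spanned by the identity. So the proposal is correct and takes essentially the same approach.
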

\begin{proof}
The proof of \cite[Lemma 12.14]{Wu-color} applies here without change.
\end{proof}

\begin{corollary}\label{hmf-tilde-gamma-gamma-m1} 
\[
\Hom_\hmf(C_f(\widetilde{\Gamma}_{m+1}),C_f(\Gamma_{m,1})) \cong \Hom_\hmf(C_f(\Gamma_{m,1}),C_f(\widetilde{\Gamma}_{m+1})) \cong \C.
\]
\end{corollary}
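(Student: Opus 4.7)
The plan is to reduce the corollary immediately to the three ingredients proved right above it: the direct sum decomposition \eqref{gamma-m-1-decomp-V}, the vanishing in Corollary \ref{hmf-tilde-gamma-double-prime-m}, and the one-dimensionality in Lemma \ref{hmf-tilde-gamma-m}. Since $\hmf_{\tilde{R}_\partial,w}$ is an additive category, applying $\Hom_\hmf(C_f(\widetilde{\Gamma}_{m+1}),-)$ (respectively $\Hom_\hmf(-,C_f(\widetilde{\Gamma}_{m+1}))$) to the decomposition
\[
C_f(\Gamma_{m,1}) \simeq C_f(\Gamma_{m-1}'') \oplus C_f(\widetilde{\Gamma}_{m+1})
\]
splits the Hom spaces as direct sums.

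Concretely, I would first write
\[
\Hom_\hmf(C_f(\widetilde{\Gamma}_{m+1}),C_f(\Gamma_{m,1})) \cong \Hom_\hmf(C_f(\widetilde{\Gamma}_{m+1}),C_f(\Gamma_{m-1}'')) \oplus \Hom_\hmf(C_f(\widetilde{\Gamma}_{m+1}),C_f(\widetilde{\Gamma}_{m+1})),
\]
and symmetrically for the opposite direction. Then Corollary \ref{hmf-tilde-gamma-double-prime-m} kills the first summand in each case, and Lemma \ref{hmf-tilde-gamma-m} identifies the surviving summand with $\C$. Putting these together yields the claimed isomorphism.

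There is essentially no obstacle here; the real work was already carried out in Lemma \ref{hmf-tilde-gamma-prime-m} (and its corollary), which itself relied on decomposing $C_f(\Gamma_{m-1}')$ via Decomposition (V) as $C_f(\Gamma_{m-1}'') \oplus C_f(\Gamma_{m-2}'')$, and in Lemma \ref{hmf-tilde-gamma-m}. The only thing to watch is that the decomposition \eqref{gamma-m-1-decomp-V} holds in $\hmf$ (with both gradings preserved), so that the induced splitting of Hom spaces lives in the right category; this is immediate from the statements of Corollary \ref{contract-expand} and Theorem \ref{decomp-V}. Thus the proof is a one-line assembly of the preceding results.
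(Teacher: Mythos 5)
Your proposal is correct and is exactly the paper's argument: the paper also deduces the corollary by splitting the Hom spaces along the decomposition \eqref{gamma-m-1-decomp-V} and then invoking Corollary \ref{hmf-tilde-gamma-double-prime-m} and Lemma \ref{hmf-tilde-gamma-m}. Nothing is missing.
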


\begin{proof}
This follows easily from \eqref{gamma-m-1-decomp-V}, Corollary \ref{hmf-tilde-gamma-double-prime-m} and Lemma \ref{hmf-tilde-gamma-m}.
\end{proof}

\begin{figure}[ht]
$
\xymatrix{
\input{gamma-m-prime} \ar@<1ex>[rr]^{\tilde{p}} \ar@<1ex>[d]^{\chi^0 \otimes \chi^0} & & \input{tilde-gamma-m+1} \ar@<1ex>[ll]^{\tilde{\jmath}} \ar@<1ex>[d]^{h} \\
\input{tilde-gamma-m+1-bubble} \ar@<1ex>[u]^{\chi^1 \otimes \chi^1} \ar@<1ex>[rr]^{\overline{\phi}} && \input{tilde-gamma-m+1-prime} \ar@<1ex>[ll]^{\phi} \ar@<1ex>[u]^{\overline{h}}
} 
$
\caption{}\label{gamma-m-1-p-j-def} 

\end{figure}

Consider the diagram in Figure \ref{gamma-m-1-p-j-def}, where 
\begin{eqnarray*}
\tilde{p} & := & \overline{h} \circ \overline{\phi} \circ (\chi^0 \otimes \chi^0), \\
\tilde{\jmath} & := & (\chi^1 \otimes \chi^1) \circ \phi \circ h,
\end{eqnarray*}
and morphisms on the right hand side are induced by the apparent basic local changes of the MOY graphs and 

\begin{lemma}\label{gamma-m-1-p-j-tilde}
Up to homotopy and scaling, $C_f(\widetilde{\Gamma}_{m+1}) \xrightarrow{\tilde{\jmath}} C_f(\Gamma_{m,1})$ and $C_f(\Gamma_{m,1}) \xrightarrow{\tilde{p}} C_f(\widetilde{\Gamma}_{m+1})$ are the inclusion and projection of the component $C_f(\widetilde{\Gamma}_{m+1})$ in decomposition \eqref{gamma-m-1-decomp-V}.
\end{lemma}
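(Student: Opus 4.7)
The plan is to argue that $\tilde{\jmath}$ and $\tilde{p}$ are the ``right'' inclusion and projection by combining a uniqueness statement (Corollary \ref{hmf-tilde-gamma-gamma-m1}) with an identification of $\tilde{p}\circ\tilde{\jmath}$ with $\id_{C_f(\widetilde{\Gamma}_{m+1})}$, using the functor $\varpi_0$ of Lemma \ref{MOY-object-of-hmf} to reduce to the already-known analogue in \cite{Wu-color}.

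First I would check that each of the component morphisms ($\chi^0\otimes\chi^0$, $\chi^1\otimes\chi^1$, $\phi$, $\overline{\phi}$, $h$, $\overline{h}$) is homogeneous, preserves the $\zed_2$-grading, and has the quantum degree forced by Definitions \ref{morphism-edge-splitting-merging-def}, \ref{varphi-def}, and Proposition \ref{general-general-chi-maps}, and then simply add the degree shifts to see that $\tilde{\jmath}$ and $\tilde{p}$ are homogeneous of $\zed_2$-degree $0$ and of total polynomial degree $0$. In particular they live in $\Hom_\hmf(C_f(\widetilde{\Gamma}_{m+1}),C_f(\Gamma_{m,1}))$ and $\Hom_\hmf(C_f(\Gamma_{m,1}),C_f(\widetilde{\Gamma}_{m+1}))$ respectively, both of which are $1$-dimensional by Corollary \ref{hmf-tilde-gamma-gamma-m1}.

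Next I would show $\tilde{p}\circ\tilde{\jmath}\approx \id_{C_f(\widetilde{\Gamma}_{m+1})}$. The main step is to push the whole diagram through $\varpi_0$: by Proposition \ref{basic-changes-varpi-0}, each of $\chi^0$, $\chi^1$, $\phi$, $\overline{\phi}$, $h$, $\overline{h}$ is sent to the corresponding morphism of $C$-matrix factorizations used in \cite{Wu-color}. Hence $\varpi_0(\tilde{\jmath})$ and $\varpi_0(\tilde{p})$ are exactly the morphisms built in the proof of \cite[Lemma 12.15]{Wu-color}, where the analogous composition is shown to be homotopic to the identity. Applying $\varpi_0$ gives $\varpi_0(\tilde{p}\circ\tilde{\jmath})\approx \id_{C(\widetilde{\Gamma}_{m+1})}$, so $\tilde{p}\circ\tilde{\jmath}$ is a homotopy equivalence by Lemma \ref{reduce-base-homotopic-equivalence}. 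Since $\Hom_\hmf(C_f(\widetilde{\Gamma}_{m+1}),C_f(\widetilde{\Gamma}_{m+1}))\cong\C$ by Lemma \ref{hmf-tilde-gamma-m} and is spanned by $\id_{C_f(\widetilde{\Gamma}_{m+1})}$, this forces $\tilde{p}\circ\tilde{\jmath}\approx\id_{C_f(\widetilde{\Gamma}_{m+1})}$. In particular, neither $\tilde{\jmath}$ nor $\tilde{p}$ is null-homotopic.

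The main (only real) obstacle is the comparison in the last step: one has to be confident that the specific morphism-of-matrix-factorizations defining the $C_f(\widetilde{\Gamma}_{m+1})$-summand of decomposition \eqref{gamma-m-1-decomp-V} is determined (up to homotopy and scaling) by the property that its composition with its partner is a homotopy equivalence. But this is immediate from Corollary \ref{hmf-tilde-gamma-gamma-m1}, which asserts that the relevant $\Hom_\hmf$-spaces are $1$-dimensional: any two homotopically nontrivial homogeneous morphisms of bidegree $(0,0)$ between $C_f(\widetilde{\Gamma}_{m+1})$ and $C_f(\Gamma_{m,1})$ agree up to scaling. Therefore $\tilde{\jmath}$ and $\tilde{p}$ coincide, up to homotopy and scaling, with the inclusion and projection associated to the $C_f(\widetilde{\Gamma}_{m+1})$-summand in \eqref{gamma-m-1-decomp-V}, which is the desired conclusion.
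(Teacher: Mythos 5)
Your proof is correct and follows essentially the same strategy as the paper: reduce to the undeformed case via $\varpi_0$ and then invoke the one-dimensionality of the $\Hom_\hmf$-spaces from Corollary \ref{hmf-tilde-gamma-gamma-m1}. The paper is slightly more direct — it simply notes that $\varpi_0(\tilde{\jmath})$ and $\varpi_0(\tilde{p})$ are the inclusion and projection in \cite{Wu-color} (Lemma 12.16 there, not 12.15) and hence homotopically non-trivial, rather than routing through $\tilde{p}\circ\tilde{\jmath}\approx\id$ and Lemma \ref{reduce-base-homotopic-equivalence}, but your extra step is valid and changes nothing essential.
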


\begin{proof}
Note that $\tilde{\jmath}$ and $\tilde{p}$ are homogeneous morphisms preserving both gradings. So, by Corollary \ref{hmf-tilde-gamma-gamma-m1}, to prove the lemma, we only need to show that $\tilde{\jmath}$ and $\tilde{p}$ are homotopically non-trivial. Let $\varpi_0$ be the functor given in Lemma \ref{MOY-object-of-hmf}. From \cite[Lemma 12.16]{Wu-color}, we know that $\varpi_0(\tilde{\jmath})$ and $\varpi_0(\tilde{p})$ are the inclusion and projection of the component $C(\widetilde{\Gamma}_{m+1})$ in the decomposition
\[
C(\Gamma_{m,1}) \simeq C(\Gamma_{m-1}'') \oplus C(\widetilde{\Gamma}_{m+1}).
\]
So $\varpi_0(\tilde{\jmath})$ and $\varpi_0(\tilde{p})$ are homotopically non-trivial, which implies that $\tilde{\jmath}$ and $\tilde{p}$ are homotopically non-trivial.
\end{proof}

\begin{figure}[ht]
$
\xymatrix{
\input{tilde-gamma-m+1} \ar@<1ex>[d]^{\tilde{\jmath}} \ar@<1ex>[rr]^{\tilde{d}_{m+1}^+} && \input{tilde-gamma-m} \ar@<1ex>[ll]^{\tilde{d}_{m}^-} \ar@<1ex>[d]^{h^{(m)}} \\
\input{gamma-m-prime} \ar@<1ex>[u]^{\tilde{p}} \ar@<1ex>[rr]^{\chi^1} && \input{gamma-m3} \ar@<1ex>[ll]^{\chi^0} \ar@<1ex>[u]^{\overline{h}^{(m)}}
} 
$
\caption{}\label{gamma-m-1-p-j-differential-fig} 

\end{figure}

\begin{lemma}\label{gamma-m-1-p-j-differential} 
Consider the diagram in Figure \ref{gamma-m-1-p-j-differential-fig}, where $\chi^0$, $\chi^1$, $h^{(m)}$ and $\overline{h}^{(m)}$ are induced by the apparent basic local changes of MOY graphs. Then $\tilde{d}_{m+1}^+ \approx \overline{h}^{(m)} \circ \chi^1 \circ \tilde{\jmath}$ and $\tilde{d}_{m}^- \approx \tilde{p} \circ \chi^0 \circ h^{(m)}$. That is, the diagram in Figure \ref{gamma-m-1-p-j-differential-fig} commutes in both directions up to homotopy and scaling.
\end{lemma}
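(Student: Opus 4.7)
The plan is to follow the pattern already established in the proofs of Lemmas \ref{relating-C-D-11-lemma} and \ref{relating-D-0-1-lemma}: reduce the statement to a one-dimensionality result in a suitable $\Hom$-space, check that all four morphisms have the correct bi-degree, and then verify non-triviality by pushing everything down through the functor $\varpi_0$ to the corresponding statement for $C$ in \cite{Wu-color}.

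First I would compute $\Hom_\HMF(C_f(\widetilde{\Gamma}_{m+1}), C_f(\widetilde{\Gamma}_m))$ and $\Hom_\HMF(C_f(\widetilde{\Gamma}_m), C_f(\widetilde{\Gamma}_{m+1}))$. Applying Lemma \ref{complex-computing-gamma-HMF-lemma} to rewrite each space as the homology of $M \otimes C_f(\Gamma')$ for an appropriate $\Gamma'$, then contracting bigons and parallel edges via Corollary \ref{contract-expand} and Decomposition (II) (Theorem \ref{decomp-II}), and finally reducing the resulting closed MOY graph by Decomposition (I) (Theorem \ref{decomp-I}) and Proposition \ref{circle-module}, one obtains that both spaces are concentrated in $\zed_2$-grading $0$, have lowest non-vanishing total polynomial grading equal to $1$, and have one-dimensional subspace of homogeneous elements of total polynomial degree $1$. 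This is the same combinatorial computation that appears (after stabilization) in the proof of Lemma \ref{relating-D-0-1-lemma}, only with the extra strand of width $1$ attached to the top.

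Next, I would check the degrees of the four morphisms in question. The morphism $\tilde{d}_{m+1}^+$ is by definition a differential map in the complex \eqref{complex-D-10-+} and so has $\zed_2$-degree $0$ and total polynomial degree $1$; similarly for $\tilde{d}_m^-$. For the composites $\overline{h}^{(m)} \circ \chi^1 \circ \tilde{\jmath}$ and $\tilde{p} \circ \chi^0 \circ h^{(m)}$, the bouquet-move morphisms $h^{(m)}, \overline{h}^{(m)}$ preserve both gradings by Lemma \ref{bouquet-move-lemma}, the morphisms $\tilde{\jmath}$ and $\tilde{p}$ preserve both gradings by Lemma \ref{gamma-m-1-p-j-tilde}, and the $\chi^0$, $\chi^1$ between a $(1,n)$- and a $(1,n-1)$-resolution have $\zed_2$-degree $0$ and total polynomial degree $1$ (Proposition \ref{general-general-chi-maps}, with $l = 1$ and $m = 1$). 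So the two composites have the same bi-degree as the corresponding $\tilde{d}^{\pm}$.

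Finally, I would verify non-triviality. For $\tilde{d}_{m+1}^+$ and $\tilde{d}_m^-$, the same argument used in the proof of Lemma \ref{relating-D-0-1-lemma} applies: split the bottom $\mathbb{E}$-strand into an additional pair of parallel edges, compare with the differential of a $\pm(m+1,n)$-crossing, and use Lemma \ref{phibar-compose-phi} to recover the $\pm(m+1,n)$-differential from $\tilde{d}^{\pm}$ by pre- and post-composition. For $\overline{h}^{(m)} \circ \chi^1 \circ \tilde{\jmath}$ and $\tilde{p} \circ \chi^0 \circ h^{(m)}$, I would apply the functor $\varpi_0$ of Lemma \ref{MOY-object-of-hmf}. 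By Proposition \ref{basic-changes-varpi-0} and Lemma \ref{gamma-m-1-p-j-tilde}, $\varpi_0$ sends these composites to the corresponding compositions over $C$, and \cite[Lemma 12.17]{Wu-color} asserts that these $C$-level composites are $\approx$ the $C$-level $\tilde{d}^{\pm}$ and in particular are not null-homotopic. Since a functor sends null-homotopic morphisms to null-homotopic morphisms, this forces $\overline{h}^{(m)} \circ \chi^1 \circ \tilde{\jmath}$ and $\tilde{p} \circ \chi^0 \circ h^{(m)}$ to be homotopically non-trivial. Combined with the one-dimensionality from the first step, this yields $\tilde{d}_{m+1}^+ \approx \overline{h}^{(m)} \circ \chi^1 \circ \tilde{\jmath}$ and $\tilde{d}_m^- \approx \tilde{p} \circ \chi^0 \circ h^{(m)}$.

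The only real obstacle I anticipate is bookkeeping in the Hom computation — tracking the grading shifts accumulated from the successive applications of the decomposition theorems and Proposition \ref{circle-module} — but the structure is fully parallel to the Hom computations carried out earlier in this section, and no new ideas are required beyond invoking $\varpi_0$ to import the non-triviality from \cite[Lemma 12.17]{Wu-color}.
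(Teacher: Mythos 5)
Your proof is correct, but it takes a genuinely different route from the paper's. The paper's proof is a direct diagram chase: it unwinds the definitions of $\tilde{\jmath}$, $\tilde{p}$ and of the differentials $\tilde{d}_{m+1}^+$, $\tilde{d}_m^-$ (the latter via their explicit forms as composites of edge splittings/mergings, bouquet moves and $\chi$-morphisms from Theorem \ref{explicit-differential-general}), and then invokes the commutativity Lemma \ref{chi-commute-chi-chi} to match the two composites. You instead run the uniqueness-plus-non-triviality scheme used for most of the other commutativity lemmas in Section \ref{sec-inv-fork}: the Hom computation from the proof of Lemma \ref{relating-D-0-1-lemma}, specialized to $k=m+1$, shows that the subspace of $\Hom_\HMF(C_f(\widetilde{\Gamma}_{m+1}),C_f(\widetilde{\Gamma}_m))$ of homogeneous elements of total polynomial degree $1$ is one-dimensional (the shift $\{[m+1]q^m\}$ has constant term $1$, so it does not disturb the bottom degree); all four morphisms have the right bidegree; and non-triviality of the two composites is imported through $\varpi_0$ from the corresponding uncolored-base statement in \cite{Wu-color}. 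Both arguments are sound, and both ultimately rest on \cite{Wu-color} (the paper's Lemma \ref{chi-commute-chi-chi} is itself proved by pushing down through $\varpi_0$). Yours is more uniform with the surrounding lemmas but needs the Hom-space bookkeeping and the external non-triviality input; the paper's is essentially immediate once Lemma \ref{chi-commute-chi-chi} is available and does not require knowing that the target Hom space is one-dimensional in the relevant degree.
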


\begin{proof}
This follows easily from the definitions of $\tilde{d}_{m+1}^+$, $\tilde{d}_{m}^-$, $\tilde{\jmath}$, $\tilde{p}$ and Lemma \ref{chi-commute-chi-chi}.
\end{proof}

\begin{figure}[ht]
$
\xymatrix{
\input{gamma-m-1-double-prime} \ar@<1ex>[rd]^{\jmath''} \ar@<1ex>[rr]^{J_{m-1,m-1}} &  & \input{gamma-m-1-prime} \ar@<1ex>[ld]^{\delta_{m-1}^-} \ar@<1ex>[ll]^{P_{m-1,m-1}} \\
& \input{gamma-m-prime} \ar@<1ex>[lu]^{p''} \ar@<1ex>[ru]^{\delta_m^+} &
} 
$
\caption{}\label{gamma-m-1-p-j-prime-dif-fig} 

\end{figure}

Denote by $\jmath'':C_f(\Gamma_{m-1}'') \rightarrow C_f(\Gamma_{m,1})$ and $p'':C_f(\Gamma_{m,1}) \rightarrow C_f(\Gamma_{m-1}'')$ the inclusion and projection between $C_f(\Gamma_{m-1}'')$ and $C_f(\Gamma_{m,1})$ in \eqref{gamma-m-1-decomp-V}. Consider the diagram in Figure \ref{gamma-m-1-p-j-prime-dif-fig}, where $J_{m-1,m-1}$, $P_{m-1,m-1}$ are defined in Definition \ref{trivial-complex-differential-def}. We have the following lemma.

\begin{lemma}\label{gamma-m-1-p-j-prime-dif}
$\delta_m^+ \circ \jmath'' \approx J_{m-1,m-1}$ and $p'' \circ \delta_{m-1}^- \approx P_{m-1,m-1}$. That is, the diagram in Figure \ref{gamma-m-1-p-j-prime-dif-fig} commutes in both directions up to homotopy and scaling.
\end{lemma}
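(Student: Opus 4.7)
The plan is to apply the template used throughout this section: reduce the lemma to a one-dimensional $\Hom_\hmf$ computation, then establish non-triviality by translating the compositions through the functor $\varpi_0$ of Lemma \ref{MOY-object-of-hmf} to invoke the corresponding non-equivariant statement from \cite{Wu-color}. First I would observe that $\delta_m^+ \circ \jmath''$ and $J_{m-1,m-1}$ are both morphisms $C_f(\Gamma_{m-1}'') \to C_f(\Gamma_{m-1}')$ that are homogeneous of $\zed_2$-degree $0$ and total polynomial degree $0$, and that $p'' \circ \delta_{m-1}^-$ and $P_{m-1,m-1}$ are similarly bidegree-preserving morphisms in the opposite direction. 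Thus it suffices to show that the relevant spaces of bidegree-preserving morphisms modulo homotopy are one-dimensional (so that $J_{m-1,m-1}$ and $P_{m-1,m-1}$ are the unique such morphisms up to scaling) and that the two compositions are homotopically non-trivial.

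The one-dimensionality follows quickly. By Lemma \ref{decomp-V-special-2} we have $C_f(\Gamma_{m-1}') \simeq C_f(\Gamma_{m-1}'') \oplus C_f(\Gamma_{m-2}'')$, so
\[
\Hom_\hmf(C_f(\Gamma_{m-1}''), C_f(\Gamma_{m-1}')) \cong \Hom_\hmf(C_f(\Gamma_{m-1}''), C_f(\Gamma_{m-1}'')) \oplus \Hom_\hmf(C_f(\Gamma_{m-1}''), C_f(\Gamma_{m-2}'')),
\]
which Lemma \ref{trivial-complex-lemma-2} identifies with $\C \oplus 0 \cong \C$, spanned by $J_{m-1,m-1}$; the reverse direction is entirely analogous, with generator $P_{m-1,m-1}$.

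For the non-triviality I would apply the functor $\varpi_0$. By Corollary \ref{differentials-varpi-0} and its proof, $\varpi_0$ sends the null-homotopic chain complexes $C^\pm$ of \eqref{complex-D-contractible-+} and \eqref{complex-D-contractible--} to their non-equivariant counterparts from \cite{Wu-color}, and in particular $\varpi_0(\delta_m^\pm)$, $\varpi_0(J_{m-1,m-1})$, $\varpi_0(P_{m-1,m-1})$ coincide (up to homotopy and scaling) with the corresponding morphisms in the non-equivariant setting. Because $\varpi_0$ preserves direct sums, $\varpi_0(\jmath'')$ and $\varpi_0(p'')$ are inclusion/projection morphisms for the splitting $C(\Gamma_{m,1}) \simeq C(\Gamma_{m-1}'') \oplus C(\widetilde{\Gamma}_{m+1})$; by the non-equivariant analog of Corollary \ref{hmf-tilde-gamma-gamma-m1} they are uniquely determined up to homotopy and a non-zero scalar. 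The non-equivariant version of the present lemma, proved in \cite{Wu-color}, shows that the corresponding compositions there are $\approx J_{m-1,m-1}$ and $\approx P_{m-1,m-1}$ and hence homotopically non-trivial. Since a functor sends null-homotopies to null-homotopies, this forces $\delta_m^+ \circ \jmath''$ and $p'' \circ \delta_{m-1}^-$ to be homotopically non-trivial as well, completing the proof.

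The hard part will be the third step, namely pinning down $\varpi_0(\jmath'')$ and $\varpi_0(p'')$ as being (up to homotopy and scaling) the splitting morphisms of the non-equivariant decomposition. Because $\jmath''$ and $p''$ are specified only up to homotopy and a non-zero scalar by \eqref{gamma-m-1-decomp-V}, this matching requires combining the one-dimensional $\Hom_\hmf$ computation of Corollary \ref{hmf-tilde-gamma-gamma-m1} with its non-equivariant counterpart, together with Lemma \ref{reduce-base-homotopic-equivalence}, which guarantees that the bidegree-preserving homotopy equivalence underlying \eqref{gamma-m-1-decomp-V} is sent by $\varpi_0$ to a bidegree-preserving homotopy equivalence in the non-equivariant setting.
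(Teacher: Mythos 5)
Your proposal is correct, but it does not follow the paper's own route. The paper's entire proof is the assertion that the proof of \cite[Lemma 12.18]{Wu-color} applies without change; that is, it re-runs the non-equivariant argument directly in the equivariant setting. That argument is essentially formal: since $\delta_m^+=J_{m-1,m-1}\circ P_{m,m-1}$ and $\delta_{m-1}^-=J_{m,m-1}\circ P_{m-1,m-1}$ by Definition \ref{trivial-complex-differential-def}, one only needs $P_{m,m-1}\circ\jmath''\approx\id$ and $p''\circ J_{m,m-1}\approx\id$, which follow from the same $\Hom_\hmf$ computations you quote (Lemma \ref{trivial-complex-lemma-2}, Corollary \ref{hmf-tilde-gamma-double-prime-m}, and the identification $C_f(\Gamma_m'')\simeq C_f(\widetilde{\Gamma}_{m+1})$); all of these have already been established equivariantly, so no passage through $\varpi_0$ is needed. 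You instead use the $\varpi_0$-translation template from the neighboring lemmas (e.g.\ Lemmas \ref{relating-C-D-11-lemma} and \ref{gamma-m-1-p-j-tilde}): one-dimensionality of the relevant $\Hom_\hmf$ spaces plus non-triviality of the compositions deduced from the non-equivariant statement. This works, and you correctly isolate and resolve the one delicate point, namely that $\varpi_0(\jmath'')$ and $\varpi_0(p'')$ are (up to homotopy and scaling) the splitting morphisms of the non-equivariant decomposition; your argument there is sound because $\varpi_0(p'')\circ\varpi_0(\jmath'')\simeq\id_{C(\Gamma_{m-1}'')}$ forces non-triviality, and the target $\Hom_\hmf$ space is one-dimensional. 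The only cosmetic slip is the citation: what you need for that step is $\Hom_\hmf(C(\Gamma_{m-1}''),C(\Gamma_{m,1}))\cong\C$, which comes from Lemma \ref{trivial-complex-lemma-2} together with Corollary \ref{hmf-tilde-gamma-double-prime-m} applied to the decomposition \eqref{gamma-m-1-decomp-V}, rather than from Corollary \ref{hmf-tilde-gamma-gamma-m1} itself (which concerns the other summand $C(\widetilde{\Gamma}_{m+1})$). The trade-off is that your route is uniform with the rest of the section but longer, while the paper's is shorter and avoids having to control $\varpi_0$ of the splitting morphisms at all.
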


\begin{proof}
The proof of \cite[Lemma 12.18]{Wu-color} applies here without change.
\end{proof}

\subsection{Proof of Proposition \ref{fork-sliding-invariance-special}} In this section, we prove \eqref{fork-sliding-invariance-special-1-+} and \eqref{fork-sliding-invariance-special-1--}. As mentioned above, the proof of the rest of Proposition \ref{fork-sliding-invariance-special} is similar and left to the reader. The proof below is a step by step adaptation of the proof of \cite[Proposition 12.2]{Wu-color}.

\begin{lemma}\cite[Lemma 4.2 -- Gaussian Elimination]{Bar-fast}\label{gaussian-elimination}
Let $\mathcal{C}$ be an additive category, and
\[
\mathtt{I}=``\cdots\rightarrow C\xrightarrow{\left(%
\begin{array}{c}
  \alpha\\
  \beta \\
\end{array}%
\right)}
\left.%
\begin{array}{c}
  A\\
  \oplus \\
  D
\end{array}%
\right.
\xrightarrow{
\left(%
\begin{array}{cc}
  \phi & \delta\\
  \gamma & \varepsilon \\
\end{array}%
\right)}
\left.%
\begin{array}{c}
  B\\
  \oplus \\
  E
\end{array}%
\right.
\xrightarrow{
\left(%
\begin{array}{cc}
  \mu & \nu\\
\end{array}%
\right)} F \rightarrow \cdots"
\]
an object of $\ch(\mathcal{C})$, that is, a bounded chain complex over $\mathcal{C}$. Assume that $A\xrightarrow{\phi} B$ is an isomorphism in $\mathcal{C}$ with inverse $\phi^{-1}$. Then $\mathtt{I}$ is homotopic to (i.e. isomorphic in $\hch(\mathcal{C})$ to)
\[
\mathtt{II}=
``\cdots\rightarrow C \xrightarrow{\beta} D
\xrightarrow{\varepsilon-\gamma\phi^{-1}\delta} E\xrightarrow{\nu} F \rightarrow \cdots".
\]
In particular, if $\delta$ or $\gamma$ is $0$, then $\mathtt{I}$ is homotopic to 
\[
\mathtt{II}=
``\cdots\rightarrow C \xrightarrow{\beta} D
\xrightarrow{\varepsilon} E\xrightarrow{\nu} F \rightarrow \cdots".
\]
\end{lemma}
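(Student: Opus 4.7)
The plan is to prove the lemma by an explicit change of basis that makes the differential block-diagonal, reducing $\mathtt{I}$ to the direct sum of $\mathtt{II}$ with a contractible two-term complex $A \xrightarrow{\phi} B$. Since $\phi$ is an isomorphism, this direct summand is null-homotopic, and the claimed homotopy equivalence follows. Everything takes place in an additive category, so only composition, addition, direct sums, and formal identities are available — no passing to kernels or images is needed.

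First I would define automorphisms of $A \oplus D$ and $B \oplus E$ by the formulas
\[
T = \begin{pmatrix} \id_A & -\phi^{-1}\delta \\ 0 & \id_D \end{pmatrix}, \qquad S = \begin{pmatrix} \id_B & 0 \\ -\gamma\phi^{-1} & \id_E \end{pmatrix}.
\]
Both are invertible in $\mathcal{C}$ with inverses obtained by flipping the signs of the off-diagonal entries, so they give an isomorphism of graded objects between $\mathtt{I}$ and the chain complex $\mathtt{I}'$ obtained by replacing the differentials near the middle with $S \circ \begin{pmatrix} \phi & \delta \\ \gamma & \varepsilon \end{pmatrix} \circ T$, $T^{-1} \circ \begin{pmatrix} \alpha \\ \beta \end{pmatrix}$, and $(\mu,\nu) \circ S^{-1}$. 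A direct matrix computation shows that the conjugated middle differential equals $\begin{pmatrix} \phi & 0 \\ 0 & \varepsilon - \gamma\phi^{-1}\delta \end{pmatrix}$, while the incoming map becomes $\begin{pmatrix} \alpha + \phi^{-1}\delta\beta \\ \beta \end{pmatrix}$ and the outgoing one becomes $(\mu + \nu\gamma\phi^{-1},\nu)$.

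Next I would use the fact that $\mathtt{I}$ is a chain complex to simplify. The conditions $d^2 = 0$ at the relevant positions say precisely $\phi\alpha + \delta\beta = 0$ and $\mu\phi + \nu\gamma = 0$, and hence $\alpha + \phi^{-1}\delta\beta = 0$ and $\mu + \nu\gamma\phi^{-1} = 0$. This kills the $A$-component of the incoming map and the $B$-component of the outgoing map in $\mathtt{I}'$, so $\mathtt{I}'$ splits as the direct sum (in $\ch(\mathcal{C})$) of $\mathtt{II}$ and the complex
\[
\cdots \to 0 \to A \xrightarrow{\phi} B \to 0 \to \cdots.
\]
The latter is null-homotopic via the contracting homotopy $\phi^{-1}: B \to A$, so it is isomorphic to the zero object in $\hch(\mathcal{C})$. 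Therefore $\mathtt{I} \cong \mathtt{I}' \cong \mathtt{II}$ in $\hch(\mathcal{C})$, which is the claim.

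The main (and really only) obstacle is bookkeeping: one must keep the signs and the order of composition straight when conjugating the middle differential and when verifying that the two boundary conditions $\phi\alpha + \delta\beta = 0$ and $\mu\phi + \nu\gamma = 0$ are exactly what is needed to annihilate the new off-diagonal entries of the incoming and outgoing maps. The special case where $\delta = 0$ or $\gamma = 0$ follows immediately from the general statement, since in either case the correction term $\gamma\phi^{-1}\delta$ vanishes.
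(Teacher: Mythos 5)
Your proof is correct. Note that the paper itself gives no proof of this lemma --- it is quoted verbatim from Bar-Natan's paper \cite{Bar-fast} --- so there is nothing internal to compare against; your change-of-basis argument (conjugating the middle differential by the elementary automorphisms $T$ and $S$, using $d^2=0$ to kill the off-diagonal entries of the adjacent differentials, and then cancelling the contractible summand $A\xrightarrow{\phi}B$) is the standard proof and is essentially a repackaging of Bar-Natan's explicit homotopy equivalence.
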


\begin{figure}[ht]
$
\xymatrix{
\input{gamma-k-prime} && \input{gamma-k-1-prime}
} 
$
\caption{}\label{hmf-gamma-prime-k-k-1-fig} 

\end{figure}

\begin{lemma}\label{hmf-gamma-prime-k-k-1}
\begin{eqnarray*}
\Hom_\hmf (C_f(\Gamma_k')\{[m-k]q^{k-1-m}\}, C_f(\Gamma_{k-1}')) & \cong & 0, \\
\Hom_\hmf (C_f(\Gamma_{k-1}'), C_f(\Gamma_k')\{[m-k]q^{m+1-k}\}) & \cong & 0.
\end{eqnarray*}
\end{lemma}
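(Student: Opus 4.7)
The plan is to bound the total polynomial grading of the underlying $\Hom_\HMF$ spaces from below and then match this bound against the grading shifts $[m-k]q^{k-1-m}$ and $[m-k]q^{m+1-k}$. The key inputs are Lemma~\ref{trivial-complex-lemma-1} and the direct sum decomposition in Lemma~\ref{decomp-V-special-2}.

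First I would realize the Hom spaces of interest as direct summands of Hom spaces already computed in Lemma~\ref{trivial-complex-lemma-1}. By Lemma~\ref{decomp-V-special-2}, $C_f(\Gamma_{k-1}')$ is a summand of $C_f(\Gamma_{k-1})$, so $\Hom_\HMF(C_f(\Gamma_k'), C_f(\Gamma_{k-1}'))$ embeds as a direct summand of $\Hom_\HMF(C_f(\Gamma_k'), C_f(\Gamma_{k-1}))$. Similarly, $C_f(\Gamma_k')$ is a summand of $C_f(\Gamma_{k+1})$, so $\Hom_\HMF(C_f(\Gamma_{k-1}'), C_f(\Gamma_k'))$ embeds as a direct summand of $\Hom_\HMF(C_f(\Gamma_{k-1}'), C_f(\Gamma_{k+1}))$. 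The boundary cases $k=\max\{m-n,0\}+1$ and $k=m$ are handled by the degenerate decompositions $C_f(\Gamma_{\max\{m-n,0\}})\simeq C_f(\Gamma_{\max\{m-n,0\}}')$ and $C_f(\Gamma_{m+1})\simeq C_f(\Gamma_m')$ from the same lemma.

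Next I would invoke Lemma~\ref{trivial-complex-lemma-1}, which asserts that the lowest non-vanishing total polynomial grading of $\Hom_\HMF(C_f(\Gamma_j'), C_f(\Gamma_k))$ is $(j-k)(j-k+1)$ and that the space is concentrated in $\zed_2$-degree $0$. Specialising to $(j,k)=(k,k-1)$ gives $1\cdot 2=2$, and to $(j,k)=(k-1,k+1)$ gives $(-2)\cdot(-1)=2$. Consequently, each of the two summands above has no homogeneous element of total polynomial grading strictly less than $2$.

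Finally I would unpack the shifts. Since $[m-k]q^{k-1-m}=q^{-2}+q^{-4}+\cdots+q^{-2(m-k)}$ and $[m-k]q^{m+1-k}=q^{2}+q^{4}+\cdots+q^{2(m-k)}$, an element of either $\Hom_\hmf$ space in the lemma corresponds (after undoing the shift in the standard way, $\Hom_\hmf(A\{q^d\},B)\cong\Hom_\HMF^{d}(A,B)$ and $\Hom_\hmf(A,B\{q^d\})\cong\Hom_\HMF^{-d}(A,B)$) to a homogeneous morphism between the unshifted matrix factorizations of total polynomial degree in $\{-2,-4,\ldots,-2(m-k)\}$. All such morphisms vanish by the lower bound, yielding both isomorphisms. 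I do not anticipate a serious obstacle beyond bookkeeping; the only delicate point is the case analysis on the range of $k$ needed to keep both Lemma~\ref{decomp-V-special-2} and Lemma~\ref{trivial-complex-lemma-1} applicable, which is precisely the boundary check noted above.
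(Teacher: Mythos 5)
Your overall strategy—bounding the lowest non-vanishing total polynomial grading of the relevant $\Hom_\HMF$ spaces via Lemma \ref{trivial-complex-lemma-1} and the decompositions of Lemma \ref{decomp-V-special-2}, then checking that the shifts $[m-k]q^{k-1-m}$ and $[m-k]q^{m+1-k}$ place the degree-zero morphisms entirely in negative degrees $-2,\dots,-2(m-k)$—is the right one (the paper itself only cites \cite[Lemma 12.20]{Wu-color}, and your degree bookkeeping for the shifts is correct). However, there is a genuine misidentification of the graphs. The $\Gamma_k'$ in Lemma \ref{hmf-gamma-prime-k-k-1} are the graphs of Figure \ref{fork-sliding-special-complex2-fig} (see Figure \ref{hmf-gamma-prime-k-k-1-fig}, which shows $\Gamma_{k-1}'$ with left middle edge $n+k-1$ and right middle edge $m+2-k$). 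In the fork-sliding section the roles are shifted: the primed graphs $\Gamma_k'$ there correspond to the \emph{unprimed} $\Gamma_k$ of Subsection \ref{subsec-null-chain}, and the double-primed $\Gamma_k''$ correspond to the primed $\Gamma_k'$ of that subsection; the author makes this explicit later when he writes $C_f(\Gamma_k')\simeq C_f(\Gamma_k'')\oplus C_f(\Gamma_{k-1}'')$ "by Lemma \ref{decomp-V-special-2}." Your statement "$C_f(\Gamma_{k-1}')$ is a summand of $C_f(\Gamma_{k-1})$" therefore applies the decomposition to the wrong family: under the correct reading, $C_f(\Gamma_{k-1}')$ is the object being decomposed, not a summand of it, and the $\Hom$ space in the lemma is strictly larger than the one you bound.

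Concretely, the argument should decompose the source (or target) of the actual $\Hom$ space: $\Hom_\HMF(C_f(\Gamma_k'),C_f(\Gamma_{k-1}'))\cong\Hom_\HMF(C_f(\Gamma_k''),C_f(\Gamma_{k-1}'))\oplus\Hom_\HMF(C_f(\Gamma_{k-1}''),C_f(\Gamma_{k-1}'))$, and then apply Lemma \ref{trivial-complex-lemma-1} (transported to the fork-sliding graphs via the edge contraction of Lemma \ref{edge-contraction}, since they differ only by the $1,n\to n{+}1$ vertex at the top right). The lowest gradings of the two summands are $(k-(k-1))(k-(k-1)+1)=2$ and $0\cdot 1=0$, so the lower bound for the full space is $0$, not the $2$ you obtain; the analogous computation for $\Hom_\HMF(C_f(\Gamma_{k-1}'),C_f(\Gamma_k'))$ also gives $0$. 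The conclusion survives because $0>-2$, so every summand of the shifted $\Hom_\hmf$ spaces still vanishes, but as written your proof only bounds a proper direct summand of the space the lemma is about and in particular misses the summand where the minimum $0$ is attained.
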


\begin{proof}
The proof of \cite[Lemma 12.20]{Wu-color} applies here without change.
\end{proof}

\begin{proof}[Proof of homotopy equivalence \eqref{fork-sliding-invariance-special-1-+}]
We are trying to prove
\[
\hat{C}_f()~\simeq~ \hat{C}_f(). \vspace{20pt} 
\]
Recall that the chain complex $\hat{C}_f() \vspace{20pt}$ is 
{\tiny
\[
0 \rightarrow C_f(\Gamma_{m,1}) \xrightarrow{\mathfrak{d}_m^+} \left.%
\begin{array}{c}
C_f(\Gamma_{m,0}) \{q^{-1}\}\\
\oplus \\
C_f(\Gamma_{m-1,1})\{q^{-1}\}
\end{array}%
\right. 
\xrightarrow{\mathfrak{d}_{m-1}^+} \cdots \xrightarrow{\mathfrak{d}_{k+1}^+} \left.%
\begin{array}{c}
C_f(\Gamma_{k+1,0}) \{q^{k-m}\}\\
\oplus \\
C_f(\Gamma_{k,1})\{q^{k-m}\}
\end{array}%
\right. 
\xrightarrow{\mathfrak{d}_{k}^+} \cdots\xrightarrow{\mathfrak{d}_{k_0}^+} C_f(\Gamma_{k_0,0}) \{q^{k_0-1-m}\} \rightarrow 0,
\]
}

\noindent where $k_0 = \max \{m-n,0\}$ as above and
\begin{eqnarray*}
\mathfrak{d}_m^+ & = & \left(%
\begin{array}{c}
\chi^1\\
-d_m^+
\end{array}%
\right),\\
\mathfrak{d}_k^+ & = & \left(%
\begin{array}{cc}
d_{k+1}^+ & \chi^1\\
0 & -d_k^+
\end{array}%
\right) ~\text{ for } k_0<k<m, \\
\mathfrak{d}_{k_0}^+ & = & \left(%
\begin{array}{cc}
d_{k_0 +1}^+, & \chi^1\\
\end{array}%
\right).
\end{eqnarray*}

From \eqref{fork-special-decomp-iv-eq}, we have 
\[
C_f(\Gamma_{k,0}) \simeq C_f(\widetilde{\Gamma}_k) \oplus C_f(\Gamma_k')\{[m-k]\}.
\] 
(Here, we identify both sides by the homotopy equivalence given in \eqref{fork-special-decomp-iv-cor-1} in Corollary \ref{fork-special-decomp-iv-corollary}.) By Corollary \ref{contract-expand} and Decomposition (II) (Theorem \ref{decomp-II}), we have 
\[
C_f(\Gamma_{k,1}) \simeq C_f(\Gamma_k')\{[m+1-k]\} \cong C_f(\Gamma_k')\{q^{m-k}\} \oplus C_f(\Gamma_k')\{[m-k]q^{-1}\}.
\]
Altogether, we have
\[
\left.%
\begin{array}{c}
C_f(\Gamma_{k+1,0}) \{q^{k-m}\}\\
\oplus \\
C_f(\Gamma_{k,1})\{q^{k-m}\}
\end{array}%
\right. \simeq 
\left.%
\begin{array}{c}
C_f(\widetilde{\Gamma}_{k+1}) \{q^{k-m}\}\\
\oplus \\
C_f(\Gamma_{k+1}')\{[m-k-1]q^{k-m}\} \\
\oplus \\
C_f(\Gamma_k') \\
\oplus \\
C_f(\Gamma_k')\{[m-k]q^{k-m-1}\}
\end{array}%
\right. \text{ for } k_0<k<m,
\]
and
\[
C_f(\Gamma_{k_0,0}) \{q^{k_0-1-m}\} \simeq 
\left.%
\begin{array}{c}
C_f(\widetilde{\Gamma}_{k_0}) \{q^{k_0-1-m}\}\\
\oplus \\
C_f(\Gamma_{k_0}')\{[m-k_0]q^{k_0-1-m}\} \\
\end{array}%
\right..
\]
So, $\hat{C}_f() \vspace{20pt}$ is isomorphic to 
{\tiny
\[
_{0 \rightarrow C_f(\Gamma_{m,1}) \xrightarrow{\mathfrak{d}_m^+} \left.%
\begin{array}{c}
C_f(\widetilde{\Gamma}_{m}) \{q^{-1}\}\\
\oplus \\
C_f(\Gamma_{m-1}') \\
\oplus \\
C_f(\Gamma_{m-1}')\{q^{-2}\}
\end{array}%
\right.
\xrightarrow{\mathfrak{d}_{m-1}^+} \cdots \xrightarrow{\mathfrak{d}_{k+1}^+} \left.%
\begin{array}{c}
C_f(\widetilde{\Gamma}_{k+1}) \{q^{k-m}\}\\
\oplus \\
C_f(\Gamma_{k+1}')\{[m-k-1]q^{k-m}\} \\
\oplus \\
C_f(\Gamma_k') \\
\oplus \\
C_f(\Gamma_k')\{[m-k]q^{k-m-1}\}
\end{array}%
\right.
\xrightarrow{\mathfrak{d}_{k}^+} \cdots\xrightarrow{\mathfrak{d}_{k_0}^+} \left.%
\begin{array}{c}
C_f(\widetilde{\Gamma}_{k_0}) \{q^{k_0-1-m}\}\\
\oplus \\
C_f(\Gamma_{k_0}')\{[m-k_0]q^{k_0-1-m}\} \\
\end{array}%
\right. \rightarrow 0.}
\]
}

\noindent In this form, for $k_0<k<m-1$, $\mathfrak{d}_k^+$ is given by a $4\times 4$ matrix $(\mathfrak{d}_{k;i,j}^+)_{4\times4}$. Clearly, 
\[
\mathfrak{d}_{k;i,j}^+=0 \text{ for } (i,j)=(3,1),~(3,2),~(4,1),~(4,2). 
\]
By Lemma \ref{relating-D-0-1-lemma}, 
\[
\mathfrak{d}_{k;1,1}^+ \approx \tilde{d}_{k+1}^+.
\]
By Lemma \ref{relating-C-D-11-lemma},
\[
\mathfrak{d}_{k;3,3}^+ \approx \delta_{k}^+.
\]
Using the homotopy equivalence given in \eqref{fork-special-decomp-iv-cor-1}, the definition of $\alpha_k$ and that $G_k \circ\vec{\alpha}_k\simeq 0$, we get
\begin{eqnarray*}
\mathfrak{d}_{k;1,4}^+ & \simeq & 0,\\
\mathfrak{d}_{k;2,4}^+ & \approx & \id_{C_f(\Gamma_k')\{[m-k]q^{k-m-1}\}}.
\end{eqnarray*}
By Lemma \ref{hmf-gamma-prime-k-k-1}, we have
\[
\mathfrak{d}_{k;3,4}^+ \simeq 0.
\]
Altogether, we have that, for $k_0<k<m-1$,
\[
\mathfrak{d}_k^+ \simeq \left(%
\begin{array}{cccc}
c_k \tilde{d}_{k+1}^+ & \ast & \ast & 0 \\
\ast & \ast & \ast & c_k''\id_{C_f(\Gamma_k')\{[m-k]q^{k-m-1}\}}\\
0 & 0 & c_k' \delta_{k}^+ & 0 \\
0 &  0 & \ast & \ast
\end{array}%
\right),
\]
where $c_k$, $c_k'$ and $c_k''$ are non-zero scalars and $\ast$ means morphisms we have not determined. Similarly,
\begin{eqnarray*}
\mathfrak{d}_{k_0}^+ & \simeq & \left(%
\begin{array}{cccc}
c_{k_0} \tilde{d}_{k_0+1}^+ & \ast & \ast & 0 \\
\ast & \ast & \ast & c_{k_0}''\id_{C_f(\Gamma_{k_0}')\{[m-k+0]q^{k_0-1-m}\}}
\end{array}%
\right), \\
\mathfrak{d}_{m-1}^+ & \simeq & \left(%
\begin{array}{ccc}
c_{m-1} \tilde{d}_{m}^+ & \ast & 0 \\
\ast  & \ast & c_{m-1}''\id_{C_f(\Gamma_{m-1}')\{q^{-2}\}}\\
0  & c_{m-1}' \delta_{m-1}^+ & 0 \\
0  & \ast & \ast
\end{array}%
\right),
\end{eqnarray*}
where $c_{k_0}$, $c_{k_0}''$, $c_{m-1}$, $c_{m-1}'$ and $c_{m-1}''$ are non-zero scalars.

Now apply Gaussian Elimination (Lemma \ref{gaussian-elimination}) to $c_k''\id_{C_f(\Gamma_k')\{[m-k]q^{k-m-1}\}}$ in $\mathfrak{d}_{k}^+$ for $k=k_0,k_0+1,\dots,m-1$ in that order. We get that $\hat{C}_f() \vspace{20pt}$ is homotopic to 
{\tiny
\[
0 \rightarrow C_f(\Gamma_{m,1}) \xrightarrow{\hat{\mathfrak{d}}_m^+} \left.%
\begin{array}{c}
C_f(\widetilde{\Gamma}_{m}) \{q^{-1}\}\\
\oplus \\
C_f(\Gamma_{m-1}') \\
\end{array}%
\right. 
\xrightarrow{\hat{\mathfrak{d}}_{m-1}^+} \cdots \xrightarrow{\hat{\mathfrak{d}}_{k+1}^+} \left.%
\begin{array}{c}
C_f(\widetilde{\Gamma}_{k+1}) \{q^{k-m}\}\\
\oplus \\
C_f(\Gamma_k') \\
\end{array}%
\right. 
\xrightarrow{\hat{\mathfrak{d}}_{k}^+} \cdots\xrightarrow{\hat{\mathfrak{d}}_{k_0}^+} C_f(\widetilde{\Gamma}_{k_0}) \{q^{k_0-1-m}\} \rightarrow 0,
\]
}

\noindent where
\begin{eqnarray}
\label{hat-d-k}\hat{\mathfrak{d}}_k^+ & \simeq & \left(%
\begin{array}{cc}
c_k \tilde{d}_{k+1}^+ & \ast\\
0 & c_k' \delta_{k}^+
\end{array}%
\right) ~\text{ for } k_0<k<m, \\
\label{hat-d-k0}\hat{\mathfrak{d}}_{k_0}^+ & \simeq & \left(%
\begin{array}{cc}
c_{k_0} \tilde{d}_{k_0+1}^+, & \ast
\end{array}%
\right).
\end{eqnarray}
Next we determine $\hat{\mathfrak{d}}_m^+$. From \eqref{gamma-m-1-decomp-V}, we have
\[
C_f(\Gamma_{m,1}) \simeq \left.%
\begin{array}{c}
C_f(\widetilde{\Gamma}_{m+1})\\
\oplus \\
C_f(\Gamma_{m-1}'')\\
\end{array}%
\right. .
\]
Under this decomposition, $\hat{\mathfrak{d}}_m^+$ is represented by a $2\times2$ matrix. By Lemmas \ref{hmf-tilde-gamma-prime-m}, \ref{gamma-m-1-p-j-differential} and \ref{gamma-m-1-p-j-prime-dif}, we know that
\begin{equation}\label{hat-d-m}
\hat{\mathfrak{d}}_m^+ \simeq \left(%
\begin{array}{cc}
c_{m} \tilde{d}_{m+1}^+ & \ast \\
0 & c_{m}' J_{m-1,m-1}
\end{array}%
\right),
\end{equation}
where $c_m$ and $c_m'$ are non-zero scalars. So $\hat{C}_f() \vspace{20pt}$ is homotopic to 
{\tiny
\[
0 \rightarrow \left.%
\begin{array}{c}
C_f(\widetilde{\Gamma}_{m+1})\\
\oplus \\
C_f(\Gamma_{m-1}'')\\
\end{array}%
\right. \xrightarrow{\hat{\mathfrak{d}}_m^+} \left.%
\begin{array}{c}
C_f(\widetilde{\Gamma}_{m}) \{q^{-1}\}\\
\oplus \\
C_f(\Gamma_{m-1}') \\
\end{array}%
\right. 
\xrightarrow{\hat{\mathfrak{d}}_{m-1}^+} \cdots \xrightarrow{\hat{\mathfrak{d}}_{k+1}^+} \left.%
\begin{array}{c}
C_f(\widetilde{\Gamma}_{k+1}) \{q^{k-m}\}\\
\oplus \\
C_f(\Gamma_k') \\
\end{array}%
\right. 
\xrightarrow{\hat{\mathfrak{d}}_{k}^+} \cdots\xrightarrow{\hat{\mathfrak{d}}_{k_0}^+} C_f(\widetilde{\Gamma}_{k_0}) \{q^{k_0-1-m}\} \rightarrow 0,
\]
}

\noindent where $\hat{\mathfrak{d}}_m^+,\dots,\hat{\mathfrak{d}}_{k_0}^+$ are given in \eqref{hat-d-k}, \eqref{hat-d-k0} and \eqref{hat-d-m}.

Recall that, by Lemma \ref{decomp-V-special-2}, 
\[
C_f(\Gamma_k') \simeq \begin{cases}
C_f(\Gamma_k'') \oplus C_f(\Gamma_{k-1}'') & \text{if } k_0+1 \leq l \leq m-1 ,\\
C_f(\Gamma_k'') & \text{if } k=k_0.
\end{cases}
\]
By Proposition \ref{trivial-complex-prop}, under the decomposition
\[
\left.%
\begin{array}{c}
C_f(\widetilde{\Gamma}_{k+1}) \{q^{k-m}\}\\
\oplus \\
C_f(\Gamma_k') \\
\end{array}%
\right. \simeq 
\left.%
\begin{array}{c}
C_f(\widetilde{\Gamma}_{k+1}) \{q^{k-m}\}\\
\oplus \\
C_f(\Gamma_k'') \\
\oplus \\
C_f(\Gamma_{k-1}'')
\end{array}%
\right. ,
\]
we have
\begin{eqnarray}
\label{hat-d-m-1}\hat{\mathfrak{d}}_m^+ & \simeq &
\left(%
\begin{array}{cc}
c_{m} \tilde{d}_{m+1}^+ & \ast \\
0 & c_{m}''' \id_{C_f(\Gamma_{m-1}'')} \\
0 & 0
\end{array}%
\right), \\
\label{hat-d-k-1}\hat{\mathfrak{d}}_k^+ & \simeq & \left(%
\begin{array}{ccc}
c_k \tilde{d}_{k+1}^+ & \ast& \ast \\
0 & 0& c_k''' \id_{C_f(\Gamma_{k-1}'')} \\
0 & 0& 0
\end{array}%
\right) ~\text{ for } k_0+1<k<m,
\end{eqnarray}
where $c_k'''$ is a non-zero scalar for $k_0+1<k\leq m$. Since $C_f(\Gamma_{k_0}') \simeq C_f(\Gamma_{k_0}'')$, we have 
\[
\left.%
\begin{array}{c}
C_f(\widetilde{\Gamma}_{k_0+1}) \{q^{k_0-m}\}\\
\oplus \\
C_f(\Gamma_{k_0}') \\
\end{array}%
\right. \simeq 
\left.%
\begin{array}{c}
C_f(\widetilde{\Gamma}_{k_0+1}) \{q^{k_0-m}\}\\
\oplus \\
C_f(\Gamma_{k_0}'') \\
\end{array}%
\right. 
\]
and 
\begin{eqnarray}
\label{hat-d-k0+1-1} \hat{\mathfrak{d}}_{k_0+1}^+ & \simeq & \left(%
\begin{array}{ccc}
c_{k_0+1} \tilde{d}_{k_0+2}^+ & \ast& \ast \\
0 & 0 & c_{k_0+1}''' \id_{C_f(\Gamma_{k_0}'')}
\end{array}%
\right), \\
\label{hat-d-k0-1} \hat{\mathfrak{d}}_{k_0}^+ & \simeq & \left(%
\begin{array}{cc}
c_{k_0} \tilde{d}_{k_0+1}^+, & \ast
\end{array}%
\right),
\end{eqnarray}
where $c_{k_0+1}'''$ is a non-zero scalar.

Putting these together, we know that  $\hat{C}_f() \vspace{20pt}$ is homotopic to 
{\tiny
\[_{
0 \rightarrow \left.%
\begin{array}{c}
C_f(\widetilde{\Gamma}_{m+1})\\
\oplus \\
C_f(\Gamma_{m-1}'')\\
\end{array}%
\right. \xrightarrow{\hat{\mathfrak{d}}_m^+} \left.%
\begin{array}{c}
C_f(\widetilde{\Gamma}_{m}) \{q^{-1}\}\\
\oplus \\
C_f(\Gamma_{m-1}'') \\
\oplus \\
C_f(\Gamma_{m-2}'')
\end{array}%
\right. 
\xrightarrow{\hat{\mathfrak{d}}_{m-1}^+}  \cdots \xrightarrow{\hat{\mathfrak{d}}_{k_0+1}^+} \left.%
\begin{array}{c}
C_f(\widetilde{\Gamma}_{k_0+1}) \{q^{k_0-m}\}\\
\oplus \\
C_f(\Gamma_{k_0}'') \\
\end{array}%
\right. \xrightarrow{\hat{\mathfrak{d}}_{k_0}^+} C_f(\widetilde{\Gamma}_{k_0}) \{q^{k_0-1-m}\} \rightarrow 0,}
\]
}

\noindent where $\hat{\mathfrak{d}}_m^+,\dots,\hat{\mathfrak{d}}_{k_0}^+$ are given in \eqref{hat-d-m-1},\eqref{hat-d-k-1}, \eqref{hat-d-k0+1-1} and \eqref{hat-d-k0-1}.

Applying Gaussian Elimination (Lemma \ref{gaussian-elimination}) to $c_k''' \id_{C_f(\Gamma_{k-1}'')}$ in $\hat{\mathfrak{d}}_k^+$ for $k=m,m-1,\dots,k_0+1$, we get that $\hat{C}_f() \vspace{20pt}$ is homotopic to
{\tiny
\[
0 \rightarrow 
C_f(\widetilde{\Gamma}_{m+1}) \xrightarrow{\check{\mathfrak{d}}_m^+} 
C_f(\widetilde{\Gamma}_{m}) \{q^{-1}\}
\xrightarrow{\check{\mathfrak{d}}_{m-1}^+} \cdots \xrightarrow{\check{\mathfrak{d}}_{k+1}^+} 
C_f(\widetilde{\Gamma}_{k+1}) \{q^{k-m}\}
\xrightarrow{\check{\mathfrak{d}}_{k}^+} \cdots\xrightarrow{\check{\mathfrak{d}}_{k_0}^+} C_f(\widetilde{\Gamma}_{k_0}) \{q^{k_0-1-m}\} \rightarrow 0,
\]
}

\noindent where $\check{\mathfrak{d}}_{k}^+ \simeq c_k \tilde{d}_{k+1}^+$ for $k=m,m-1,\dots,k_0$. Recall that $c_k\neq 0$ for $k=m,\dots,k_0$. So this last chain complex is isomorphic to $\hat{C}_f() \vspace{20pt}$ in $\ch(\hmf)$. Therefore, 
\[
\hat{C}_f()~\simeq~ \hat{C}_f(). \vspace{20pt} 
\]
\end{proof}

\begin{proof}[Proof of homotopy equivalence \eqref{fork-sliding-invariance-special-1--}]
We are trying to prove that
\[
\hat{C}_f()~\simeq~ \hat{C}_f(). \vspace{20pt} 
\]

Recall that the chain complex $\hat{C}_f() \vspace{20pt}$ is
{\tiny
\[
0 \rightarrow C_f(\Gamma_{k_0,0}) \{q^{m+1-k_0}\} \xrightarrow{\mathfrak{d}_{k_0}^-} \cdots \xrightarrow{\mathfrak{d}_{k-1}^-} \left.%
\begin{array}{c}
C_f(\Gamma_{k,0}) \{q^{m+1-k}\}\\
\oplus \\
C_f(\Gamma_{k-1,1})\{q^{m+1-k}\}
\end{array}%
\right. 
\xrightarrow{\mathfrak{d}_{k}^-} \cdots  \xrightarrow{\mathfrak{d}_{m-1}^-} \left.%
\begin{array}{c}
C_f(\Gamma_{m,0}) \{q\}\\
\oplus \\
C_f(\Gamma_{m-1,1})\{q\}
\end{array}%
\right. 
\xrightarrow{\mathfrak{d}_m^-} C_f(\Gamma_{m,1}) \rightarrow 0,
\]
}
where $k_0 = \max \{m-n,0\}$ as above and
\begin{eqnarray*}
\mathfrak{d}_{k_0}^- & = & \left(%
\begin{array}{c}
d_{k_0}^-\\ 
\chi^0
\end{array}%
\right),\\
\mathfrak{d}_k^- & = & \left(%
\begin{array}{cc}
d_{k}^- & 0\\
\chi^0 & -d_{k-1}^-
\end{array}%
\right) ~\text{ for } k_0<k<m, \\
\mathfrak{d}_{m}^- & = & \left(%
\begin{array}{cc}
\chi^0 & -d_{m-1}^-\\
\end{array}%
\right).
\end{eqnarray*}

From \eqref{fork-special-decomp-iv-eq}, we have 
\[
C_f(\Gamma_{k,0}) \simeq C_f(\widetilde{\Gamma}_k) \oplus C_f(\Gamma_k')\{[m-k]\}.
\] 
(Here, we identify both sides by the homotopy equivalence given in \eqref{fork-special-decomp-iv-cor-2} in Corollary \ref{fork-special-decomp-iv-corollary}.) By Corollary \ref{contract-expand} and Decomposition (II) (Theorem \ref{decomp-II}), we have 
\[
C_f(\Gamma_{k,1}) \simeq C_f(\Gamma_k')\{[m+1-k]\} \cong C_f(\Gamma_k')\{q^{k-m}\} \oplus C_f(\Gamma_k')\{[m-k]\cdot q\}.
\]
Therefore,
\[
\left.%
\begin{array}{c}
C_f(\Gamma_{k,0}) \{q^{m+1-k}\}\\
\oplus \\
C_f(\Gamma_{k-1,1})\{q^{m+1-k}\}
\end{array}%
\right. \simeq 
\left.%
\begin{array}{c}
C_f(\widetilde{\Gamma}_{k}) \{q^{m+1-k}\}\\
\oplus \\
C_f(\Gamma_{k}')\{[m-k]q^{m+1-k}\} \\
\oplus \\
C_f(\Gamma_{k-1}') \\
\oplus \\
C_f(\Gamma_{k-1}')\{[m+1-k]q^{m+2-k}\}
\end{array}%
\right. \text{ for } k_0<k<m,
\]
and 
\[
C_f(\Gamma_{k_0,0}) \{q^{m+1-k_0}\} \simeq 
\left.%
\begin{array}{c}
C_f(\widetilde{\Gamma}_{k_0}) \{q^{m+1-k_0}\}\\
\oplus \\
C_f(\Gamma_{k_0}')\{[m-k_0]q^{m+1-k_0}\} \\
\end{array}%
\right..
\]
So, $\hat{C}_f() \vspace{20pt}$ is isomorphic to 
{\tiny
\[
_{0 \rightarrow \left.%
\begin{array}{c}
C_f(\widetilde{\Gamma}_{k_0}) \{q^{m+1-k_0}\}\\
\oplus \\
C_f(\Gamma_{k_0}')\{[m-k_0]q^{m+1-k_0}\} \\
\end{array}%
\right. \xrightarrow{\mathfrak{d}_{k_0}^-} \cdots \xrightarrow{\mathfrak{d}_{k-1}^-} \left.%
\begin{array}{c}
C_f(\widetilde{\Gamma}_{k}) \{q^{m+1-k}\}\\
\oplus \\
C_f(\Gamma_{k}')\{[m-k]q^{m+1-k}\} \\
\oplus \\
C_f(\Gamma_{k-1}') \\
\oplus \\
C_f(\Gamma_{k-1}')\{[m+1-k]q^{m+2-k}\}
\end{array}%
\right.
\xrightarrow{\mathfrak{d}_{k}^-} \cdots  \xrightarrow{\mathfrak{d}_{m-1}^-} \left.%
\begin{array}{c}
C_f(\widetilde{\Gamma}_{m}) \{q\}\\
\oplus \\
C_f(\Gamma_{m-1}') \\
\oplus \\
C_f(\Gamma_{m-1}')\{q^{2}\}
\end{array}%
\right.
\xrightarrow{\mathfrak{d}_m^-} C_f(\Gamma_{m,1}) \rightarrow 0.}
\]
}

\noindent In this form, for $k_0<k<m-1$, $\mathfrak{d}_k^-$ is given by a $4\times 4$ matrix $(\mathfrak{d}_{k;i,j}^-)_{4\times4}$. Clearly, 
\[
\mathfrak{d}_{k;i,j}^-=0 \text{ for } (i,j)=(1,3),~(1,4),~(2,3),~(2,4). 
\]
By Lemma \ref{relating-D-0-1-lemma}, 
\[
\mathfrak{d}_{k;1,1}^- \approx \tilde{d}_{k}^-.
\]
By Lemma \ref{relating-C-D-11-lemma},
\[
\mathfrak{d}_{k;3,3}^- \approx \delta_{k-1}^-.
\]
Using the homotopy equivalence given in \eqref{fork-special-decomp-iv-cor-2}, the definition of $\beta_k$ and that $\vec{\beta}_k \circ F_k\simeq 0$, we know that
\begin{eqnarray*}
\mathfrak{d}_{k;4,1}^- & \simeq & 0,\\
\mathfrak{d}_{k;4,2}^- & \approx & \id_{C_f(\Gamma_{k}')\{[m-k]q^{m+1-k}\}}.
\end{eqnarray*}
By Lemma \ref{hmf-gamma-prime-k-k-1}, we have
\[
\mathfrak{d}_{k;4,3}^- \simeq 0.
\]
Altogether, we have that, for $k_0<k<m-1$,
\[
\mathfrak{d}_k^- \simeq \left(%
\begin{array}{cccc}
c_k \tilde{d}_{k}^- & \ast & 0 & 0 \\
\ast & \ast & 0 & 0\\
\ast & \ast & c_k'\delta_{k-1}^- & \ast \\
0 &  c_k''\id_{C_f(\Gamma_{k}')\{[m-k]q^{m+1-k}\}} & 0 & \ast
\end{array}%
\right),
\]
where $c_k$, $c_k'$ and $c_k''$ are non-zero scalars and $\ast$ means morphisms we have not determined. Similarly,
\begin{eqnarray*}
\mathfrak{d}_{k_0}^- & \simeq & \left(%
\begin{array}{cc}
c_{k_0} \tilde{d}_{k_0}^- & \ast  \\
\ast & \ast \\
\ast & \ast  \\
0 &  c_{k_0}''\id_{C_f(\Gamma_{k_0}')\{[m-k_0]q^{m+1-k_0}\}} 
\end{array}%
\right), \\
\mathfrak{d}_{m-1}^- & \simeq & \left(%
\begin{array}{cccc}
c_{m-1} \tilde{d}_{m-1}^- & \ast & 0 & 0 \\
\ast & \ast & c_{m-1}'\delta_{m-2}^- & \ast \\
0 &  c_{m-1}''\id_{C_f(\Gamma_{m-1}')\{q^{2}\}} & 0 & \ast
\end{array}%
\right),
\end{eqnarray*}
where $c_{k_0}$, $c_{k_0}''$, $c_{m-1}$, $c_{m-1}'$ and $c_{m-1}''$ are non-zero scalars.

Now apply Gaussian Elimination (Lemma \ref{gaussian-elimination}) to $c_{k}''\id_{C_f(\Gamma_{k}')\{[m-k]q^{m+1-k}\}}$ in $\mathfrak{d}_{k}^-$ for $k=k_0,k_0+1,\dots,m-1$ in that order. We get that $\hat{C}_f() \vspace{20pt}$ is homotopic to
{\tiny
\[
0 \rightarrow 
C_f(\widetilde{\Gamma}_{k_0}) \{q^{m+1-k_0}\} \xrightarrow{\hat{\mathfrak{d}}_{k_0}^-} \cdots \xrightarrow{\hat{\mathfrak{d}}_{k-1}^-} \left.%
\begin{array}{c}
C_f(\widetilde{\Gamma}_{k}) \{q^{m+1-k}\}\\
\oplus \\
C_f(\Gamma_{k-1}') \\
\end{array}%
\right.
\xrightarrow{\hat{\mathfrak{d}}_{k}^-} \cdots  \xrightarrow{\hat{\mathfrak{d}}_{m-1}^-} \left.%
\begin{array}{c}
C_f(\widetilde{\Gamma}_{m}) \{q\}\\
\oplus \\
C_f(\Gamma_{m-1}') \\
\end{array}%
\right.
\xrightarrow{\hat{\mathfrak{d}}_m^-} C_f(\Gamma_{m,1}) \rightarrow 0,
\]
}

\noindent where
\begin{eqnarray}
\label{hat-d-k-}\hat{\mathfrak{d}}_k^- & \simeq & \left(%
\begin{array}{cc}
c_k \tilde{d}_{k}^-  & 0 \\
\ast & c_k'\delta_{k-1}^- 
\end{array}%
\right) ~\text{ for } k_0<k<m, \\
\label{hat-d-k0-}\hat{\mathfrak{d}}_{k_0}^- & \simeq & \left(%
\begin{array}{c}
c_{k_0} \tilde{d}_{k_0}^-  \\
\ast  \\
\ast  
\end{array}%
\right).
\end{eqnarray}
Next we determine $\hat{\mathfrak{d}}_m^-$. By Decomposition (V) (more specifically, \eqref{gamma-m-1-decomp-V}), we have
\[
C_f(\Gamma_{m,1}) \simeq \left.%
\begin{array}{c}
C_f(\widetilde{\Gamma}_{m+1})\\
\oplus \\
C_f(\Gamma_{m-1}'')
\end{array}%
\right. .
\]
Under this decomposition, $\hat{\mathfrak{d}}_m^-$ is represented by a $2\times2$ matrix. By Lemmas \ref{hmf-tilde-gamma-prime-m}, \ref{gamma-m-1-p-j-differential} and \ref{gamma-m-1-p-j-prime-dif}, we know that
\begin{equation}\label{hat-d-m-}
\hat{\mathfrak{d}}_m^- \simeq \left(%
\begin{array}{cc}
c_{m} \tilde{d}_{m}^- & 0 \\
\ast & c_{m}' P_{m-1,m-1}
\end{array}%
\right),
\end{equation}
where $c_m$ and $c_m'$ are non-zero scalars. So $\hat{C}_f() \vspace{20pt}$ is homotopic to 
{\tiny
\[
0 \rightarrow 
C(\widetilde{\Gamma}_{k_0}) \{q^{m+1-k_0}\} \xrightarrow{\hat{\mathfrak{d}}_{k_0}^-} \cdots \xrightarrow{\hat{\mathfrak{d}}_{k-1}^-} \left.%
\begin{array}{c}
C(\widetilde{\Gamma}_{k}) \{q^{m+1-k}\}\\
\oplus \\
C(\Gamma_{k-1}') \\
\end{array}%
\right.
\xrightarrow{\hat{\mathfrak{d}}_{k}^-} \cdots  \xrightarrow{\hat{\mathfrak{d}}_{m-1}^-} \left.%
\begin{array}{c}
C(\widetilde{\Gamma}_{m}) \{q\}\\
\oplus \\
C(\Gamma_{m-1}') \\
\end{array}%
\right.
\xrightarrow{\hat{\mathfrak{d}}_m^-} \left.%
\begin{array}{c}
C(\widetilde{\Gamma}_{m+1})\\
\oplus \\
C(\Gamma_{m-1}'')
\end{array}%
\right. \rightarrow 0,
\]
}
where $\hat{\mathfrak{d}}_m^-,\dots,\hat{\mathfrak{d}}_{k_0}^-$ are given in \eqref{hat-d-k-}, \eqref{hat-d-k0-} and \eqref{hat-d-m-}.

Recall that, by  Lemma \ref{decomp-V-special-2}, 
\[
C_f(\Gamma_k') \simeq \begin{cases}
C_f(\Gamma_k'') \oplus C_f(\Gamma_{k-1}'') & \text{if } k_0+1 \leq l \leq m-1 ,\\
C_f(\Gamma_k'') & \text{if } k=k_0.
\end{cases}
\]
By Proposition \ref{trivial-complex-prop}, under the decomposition
\[
\left.%
\begin{array}{c}
C_f(\widetilde{\Gamma}_{k}) \{q^{m+1-k}\}\\
\oplus \\
C_f(\Gamma_{k-1}') \\
\end{array}%
\right. \simeq 
\left.%
\begin{array}{c}
C_f(\widetilde{\Gamma}_{k}) \{q^{m+1-k}\}\\
\oplus \\
C_f(\Gamma_{k-1}'') \\
\oplus \\
C_f(\Gamma_{k-2}'')
\end{array}%
\right. ,
\]
we have
\begin{eqnarray}
\label{hat-d-m-1-}\hat{\mathfrak{d}}_m^- & \simeq & \left(%
\begin{array}{ccc}
c_{m} \tilde{d}_{m}^- & 0 & 0\\
\ast & c_{m}''' \id_{C_f(\Gamma_{m-1}'')} & 0
\end{array}%
\right), \\
\label{hat-d-k-1-}\hat{\mathfrak{d}}_k^- & \simeq & \left(%
\begin{array}{ccc}
c_k \tilde{d}_{k}^-  & 0 & 0\\
\ast & 0 & 0 \\
\ast & c_k'''\id_{C_f(\Gamma_{k-1}'')} & 0
\end{array}%
\right) ~\text{ for } k_0+1<k<m,
\end{eqnarray}
where $c_k'''$ is a non-zero scalar for $k_0+1<k\leq m$. Since $C_f(\Gamma_{k_0}') \simeq C_f(\Gamma_{k_0}'')$, we have 
\[
\left.%
\begin{array}{c}
C_f(\widetilde{\Gamma}_{k_0+1}) \{q^{m-k_0}\}\\
\oplus \\
C_f(\Gamma_{k_0}') \\
\end{array}%
\right. \simeq 
\left.%
\begin{array}{c}
C_f(\widetilde{\Gamma}_{k_0+1}) \{q^{m-k_0}\}\\
\oplus \\
C_f(\Gamma_{k_0}'') \\
\end{array}%
\right. 
\]
and 
\begin{eqnarray}
\label{hat-d-k0+1-1-} \hat{\mathfrak{d}}_{k_0+1}^- & \simeq & \left(%
\begin{array}{cc}
c_{k_0+1} \tilde{d}_{k_0+1}^-  & 0 \\
\ast & 0 \\
\ast & c_{k_0+1}'''\id_{C_f(\Gamma_{k_0}'')}
\end{array}%
\right), \\
\label{hat-d-k0-1-} \hat{\mathfrak{d}}_{k_0}^- & \simeq & \left(%
\begin{array}{c}
c_{k_0} \tilde{d}_{k_0+1}^+ \\ 
\ast
\end{array}%
\right),
\end{eqnarray}
where $c_{k_0+1}'''$ is a non-zero scalar. Putting these together, we know that $\hat{C}_f() \vspace{20pt}$ is homotopic to 
{\tiny
\[
_{0 \rightarrow 
C_f(\widetilde{\Gamma}_{k_0}) \{q^{m+1-k_0}\} \xrightarrow{\hat{\mathfrak{d}}_{k_0}^-} \left.%
\begin{array}{c}
C_f(\widetilde{\Gamma}_{k_0+1}) \{q^{m-k_0}\}\\
\oplus \\
C_f(\Gamma_{k_0}'') \\
\end{array}%
\right. \xrightarrow{\hat{\mathfrak{d}}_{k_0+1}^-} \cdots \xrightarrow{\hat{\mathfrak{d}}_{k-1}^-} \left.%
\begin{array}{c}
C_f(\widetilde{\Gamma}_{k}) \{q^{m+1-k}\}\\
\oplus \\
C_f(\Gamma_{k-1}'') \\
\oplus \\
C_f(\Gamma_{k-2}'')
\end{array}%
\right.
\xrightarrow{\hat{\mathfrak{d}}_{k}^-} \cdots 
\xrightarrow{\hat{\mathfrak{d}}_m^-} \left.%
\begin{array}{c}
C_f(\widetilde{\Gamma}_{m+1})\\
\oplus \\
C_f(\Gamma_{m-1}'')
\end{array}%
\right. \rightarrow 0,}
\]
}

\noindent where $\hat{\mathfrak{d}}_m^-,\dots,\hat{\mathfrak{d}}_{k_0}^-$ are given in \eqref{hat-d-m-1-},\eqref{hat-d-k-1-}, \eqref{hat-d-k0+1-1-} and \eqref{hat-d-k0-1-}.

Applying Gaussian Elimination (Lemma \ref{gaussian-elimination}) to $c_k''' \id_{C_f(\Gamma_{k-1}'')}$ in $\hat{\mathfrak{d}}_k^-$ for $k=m,m-1,\dots,k_0+1$, we get that $\hat{C}_f() \vspace{20pt}$ is homotopic to
\[
0 \rightarrow 
C_f(\widetilde{\Gamma}_{k_0}) \{q^{m+1-k_0}\} \xrightarrow{\check{\mathfrak{d}}_{k_0}^-}  \cdots \xrightarrow{\check{\mathfrak{d}}_{k-1}^-} 
C_f(\widetilde{\Gamma}_{k}) \{q^{m+1-k}\}
\xrightarrow{\check{\mathfrak{d}}_{k}^-} \cdots  
\xrightarrow{\check{\mathfrak{d}}_m^-} 
C_f(\widetilde{\Gamma}_{m+1})\rightarrow 0,
\]
where $\check{\mathfrak{d}}_{k}^- \simeq c_k \tilde{d}_{k}^-$ for $k=m,\dots,k_0$. Recall that $c_k\neq 0$ for $k=m,\dots,k_0$. So this last chain complex is isomorphic to $\hat{C}_f() \vspace{20pt}$ in $\ch(\hmf)$. Therefore, 
\[
\hat{C}_f()~\simeq~ \hat{C}_f(). \vspace{20pt} 
\]
\end{proof}

\section{Invariance under Reidemeister Moves}\label{sec-inv-reidemeister}

In this section, we prove that the homotopy type of the chain complex associated to a knotted MOY graph is invariant under Reidemeister moves. The main result of this section is Theorem \ref{invariance-reidemeister-all} below. Note that Theorem \ref{thm-inv-main} is a special case of Theorem \ref{invariance-reidemeister-all}.

\begin{theorem}\label{invariance-reidemeister-all}
Let $D_0$ and $D_1$ be two knotted MOY graphs. Assume that there is a finite sequence of Reidemeister moves that changes $D_0$ into $D_1$. Then $C_f(D_0) \simeq C_f(D_1)$, that is, they are isomorphic as objects of $\hch(\hmf)$.
\end{theorem}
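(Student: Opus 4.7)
The plan is to follow the proof of the corresponding Reidemeister invariance theorem for $C(D)$ in \cite{Wu-color} step by step, translating every step into the equivariant setting via the functor $\varpi_0$ of Lemma \ref{MOY-object-of-hmf}. Marking independence is already Corollary \ref{complex-knotted-MOY-marking-independence}, and the colored fork-sliding moves are handled by Theorem \ref{fork-sliding-invariance-general}, so the remaining task is to establish invariance under the classical Reidemeister moves R1, R2 and R3 with colored strands.

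For each such move, I would write out the chain complexes $C_f(D_0)$ and $C_f(D_1)$ explicitly using Definition \ref{complex-knotted-MOY-def} together with the explicit form of the crossing differentials from Theorem \ref{explicit-differential-general}. Using the direct-sum decompositions (I)--(V) of Section \ref{sec-decomps} and the fork-sliding equivalences of Theorem \ref{fork-sliding-invariance-general}, each term of these complexes splits into a direct sum of simpler pieces. Repeated applications of Gaussian elimination (Lemma \ref{gaussian-elimination}) then collapse both complexes down to the same reduced form, after which one identifies them by an explicit chain map built from the basic morphisms (bouquet, circle, edge split/merge, $\chi$, saddle) of Section \ref{sec-some-morph}.

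The key justification step, repeated throughout, is the following translation principle. At each stage of the reduction one must verify that a certain composition $\psi : C_f(\Gamma) \to C_f(\Gamma')$ of basic morphisms is homotopically non-trivial, so that by the $\Hom$-space computations of Section \ref{sec-some-morph} it coincides, up to scaling, with the unique non-trivial morphism of its bidegree. To this end one computes $\varpi_0(\psi)$: by Proposition \ref{basic-changes-varpi-0} and Corollary \ref{differentials-varpi-0}, the functor $\varpi_0$ sends each basic morphism and each crossing differential of $C_f$ to the corresponding morphism of $C$ used in \cite{Wu-color}, so non-triviality of $\varpi_0(\psi)$ follows from the analogous statement there, and Lemma \ref{reduce-base-homotopic-equivalence} upgrades this to non-triviality of $\psi$ itself. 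The isomorphism with Krasner's complex in the $1$-colored case is read off from the description of $\hat{C}_f$ in Corollary \ref{explicit-differential-1-n-crossings--res}, as noted in Remark \ref{generalizing-krasner}.

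The main obstacle is bookkeeping rather than conceptual: one must check, invariant by invariant, that every morphism and every homotopy used in the proof for $C(D)$ in \cite{Wu-color} can be assembled from the basic morphisms whose $\varpi_0$-images have been identified in Sections \ref{sec-some-morph}--\ref{sec-chain-complex-def}, and that each step of the ensuing Gaussian-elimination cascade is justified via the translation principle above. Since no step in \cite{Wu-color} uses anything beyond the decomposition theorems (I)--(V), fork sliding, the basic morphisms, and Gaussian elimination --- all of which are now in place for $C_f$ --- the proof goes through essentially verbatim.
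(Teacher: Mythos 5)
Your proposal misidentifies the structure of the proof and, as written, would not go through. The paper does \emph{not} prove the colored Reidemeister moves by writing out the two chain complexes, decomposing them, and collapsing both by Gaussian elimination to a common reduced form --- that is the strategy for fork sliding (Theorem \ref{fork-sliding-invariance-general}), not for the Reidemeister moves themselves. For a colored R3 move (and even for colored R1, R2) the required $\Hom$-space computations for the relevant tangles are not available in the paper, and a direct cube-of-resolutions reduction would be a new and very large computation. What the paper actually does is an induction on the colors of the strands involved. The base case is the $1$-colored case, which is \emph{not} obtained by translating \cite{Wu-color} through $\varpi_0$: it is Krasner's theorem (Theorem \ref{invariance-reidemeister-all-color=1}, citing \cite[Theorem 14]{Krasner}) for R1, R2$_a$, R3, supplemented by a separate argument for R2$_b$ using Decomposition (III) and Gaussian elimination. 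Your proposal never invokes this base case as the start of an induction.

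The inductive step is the second missing idea. To pass from colors $\leq k$ to $\leq k+1$, one replaces an $m$-colored strand by a $(1,m-1)$ ``cable'' and uses Theorem \ref{fork-sliding-invariance-general} together with the induction hypothesis to slide the lower-colored crossings through; this produces the desired homotopy equivalence only after multiplication by the grading shift $\{[m][n]\}$ (coming from Decomposition (II)), and one then needs the cancellation property \cite[Proposition 3.20]{Wu-color} to divide out this factor and conclude $C_f(D_0)\simeq C_f(D_1)$ itself. Similarly, the R1 argument needs Lemma \ref{twisted-forks} and the already-established R2 invariance inside the induction. None of these ingredients --- the induction on color, the cabling reduction, or the $\{[m][n]\}$ cancellation --- appears in your outline, and your closing claim that the proof in \cite{Wu-color} ``uses nothing beyond the decompositions, fork sliding, the basic morphisms, and Gaussian elimination'' is therefore not accurate. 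Your translation principle via $\varpi_0$ is indeed the right tool for verifying that individual morphisms are homotopically non-trivial (and it is used heavily in the proof of fork sliding), but it does not by itself supply the base case or the inductive mechanism for the Reidemeister moves.
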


We prove Theorem \ref{invariance-reidemeister-all} by an induction on the colors of edges involved in the Reidemeister moves. The starting point of this induction is the following theorem by Krasner \cite{Krasner}.

\begin{theorem}\cite[Theorem 14]{Krasner}\label{invariance-reidemeister-all-color=1}
Let $D_0$ and $D_1$ be two knotted MOY graphs. Assume that there is a Reidemeister move changing $D_0$ into $D_1$ that involves only edges colored by $1$. Then $C_f(D_0) \simeq C_f(D_1)$, that is, they are isomorphic as objects of $\hch(\hmf)$. 
\end{theorem}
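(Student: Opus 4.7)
The plan is to argue that this is essentially a restatement of Krasner's invariance theorem. When every edge participating in the Reidemeister move is colored by $1$, the local chain complex that our construction in Section~\ref{sec-chain-complex-def} assigns to the move region coincides (up to isomorphism in $\ch(\hmf)$) with the one Krasner assigns in \cite{Krasner}, so the result reduces directly to \cite[Theorem~14]{Krasner}.

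First I would choose compatible markings. Since a Reidemeister move is a local change, one can find an open disk $B \subset \mathbb{R}^2$ such that $D_0$ and $D_1$ agree outside $B$, and such that every edge of $D_i$ meeting $\partial B$ is colored by $1$. Choose a marking of $D_0$ and a marking of $D_1$ such that $\partial B$ meets $D_i$ only at marked points, such that corresponding boundary marked points carry the same alphabet, and such that the markings of $D_0$ and $D_1$ agree on the common part $D^{\mathrm{out}} := D_0 \setminus B = D_1 \setminus B$. By Corollary~\ref{complex-knotted-MOY-marking-independence}, this choice does not affect the homotopy type of $C_f(D_i)$. With these markings, Definition~\ref{complex-knotted-MOY-def} gives
\[
C_f(D_i) \;\cong\; C_f(T_i) \otimes C_f(D^{\mathrm{out}}),
\]
where $T_i = D_i \cap \overline{B}$ and the tensor product is taken over the ring generated by the alphabets on $\partial B$ together with $R_B$. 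Since $C_f(D^{\mathrm{out}})$ is the same factor for $i = 0, 1$, it suffices to show $C_f(T_0) \simeq C_f(T_1)$ as objects of $\hch(\hmf_{\tilde{R}_\partial, w})$.

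Next I would identify $C_f(T_i)$ with Krasner's chain complex. Because every edge of $T_i$ is colored by $1$, each piece of $T_i$ is either an oriented arc between two $1$-colored marked points, or a $\pm(1,1)$-crossing. For the arcs, the matrix factorization $C_f$ of Definition~\ref{def-complex-embedded-pieces} is a rank-one Koszul factorization over $\Sym(\mathbb{X}|\mathbb{Y}) \otimes R_B$ with entries $X - Y$ and $U$ satisfying $U(X-Y) = f(\mathbb{X}) - f(\mathbb{Y})$; this agrees (after the standard identification of $B_k$ with the deformation parameters) with the arc factorization Krasner uses. For the crossings, Corollary~\ref{explicit-differential-1-n-crossings--res} (specialized to $m = n = 1$ via Remark~\ref{generalizing-krasner}) identifies $\hat{C}_f$ at a $\pm(1,1)$-crossing with Krasner's two-term complex whose differential is a $\chi$-morphism. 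Putting these local identifications together via the tensor product in Definition~\ref{complex-knotted-MOY-def}, $C_f(T_i)$ is isomorphic in $\ch(\hmf)$ to Krasner's chain complex for the tangle $T_i$.

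Finally I would invoke Krasner's invariance theorem \cite[Theorem~14]{Krasner}, which shows that his chain complex is invariant under Reidemeister moves, to conclude $C_f(T_0) \simeq C_f(T_1)$, and hence $C_f(D_0) \simeq C_f(D_1)$. The only subtlety I anticipate is that Krasner's theorem is usually phrased for closed link diagrams, whereas $T_0, T_1$ are tangles with boundary; the standard remedy, which I would spell out, is that his local proof of Reidemeister invariance works verbatim at the tangle level over the base ring generated by the alphabets on $\partial B$ together with $R_B$, because the proof compares the two local chain complexes (before and after the move) inside the homotopy category of matrix factorizations with prescribed boundary alphabets. This is the main (and only) technical step, and once it is recorded, the theorem follows by tensoring with $C_f(D^{\mathrm{out}})$.
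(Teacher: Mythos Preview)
Your reduction is correct in spirit and matches the paper's approach for three of the four oriented Reidemeister moves: the paper, like you, simply cites Krasner \cite{Krasner} for moves I, II$_a$, and III, relying on Remark~\ref{generalizing-krasner} to identify $C_f$ on uncolored pieces with Krasner's complex.

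The gap is Reidemeister II$_b$ (the two strands oriented oppositely). The paper explicitly records that Krasner's proof covers only I, II$_a$, and III, so your appeal to \cite[Theorem~14]{Krasner} does not close this case. The paper supplies a separate argument for II$_b$, following \cite{KR1}: it writes out the double complex for the two-crossing tangle, uses the explicit morphisms from the proof of Decomposition~(III) (Theorem~\ref{decomp-III}) to identify a direct summand on which a differential component is a homotopy equivalence, and then applies Gaussian elimination (Lemma~\ref{gaussian-elimination}) twice to collapse the complex to the trivial one. Your proposal would need either to add this computation or to argue carefully that II$_b$ at the tangle level can be derived from I, II$_a$, and III together with the fork-sliding and edge-contraction moves already available; the paper takes the first route.
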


\begin{proof}
Krasner \cite{Krasner} proved the invariance under Reidemeister moves I, II$_a$ and III. We only need to prove the invariance under Reidemeister Move II$_b$, which we prove using the argument in \cite{KR1}. 

Recall that
\[
C_f(\xygraph{
!{0;/r2pc/:}
[u]
!{\vcrossneg=>|{1}}
!{\vcross<{1}}})
=
`` 0\rightarrow C_f(\input{RM-III-pro5}) \xrightarrow{d_{-1}} C_f(\setlength{\unitlength}{1pt}
\begin{picture}(46,25)(-23,25)
\put(-15,50){\vector(1,0){30}}
\put(18,45){\tiny{$_{1}$}}

\qbezier(15,35)(15,40)(0,40)
\qbezier(-15,35)(-15,40)(0,40)
\put(15,35){\line(0,-1){20}}
\qbezier(15,15)(15,10)(0,10)
\qbezier(-15,15)(-15,10)(0,10)
\put(-15,15){\vector(0,1){20}}
\put(18,25){\tiny{$_{1}$}}

\put(15,0){\vector(-1,0){30}}
\put(-20,5){\tiny{$_{1}$}}

\end{picture}) \oplus C_f(\input{RM-III-pro2}) \xrightarrow{d_0} C_f(\input{RM-III-pro4}) \rightarrow 0,"
\]
where 
\begin{eqnarray*}
d_{-1} & = & \left(%
\begin{array}{l}
\chi^1_{lower}\\
\chi^0_{upper}
\end{array}%
\right), \\
d_0 & = & \left(%
\begin{array}{ll}
\chi^0_{upper}, & \chi^1_{lower}
\end{array}%
\right),
\end{eqnarray*}
and $\chi^1_{lower}$, $\chi^0_{upper}$ are the apparent $\chi$-morphisms.

From the proof of Decomposition (III) (Theorem \ref{decomp-III},) we know that there is a morphism 
\[
C_f() \xrightarrow{\rho} C_f(\input{RM-III-pro5}) \vspace{25pt}
\]
such that $\rho \circ \chi^1_{lower} \simeq \id$ and the kernel of $\rho$ is the image of
\[
C_f(\setlength{\unitlength}{1pt}
\begin{picture}(46,25)(-23,25)
\put(-15,40){\vector(1,0){30}}
\put(0,45){\tiny{$_{1}$}}

\put(15,10){\vector(-1,0){30}}
\put(0,5){\tiny{$_{1}$}}

\end{picture}) \xrightarrow{\iota} C_f(), \vspace{25pt}
\]
where $\iota$ is the morphism induced by the apparent circle creation. Thus, applying Gaussian Elimination (Lemma \ref{gaussian-elimination},) we get
\[
C_f(\xygraph{
!{0;/r2pc/:}
[u]
!{\vcrossneg=>|{1}}
!{\vcross<{1}}})
\simeq
`` 0\rightarrow  C_f()\{q^{1-N}\} \left\langle 1 \right\rangle \oplus C_f(\input{RM-III-pro2}) \xrightarrow{\hat{d}_0} C_f(\input{RM-III-pro4}) \rightarrow 0,"
\]
where $\hat{d}_0 = (\chi^0_{upper} \circ \iota,  \chi^1_{lower})$.

By Decomposition (III) (Theorem \ref{decomp-III},) there is a homotopy equivalence
\[
C_f(\setlength{\unitlength}{1pt}
\begin{picture}(40,25)(-20,25)
\put(15,0){\vector(0,1){50}}
\put(20,45){\tiny{$_{1}$}}

\put(-15,50){\vector(0,-1){50}}
\put(-20,5){\tiny{$_{1}$}}

\end{picture}) \oplus C_f()\{[N-2]\} \left\langle 1 \right\rangle \xrightarrow{(F,\vec{\alpha})} C_f(\input{RM-III-pro2}). \vspace{25pt}
\]
From the proof of Decomposition (III) (Theorem \ref{decomp-III},) one can see that 
\[
C_f()\{q^{1-N}\} \oplus C_f()\{[N-2]\} \left\langle 1 \right\rangle \xrightarrow{(\chi^0_{upper} \circ \iota, ~\chi^1_{lower} \circ \vec{\alpha})} C_f(\input{RM-III-pro4}) \vspace{25pt}
\]
is a homotopy equivalence of matrix factorizations. Applying Gaussian Elimination (Lemma \ref{gaussian-elimination}) to this homotopy equivalence, we get
\[
C_f(\xygraph{
!{0;/r2pc/:}
[u]
!{\vcrossneg=>|{1}}
!{\vcross<{1}}})
\simeq
`` 0\rightarrow C_f()  \rightarrow 0."
\]
Thus, $C_f$ is invariant under Reidemeister move II$_b$ of edges colored by $1$.
\end{proof}

In the rest of this section, we prove Theorem \ref{invariance-reidemeister-all} by an induction based on Theorem \ref{invariance-reidemeister-all-color=1}, which is a straightforward generalization of the proof of \cite[Theorem 13.1]{Wu-color}. 

\subsection{Invariance under Reidemeister moves II$_a$, II$_b$ and III}

\begin{lemma}\label{invariance-reidemeister-II-III}
Let $D_0$ and $D_1$ be two knotted MOY graphs. Assume that there is a Reidemeister move of type II$_a$, II$_b$ or III that changes $D_0$ into $D_1$. Then $C_f(D_0) \simeq C_f(D_1)$, that is, they are isomorphic as objects of $\hch(\hmf)$.
\end{lemma}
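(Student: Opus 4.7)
The plan is to perform an induction on the colors of the edges involved in the Reidemeister move, with the base case being Krasner's theorem (Theorem \ref{invariance-reidemeister-all-color=1}) for the all-$1$-colored version of each move. The inductive step will use fork sliding (Theorem \ref{fork-sliding-invariance-general}) to reduce the color of a single strand by one, at which point the same argument applies to the other strand(s).

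First I would treat Reidemeister II$_a$ and II$_b$ together. Given such a move between strands of colors $m+l$ and $n$ with $m, l \geq 1$, I use edge splitting (the morphism $\phi$ of Definition \ref{morphism-edge-splitting-merging-def}) to split the $(m+l)$-colored strand at one end of the tangle into parallel $m$- and $l$-colored edges, yielding via Theorem \ref{decomp-II} a direct summand of the chain complex. I then apply Theorem \ref{fork-sliding-invariance-general} repeatedly to slide this fork across both crossings of the Reidemeister II tangle. The result is that the original tangle decomposes (as an object of $\hch(\hmf)$) into a direct sum involving the two Reidemeister II tangles between colors $(m,n)$ and $(l,n)$, which, by the inductive hypothesis on the total color, are invariant. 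Merging the fork back using $\overline{\phi}$ and invoking Decomposition (II) again yields $C_f(D_0) \simeq C_f(D_1)$. The induction bottoms out with $m = l = 1$ on one side; iterating the same splitting-sliding argument on the $n$-colored strand further reduces to the completely $1$-colored case of Theorem \ref{invariance-reidemeister-all-color=1}. For Reidemeister III, I would apply the identical cabling procedure to each of the three involved strands in turn, using Theorem \ref{fork-sliding-invariance-general} to slide forks past all three crossings (here fork sliding is used in all its variants, above and below crossings of both signs), reducing to Krasner's Reidemeister III invariance.

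The main obstacle will be diagrammatic bookkeeping: each reduction step produces several new forks, grading shifts from Theorem \ref{decomp-II}, and compositions of the morphisms $\phi$, $\overline{\phi}$ with fork slidings, and one must verify that all these compose to a genuine homotopy equivalence between $C_f(D_0)$ and $C_f(D_1)$ rather than merely a chain map with a null-homotopic defect. This is essentially the same bookkeeping that was performed for $C$ in \cite{Wu-color}; since all the ingredients we use — fork sliding, edge splitting/merging, and the natural homotopy equivalences of Decomposition (II) — have already been established for $C_f$ in Sections \ref{sec-mf-MOY}--\ref{sec-chain-complex-def} (and since $\varpi_0$ sends each of them to its $C$-counterpart by Proposition \ref{basic-changes-varpi-0}), whenever a candidate homotopy equivalence is produced one may verify non-triviality by applying $\varpi_0$ and Lemma \ref{reduce-base-homotopic-equivalence} to reduce to the corresponding statement for $C(D_0) \simeq C(D_1)$ proven in \cite{Wu-color}. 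No new conceptual input is required.
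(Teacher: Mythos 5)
Your overall strategy --- induction on the colors with Krasner's Theorem \ref{invariance-reidemeister-all-color=1} as the base case, cabling a strand via edge splitting, pushing the fork through the crossings with Theorem \ref{fork-sliding-invariance-general}, and uncabling via Decomposition (II) --- is the same as the paper's. (The paper cables both strands of the Reidemeister II tangle simultaneously, into $1$ and $m-1$ on one strand and $n-1$ and $1$ on the other, rather than one strand at a time as you do; this difference is immaterial.)

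There is, however, a gap in your last step. What the cabling argument actually yields is a homotopy equivalence of the \emph{cabled} complexes, which by Decomposition (II) reads $C_f(D_0)\{\qb{m+l}{m}\}\simeq C_f(D_1)\{\qb{m+l}{m}\}$ (in the paper's version, $C_f(D_0)\{[m][n]\}\simeq C_f(D_1)\{[m][n]\}$). To strip off the multiplicity one needs the cancellation property of the Krull--Schmidt category $\hch(\hmf)$; the paper invokes \cite[Proposition 3.20]{Wu-color} for exactly this. ``Merging the fork back using $\overline{\phi}$'' does not accomplish it: $\overline{\phi}$ is only one half of an inclusion/projection pair onto a single summand, and the composite $\overline{\phi}\circ(\text{fork slides})\circ\phi$ is a morphism $C_f(D_0)\to C_f(D_1)$ whose invertibility is not automatic unless one also shows the fork-slide equivalence is compatible with the direct-sum decompositions --- which is precisely what the abstract cancellation argument lets you avoid. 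Your proposed fallback, verifying candidate equivalences by applying $\varpi_0$ and Lemma \ref{reduce-base-homotopic-equivalence}, does not repair this either: that lemma concerns a single morphism of matrix factorizations, whereas here the objects are chain complexes over $\hmf$, and no analogous transfer principle for $\hch(\hmf)$ is established (or needed) in the paper. Replacing the merge-back step by the cancellation of the common factor $\{\qb{m+l}{m}\}$ makes the argument complete.
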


\begin{proof}
The proofs for Reidemeister moves II$_a$, II$_b$ and III are quite similar. We only give details for Reidemeister move II$_a$ here and leave the other two moves to the reader. 

For Reidemeister move II$_a$, we need to show that
\begin{equation}\label{RM-II-a-pro}
C_f(\xygraph{
!{0;/r2pc/:}
[u]
!{\xcapv[-1]@(0)=<>{m}}
!{\xcapv@(0)}
[ruu]!{\xcapv@(0)=><{n}}
!{\xcapv@(0)}
})  \simeq
C_f\xygraph{
!{0;/r2pc/:}
[u]
!{\vtwist=<>{m}|{n}}
!{\vtwistneg}
}).
\end{equation}

By Theorem \ref{invariance-reidemeister-all-color=1}, \eqref{RM-II-a-pro} is true if $m=n=1$. Assume there is a $k \in \nat$ such that \eqref{RM-II-a-pro} is true for $1\leq m,n \leq k$. Consider that case $1\leq m,n \leq k+1$. By the assumption and Theorem \ref{fork-sliding-invariance-general}, we know that
\[
C_f(\xygraph{
!{0;/r3pc/:}
[u(1.25)]
!{\xcapv[-0.75]@(0)=<<{m}}
[u(0.25)]!{\xcapv=>|{1}}
[u]!{\xcapv[-1]=<|{m-1}}
!{\xcapv[-0.75]@(0)=<>{m}}
[r][u(2.75)]!{\xcapv[-0.75]@(0)=<<{n}}
[u(0.25)]!{\xcapv=>|{n-1}}
[u]!{\xcapv[-1]=<|{1}}
!{\xcapv[-0.75]@(0)=<>{n}}
})
\simeq C_f(\xygraph{
!{0;/r1pc/:}
[uuuu]
!{\xcapv[-1]@(0)=<<{m}}
[ur]!{\xcapv@(0)=>>{n}}
[dll]!{\zbendv<{m-1}}
!{\vtwist}
[ru]!{\sbendv[-1]<{n-1}}
[lll]!{\vtwist}
[urr]!{\vtwist}
[l]!{\vtwist=<}
[lu]!{\xcapv@(0)=>}
!{\xcapv[-1]@(0)<{1}}
[rrruu]!{\xcapv@(0)=>}
!{\xcapv@(0)>{1}}
[llu]!{\vtwistneg}
[l]!{\vtwistneg}
[urr]!{\vtwistneg}
[l]!{\vtwistneg}
[lu]!{\sbendv}
[r]!{\zbendv}
[dl]!{\xcapv@(0)=><{n}}
[lu]!{\xcapv[-1]@(0)=<>{m}}
})
\simeq C_f(\xygraph{
!{0;/r2pc/:}
[uu]
!{\xcapv[-0.5]@(0)=<<{m}}
[u(0.5)]!{\xcapv=>>{1}}
[u]!{\xcapv[-1]=<|{m-1}}
[r][u(1.5)]!{\xcapv[-0.5]@(0)=<<{n}}
[u(0.5)]!{\xcapv=>|{n-1}}
[u]!{\xcapv[-1]=<>{1}}
[l]!{\vtwist}
!{\vtwistneg}
!{\xcapv[-0.5]@(0)=<>{m}}
[ru]!{\xcapv[0.5]@(0)=><{n}}
}).
\]
According to Decomposition II (Theorem \ref{decomp-II},) this means that, for $1\leq m,n \leq k+1$,
\[
C_f(\xygraph{
!{0;/r2pc/:}
[u]
!{\xcapv[-1]@(0)=<>{m}}
!{\xcapv@(0)}
[ruu]!{\xcapv@(0)=><{n}}
!{\xcapv@(0)}
})\{[m][n]\}  \simeq
C_f(\xygraph{
!{0;/r2pc/:}
[u]
!{\vtwist=<>{m}|{n}}
!{\vtwistneg}
})\{[m][n]\}.
\]
Then, by \cite[Proposition 3.20]{Wu-color}, we know that \eqref{RM-II-a-pro} is true for $1\leq m,n \leq k+1$.
\end{proof}

\subsection{Invariance under Reidemeister move I} This proof of invariance under Reidemeister move I is slightly more complex and requires the following lemma.

\begin{lemma}\label{twisted-forks}
\[
\xymatrix{
\hat{C}_f(\setlength{\unitlength}{1pt}
\begin{picture}(60,30)(-30,30)
\put(-20,0){\vector(1,1){30}}
\qbezier(10,30)(15,35)(0,40)

\put(20,0){\line(-1,1){18}}
\put(-2,22){\vector(-1,1){8}}
\qbezier(-10,30)(-15,35)(0,40)

\put(0,40){\vector(0,1){20}}
\put(5,55){\tiny{$_{1+n}$}}
\put(-26,0){\tiny{$_{1}$}}
\put(23,0){\tiny{$_{n}$}}
\end{picture}) \simeq \hat{C}_f(\setlength{\unitlength}{1pt}
\begin{picture}(60,20)(-30,20)
\put(-20,0){\vector(1,1){20}}
\put(20,0){\vector(-1,1){20}}
\put(0,20){\vector(0,1){20}}
\put(5,35){\tiny{$_{1+n}$}}
\put(-26,0){\tiny{$_{1}$}}
\put(23,0){\tiny{$_{n}$}}

\end{picture})\{q^{n}\}, & \hat{C}_f(\setlength{\unitlength}{1pt}
\begin{picture}(60,30)(-30,30)
\put(20,0){\vector(-1,1){30}}
\qbezier(10,30)(15,35)(0,40)

\put(-20,0){\line(1,1){18}}
\put(2,22){\vector(1,1){8}}
\qbezier(-10,30)(-15,35)(0,40)

\put(0,40){\vector(0,1){20}}
\put(5,55){\tiny{$_{1+n}$}}
\put(-26,0){\tiny{$_{1}$}}
\put(23,0){\tiny{$_{n}$}}

\end{picture} ) \simeq \hat{C}_f()\{q^{-n}\}, \vspace{30pt} \\
\hat{C}_f(\setlength{\unitlength}{1pt}
\begin{picture}(60,30)(-30,30)
\put(-20,0){\vector(1,1){30}}
\qbezier(10,30)(15,35)(0,40)

\put(20,0){\line(-1,1){18}}
\put(-2,22){\vector(-1,1){8}}
\qbezier(-10,30)(-15,35)(0,40)

\put(0,40){\vector(0,1){20}}
\put(5,55){\tiny{$_{m+1}$}}
\put(-26,0){\tiny{$_{m}$}}
\put(23,0){\tiny{$_{1}$}}

\end{picture}) \simeq \hat{C}_f(\setlength{\unitlength}{1pt}
\begin{picture}(60,20)(-30,20)
\put(-20,0){\vector(1,1){20}}
\put(20,0){\vector(-1,1){20}}
\put(0,20){\vector(0,1){20}}
\put(5,35){\tiny{$_{m+1}$}}
\put(-26,0){\tiny{$_{m}$}}
\put(23,0){\tiny{$_{1}$}}

\end{picture})\{q^{m}\}, & \hat{C}_f(\setlength{\unitlength}{1pt}
\begin{picture}(60,30)(-30,30)
\put(20,0){\vector(-1,1){30}}
\qbezier(10,30)(15,35)(0,40)

\put(-20,0){\line(1,1){18}}
\put(2,22){\vector(1,1){8}}
\qbezier(-10,30)(-15,35)(0,40)

\put(0,40){\vector(0,1){20}}
\put(5,55){\tiny{$_{m+1}$}}
\put(-26,0){\tiny{$_{m}$}}
\put(23,0){\tiny{$_{1}$}}
\end{picture}) \simeq \hat{C}_f()\{q^{-m}\},
} \vspace{30pt}
\]
where ``$\simeq$" is the isomorphism in $\hch(\hmf)$.
\end{lemma}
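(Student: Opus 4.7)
The plan is to expand $\hat{C}_f$ of each twisted fork via Corollary \ref{explicit-differential-1-n-crossings--res}, which resolves a $\pm(1,n)$ or $\pm(m,1)$ crossing as a two-term chain complex of matrix factorizations, and then to simplify each resolved term by combining with the adjacent trivalent vertex and invoking the tools already developed. I will describe the argument for the first equivalence, namely $\hat{C}_f(\text{twisted fork with }+(1,n)\text{-crossing})\simeq \hat{C}_f(\text{straight fork})\{q^n\}$; the three remaining isomorphisms follow by entirely analogous arguments, with the $\chi^0$-resolution replacing the $\chi^1$-resolution for the negative crossings and with the roles of the colors $1$ and $n$ swapped in the $(m,1)$ versions.

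By Corollary \ref{explicit-differential-1-n-crossings--res}, the bare crossing has
\[
\hat{C}_f(+\text{crossing})\;\cong\;\bigl(0 \to C_f(\Gamma_1)\xrightarrow{\chi^1}C_f(\Gamma_0)\{q^{-1}\}\to 0\bigr),
\]
where $\Gamma_1$ is the merge-split resolution with a vertical wide $(n+1)$-edge and $\Gamma_0$ is the H-shape with horizontal $(n-1)$-edge. The twisted fork is obtained by attaching the $(1,n)\to(n+1)$ merge vertex $\nabla$ on top of the crossing, so tensoring over the common alphabets (and using Corollary \ref{complex-knotted-MOY-marking-independence}) yields a two-term complex with terms $C_f(\Gamma_1\cup\nabla)$ and $C_f(\Gamma_0\cup\nabla)\{q^{-1}\}$. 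In $\Gamma_1\cup\nabla$, the split vertex at the top of the wide $(n+1)$-edge is immediately composed with $\nabla$; two successive applications of Lemma \ref{edge-contraction} (equivalently, Corollary \ref{contract-expand}) collapse this bigon and show $C_f(\Gamma_1\cup\nabla)\simeq C_f(\Gamma_{\mathrm{sf}})$, where $\Gamma_{\mathrm{sf}}$ is the straight fork. In $\Gamma_0\cup\nabla$, the H-shape joined at the top by $\nabla$ has three interior trivalent vertices; applying Lemma \ref{edge-contraction} at the H-junctions and then Decomposition (II) (Theorem \ref{decomp-II}) reduces it to $C_f(\Gamma_{\mathrm{sf}})\{[n]\}$.

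At this stage the resolved complex has both terms built from shifted copies of $C_f(\Gamma_{\mathrm{sf}})$, with a single nonzero differential that factors through $\chi^1$. Using Lemmas \ref{edge-splitting-lemma} and \ref{colored-crossing-res-HMF} to pin down the non-null-homotopic homogeneous morphisms of the appropriate total polynomial degrees, one identifies the induced differential as the standard $\chi$-style map on each summand; Gaussian elimination (Lemma \ref{gaussian-elimination}) then cancels all but one summand, leaving $\hat{C}_f(\Gamma_{\mathrm{sf}})\{q^n\}$ in homological degree zero, as required. The main obstacle is the careful bookkeeping of the quantum and homological grading shifts introduced by the edge contractions and by Decomposition (II), and the verification that after these identifications the remaining differential factors correctly to produce precisely the shift $\{q^n\}$; both the relevant grading data and the non-triviality of the involved morphism spaces are controlled by the $\Hom_{\HMF}$-computations of Lemmas \ref{colored-crossing-res-HMF} and \ref{edge-splitting-lemma}, so this reduces to a routine (if delicate) computation.
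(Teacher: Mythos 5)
Your overall strategy --- resolve the $\pm(1,n)$ (resp.\ $\pm(m,1)$) crossing via Corollary \ref{explicit-differential-1-n-crossings--res}, identify each resolved term capped by the merge vertex as a $q$-multiple of $C_f$ of the straight fork, and then cancel by Gaussian elimination --- is the right one, and matches the computation the paper delegates to \cite[Lemma 13.6]{Wu-color}. However, there is a concrete error in the identification of the homological-degree-zero term. The wide-edge resolution capped by the merge vertex is \emph{not} homotopy equivalent to $C_f(\Gamma_{\mathrm{sf}})$: the configuration ``merge $(1,n)\to(n+1)$, split $(n+1)\to(1,n)$, merge $(1,n)\to(n+1)$'' contains a digon on the $(n+1)$-edge in which \emph{both} outputs of the intermediate split feed back into the same merge. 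Lemma \ref{edge-contraction} (and Corollary \ref{contract-expand}) does not apply to such a configuration --- contracting one of the two parallel edges would turn the other into a loop, which is not covered by the edge-contraction lemma. The digon must instead be removed by Decomposition (II), which yields $C_f(\Gamma_1\cup\nabla)\simeq C_f(\Gamma_{\mathrm{sf}})\{\qb{n+1}{1}\}=C_f(\Gamma_{\mathrm{sf}})\{[n+1]\}$.

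This is not a cosmetic slip: with your identification the resolved complex has graded Euler characteristic $(1-q^{-1}[n])\cdot\langle\Gamma_{\mathrm{sf}}\rangle$, which is not $q^{n}\langle\Gamma_{\mathrm{sf}}\rangle$, so no amount of Gaussian elimination can produce the claimed answer; the lemma is precisely a statement about grading shifts, and the bookkeeping has to come out exactly. With the corrected factor one gets a two-term complex with $C_f(\Gamma_{\mathrm{sf}})\{[n+1]\}$ in degree $0$ and $C_f(\Gamma_{\mathrm{sf}})\{[n]q^{-1}\}$ in degree $1$, and since $[n+1]-q^{-1}[n]=q^{n}$, Gaussian elimination can leave exactly $C_f(\Gamma_{\mathrm{sf}})\{q^{n}\}$ in degree $0$ --- provided one also verifies (via $\Hom_\hmf(C_f(\Gamma_{\mathrm{sf}}),C_f(\Gamma_{\mathrm{sf}}))\cong\C$ and a non-triviality argument of the type used in the proofs of Decompositions (I) and (III), e.g.\ by applying $\varpi_0$ and Lemma \ref{reduce-base-homotopic-equivalence}) that the components of the differential between equal-shift summands are isomorphisms. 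That verification is the substantive part of the proof and should not be left as ``routine'' without at least indicating which composition formula forces the relevant matrix entries to be invertible.
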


\begin{proof}
The proof of \cite[Lemma 13.6]{Wu-color} applies here without change.
\end{proof}

\begin{lemma}\label{invariance-reidemeister-I-unnormal}
\begin{eqnarray}
\label{invariance-reidemeister-I-unnormal+}\hat{C}_f(\xygraph{
!{0;/r1pc/:}
[u]
!{\xcapv@(0)=>>{m}}
!{\hover}
!{\hcap}
[ld]!{\xcapv@(0)}
}) & \simeq & \hat{C}_f(\xygraph{
!{0;/r1pc/:}
[u]
!{\xcapv@(0)=>>{m}}
!{\xcapv@(0)}
!{\xcapv@(0)}
})\left\langle m \right\rangle\|m\| \{q^{-m(N+1-m)}\}, \\
\label{invariance-reidemeister-I-unnormal-}\hat{C}_f(\xygraph{
!{0;/r1pc/:}
[u]
!{\xcapv@(0)=>>{m}}
!{\hunder}
!{\hcap}
[ld]!{\xcapv@(0)}
}) & \simeq & \hat{C}_f(\xygraph{
!{0;/r1pc/:}
[u]
!{\xcapv@(0)=>>{m}}
!{\xcapv@(0)}
!{\xcapv@(0)}
})\left\langle m \right\rangle\|-m\| \{q^{m(N+1-m)}\},
\end{eqnarray}
where $\|\ast\|$ means shifting the homological grading. 
Therefore,
\[
C_f(\xygraph{
!{0;/r1pc/:}
[u]
!{\xcapv@(0)=>>{m}}
!{\hover}
!{\hcap}
[ld]!{\xcapv@(0)}
})  \simeq  C_f(\xygraph{
!{0;/r1pc/:}
[u]
!{\xcapv@(0)=>>{m}}
!{\xcapv@(0)}
!{\xcapv@(0)}
})
\simeq C_f(\xygraph{
!{0;/r1pc/:}
[u]
!{\xcapv@(0)=>>{m}}
!{\hunder}
!{\hcap}
[ld]!{\xcapv@(0)}
}).
\]
\end{lemma}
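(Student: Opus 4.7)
My plan is to adapt, in the equivariant setting, the argument used for the analogous uncolored/colored statement in \cite{Wu-color}, relying heavily on the translator functor $\varpi_0$ of Lemma~\ref{MOY-object-of-hmf} together with the equivariant lemmas already established in this section.

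First, I would expand the single crossing of the curl via Definition~\ref{complex-colored-crossing-def}, writing $\hat{C}_f(\text{curl})$ as a bounded chain complex of matrix factorizations associated to its MOY resolutions. Each resolution looks like the identity strand with an attached ``theta/bubble'' whose middle edge is colored $k$, where $k$ ranges from $\max\{0,2m-N\}$ to $m$. Applying Lemma~\ref{twisted-forks} at each resolution replaces the two twisted fork subdiagrams (coming from the crossing being between two pieces of the same strand of color $m$) by straight forks, at the cost of a $q^{\pm m}$ shift on the summand. This identifies each term of the chain complex with a bubble on a straight strand.

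Next I would apply Decomposition~(I) (Theorem~\ref{decomp-I}) and Decomposition~(II) (Theorem~\ref{decomp-II}) to each bubble to express it as a grading-shifted sum of copies of $C_f(\text{straight strand})$. This rewrites the entire complex as one whose terms are all shifted copies of $C_f(\text{straight strand of color }m)$, with differentials built from $\chi$-morphisms and the morphisms $\phi,\overline\phi,\iota,\epsilon$ from Section~\ref{sec-some-morph}. I then invoke Gaussian Elimination (Lemma~\ref{gaussian-elimination}) inductively to collapse all but one summand, reading off the homological grading shift $\|\pm m\|$ from the index at which the surviving summand sits, the quantum shift $q^{\mp m(N+1-m)}$ from the cumulative $q$-shifts, and the $\zed_2$-shift $\langle m\rangle$ from the parities of the morphisms involved.

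The main obstacle, as always, is to verify that the relevant differential components are actually homotopy equivalences (or zero) up to scaling, so that Gaussian Elimination applies as advertised. In every such step, the morphism in question preserves both gradings, so by the Hom-space computations of Lemma~\ref{colored-crossing-res-HMF} (and its siblings in Lemmas~\ref{trivial-complex-lemma-1}--\ref{trivial-complex-lemma-3}), the subspace of morphisms of the relevant bidegree is one-dimensional; it therefore suffices to rule out null-homotopy. Here is where Proposition~\ref{basic-changes-varpi-0} and Corollary~\ref{differentials-varpi-0} do the decisive work: applying $\varpi_0$ and combining with the corresponding result in \cite[Theorem~13.1]{Wu-color} shows the induced morphism on $C(\cdot)$ is not null-homotopic, which by Lemma~\ref{reduce-base-homotopic-equivalence} lifts back to prove that the original morphism between the $C_f(\cdot)$-terms is a homotopy equivalence of matrix factorizations. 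Once \eqref{invariance-reidemeister-I-unnormal+} and \eqref{invariance-reidemeister-I-unnormal-} are established, the normalized statement $C_f(\text{curl})\simeq C_f(\text{straight strand})$ is immediate from the grading-shift conventions in Definition~\ref{complex-colored-crossing-def}.
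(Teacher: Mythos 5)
Your outline takes a genuinely different route from the paper's, and it has a gap at its central step. After decomposing each resolution-with-bubble into shifted copies of $C_f(\text{strand})$ via Decompositions (I) and (II), the differential becomes a matrix of morphisms between objects $C_f(\text{strand})\{q^a\}$. The grading-preserving entries between identically shifted summands are scalar multiples of the identity (the degree-$0$ part of $\Hom_\HMF(C_f(\text{strand}),C_f(\text{strand}))\cong C_f(\emptyset)\{q^{m(N-m)}\qb{N}{m}\}$ is one-dimensional), so Gaussian Elimination would apply \emph{if} you knew which of those scalars are nonzero --- and that is exactly what your argument does not establish. Lemma \ref{colored-crossing-res-HMF} controls morphisms between the open resolutions $\Gamma^L_j$, not the components that arise after closing up the curl and decomposing. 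The $\varpi_0$ fallback also fails here: Proposition \ref{basic-changes-varpi-0} and Corollary \ref{differentials-varpi-0} only transport a computation from \cite{Wu-color} if that computation exists there in the same resolution-by-resolution form, and \cite[Theorem 13.1]{Wu-color} is the invariance statement itself; its proof (like the one in this paper) never identifies these differential components. Without that input you cannot determine which summand survives, hence neither the homological shift $\|\pm m\|$ nor the quantum shift $q^{\mp m(N+1-m)}$.

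The paper proves the lemma by induction on $m$ instead. The base case $m=1$ is Krasner's Theorem \ref{invariance-reidemeister-all-color=1}; for the inductive step one splits the color-$(m+1)$ strand into strands of colors $1$ and $m$, slides the forks through the crossing with Theorem \ref{fork-sliding-invariance-general}, removes the resulting color-$1$ and color-$m$ curls by the base case and the inductive hypothesis, simplifies with Reidemeister II (Lemma \ref{invariance-reidemeister-II-III}) and Lemma \ref{twisted-forks}, and finally cancels the overall factor $\{[m+1]\}$ using Decomposition (II) and \cite[Proposition 3.20]{Wu-color}. If you want to salvage your direct approach, you would have to carry out for the curl the same kind of explicit identification of differential components that Section \ref{sec-inv-fork} performs for fork sliding --- substantially more work than the induction.
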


\begin{proof}
We prove \eqref{invariance-reidemeister-I-unnormal+} by an induction on $m$. The proof of \eqref{invariance-reidemeister-I-unnormal-} is similar and left to the reader. If $m=1$, then \eqref{invariance-reidemeister-I-unnormal+} follows from Theorem \ref{invariance-reidemeister-all-color=1}. Assume \eqref{invariance-reidemeister-I-unnormal+} is true for $m$. Consider $m+1$. 

By Theorem \ref{fork-sliding-invariance-general}, we have
\[
\hat{C}_f(\xygraph{
!{0;/r2pc/:}
[uu]
!{\xcapv@(0)=>>{m+1}}
!{\hover}
!{\hcap}
[ld]!{\xcapv=><{1}}
[u]!{\xcapv[-1]=<>{m}}
!{\xcapv@(0)=><{m+1}}
}) \simeq \hat{C}_f(\xygraph{
!{0;/r1pc/:}
[uuuu]
!{\xcapv@(0)=>>{m+1}}
!{\zbendh}
!{\hcross}
[d]!{\hcross}
!{\hcap=<}
[lld]!{\hcross}
[llu]!{\hover}
[uul]!{\xcapv[2]@(0)}
[dd]!{\xcapv[-2]@(0)|{m}}
[d]!{\sbendh|{1}}
[dl]!{\xcapv@(0)=><{m+1}}
[uuuuurr]!{\hloop[3]=<}
}).
\]
Since \eqref{invariance-reidemeister-I-unnormal+} is true for $1$, we know that 
\[
\hat{C}_f(\xygraph{
!{0;/r1pc/:}
[uuuu]
!{\xcapv@(0)=>>{m+1}}
!{\zbendh}
!{\hcross}
[d]!{\hcross}
!{\hcap=<}
[lld]!{\hcross}
[llu]!{\hover}
[uul]!{\xcapv[2]@(0)}
[dd]!{\xcapv[-2]@(0)|{m}}
[d]!{\sbendh|{1}}
[dl]!{\xcapv@(0)=><{m+1}}
[uuuuurr]!{\hloop[3]=<}
}) \simeq \hat{C}_f(\xygraph{
!{0;/r1pc/:}
[uuuu]
!{\xcapv@(0)=>>{m+1}}
!{\zbendh}
!{\hcross}
[d]!{\hcap=>}
[ld]!{\hcross}
[llu]!{\hover}
[uul]!{\xcapv[2]@(0)}
[dd]!{\xcapv[-2]@(0)<{m}}
[d]!{\sbendh|{1}}
[dl]!{\xcapv@(0)=><{m+1}}
[uuuuurr]!{\hcap[3]=<}
})\left\langle 1 \right\rangle\|1\| \{q^{-N}\}.
\]
From Lemma \ref{invariance-reidemeister-II-III}, one can see that 
\[
\hat{C}_f(\xygraph{
!{0;/r1pc/:}
[uuuu]
!{\xcapv@(0)=>>{m+1}}
!{\zbendh}
!{\hcross}
[d]!{\hcap=>}
[ld]!{\hcross}
[llu]!{\hover}
[uul]!{\xcapv[2]@(0)}
[dd]!{\xcapv[-2]@(0)<{m}}
[d]!{\sbendh|{1}}
[dl]!{\xcapv@(0)=><{m+1}}
[uuuuurr]!{\hcap[3]=<}
}) \simeq \hat{C}_f(\xygraph{
!{0;/r1.5pc/:}
[uuu]
!{\xcapv[0.5]@(0)=>>{m+1}}
[u(0.5)]!{\xcapv@(0)}
[u]!{\sbendv}
[l]!{\vtwist}
!{\xcapv[-1]@(0)=<<{1}}
[ur]!{\hover}
!{\hcap}
[lld]!{\vtwist}
!{\xcapv[-1]@(0)=<<{m}}
!{\zbendv}
[dl]!{\xcapv[0.5]@(0)=>>{m+1}}
}).
\]
Since \eqref{invariance-reidemeister-I-unnormal+} is true for $m$, we know that 
\[
\hat{C}_f(\xygraph{
!{0;/r1.5pc/:}
[uuu]
!{\xcapv[0.5]@(0)=>>{m+1}}
[u(0.5)]!{\xcapv@(0)}
[u]!{\sbendv}
[l]!{\vtwist}
!{\xcapv[-1]@(0)=<<{1}}
[ur]!{\hover}
!{\hcap}
[lld]!{\vtwist}
!{\xcapv[-1]@(0)=<<{m}}
!{\zbendv}
[dl]!{\xcapv[0.5]@(0)=>>{m+1}}
}) \simeq \hat{C}_f(\xygraph{
!{0;/r2pc/:}
[uu]
!{\xcapv[0.5]@(0)=>>{m+1}}
[u(0.5)]!{\xcapv@(0)}
[u]!{\sbendv}
[l]!{\vtwist}
!{\vtwist}
!{\xcapv@(0)=>>{m}}
!{\zbendv[-1]=<<{1}}
[dl]!{\xcapv[0.5]@(0)=>>{m+1}}
})\left\langle m \right\rangle\|m\| \{q^{-m(N+1-m)}\}.
\]
By Lemma \ref{twisted-forks}, we know that 
\[
\hat{C}_f(\xygraph{
!{0;/r2pc/:}
[uu]
!{\xcapv[0.5]@(0)=>>{m+1}}
[u(0.5)]!{\xcapv@(0)}
[u]!{\sbendv}
[l]!{\vtwist}
!{\vtwist}
!{\xcapv@(0)=>>{m}}
!{\zbendv[-1]=<<{1}}
[dl]!{\xcapv[0.5]@(0)=>>{m+1}}
}) \simeq \hat{C}_f(\xygraph{
!{0;/r2pc/:}
[u]
!{\xcapv[0.5]@(0)=>>{m+1}}
[u(0.5)]!{\xcapv@(0)}
[u]!{\sbendv}
[l]!{\xcapv@(0)=>>{m}}
!{\zbendv[-1]=<<{1}}
[dl]!{\xcapv[0.5]@(0)=>>{m+1}}
})\{q^{2m}\}
\]
Putting these together, we get that
\[
\hat{C}_f(\xygraph{
!{0;/r2pc/:}
[uu]
!{\xcapv@(0)=>>{m+1}}
!{\hover}
!{\hcap}
[ld]!{\xcapv=><{1}}
[u]!{\xcapv[-1]=<>{m}}
!{\xcapv@(0)=><{m+1}}
}) \simeq \hat{C}_f(\xygraph{
!{0;/r2pc/:}
[uu]
!{\xcapv[0.5]@(0)=>>{m+1}}
[u(0.5)]!{\xcapv@(0)}
[u]!{\sbendv}
[l]!{\xcapv@(0)=>>{m}}
!{\zbendv[-1]=<<{1}}
[dl]!{\xcapv[0.5]@(0)=>>{m+1}}
})\left\langle m+1 \right\rangle\|m+1\| \{q^{-(m+1)(N-m)}\}.
\]
By Decomposition II (Theorem \ref{decomp-II}) and \cite[Proposition 3.20]{Wu-color}, it follows that \eqref{invariance-reidemeister-I-unnormal+} is true for $m+1$. This completes the induction. So \eqref{invariance-reidemeister-I-unnormal+} is true for all $m$. 

By \eqref{invariance-reidemeister-I-unnormal+}, \eqref{invariance-reidemeister-I-unnormal-} and the normalizations in Definition \ref{complex-colored-crossing-def}, one can easily see that 
\[
C_f(\xygraph{
!{0;/r1pc/:}
[u]
!{\xcapv@(0)=>>{m}}
!{\hover}
!{\hcap}
[ld]!{\xcapv@(0)}
})  \simeq  C_f(\xygraph{
!{0;/r1pc/:}
[u]
!{\xcapv@(0)=>>{m}}
!{\xcapv@(0)}
!{\xcapv@(0)}
})
\simeq C_f(\xygraph{
!{0;/r1pc/:}
[u]
!{\xcapv@(0)=>>{m}}
!{\hunder}
!{\hcap}
[ld]!{\xcapv@(0)}
}).
\]
\end{proof}

We have now proved Theorem \ref{invariance-reidemeister-all} and, therefore, Theorem \ref{thm-inv-main}.

\section{Deformations of the Colored $\mathfrak{sl}(N)$-Homology over $\C$}\label{sec-deform-over-C}

In this section, we study properties of deformations of the colored $\mathfrak{sl}(N)$-homology over $\C$ and prove Theorem \ref{thm-inv-deformation}. 
 
\subsection{Definition and invariance} First, let us recall the definition of filtered matrix factorizations. For simplicity, we only define filtered matrix factorizations over a graded ring with graded underlying modules. See for example \cite{Wu7} for a more detailed discussion.

\begin{definition}\label{filtered-mf-def}
Let $R$ be a graded commutative unital $\C$-algebra. Fix an integer $N\geq2$. Let $w$ be a (not necessarily homogeneous) element of $R$ with $\deg{w}\leq 2N+2$. A filtered matrix factorization over $R$ with potential $w$ is a collection of two graded-free $R$-modules $M^0$, $M^1$ and two $R$-module homomorphisms $d^0:M^0\rightarrow M^1$, $d^1:M^1\rightarrow M^0$, called differential maps, such that
\[
\xymatrix{
d^1 \circ d^0=w\cdot\id_{M^0}, && d^0 \circ d^1=w\cdot\id_{M^1}.
}
\]
and
\[
\xymatrix{
d^0\in\fil^{N+1}\Hom_R(M^0,M^1), &&  d^1\in\fil^{N+1}\Hom_R(M^1,M^0),
}
\]
where $\fil$ denotes the filtrations on $\Hom_R(M^0,M^1)$ and $\Hom_R(M^1,M^0)$ induced by the gradings of $M^0$ and $M^1$.

We usually write such a matrix factorization $M$ as
\[
M^0 \xrightarrow{d_0} M^1 \xrightarrow{d_1} M^0.
\]
\end{definition}

Let $\Gamma$ be a MOY graph with a marking, and $\mathbb{X}_1,\dots,\mathbb{X}_m$ the alphabets associated to the internal marked points on $\Gamma$ and $\mathbb{E}_1,\dots,\mathbb{E}_l$ the alphabet associated to end points of $\Gamma$. Recall that $C_f(\Gamma)$ is a Koszul matrix factorization over the ring $\tilde{R} = \Sym(\mathbb{X}_1|\dots|\mathbb{X}_m|\mathbb{E}_1|\dots|\mathbb{E}_l) \otimes_\C R_B$, where $R_B=\C[B_1,\dots,B_N]$. The total polynomial grading of $\tilde{R}$ is given by $\deg B_k=2k$ and the grading of each alphabet (that is, each indeterminate in $\mathbb{X}_i$ or $\mathbb{E}_j$ has degree $2$.) Under the total polynomial grading of $\tilde{R}$, $C_f(\Gamma)$ is graded. We view $C_f(\Gamma)$ as a graded matrix factorization over $\tilde{R}_\partial = \Sym(\mathbb{E}_1|\dots|\mathbb{E}_l) \otimes_\C R_B$, where $\tilde{R}_\partial$ is given the total polynomial grading, that is, the grading inherited from $\tilde{R}$ by viewing $\tilde{R}_\partial$ as a subring of $\tilde{R}$.

Note that there is a grading of $\tilde{R}$ given by $\deg B_k=0$ and that every indeterminate in each $\mathbb{X}_i$ has degree $2$. We call this grading the quantum grading of $\tilde{R}$. $\tilde{R}_\partial$ inherits this quantum grading. Note that $C_f(\Gamma)$ is a filtered matrix factorization over $\tilde{R}_\partial$ under the quantum grading. 

In the rest of this section, we denote by $\deg_T$ the degree associated to the total polynomial grading and by $\deg_Q$ the degree associated to the quantum grading/filtration. It is clear that 
\begin{equation}\label{element-degs-comp}
\deg_Q v \leq \deg_T v \text{ for any } v \in C_f(\Gamma). 
\end{equation}
Let $\Gamma'$ be an MOY graph that has the same boundary condition (and boundary marking) as $\Gamma$. Then, for any $\tilde{R}_\partial$-module homomorphism $F: C_f(\Gamma) \rightarrow C_f(\Gamma')$, one can see that 
\begin{equation}\label{morphism-degs-comp}
\deg_Q F \leq \deg_T F.
\end{equation}

For $b_1,\dots, b_N\in \C$, let 
\[
\pi:\tilde{R}_\partial \rightarrow \tilde{R}_\partial/(B_1-b_1,\dots,B_N-b_N) \cong R_\partial := \Sym(\mathbb{E}_1|\dots|\mathbb{E}_l)
\] 
be the standard projection. Note that $R_\partial$ comes with a natural grading given by that every indeterminate in each $\mathbb{E}_j$ has degree $2$. We call this grading the quantum grading of $R_\partial$. Then $\pi$ preserves the quantum grading.

Denote by $\tilde{R}_\partial-\Mod$ and $R_\partial-\Mod$ the categories of graded-free modules over $\tilde{R}_\partial$ and $R_\partial$. Then $\pi$ induces a functor $\tilde{R}_\partial-\Mod \xrightarrow{\varpi} R_\partial-\Mod$ which preserves the quantum grading. Let $w$ be an element of $\tilde{R}_\partial$ homogeneous under the total polynomial grading with $\deg_T w = 2N+2$. 

Denote by $\hmf^\fil_{R_\partial,\pi(w)}$ the homotopy category of homotopically finite matrix factorizations over $R_\partial$ with potential $\pi(w)$ whose quantum filtration is bounded below. (The morphisms of $\hmf^\fil_{R_\partial,\pi(w)}$ are homotopy classes of morphisms of matrix factorizations preserving the $\zed_2$-grading and of filtered degree $\leq 0$.)

Then $\tilde{R}_\partial-\Mod \xrightarrow{\varpi} R_\partial-\Mod$ induces a functor $\hmf_{\tilde{R}_\partial,w} \xrightarrow{\varpi} \hmf^\fil_{R_\partial,\pi(w)}$ that preserves the $\zed_2$-grading and the quantum filtration, which, in turn, induces a functor $\hch(\hmf_{\tilde{R}_\partial,w}) \xrightarrow{\varpi} \hch(\hmf^\fil_{R_\partial,\pi(w)})$ that preserves the $\zed_2$-grading, the quantum filtration and the homological grading.

\begin{definition}\label{C-f-pi-def}
Let $D$ be a knotted MOY graph. Define $C_{f,\pi}(D) = \varpi(C_f(D))$.
\end{definition}

\begin{theorem}\label{invariance-reidemeister-deformation}
Let $D$ and $D'$ be two knotted MOY graphs. Assume that there is a finite sequence of Reidemeister moves that changes $D$ into $D'$. Then $C_{f,\pi}(D) \simeq C_{f,\pi}(D')$ as objects of $\hch(\hmf_{R_\partial,\pi(w)})$. That is, the homotopy type of $C_{f,\pi}(D)$, with its $\zed_2$-grading, quantum filtration and homological grading, is invariant under Reidemeister moves.
\end{theorem}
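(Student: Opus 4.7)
The plan is to obtain Theorem \ref{invariance-reidemeister-deformation} as a direct corollary of Theorem \ref{invariance-reidemeister-all} by applying the specialization functor $\varpi$. Given a sequence of Reidemeister moves taking $D$ to $D'$, Theorem \ref{invariance-reidemeister-all} supplies a homotopy equivalence $C_f(D) \simeq C_f(D')$ in $\hch(\hmf_{\tilde{R}_\partial,w})$ that preserves the $\zed_2$-grading, the total polynomial grading and the homological grading. Since $C_{f,\pi}(D) = \varpi(C_f(D))$ and $C_{f,\pi}(D') = \varpi(C_f(D'))$ by Definition \ref{C-f-pi-def}, and $\varpi$ extends to a functor $\hch(\hmf_{\tilde{R}_\partial,w}) \to \hch(\hmf^{\fil}_{R_\partial,\pi(w)})$, applying $\varpi$ to this equivalence will produce the desired $C_{f,\pi}(D) \simeq C_{f,\pi}(D')$ with the required preservation properties. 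All the Reidemeister-move content is thus imported wholesale from Theorem \ref{invariance-reidemeister-all}; the only genuine content of the present theorem is checking that $\varpi$ intertwines the two notions of ``grading-preservation.''

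The core step is therefore to verify that $\varpi$ carries a $\deg_T$-preserving homotopy equivalence in $\hmf_{\tilde{R}_\partial,w}$ to a $\deg_Q$-filtration-preserving homotopy equivalence in $\hmf^{\fil}_{R_\partial,\pi(w)}$. This is exactly the role played by the inequalities \eqref{element-degs-comp} and \eqref{morphism-degs-comp}: any $\tilde{R}_\partial$-module homomorphism $F$ between the relevant matrix factorizations satisfies $\deg_Q F \leq \deg_T F$. Consequently, a morphism homogeneous of $\deg_T=0$ specializes under $\varpi$ to a morphism of filtered degree $\deg_Q \leq 0$, and a null-homotopy of $\deg_T = -1$ specializes to a null-homotopy of filtered degree $\deg_Q \leq -1$, so that homotopies behave correctly. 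The same check at the level of chain complexes (where one needs the differentials and the homotopies between them to behave likewise) is an identical application of \eqref{morphism-degs-comp}, so $\varpi$ indeed descends to a well-defined functor on $\hch$ carrying $(\zed_2,\deg_T)$-graded homotopy equivalences to $(\zed_2,\deg_Q)$-filtered homotopy equivalences, while preserving homological grading on the nose.

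The only technical item that needs checking beyond this is that $\varpi$ is well-defined on the stated target category, i.e.\ that $\varpi(C_f(\Gamma))$ is homotopically finite over $R_\partial$ and its quantum filtration is bounded below. Homotopical finiteness is a variant of Lemma \ref{homotopy-finite-reduce-base}: the argument given there for the quotient by $(B_1,\dots,B_N)$ adapts verbatim to the specialization along $\pi(B_i)=b_i$, via the change of variables $B_i \mapsto B_i - b_i$. Boundedness of the quantum filtration on $\varpi(C_f(\Gamma))$ is immediate from boundedness of the total polynomial grading of $C_f(\Gamma)$ together with $\deg_Q\leq\deg_T$. I do not anticipate any real obstacle in this proof: the hard Reidemeister-invariance work was already carried out in the $\deg_T$-preserving setting of Theorem \ref{invariance-reidemeister-all}, and that setting is strong enough to control the quantum filtration after specialization.
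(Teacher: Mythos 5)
Your proposal is correct and follows essentially the same route as the paper: the paper's proof likewise takes the $\deg_T$-preserving homotopy equivalence from Theorem \ref{invariance-reidemeister-all}, applies $\varpi$, and invokes \eqref{morphism-degs-comp} to see that the resulting maps preserve the quantum filtration. The extra checks you flag (behavior of null-homotopies, well-definedness of $\varpi$ on the target category) are handled in the paper's preceding setup rather than in the proof itself, but your treatment of them is consistent with it.
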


\begin{proof}
By Theorem \ref{invariance-reidemeister-all}, $C_{f}(D) \simeq C_{f}(D')$. That is, there are chain maps $F: C_{f}(D) \rightarrow C_{f}(D')$ and $G: C_{f}(D') \rightarrow C_{f}(D)$ preserving the $\zed_2$-grading, the total polynomial grading and the homological grading that satisfy
\[
\xymatrix{
G \circ F \simeq \id_{C_{f}(D)} & \text{ and } & F \circ G \simeq \id_{C_{f}(D')}.
}
\]

Consider the chain maps $\varpi(F): C_{f,\pi}(D) \rightarrow C_{f,\pi}(D')$ and $\varpi(G): C_{f,\pi}(D') \rightarrow C_{f,\pi}(D)$. Clearly, they preserve the $\zed_2$- and homological gradings. By \eqref{morphism-degs-comp}, they also preserve the quantum filtration. Moreover, we have 
\[
\varpi(G) \circ \varpi(F) \simeq \varpi(\id_{C_{f}(D)})=\id_{C_{f,\pi}(D)} \text{ and } \varpi(F) \circ \varpi(G) \simeq \varpi(\id_{C_{f}(D')}) = \id_{C_{f,\pi}(D')}.
\]
Thus, $C_{f,\pi}(D) \simeq C_{f,\pi}(D')$.
\end{proof}

\subsection{The $\zed_2$-grading and the spectral sequence} To prove the purity of the $\zed_2$-grading of $C_{f,\pi}(D)$ and construct the spectral sequence connecting $H(D)$ to $H_{f,\pi}(D)$, we need to first establish a simple relation between $C_{f,\pi}(D)$ and $C(D)$.

First, note that the functor $\varpi_0$ defined in Corollary \ref{reduce-base-functor} is the special case of the functor $\varpi$ when $b_1=\cdots=b_N=0$.

Second, defined $\kappa: R_\partial \rightarrow R_\partial$ by letting $\kappa(r)=$ the top homogeneous component of $r$. Note that $\kappa(r_1r_2)=\kappa(r_1)\kappa(r_2)$ for all $r_1,r_2\in R_\partial$. If $M$ is a finitely generated graded-free $R_\partial$-module, then $\kappa$ induces a map $\kappa: M \rightarrow M$. Let $M$ and $M'$ be graded-free $R_\partial$-modules with gradings bounded below, and $\fil$ the filtration on $\Hom_{R_\partial}(M,M')$ induced by the gradings of $M$ and $M'$. Then $\kappa$ induces a map $\kappa: \bigcup_{i\in \zed}\fil^i\Hom_{R_\partial}(M,M') \rightarrow \Hom_{R_\partial}(M,M')$, which preserves composition and tensor product of homomorphisms. 

\begin{lemma}\label{varpi-0-kappa-varpi}
Suppose that $\tilde{M}$ and $\tilde{M}'$ are graded-free $\tilde{R}_\partial$-modules. If $F\in \bigcup_{i\in \zed}\fil^i\Hom_{R_\partial}(M,M')$ is a $\tilde{R}_\partial$-module homomorphism homogeneous under the total polynomial grading with $\deg_T F = \deg_Q F$, then $\varpi_0 (F) = \kappa (\varpi(F))$.

In particular, if $w \in \tilde{R}_\partial$ is an element homogeneous under the total polynomial grading such that $\deg_T w= \deg_Q w =2N+2$, then, for every object $M$ of $\hmf^\fil_{R_\partial,\pi(w)}$, there is a uniquely defined object $\kappa(M)$ of $\hmf_{R_\partial,\pi_0(w)}$. And for every object $\tilde{M}$ of $\hmf_{\tilde{R}_\partial,w}$, we have $\varpi_0(\tilde{M}) = \kappa (\varpi(\tilde{M}))$. 

Moreover, if $\tilde{M}$ and $\tilde{M}'$ are objects of $\hmf_{\tilde{R}_\partial,w}$ and $F: \tilde{M} \rightarrow \tilde{M}'$ is a morphism of matrix factorizations homogeneous under the total polynomial grading with $\deg_T F = \deg_Q F$, then $\varpi_0 (F) = \kappa (\varpi(F))$.
\end{lemma}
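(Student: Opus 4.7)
The plan is to reduce the lemma to an elementwise statement about $\tilde{R}_\partial$. Fix homogeneous bases of $\tilde{M}$ and $\tilde{M}'$ (available since they are graded-free over $\tilde{R}_\partial$); then $F$ is represented by a matrix of entries $F_{ij} \in \tilde{R}_\partial$, each homogeneous under the total polynomial grading, and both $\varpi_0(F)$ and $\varpi(F)$ are obtained by applying $\varpi_0$ and $\varpi$ entrywise. The hypothesis $\deg_Q F = \deg_T F$ translates to: for each nonzero entry $F_{ij}$, the element $\varpi(F_{ij}) \in R_\partial$ has quantum filtration degree equal to $\deg_T F_{ij}$. Thus it suffices to verify, for each such element $r \in \tilde{R}_\partial$, the identity $\varpi_0(r) = \kappa(\varpi(r))$.

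The core computation is then direct. Write $r \in \tilde{R}_\partial = R_\partial \otimes_\C R_B$ as a finite sum $r = \sum_\alpha r_\alpha \otimes B^\alpha$ with $r_\alpha \in R_\partial$. Homogeneity of $r$ under the total polynomial grading forces each $r_\alpha$ to be homogeneous in $R_\partial$ with $\deg r_\alpha = \deg_T r - 2\sum_k k\alpha_k$, so $\deg r_\alpha \leq \deg_T r$ with equality exactly when $\alpha = 0$. Consequently $\varpi_0(r) = r_0$, while $\varpi(r) = \sum_\alpha r_\alpha b^\alpha$ has top quantum component $r_0$ whenever $r_0 \neq 0$. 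The assumption $\deg_T r = \deg_Q \varpi(r)$ guarantees precisely $r_0 \neq 0$, so $\kappa(\varpi(r)) = r_0 = \varpi_0(r)$, and applying this entrywise gives the identity for $F$.

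For the two ``in particular'' statements, the same calculation applied to $w$ (which has $\deg_T w = \deg_Q w = 2N+2$) immediately yields $\kappa(\pi(w)) = \pi_0(w)$. For a filtered matrix factorization $M$ over $R_\partial$ with potential $\pi(w)$ and differentials $d^0, d^1 \in \fil^{N+1}\Hom$, I would define $\kappa(M)$ to have the same underlying graded $R_\partial$-modules and differentials $\kappa(d^0), \kappa(d^1)$; the required identity $\kappa(d^1) \circ \kappa(d^0) = \pi_0(w) \cdot \id$ then follows by extracting the top-degree component of the matrix factorization relation $d^1\circ d^0 = \pi(w)\cdot\id$, together with the observation that the top parts of $d^0$ and $d^1$ compose to give the top part of $d^1\circ d^0$ once one accounts for the maximal degree contribution. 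The elementwise identity then gives $\varpi_0(\tilde{M}) = \kappa(\varpi(\tilde{M}))$ on objects, and the morphism statement follows by applying the same reasoning entrywise. The main obstacle I anticipate is the degree bookkeeping needed to confirm that $\kappa$ descends to a well-defined functor on $\hmf^\fil_{R_\partial,\pi(w)}$ — in particular that extracting top-filtration parts respects composition in the presence of possible cancellation — but this is essentially forced by the matching $\deg_T = \deg_Q = 2N+2$ on $w$, after which the rest of the argument is bookkeeping.
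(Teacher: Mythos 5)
Your proof is correct and takes essentially the same route as the paper, which simply asserts that the first part "follows easily from the definitions" and that the rest is a straightforward application of it; your entrywise unpacking is exactly the intended bookkeeping, and your treatment of $\kappa(M)$ via the top-degree components $d^{\ve,(0)}$ matches the decomposition the paper itself uses later in the proof of Proposition \ref{MOY-H-H-f-pi-ismorphism}. One small imprecision: the hypothesis $\deg_T F=\deg_Q F$ is global, so an individual entry may well have $r_0=0$; this does no harm, since for such an entry both $\varpi_0(F_{ij})$ and the component of $\varpi(F_{ij})$ in the quantum degree dictated by $\deg_T F$ vanish, so the identity still holds entrywise.
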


\begin{proof}
The first part of the lemma follows easily from the definitions of $\kappa$, $\varpi$ and $\varpi_0$. The second part of a straightforward application of the first part.
\end{proof}

\begin{remark}
The condition ``$\deg_T F = \deg_Q F$" is necessary in Lemma \ref{varpi-0-kappa-varpi}. Also, $\kappa$ is not a functor from $\hmf^\fil_{R_\partial,\pi(w)}$ to $\hmf_{R_\partial,\pi_0(w)}$.
\end{remark}

\begin{corollary}\label{relate-C-C-f-pi}
Let $D$ be a knotted MOY graph. Then $\kappa(C_{f,\pi}(D)) \cong C(D)$ as objects of $\ch(\hmf)$. 
\end{corollary}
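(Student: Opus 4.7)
The plan is to reduce this corollary to Corollary \ref{differentials-varpi-0} (which gives $\varpi_0(C_f(D)) \cong C(D)$) by means of the identity $\kappa \circ \varpi = \varpi_0$ supplied by Lemma \ref{varpi-0-kappa-varpi}. The only thing to verify is that every morphism appearing in the construction of $C_f(D)$ -- the matrix factorization differentials attached to each piece, and the chain differentials between tensor products of these pieces -- is homogeneous under $\deg_T$ with its quantum-filtered degree equal to its total polynomial degree, so that the hypothesis of Lemma \ref{varpi-0-kappa-varpi} is satisfied.

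First I would handle the pieces. For a piece $\Gamma_v$ of $D$ that is a star-neighborhood of an MOY vertex, $C_f(\Gamma_v)$ is the Koszul matrix factorization with columns $(U_i, X_i - Y_i)$. The entries $X_i - Y_i$ do not involve any $B_k$, so they are homogeneous with $\deg_T = \deg_Q = 2i$. The $U_i$ may be chosen homogeneous under $\deg_T$ of degree $2N+2-2i$; since $B_k$ has $\deg_T = 2k$ but $\deg_Q = 0$, the monomials of $U_i$ of highest quantum degree are precisely the $B$-free ones, i.e.\ those equal to $\pi_0(U_i)$. Hence the underlying differential of $C_f(\Gamma_v)$, viewed in $\hmf^{\fil}_{R_\partial, \pi(w)}$, lies in $\fil^{N+1}$ with top quantum-degree part the $B_k=0$ specialization, so Lemma \ref{varpi-0-kappa-varpi} gives $\kappa(\varpi(C_f(\Gamma_v))) = \varpi_0(C_f(\Gamma_v))$, which by Lemma \ref{MOY-object-of-hmf} equals $C(\Gamma_v)$.

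For a piece that is a colored crossing, $C_f(\Gamma_v)$ is the chain complex of Definition \ref{complex-colored-crossing-def}, whose terms are $C_f(\Gamma_k^L)$'s (already handled) and whose chain differentials $d_k^{\pm}$ are, by Definition \ref{complex-colored-crossing-chain-maps-def} and the explicit construction in Theorem \ref{explicit-differential-general}, homogeneous of total polynomial degree $1$ and of quantum-filtered degree $1$ (each is built from a composition of the basic morphisms $\phi, \overline{\phi}, \chi^0, \chi^1, h_k, \overline{h}_k$, whose total and quantum degrees coincide by their construction in Section \ref{sec-some-morph}). So again Lemma \ref{varpi-0-kappa-varpi} applies termwise and differentialwise, showing $\kappa \circ \varpi$ coincides with $\varpi_0$ on $C_f(\Gamma_v)$.

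Finally, since $C_f(D)$ is the tensor product of the $C_f(\Gamma_v)$'s over $R_\partial \otimes R_B$-subalgebras corresponding to common endpoint alphabets, and the property ``$\deg_T=\deg_Q$'' is preserved under tensor products, while $\varpi$, $\kappa$, and $\varpi_0$ all commute with such tensor products, we conclude
\[
\kappa(C_{f,\pi}(D)) = \kappa(\varpi(C_f(D))) = \varpi_0(C_f(D)) \cong C(D),
\]
the last isomorphism being Corollary \ref{differentials-varpi-0}. The main obstacle is not conceptual but bookkeeping: one must make sure that at each step of the construction -- choice of $U_i$'s, definitions of the basic morphisms, and assembly of the crossing complex -- the total polynomial degree and the quantum-filtered degree coincide, so that the associated graded of $\varpi$ genuinely recovers $\varpi_0$ rather than losing information to lower-filtration noise.
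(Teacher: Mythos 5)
Your proposal is correct and follows essentially the same route as the paper: decompose $D$ into pieces, apply Lemma \ref{varpi-0-kappa-varpi} to the pieces that are embedded MOY graphs, observe that the crossing differentials satisfy $\deg_T = \deg_Q$ (the paper records both degrees as $0$ after the $q$-shifts in Definition \ref{complex-colored-crossing-def}, while you record both as $1$ before the shifts -- an immaterial bookkeeping difference), and conclude via compatibility of $\kappa$ with tensor products and Corollary \ref{differentials-varpi-0}.
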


\begin{proof}
Put a marking on $D$ and let $D'$ be any piece of $D$ from this marking. If $D'$ is an embedded MOY graph, then $\kappa(C_{f,\pi}(D')) \cong C(D')$ by Lemma \ref{varpi-0-kappa-varpi}. If $D'$ is a colored crossing, then $\kappa(C_{f,\pi}(D'))$ is a chain complex with the same terms as $C(D')$. By the definitions of the differential maps of $C_{f}(D')$ and $C(D')$ and Corollary \ref{differentials-varpi-0}, we know that the total polynomial degree and the quantum degree of the differential map of $C_{f}(D')$ are both $0$. So, by the last part of Lemma \ref{varpi-0-kappa-varpi}, we know that the differential maps of $\kappa(C_{f,\pi}(D'))$ and $C(D')$ are the same. Thus, $\kappa(C_{f,\pi}(D')) \cong C(D')$ in this case too. Since $\kappa$ preserve tensor product, we have $\kappa(C_{f,\pi}(D)) \cong C(D)$.
\end{proof}

\begin{figure}[ht]
$
\xymatrix{
\input{vertex-split}
} 
$
\caption{}\label{vertex-split} 

\end{figure}

Let $\Gamma$ be a closed (embedded) MOY graph. Replace each edge of $\Gamma$ of color $m$ by $m$ parallel edges colored by $1$ and, at each vertex of $\Gamma$, match the incoming and outgoing edges in linear order without creating any intersection. That is replace each vertex of $\Gamma$ by the configuration in Figure \ref{vertex-split}, where each strand is an edge colored by $1$. This changes $\Gamma$ into a collection of disjoint embedded circles colored by $1$. As in \cite{Wu-color}, we define the colored rotation number $\mathrm{cr}(\Gamma)$ of $\Gamma$ to be the sum of the rotation numbers of these circles. Denote by $H^{\ve,i}(\Gamma)$ subspace of $H(\Gamma)$ of homogeneous elements of $\zed_2$-degree $\ve$ and quantum degree $i$. The following is part of \cite[Theorem 14.7]{Wu-color}.

\begin{lemma}\cite[Theorem 14.7]{Wu-color}\label{MOY-C-cr}
If $\Gamma$ is a closed (embedded) MOY graph, then $H^{\mathrm{cr}(\Gamma)-1,i}(\Gamma)=0$ for all $i\in \zed$.
\end{lemma}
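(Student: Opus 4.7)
The plan is to invoke the proof of \cite[Theorem 14.7]{Wu-color} essentially unchanged, since the lemma concerns $H(\Gamma)$ and $C(\Gamma)$ (not the equivariant versions $H_f$, $C_f$), and these are exactly the objects for which the statement was established in \cite{Wu-color}. If one wanted to sketch a proof here rather than merely quote, the approach would proceed by induction on a complexity measure of $\Gamma$ (e.g.\ total edge color, or number of interior vertices), using Direct Sum Decompositions (I)--(V) proved in Sections~\ref{sec-mf-MOY} and \ref{sec-decomps} above as the inductive step to reduce to the base case of a disjoint union of colored circles.

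For the base case, a single colored circle $\bigcirc_m$, Proposition~\ref{circle-module} (or its $C$-analogue) gives $H(\bigcirc_m) \cong C(\emptyset)\{\qb{N}{m}\}\langle m\rangle$, which is concentrated in $\zed_2$-degree $m$. The colored rotation number satisfies $\mathrm{cr}(\bigcirc_m) \equiv m \pmod 2$, so $H^{\mathrm{cr}(\bigcirc_m)-1,i}(\bigcirc_m) = 0$ for every $i$. Disjoint unions are handled by the fact that $C(\Gamma_1 \sqcup \Gamma_2) \cong C(\Gamma_1) \otimes_\C C(\Gamma_2)$ and that $\mathrm{cr}$ and the $\zed_2$-grading are both additive under disjoint union.

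For the inductive step, each of the five decompositions locally modifies $\Gamma$ into simpler pieces $\Gamma_1,\dots,\Gamma_r$ (possibly with quantum and homological shifts), and the key bookkeeping is that the $\zed_2$-grading shift $\langle s \rangle$ appearing in the decomposition matches the change in $\mathrm{cr}$ modulo $2$ between $\Gamma$ and each $\Gamma_j$. Once this compatibility is checked, the inductive hypothesis $H^{\mathrm{cr}(\Gamma_j)-1,i}(\Gamma_j) = 0$ translates after the shift into $H^{\mathrm{cr}(\Gamma)-1,i}(\Gamma) = 0$.

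The main obstacle is precisely the parity bookkeeping in Decomposition (V), where multiple summands appear with varying $\zed_2$-grading shifts tied to the resolution index $j$. One must verify that each summand's internal $\zed_2$-shift is balanced by the corresponding change in $\mathrm{cr}$ when the local bounded region is replaced by the alternative resolution. This is carried out explicitly in \cite[Section 14]{Wu-color}; the verification is purely combinatorial and does not depend on the base ring, so it transfers verbatim. For the purposes of the present paper, however, the cleanest plan is simply to cite \cite[Theorem 14.7]{Wu-color} and proceed.
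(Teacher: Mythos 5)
Your proposal is correct and matches the paper exactly: the lemma is imported directly from \cite[Theorem 14.7]{Wu-color} with no proof given here, which is precisely your stated plan. Your supplementary sketch (reduction to colored circles via the decompositions, with parity bookkeeping between the $\zed_2$-shifts and $\mathrm{cr}$) is a fair summary of how the cited result is established in \cite{Wu-color}, but it is not needed in this paper.
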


Denote by $H_{f,\pi}^{\ve}(\Gamma)$ the subspace of $H_{f,\pi}(\Gamma)$ of elements of $\zed_2$-degree $\ve$. The following is a generalization of \cite[Proposition 2.19]{Wu7}.

\begin{proposition}\label{MOY-H-H-f-pi-ismorphism}
Let $\Gamma$ be a closed (embedded) MOY graph. Then
\begin{eqnarray}
\label{MOY-H-H-f-pi-ismorphism-1} H_{f,\pi}^{\mathrm{cr}(\Gamma)-1}(\Gamma)& = & 0, \\
\label{MOY-H-H-f-pi-ismorphism-2} \fil^k H_{f,\pi}^{\mathrm{cr}(\Gamma)}(\Gamma) / \fil^{k-1} H_{f,\pi}^{\mathrm{cr}(\Gamma)}(\Gamma) & \cong & H^{\mathrm{cr}(\Gamma),k}(\Gamma) \text{ for all } k\in\zed,
\end{eqnarray}
where $\fil$ is the quantum filtration on $H_{f,\pi}(\Gamma)$.

Moreover, the isomorphism \eqref{MOY-H-H-f-pi-ismorphism-2} is natural. That is, if $\Gamma'$ is also a closed (embedded) MOY graph and $G:C_{f}(\Gamma) \rightarrow  C_{f}(\Gamma')$ is a morphism of matrix factorizations of $\zed_2$-degree $\mathrm{cr}(\Gamma)-\mathrm{cr}(\Gamma')$ that is homogeneous under the total polynomial grading with $\deg_T G = \deg_Q G = i$, then $(\varpi_0(G))_\ast:H(\Gamma) \rightarrow  H(\Gamma')$ is compatible with $(\varpi(G))_\ast:H_{f,\pi}(\Gamma) \rightarrow  H_{f,\pi}(\Gamma')$ under the isomorphism \eqref{MOY-H-H-f-pi-ismorphism-2}.
\end{proposition}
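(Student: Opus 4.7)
The plan is to realize both conclusions as consequences of a single spectral sequence coming from the quantum filtration, together with the $\zed_2$-concentration of $H(\Gamma)$ supplied by Lemma~\ref{MOY-C-cr}. Since $\Gamma$ is closed, $C_f(\Gamma)$ has potential $0$ over $R_B$ and is a genuine $\zed_2$-graded chain complex, so $C_{f,\pi}(\Gamma)=\varpi(C_f(\Gamma))$ is a $\zed_2$-graded filtered chain complex over $\C$ whose quantum filtration is bounded below and exhaustive. Standard filtered-complex technology then produces a convergent spectral sequence $\{E_r^{\ve,k},d_r\}$ with $E_\infty^{\ve,k}\cong \fil^k H_{f,\pi}^{\ve}(\Gamma)/\fil^{k-1}H_{f,\pi}^{\ve}(\Gamma)$. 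Corollary~\ref{relate-C-C-f-pi} identifies the $E_0$-page with $C(\Gamma)$ as a $\zed_2$-graded chain complex endowed with its quantum grading, and hence $E_1^{\ve,k}\cong H^{\ve,k}(\Gamma)$.

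Next I would exploit the $\zed_2$-vanishing. Because the differential of a matrix factorization has $\zed_2$-degree $1$, every page differential $d_r$ shifts the $\zed_2$-grading by $1$. Lemma~\ref{MOY-C-cr} gives $E_1^{\mathrm{cr}(\Gamma)-1,\ast}=0$, so by induction on $r$ we conclude $E_r^{\mathrm{cr}(\Gamma)-1,\ast}=0$ for every $r\ge 1$, and therefore every $d_r$ entering or leaving the $\mathrm{cr}(\Gamma)$-strip has either source or target in the zero strip. This forces $E_r^{\mathrm{cr}(\Gamma),k}\cong E_1^{\mathrm{cr}(\Gamma),k}=H^{\mathrm{cr}(\Gamma),k}(\Gamma)$ for all $r$, and likewise $E_\infty^{\mathrm{cr}(\Gamma)-1,\ast}=0$. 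Passing to $E_\infty$ and using convergence then yields \eqref{MOY-H-H-f-pi-ismorphism-1} and \eqref{MOY-H-H-f-pi-ismorphism-2} simultaneously.

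For naturality, the bound \eqref{morphism-degs-comp} applied to $G$ with $\deg_T G=\deg_Q G=i$ shows that $\varpi(G):C_{f,\pi}(\Gamma)\to C_{f,\pi}(\Gamma')$ is a chain map of filtration degree exactly $i$, which therefore induces a morphism of the associated spectral sequences. By the last part of Lemma~\ref{varpi-0-kappa-varpi}, the induced map on the $E_0$-pages coincides with $\varpi_0(G):C(\Gamma)\to C(\Gamma')$, so on $E_1$-pages it is $\varpi_0(G)_\ast$. The comparison of $E_1$ and $E_\infty$ established above then identifies $\varpi_0(G)_\ast$ with the associated graded of $\varpi(G)_\ast$ on the $\mathrm{cr}(\Gamma)$-strip, which is exactly the compatibility assertion.

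The main delicate point will be to confirm that the quantum filtration on $C_{f,\pi}(\Gamma)$ really does produce a well-behaved, convergent spectral sequence whose $E_0$-page is $\kappa(C_{f,\pi}(\Gamma))$: one has to check that the filtration is finite in each bidegree (which follows from the underlying modules being finitely generated over $R_\partial$ with gradings bounded below) and that $d$ has the correct filtered degree so that passing to $E_0$ really gives $C(\Gamma)$. The $\zed_2$-degeneration itself, once the spectral sequence is in hand, is purely formal and essentially identical to the argument used in \cite[Proposition~2.19]{Wu7}.
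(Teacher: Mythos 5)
Your overall strategy -- use the vanishing of $H^{\mathrm{cr}(\Gamma)-1}(\Gamma)$ from Lemma \ref{MOY-C-cr} to force a ``degeneration'' that transports the associated graded of $C(\Gamma)$ to $H_{f,\pi}(\Gamma)$ -- is the right idea, but the mechanism you invoke does not exist in the form you use it, and this is a genuine gap rather than a routine check. The quantum filtration on the two-periodic complex $(C_{f,\pi}(\Gamma),d_{mf})$ is \emph{not} preserved by $d_{mf}$: by Definition \ref{filtered-mf-def} the differential lies in $\fil^{N+1}\Hom$, i.e.\ it raises the quantum degree by up to $N+1>0$, so $\fil^k C_{f,\pi}(\Gamma)$ is not a subcomplex and ``standard filtered-complex technology'' produces nothing. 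The usual cure -- absorbing the shift into a homological regrading so that the differential becomes filtration-preserving -- is blocked here precisely by the $\zed_2$-periodicity: all elements of $M^0$ have one parity of quantum degree and all elements of $M^1$ the other, and any shift of the filtration on $M^1$ that fixes $d^0$ breaks $d^1$. Relatedly, the $E_0$-page you want, namely $\kappa(C_{f,\pi}(\Gamma))\cong C(\Gamma)$ with differential $d^{(0)}$ of quantum degree exactly $N+1$, cannot be the associated graded of a filtration-preserving differential (that would have degree $0$). So the ``main delicate point'' you defer is exactly where the naive argument fails. Note also that the spectral sequence the paper does construct in Theorem \ref{thm-MOY-spectral} is taken only \emph{after} first applying $H(-,d_{mf})$, where the remaining differential $d_\ast$ genuinely has filtered degree $\le 0$; and that construction uses the present Proposition as input to identify its $E_1$-term, so you cannot appeal to it without circularity.

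The correct implementation of your idea is the multicomplex / homological-perturbation version of the spectral sequence, and unwinding it is precisely what the paper does explicitly (following \cite[Proposition 2.19]{Wu7}): decompose $d_\ve=\sum_{l=0}^{N}d_\ve^{(l)}$ with $\deg_Q d_\ve^{(l)}=N+1-2l$, extract the relations $\sum_{j+k=l}d_\ve^{(j)}\circ d_{\ve-1}^{(k)}=0$, and then inductively build corrected cycles $\sum_l\alpha_l$ (and bounding chains $\sum_k\beta_k$), using $\ker d^{(0)}_{\mathrm{cr}(\Gamma)-1}=\im d^{(0)}_{\mathrm{cr}(\Gamma)}$ from Lemma \ref{MOY-C-cr} to solve each successive homogeneous equation. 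This yields the explicit map $\tilde\theta_k$, proves \eqref{MOY-H-H-f-pi-ismorphism-1} and \eqref{MOY-H-H-f-pi-ismorphism-2}, and -- a further advantage over the formal route -- gives concrete cycle representatives on which the naturality statement is verified directly (your naturality argument would otherwise also need the functoriality of the not-yet-constructed spectral sequence for maps of filtered degree $i$). If you want to keep the spectral-sequence packaging, you must either carry out this inductive lifting yourself or cite a perturbation-lemma/multicomplex statement that applies to $\zed_2$-graded complexes with a differential of positive filtered degree; as written, the proof does not go through.
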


\begin{proof} (The proof is identical to that of \cite[Proposition 2.19]{Wu7}.)
The matrix factorization $C_{f,\pi}(\Gamma)$ has potential $0$ and is therefore a chain complex. Using the usual notation for matrix factorizations, we write $C_{f,\pi}(\Gamma)$ as
\[
M^0\xrightarrow{d_0}M^1\xrightarrow{d_1}M^0.
\]
Note that the quantum grading of $M^0 \oplus M^1$ is bounded from below. Since the quantum filtration $\fil$ of $C_{f,\pi}(\Gamma)$ is induced by the quantum grading of $M^0 \oplus M^1$, we know that $\fil$ is bounded below and exhaustive.

Also one can see that the quantum degrees of elements of $M^0$ have the same parity, and the quantum degrees of elements of $M^1$ have the same parity, while these two parities differ by $N+1$. 

The differential maps $d_0$ and $d_1$ have quantum degree $N+1$. Recall that $C_{f,\pi}(\Gamma)$ is a Koszul matrix factorization. From the definition of $C_{f,\pi}(\Gamma)$, one can see that the lowest homogeneous components of $d_0$ and $d_1$ have quantum degree $\geq 1-N$. So, for $\ve\in\zed_2$, $d_\ve$ has a decomposition
\[
d_\ve=\sum_{l=0}^{N}d_\ve^{(l)},
\]
where $d_\ve^{(l)}$ is a homogeneous linear map of quantum degree $N+1-2l$. Consider the homogeneous parts of $d_\ve\circ d_{\ve-1}=0$ and $d_{\ve-1}\circ d_\ve=0$ of degree $2(N+1-l)$. It is easy to see that
\begin{equation}\label{MOY-H-H-f-pi-ismorphism-pro-0}
\sum_{j+k=l,~ 0\leq j,k \leq N} d_\ve^{(j)}\circ d_{\ve-1}^{(k)}=0 \text{ and } \sum_{j+k=l,~ 0\leq j,k \leq N} d_{\ve-1}^{(j)}\circ d_\ve^{(k)}=0.
\end{equation}

It is clear that $\kappa (d_\ve) = d_\ve^{(0)}$ and therefore the matrix factorization $C(\Gamma)$ of potential $0$ is given by 
\[
M^0\xrightarrow{d_0^{(0)}}M^1\xrightarrow{d_1^{(0)}}M^0.
\]

We prove \eqref{MOY-H-H-f-pi-ismorphism-1} first. To simplify notations, we set $\ve_0 =\mathrm{cr}(\Gamma)$. Let $\alpha\in M^{\ve_0-1}$ be a cycle of quantum degree $g$. That is, $d_{\ve_0-1}\alpha=0$ and  
\[
\alpha=\sum_{k=-\infty}^{\infty}\alpha_k,
\]
where $\alpha_k$ is a homogeneous element of degree $g-2k$ satisfying $\alpha_k=0$ for $k<0$ and $\alpha_k=0$ for $k>>1$ (since the quantum grading is bounded from below.) 

We construct by induction a sequence $\{\beta_k\}_{-\infty}^{\infty}\subset M^{\ve_0}$, such that $\beta_k$ is a homogeneous element of degree $g-2k-N-1$, $\beta_k=0$ for $k<0$, and 
\begin{equation}\label{MOY-H-H-f-pi-ismorphism-pro-1}
\alpha_k=\sum_{l=0}^{N}d_{\ve_0}^{(l)}\beta_{k-l}.
\end{equation}
Of course, we have $\beta_k=0$ for $k>>1$ since the quantum grading is bounded from below.

By definition of $\alpha_k$ and $\beta_k$, \eqref{MOY-H-H-f-pi-ismorphism-pro-1} is true for $k<0$. Assume that we have found $\{\beta_k\}_{-\infty}^{m-1}\subset M^{\ve_0}$ so that \eqref{MOY-H-H-f-pi-ismorphism-pro-1} is true for $k<m$. Let us find a $\beta_m$. Consider the homogeneous part of $d_{\ve_0-1}\alpha=0$ of degree $N+1+g-2m$. We have that
\begin{eqnarray*}
0 & = & \sum_{l=0}^{N}d_{\ve_0-1}^{(l)}\alpha_{m-l} ~=~ d_{\ve_0-1}^{(0)}\alpha_{m}+\sum_{l=1}^{N}d_{\ve_0-1}^{(l)}\alpha_{m-l} \\
  & = & d_{\ve_0-1}^{(0)}\alpha_{m}+\sum_{l=1}^{N}d_{\ve_0-1}^{(l)}\sum_{j=0}^N d_{\ve_0}^j\beta_{m-l-j} \\
  & = & d_{\ve_0-1}^{(0)}\alpha_{m} + \sum_{k=1}^{2N}(\sum_{l+j=k,~1\leq l\leq N,~ 0\leq j\leq N}d_{\ve_0-1}^{(l)}d_{\ve_0}^{(j)}\beta_{m-k}) \\
  & = & d_{\ve_0-1}^{(0)}\alpha_{m} - \sum_{k=1}^N d_{\ve_0-1}^{(0)}d_{\ve_0}^{(k)}\beta_{m-k} ~=~ d_{\ve_0-1}^{(0)}(\alpha_{m} - \sum_{k=1}^N d_{\ve_0}^{(k)}\beta_{m-k}).    
\end{eqnarray*}
By Lemma \ref{MOY-C-cr}, $H^{\ve_0-1}(\Gamma)=0$, that is, $\ker{d_{\ve_0-1}^{(0)}}=\im{d_{\ve_0}^{(0)}}$. So there exists a homogeneous element $\beta_m$ of $M^{\ve_0}$ of degree $g-2m-N-1$ such that 
\[
\alpha_{m} - \sum_{k=1}^N d_{\ve_0}^{(k)}\beta_{m-k} = d_{\ve_0}^{(0)}\beta_m.
\] 
This completes the induction.

Now , we have 
\begin{eqnarray*}
\alpha & = & \sum_{k=-\infty}^{\infty}\alpha_k = \sum_{k=-\infty}^{\infty}\sum_{l=0}^{N}d_{\ve_0}^{(l)}\beta_{k-l} \\
       & = & \sum_{l=0}^{N}d_{\ve_0}^{(l)}(\sum_{k=-\infty}^{\infty}\beta_{k-l}) = \sum_{l=0}^{N}d_{\ve_0}^{(l)}(\sum_{j=-\infty}^{\infty}\beta_{j}) \\
       & = & d_{\ve_0}(\sum_{j=-\infty}^{\infty}\beta_{j}).
\end{eqnarray*}
This shows that $\ker{d_{\ve_0-1}}=\im{d_{\ve_0}}$ and therefore $H_{f,\pi}^{\ve_0-1}(\Gamma)=0$.

Next we prove \eqref{MOY-H-H-f-pi-ismorphism-2}. Let $(\ker{d_{\ve_0}^{(0)}})_k$ be the subspace of $\ker{d_{\ve_0}^{(0)}}$ consisting of homogeneous elements of degree $k$. Define $\tilde{\theta}_k:(\ker{d_{\ve_0}^{(0)}})_k \rightarrow \fil^k H_{f,\pi}^{\ve_0}(\Gamma)/\fil^{k-1} H_{f,\pi}^{\ve_0}(\Gamma)$ as following.

For $\alpha\in (\ker{d_{\ve_0}^{(0)}})_k$, construct by induction a sequence $\{\alpha_l\}_{0}^{\infty}\subset M^{\ve_0}$, such that $\alpha_0=\alpha$, $\alpha_l$ is a homogeneous element of quantum degree $k-2l$ and, for $l\geq0$,
\begin{equation}\label{MOY-H-H-f-pi-ismorphism-pro-2}
\sum_{j=0}^{l}d_{\ve_0}^{(j)}\alpha_{l-j}=0, ~\forall ~l\in\zed,
\end{equation}
where we use the convention that $d_{\ve_0}^{(j)}=0$ for $j>N$. Note that $\alpha_l=0$ for $l>>1$ since the quantum grading is bounded from below. Clearly, \eqref{MOY-H-H-f-pi-ismorphism-pro-2} is true with $\alpha_0=\alpha$. Next, assume that, for some $m\geq1$, we have found $\{\alpha_l\}_{0}^{m-1}$ such that  \eqref{MOY-H-H-f-pi-ismorphism-pro-2} is true for $0\leq l<m$. Let us find an $\alpha_m$. 

Consider
\begin{eqnarray*}
d_{\ve_0-1}^{(0)}(\sum_{j=1}^{m}d_{\ve_0}^{(j)}\alpha_{m-j}) & = & \sum_{j=1}^{m}d_{\ve_0-1}^{(0)}d_{\ve_0}^{(j)}\alpha_{m-j} \\
(\text{by \eqref{MOY-H-H-f-pi-ismorphism-pro-0}})   & = & -\sum_{j=1}^{m}\sum_{l=0}^{j-1}d_{\ve_0-1}^{(j-l)}d_{\ve_0}^{(l)}\alpha_{m-j} \\
                      (\text{set }p=m-j+l)         & = & -\sum_{p=0}^{m-1}\sum_{l=0}^p d_{\ve_0-1}^{(m-p)}d_{\ve_0}^{(l)}\alpha_{p-l} \\
                                                   & = & -\sum_{p=0}^{m-1}d_{\ve_0-1}^{(m-p)} (\sum_{l=0}^p d_{\ve_0}^{(l)}\alpha_{p-l}) \\
             (\text{by induction hypothesis})      & = & 0
\end{eqnarray*}
But $H^{\ve_0-1}(\Gamma)=0$, i.e. $\ker{d_{\ve_0-1}^{(0)}}=\im{d_{\ve_0}^{(0)}}$. So there exists a homogeneous element $\alpha_m \in M^{\ve_0}$ of degree $k-2m$ such that 
\[
d_{\ve_0}^{(0)}\alpha_m = -\sum_{j=1}^{m}d_{\ve_0}^{(j)}\alpha_{m-j}.
\]
This completes the induction. 

It is clear that 
\[
d_{\ve_0}(\sum_{l=0}^{\infty}\alpha_l)=0.
\]
Note the sum here is in fact a finite sum. Define 
\[
\tilde{\theta}_k(\alpha)=[\sum_{l=0}^{\infty}\alpha_l]\in\fil^k H_p^{\ve_0}(\Gamma)/\fil^{k-1} H_p^{\ve_0}(\Gamma).
\] 
We need to check that $\tilde{\theta}_k(\alpha)$ is independent of the choice of the sequence $\{\alpha_l\}_{0}^{\infty}$. This is easy. Let $\{\alpha'_l\}_{0}^{\infty}$ another such sequence. Then
\[
(\sum_{l=0}^{\infty}\alpha_l)-(\sum_{l=0}^{\infty}\alpha'_l) = (\sum_{l=1}^{\infty}\alpha_l)-(\sum_{l=1}^{\infty}\alpha'_l) \in \fil^{k-1} \ker d_{\ve_0}.
\]
So
\[
[\sum_{l=0}^{\infty}\alpha_l] = [\sum_{l=0}^{\infty}\alpha'_l] \in \fil^k H_p^{\ve_0}(\Gamma)/\fil^{k-1} H_p^{\ve_0}(\Gamma).
\]
Thus, $\tilde{\theta}_k:(\ker{d_{\ve_0}^{(0)}})_k \rightarrow \fil^k H_{f,\pi}^{\ve_0}(\Gamma)/\fil^{k-1} H_{f,\pi}^{\ve_0}(\Gamma)$ is well defined. It is straightforward to check that $\tilde{\theta}_k$ is $\C$-linear.

Note that any element of $\fil^k H_{f,\pi}^{\ve_0}(\Gamma)$ is represented by a cycle of the form $\sum_{j=0}^\infty \alpha_j$, where $\alpha_j \in M^{\ve_0}$ is a homogeneous element of quantum degree $k-2j$. Considering the top homogeneous part of $d_{\ve_0}(\sum_{j=0}^\infty \alpha_j)=0$, one can see that $d_{\ve_0}^{(0)}(\alpha_0) =0$. It is then easy to check that $\tilde{\theta}_k(\alpha_0) = [\sum_{j=0}^\infty \alpha_j] \in \fil^k H_{f,\pi}^{\ve_0}(\Gamma)/\fil^{k-1} H_{f,\pi}^{\ve_0}(\Gamma)$. So $\tilde{\theta}_k$ is surjective.

Next, we compute $\ker{\tilde{\theta}_k}$. Let $\alpha\in \ker{\tilde{\theta}_k}$. That is, for the above constructed sequence $\{\alpha_l\}_{0}^{\infty}$, we have 
\begin{equation}\label{MOY-H-H-f-pi-ismorphism-pro-3}
\sum_{l=0}^{\infty}\alpha_l = d_{\ve_0-1}\beta+\gamma,
\end{equation}
where $\gamma$ is a cycle in $\fil^{k-1}M^{\ve_0}$, and $\beta\in M^{\ve_0-1}$.  This equation implies that $d_{\ve_0-1}\beta\in\fil^kM^{\ve_0}$. We claim that we can choose $\beta$ so that $\deg_Q\beta\leq k-N-1$. Assume that $\deg_Q\beta=g> k-N-1$. Let $\beta_0$ be the top homogeneous part of $\beta$. Comparing the top homogeneous part in \eqref{MOY-H-H-f-pi-ismorphism-pro-3}, we have $d_{\ve_0-1}^{(0)}\beta_0=0$. So there exists a homogeneous element $\xi\in M^i$ of degree $g-N-1$ such that $d_{\ve_0}^{(0)}\xi=\beta_0$.  Let $\beta'=\beta-d_{\ve_0}\xi$. Then $\beta'$ also satisfies \eqref{MOY-H-H-f-pi-ismorphism-pro-3}, and $\deg_Q \beta' \leq \deg_Q \beta -2$. Repeat this process. Within finite steps, we can find a $\beta$ with $\deg_Q\beta\leq k-N-1$ that satisfies \eqref{MOY-H-H-f-pi-ismorphism-pro-3}. Now let $\hat{\beta}$ be the homogeneous part of $\beta$ of quantum degree $k-N-1$. By \eqref{MOY-H-H-f-pi-ismorphism-pro-3}, one can see that $\alpha=\alpha_0=d_{\ve_0-1}^{(0)}\hat{\beta}$. This shows that $\ker{\tilde{\theta}_k}\subset(\im{d_{\ve_0-1}^{(0)}})_k$, where $(\im{d_{\ve_0-1}^{(0)}})_k$ is the subspace of $\im{d_{\ve_0-1}^{(0)}}$ of homogeneous elements of quantum degree $k$. 

On the other hand, if $\alpha\in(\im{d_{\ve_0-1}^{(0)}})_k$, then there is a homogeneous element $\beta \in M^{\ve_0-1}$ of quantum degree $k-N-1$ with $d_{\ve_0-1}^{(0)}\beta=\alpha$. Note that 
\[
\sum_{l=0}^{\infty}\alpha_l = d_{\ve_0-1}\beta+\gamma,
\]
where $\gamma=-d_{\ve_0-1}\beta+\sum_{l=0}^{\infty}\alpha_l$ is a cycle in $\fil^{k-1}M^{\ve_0}$. This shows that $\alpha\in \ker{\tilde{\theta}_k}$. Thus, $\ker{\tilde{\theta}_k}=(\im{d_{\ve_0-1}^{(0)}})_k$. Therefore, $\tilde{\theta}_k$ induces an isomorphism
\[
\theta_k: H^{\ve_0,k}(\Gamma)=(\ker{d_{\ve_0}^{(0)}})_k/(\im{d_{\ve_0-1}^{(0)}})_k \rightarrow \fil^k H_{f,\pi}^{\ve_0}(\Gamma)/\fil^{k-1} H_{f,\pi}^{\ve_0}(\Gamma).
\]
This proves \eqref{MOY-H-H-f-pi-ismorphism-2}.

It remains to prove the naturality. Let $\alpha$ be a homogeneous cycle in $C(\Gamma)$ of $\zed_2$-degree $\mathrm{cr}(\Gamma)$ and quantum degree $k$. Then, from above, we know that there is a sequence $\{\alpha_l\}_{0}^{\infty} \subset C(\Gamma)$, such that $\alpha_0=\alpha$, $\alpha_l$ is a homogeneous element of $\zed_2$-degree $\mathrm{cr}(\Gamma)$ and quantum degree $k-2l$ and equation \eqref{MOY-H-H-f-pi-ismorphism-pro-2} is true. Also, there is a sequence $\{\alpha_l'\}_{0}^{\infty} \subset C(\Gamma')$ satisfying that $\alpha_0'=(\varpi_0(G))(\alpha)$, $\alpha_l'$ is a homogeneous element of $\zed_2$-degree $\mathrm{cr}(\Gamma')$ and quantum degree $k+i-2l$ and equation \eqref{MOY-H-H-f-pi-ismorphism-pro-2} is true for $\{\alpha_l'\}_{0}^{\infty}$. So $\sum_{l=0}^\infty \alpha_l'$ is a cycle in $\fil^{k+i}C_{f,\pi}(\Gamma')$ of $\zed_2$-degree $\mathrm{cr}(\Gamma')$. Moreover, it is easy to see that $(\varpi(G))(\sum_{l=0}^\infty \alpha_l) - \sum_{l=0}^\infty \alpha_l' \in \fil^{k+i-1}C_{f,\pi}(\Gamma')$. 

Denote by $\tilde{\theta}_k'$ and $\theta_k'$ the homomorphisms defined for $\Gamma'$ corresponding to $\tilde{\theta}_k$ and $\theta_k$. Then, as elements of $\fil^{k+i} H_{f,\pi}^{\mathrm{cr}(\Gamma')}(\Gamma')/\fil^{k+i-1} H_{f,\pi}^{\mathrm{cr}(\Gamma')}(\Gamma')$,
\begin{eqnarray*}
\tilde{\theta}_{k+i}'((\varpi_0(G))(\alpha)) & = & [\sum_{l=0}^\infty \alpha_l'] = [(\varpi(G))(\sum_{l=0}^\infty \alpha_l)] \\
& = & (\varpi(G))_\ast [\sum_{l=0}^\infty \alpha_l] = (\varpi(G))_\ast(\tilde{\theta}_k(\alpha)).
\end{eqnarray*}
This shows that the diagram
\[
\xymatrix{
H^{\mathrm{cr}(\Gamma),k}(\Gamma) \ar@<0ex>[rr]^{\theta_k} \ar@<0ex>[d]^{(\varpi_0(G))_\ast} && \fil^k H_{f,\pi}^{\ve_0}(\Gamma)/\fil^{k-1} H_{f,\pi}^{\ve_0}(\Gamma) \ar@<0ex>[d]^{(\varpi(G))_\ast} \\
H^{\mathrm{cr}(\Gamma'),k+i}(\Gamma') \ar@<0ex>[rr]^<<<<<<<<<<{\theta_{k+i}'} && \fil^{k+i} H_{f,\pi}^{\mathrm{cr}(\Gamma')}(\Gamma')/\fil^{k+i-1} H_{f,\pi}^{\mathrm{cr}(\Gamma')}(\Gamma')
}
\]
commutes.
\end{proof}

Let $D$ be a closed knotted MOY graph and $\Gamma$ any complete resolution of $D$. Note that $\mathrm{cr}(\Gamma)$ does not depend on the choice of $\Gamma$. To each crossing $c$ of $D$, associate an adjustment term $\mathsf{a}(c) \in \zed_2$ defined by 
\[
\mathsf{a}\left(\setlength{\unitlength}{1pt}
\begin{picture}(40,40)(-20,0)

\put(-20,-20){\vector(1,1){40}}

\put(20,-20){\line(-1,1){15}}

\put(-5,5){\vector(-1,1){15}}

\put(-11,15){\tiny{$_m$}}

\put(9,15){\tiny{$_n$}}

\end{picture}\right) = \mathsf{a}\left(\setlength{\unitlength}{1pt}
\begin{picture}(40,40)(-20,0)

\put(20,-20){\vector(-1,1){40}}

\put(-20,-20){\line(1,1){15}}

\put(5,5){\vector(1,1){15}}

\put(-11,15){\tiny{$_m$}}

\put(9,15){\tiny{$_n$}}

\end{picture}\right) =
\begin{cases}
m & \text{if } m=n,\\
0 & \text{if } m \neq n.
\end{cases}
\]
Define $\hat{\tc}(D) := \mathrm{cr}(\Gamma) + \sum_c \mathsf{a}(c) \in \zed_2$, where the sum is taken over all crossings of $D$. One can see that $\hat{\tc}(D)$ is invariant under Reidemeister moves and unknotting (switching the above- and below- strands at a crossing.) Therefore, if $D$ is a link diagram, it is easy to check that $\hat{\tc}(D) = \tc(D) \in \zed_2$, where $\tc(D)$ is the total color of $D$, that is, the sum of the colors of all components of $D$.

\begin{theorem}\label{thm-MOY-spectral}
Let $D$ be a closed knotted MOY graph. Then the subspace of $H_{f,\pi}(D)$ of elements of $\zed_2$-degree $\hat{\tc}(D)+1$ vanishes. 

Moreover, the quantum filtration of $C_{f,\pi}(D)$ induces a spectral sequence converging to $H_{f,\pi}(D)$ with $E_1$-term isomorphic to the colored $\mathfrak{sl}(N)$-homology $H(D)$ defined in \cite{Wu-color}.
\end{theorem}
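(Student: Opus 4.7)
The plan is to prove both assertions simultaneously by constructing the advertised spectral sequence and reading the $\zed_2$-purity off its $E_1$-page. On each complete resolution $\Gamma_s$ of $D$, the quantum filtration of $C_{f,\pi}(\Gamma_s)$ is bounded below and exhaustive, and taking the direct sum over states $s$ induces such a filtration on the whole chain complex $C_{f,\pi}(D)$. Both the matrix factorization differential $d_{mf}$ inside each resolution and the chain complex differential $d$ between resolutions preserve this filtration: the latter is assembled from the basic morphisms $\phi, \overline{\phi}, \chi^0, \chi^1, \iota, \epsilon, \eta$, each of which is built from the right columns of the ambient Koszul matrix factorizations and is therefore quantum-homogeneous with quantum degree equal to its total polynomial degree. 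Passing to $d_{mf}$-homology first yields a filtered $\zed_2$-graded chain complex $(H(C_{f,\pi}(D), d_{mf}), d_\ast)$.

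Every $\Gamma_s$ shares the same colored rotation number $\mathrm{cr} := \mathrm{cr}(\Gamma_s)$. Proposition \ref{MOY-H-H-f-pi-ismorphism} therefore kills the $\zed_2$-degree $\mathrm{cr}-1$ summand of $H(C_{f,\pi}(\Gamma_s), d_{mf})$ and identifies its associated graded at $\zed_2$-degree $\mathrm{cr}$ and filtration level $k$ with $H^{\mathrm{cr},k}(\Gamma_s)$. Summing over $s$ produces the $E_0$-page of the filtration spectral sequence, which is concentrated in $\zed_2$-degree $\mathrm{cr}$ and equals $\bigoplus_s H(\Gamma_s)$ as a trigraded vector space. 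The naturality clause of Proposition \ref{MOY-H-H-f-pi-ismorphism}, applied to each component of $d_\ast$, then shows that the induced $E_0$-differential coincides with the differential of the chain complex $C(D)$ from \cite{Wu-color}, so $E_1 \cong H(C(D)) = H(D)$.

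Convergence to $H_{f,\pi}(D)$ as a filtered $\zed_2$-graded space is automatic since the filtration is bounded below and exhaustive. The purity statement for $H(D)$ extracted from \cite[Theorem 14.7]{Wu-color} (the extension of Lemma \ref{MOY-C-cr} along the crossing-adjustment term $\mathsf{a}(c)$) implies that the $\zed_2$-degree $\hat{\tc}(D)+1$ part of $E_1$ vanishes, and this vanishing is preserved on all later pages and on $E_\infty$, hence on the associated graded of $H_{f,\pi}(D)$, and therefore on $H_{f,\pi}(D)$ itself. The main obstacle will be the uniform verification that every differential entering $C_{f,\pi}(D)$, including the composite differentials $\mathsf{d}_k^\pm$ of Section \ref{sec-chain-complex-def} and their analogues in Proposition \ref{trivial-complex-prop}, has matching total polynomial and quantum degrees after applying $\varpi$; this is precisely the hypothesis needed to invoke the naturality clause of Proposition \ref{MOY-H-H-f-pi-ismorphism} term by term. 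Granting this rigidity, the remainder is a formal adaptation of the uncolored argument in \cite[Subsections 2.3--2.4]{Wu7}.
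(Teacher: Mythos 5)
Your proposal is correct and follows essentially the same route as the paper: construct the spectral sequence of the bounded-below, exhaustive quantum filtration and identify its $E_0$- and $E_1$-terms via Proposition \ref{MOY-H-H-f-pi-ismorphism} together with its naturality clause (the degree-matching hypothesis you flag is exactly what the paper verifies in Corollary \ref{relate-C-C-f-pi}). The only minor deviation is the purity statement, which you obtain by propagating the $\zed_2$-purity of $H(D)$ through $E_\infty$, whereas the paper reads it off directly from the termwise vanishing \eqref{MOY-H-H-f-pi-ismorphism-1} applied to each resolution; both are valid.
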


\begin{proof}
The fact that the subspace of $H_{f,\pi}(D)$ of elements of $\zed_2$-degree $\hat{\tc}(D)+1$ vanishes follows from \eqref{MOY-H-H-f-pi-ismorphism-1} in Proposition \ref{MOY-H-H-f-pi-ismorphism} and the normalization in Definition \ref{complex-colored-crossing-def}. 

Recall that, when $D$ is closed, all complete resolutions of $D$ are also closed and their matrix factorizations are in fact chain complexes. Denote by $d_{mf}$ the differential map of these matrix factorizations/chain complexes. Then $H_{f,\pi}(D)$ is defined to be the homology of the filtered chain complex $H(C_{f,\pi}(D), d_{mf})$. It is easy to see that filtration on $H(C_{f,\pi}(D), d_{mf})$ is bounded below and exhaustive. So it induces a spectral sequence that converges to $H_{f,\pi}(D)$. By Proposition \ref{MOY-H-H-f-pi-ismorphism}, the $E_0$-term of this spectral sequence is isomorphic to the graded chain complex $H(C(D), d_{mf})$. So its $E_1$-term is isomorphic to $H(D)$.
\end{proof}

It is clear that Theorem \ref{thm-inv-deformation} follows from Theorem \ref{invariance-reidemeister-deformation} and Theorem \ref{thm-MOY-spectral}.

\end{document}